\documentclass[11pt]{amsart}

\usepackage{tikz}
\usepackage{pgfplots}
\usepackage{amsthm}
\usepackage{amsxtra}
\usepackage{amssymb}
\usepackage{graphicx}
\usepackage{comment}
\usepackage{array}
\usepackage{url}
\pagestyle{headings}
\usepackage{color}
\usepackage{amsfonts}
\usepackage{textcomp}
\usepackage{perpage,enumerate}
\usepackage[colorlinks=true]{hyperref}
\usepackage{bbold}
%\usepackage{epstopdf}
%\epstopdfsetup{outdir=./ForarXiv/}
%\epstopdfsetup{suffix=}

\usepackage{caption}
\usepackage{subcaption}
\DeclareMathAlphabet{\mathpzc}{OT1}{pzc}{m}{it}
\usepackage{lmodern}

\usepackage{sidecap}

\setlength{\textheight}{8.00in} \setlength{\oddsidemargin}{0.0in}
\setlength{\evensidemargin}{0.0in} \setlength{\textwidth}{6.4in}
\setlength{\topmargin}{0.18in} \setlength{\headheight}{0.18in}
\setlength{\marginparwidth}{1.0in}
\setlength{\abovedisplayskip}{0.2in}

\setlength{\belowdisplayskip}{0.2in}

\setlength{\parskip}{0.05in}

\DeclareMathOperator*{\oph}{Op_{h}}

\DeclareMathOperator*{\op}{Op}
\DeclareMathOperator*{\supp}{supp}

\DeclareMathOperator*{\WFh}{WF_{h}}
\DeclareMathOperator*{\WFhp}{WF_{h,\Ph{}{}}}
\DeclareMathOperator*{\WFhpi}{WF^{i}_{h,\Ph{}{}}}
\DeclareMathOperator*{\WFhpf}{WF^{f}_{h,\Ph{}{}}}
\DeclareMathOperator*{\loc}{loc}
\DeclareMathOperator*{\comp}{comp}
\DeclareMathOperator*{\Ell}{ell}
\DeclareMathOperator*{\MS}{MS_{h}}
\DeclareMathOperator*{\MSp}{MS_{h,\Ph{}{}}}

\DeclareMathOperator*{\sgn}{sgn}
\DeclareMathOperator*{\Id}{Id}

\DeclareMathOperator*{\Arg}{Arg}
\newcommand{\Hh}{H_{\operatorname{h}}}

\usepackage{chngcntr}

\graphicspath{{Figures/}}

\newtheorem{theorem}{Theorem}

\numberwithin{prop}{section}

\numberwithin{corol}{section}

\newtheorem{lemma}{Lemma}
\numberwithin{lemma}{section}

\numberwithin{conjecture}{section}

\newenvironment{remarks}[1][]{\begin{remark}\begin{trivlist}
\item[\hskip \labelsep {\bfseries #1}]\end{trivlist}\begin{itemize}}{\end{itemize}\end{remark}}

{\theoremstyle{definition}
\newtheorem{defin}{Definition}
\numberwithin{defin}{section}
}
\numberwithin{figure}{section}

%\counterwithout{equation}{section}
%\counterwithout{subsection}{section}
%\counterwithout{figure}{section}

\renewcommand{\Re}{\mathop{\rm Re}\nolimits}
\renewcommand{\Im}{\mathop{\rm Im}\nolimits}

%new commands

\newcommand{\quotient}[2]{{\left.\raisebox{.2em}{$#1$}\middle/\raisebox{-.2em}{$#2$}\right.}}

\newcommand{\Spec}{\operatorname{Spec}}
\newcommand{\bl}{\begin{flushleft}}
\newcommand{\el}{\end{flushleft}}
\newcommand{\br}{\begin{flushright}}
\newcommand{\ert}{\end{flushright}}
\newcommand{\bc}{\begin{center}}
\newcommand{\ec}{\end{center}}
%new question
%new subquestion

\newcommand{\recip}[1]{\frac{1}{#1}}
\newcommand{\imply}{\Rightarrow}

\newcommand{\complex}{\mathbb{C}}
\newcommand{\ints}{\mathbb{Z}}
\newcommand{\numList}{\begin{enumerate}}
\newcommand{\enumList}{\end{enumerate}}

\newcommand{\composed}{\text{\textopenbullet}}

\newcommand{\e}{\epsilon}

\newcommand{\re}{\mathbb{R}}

\newcommand{\nn}{\nonumber\\}
\newcommand{\la}{\langle}
\newcommand{\ra}{\rangle}

\newcommand{\mc}[1]{\mathcal{#1}}
\newcommand{\pO}{\partial\Omega}
\theoremstyle{remark}
\newtheorem{remark}{Remark}

\newcommand{\Deltad}[1]{\Delta_{#1,\delta}}

\newcommand{\asec}{\operatorname{arcsec}}
\newcommand{\Fh}{\mc{F}_h}

\newcommand{\dDl}{\partial_{\nu}\mc{D}\ell}
\newcommand{\Dl}{N}
\newcommand{\Sl}{\mc{S}\ell}
\newcommand{\D}{\mc{D}\ell}
\renewcommand{\S}{\mc{S}\ell}
\newcommand{\Cc}{C_c^\infty}
\renewcommand{\O}[1]{\mathpzc{O}_{#1}}
\renewcommand{\o}[1]{\mathpzc{o}_{#1}}
\newcommand{\opht}[1]{\operatorname{Op_{h,#1}}}
\newcommand{\wt}[1]{\widetilde{#1}}
\newcommand{\No}{N_2}
\newcommand{\Ph}[2]{\Psi^{#1}_{#2}}
\newcommand{\Do}{D_{\Omega}}
\newcommand{\m}{$$}
\usepackage{multicol}
\usepackage{enumitem}
\title[Quantum Sabine Law for Resonances]{The Quantum Sabine Law for Resonances in Transmission Problems}
\author{Jeffrey Galkowski}
\email{jeffrey.galkowski@stanford.edu}
\address{ Mathematics Department, Stanford University, Stanford, CA USA}

\begin{document}
\begin{abstract}
We prove a quantum version of the Sabine law from acoustics describing the location of resonances in transmission problems. This work extends the work of the author to a broader class of systems. Our main applications are to scattering by transparent obstacles, scattering by highly frequency dependent delta potentials, and boundary stabilized wave equations. We give a sharp characterization of the resonance free regions in terms of dynamical quantities. In particular, we relate the imaginary part of resonances or generalized eigenvalues to the chord lengths and reflectivity coefficients for the ray dynamics, thus proving a quantum version of the Sabine law.
\end{abstract}

\maketitle
\setcounter{tocdepth}{1} 
\tableofcontents
\section{Introduction}
In this paper we study scattering in systems where the metric or potential has a singularity along an interface. Metric examples include scattering in media having sharp changes of index of refraction \cite{Card, Card2,popVod}, in dielectric microcavities \cite{dielectric} and in fiber optic cables \cite{elliott2002fiber}. Schr\"odinger operators with a distributional potential along a hypersurface can be used to model quantum corrals, concert halls, and other thin barriers  \cite{Heller,Crommie}. Such potentials are also used to understand leaky quantum graphs \cite{Exner}.

Mathematically, an abrupt change in the index of refraction corresponds to a discontinuity in the metric along a hypersurface. Scattering in such situations has been studied in \cite{mourad,Card, Card2,popVod,popovNear} while scattering by certain distributional potentials has been studied in \cite{Galk,GalkCircle,GS}. These types of problems have also been studied from the point of view of propagation of singularities \cite{MelTayl, Miller,weiss1985reflection} and quantum chaos \cite{SafRaySplit}.

For a Schr\"odinger operator, $P$, on $L^2(\re^d)$ ($d$ odd) it is often possible to prove that solutions, $u$, to 
$$(\partial_t^2+P)u=0$$
have expansions roughly of the form 
\begin{equation}
\label{eqn:expand} u\sim \sum_{\lambda \in \text{Res}}e^{-it\lambda}u_\lambda
\end{equation}
where $\text{Res}$ is the set of \emph{scattering resonances} of $P$. Thus, the real and (negative) imaginary part of a scattering resonance correspond respectively to the frequency and decay rate of the associated resonance state, $e^{-it\lambda}u_\lambda$. This expression is similar to the expansion in terms of eigenvalues that one obtains when solving the wave equation on a compact manifold. Hence, for leaky systems, scattering resonances play the role of eigenvalues in the closed setting. 

To get a quantitative heuristic for the decay of waves (the imaginary part of resonances), we imagine that the interface for our problem occurs at $\pO$ for some $\Omega\Subset \re^d$. We then think of solving the wave equation 
$$(\partial_t^2+P)u=0\,,\quad u|_{t=0}=u_0,\quad u_t|_{t=0}=0$$
with initial data $u_0$ a wave packet (that is a function localized in frequency and space up to the scale allowed by the uncertainty principle) localized at position $x_0\in \Omega$ and frequency $\xi_0\in S^{d-1}$. We also assume that $P$ creates waves with speed $c$. The solution, $u$, then propagates along the billiard flow starting from $(x_0,\xi_0)$. At each intersection of the billiard flow with the boundary, the amplitude inside of $\Omega$ will decay by a factor, $R$, depending on the point and direction of intersection. Suppose that the billiard flow from $(x_0,\xi_0)$ intersects the boundary at $(x_n,\xi_n)\in\pO\times S^{d-1}$, $n>0$. Let $l_n=|x_{n+1}-x_n|$ be the distance between two consecutive intersections with the boundary (see Figure \ref{fig:reflectPic}). Then the amplitude of the wave decays by a factor $\prod_{i=1}^nR_i$ in time $\sum_{i=1}^nc^{-1}l_i$ where $R_i=R(x_i,\xi_i)$. The energy scales as amplitude squared and since the imaginary part of a resonance gives the exponential decay rate of $L^2$ norm, this leads us to the heuristic that resonances should occur at 
\begin{equation} \label{eqn:heurRes}\Im \lambda=\left.\overline{\log |R|^2}\right/(2c^{-1}\bar{l})\end{equation}
where the map $\bar{\cdot}$ is defined by $\bar{f}=\frac{1}{N}\sum_{i=1}^Nf_i$. 
In the early 1900s, Sabine \cite{Sabine} postulated that the decay rate of acoustic waves in a region with leaky walls is determined by the average decay over billiards trajectories. The expression \eqref{eqn:heurRes} provides a precise statement of Sabine's idea and, because resonances are spectral quantity, we refer to such an expression as a quantum Sabine law. We will show in Theorem \ref{thm:mainGeneral} that such a Sabine law holds for many different types of transmission problem.

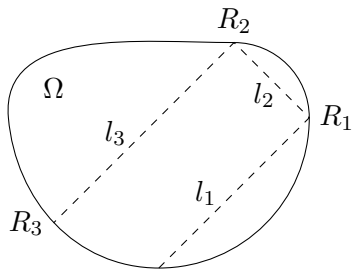
\begin{figure}
\centering
\begin{tikzpicture}
\draw (0,0) to [out=0,in=-90](2,2) to[out=90, in=0](1,3)to[out=180, in =95](-2,2)to [out=275,in=180](0,0);
\draw [dashed](0,0)to(2,2)node[right]{$R_1$}to (1,3)node[above]{$R_2$}to (1-2.4,3-2.4)node[left]{$R_3$};
\draw (0.9,1)node[left]{$l_1$} (1.4,2.6)node[below]{$l_2$}(1-1.3,3-1.2)node[left]{$l_3$};
\draw (-1.4,2.4)node{$\Omega$};
\end{tikzpicture}
\caption[Schematic of a wave packet inside a thin barrier]{\label{fig:reflectPic}The figure shows the path of a wave packet along with the lengths between each intersection ($l_i$) and the reflection coefficient at each point of intersection with the boundary ($R_i$). After each reflection with the boundary, the amplitude of the wave packet inside $\Omega$ decays by a factor of $R_i$. If the speed of the wave is $c$, the time between reflections is given by $c^{-1}l_i$.}
\end{figure}

Although the appearance of scattering resonances in \eqref{eqn:expand} is intuitive, a more mathematically useful definition of a scattering resonance is as a pole of the meromorphic continuation of 
$$(P-\lambda^2)^{-1}$$
from $\Im \lambda\gg 1$. This description allows us to show that the existence of a scattering resonance at $\lambda$ corresponds to the existence of a nonzero $\lambda-$outgoing solution to 
$$(P-\lambda^2)u=0.$$
By $\lambda$-outgoing we mean that there exists $g\in L^2_{\comp}(\re^d)$ and $M\geq 0$ such that 
$$u(x)=(R_0(\lambda)g)(x),\quad\quad |x|\geq M.$$
Here, $R_0(\lambda)$ is the meromorphic continuation of $(-\Delta-\lambda^2)^{-1}$ from $\Im \lambda\gg1$ as an operator $R_0(\lambda):L^2_{\comp}(\re^d)\to L^2_{\loc}(\re^d)$. (For a more complete description of mathematical scattering and further references, see~\cite{ZwScat})

We start by considering a few applications of our main theorem (see Theorem \ref{thm:mainGeneral}).

\subsection{Transparent Obstacles}
\label{sec:Omega}
Our first application is to scattering by a transparent obstacle. That is, an obstacle with different refractive index than the ambient medium. In particular, let $\Omega \Subset \re^d$ be strictly convex with smooth boundary,  $c\in \re_+\setminus\{1\}$ be the speed of light in $\Omega$, and $\aleph>0$ be a coupling parameter. In \cite{Card}, Cardoso, Popov, and Vodev show that the set of scattering resonances in this setting is given by $\lambda$ such that there is a non-zero solution to 
\begin{equation}
\label{intro-e:transparent}
\begin{cases}
(-c^2\Delta -\lambda^2)u_1=0&\text{ in }\Omega\\
(-\Delta -\lambda^2) u_2=0&\text{ in }\re^d\setminus\overline{\Omega}\\
u_1=u_2&\text{ on }\pO\\
\partial_\nu u_1-\aleph\partial_\nu u_2=0&\text{ on } \pO\\
u_2\text{ is $\lambda$-outgoing.}
\end{cases}
\end{equation}
We denote the set of such $\lambda$ by $\Lambda$. Here, $\nu$ denotes the outward unit normal to $\pO$. 

Let $T^* \partial\Omega$ be the cotangent bundle to $\partial\Omega$ and $B^*\partial\Omega$ denote the coball bundle of $\partial \Omega$. Let $\pi_x:T^*\pO\to \pO$ be the projection to the base. Then define $r,l_N,r_N\in C^\infty(B^*\pO)$ and 
$$l\in C^\infty(T^*\pO\times T^*\pO\setminus \{(x,\xi',x,\eta')\in T^*\pO\times T^*\pO\})\cap C(T^*\pO\times T^*\pO)$$
 by
\begin{equation}
\label{eqn:definitionsTransmision}
\begin{aligned}
r(x',\xi')&:=\frac{\sqrt{1-|\xi'|_g^2}-\aleph\sqrt{c^{2}-|\xi'|_g^2}}{\aleph \sqrt{c^2-|\xi'|_g^2}+\sqrt{1-|\xi'|_g^2}}& r_N(q)&:=\frac{\sum_{j=1}^N\log |r(\beta^j(q))|^2}{N}\\
l(q_1,q_2)&:=|\pi_x(q_1)-\pi_x(q_2)|& l_N(q)&:=\frac{\sum_{j=1}^{N}l(\beta^{j-1}(q),\beta^j(q))}{N}
\end{aligned}
\end{equation}
where $\beta:B^*\pO\to B^*\pO$ denotes the billiard ball map (see section \ref{sec:billiard}) and $|\xi'|_g$ denotes the norm induced on the fibers of $T^*\pO$ by the metric on $\re^d$. Then $r$ is the reflectivity for the transparent obstacle problem. Note that we take the branch of the square root so that $\sqrt{-1}=i$ and place the branch cut on the negative imaginary axis.
\begin{remarks}
\item We will use $\xi'$ to denote coordinates in the fiber of $T^*\pO$ and $q$ to denote points in $T^*\pO$ throughout this paper.
\item Note that the $\log$ in the definition of $r_N$ appears because we measure exponential rates of decay and the reflection coefficient acts by multiplication.
\end{remarks}
\begin{theorem}
\label{thm:mainTransparent}
Let $\Omega\Subset \re^d$ be strictly convex with smooth boundary and suppose that $0<c\neq1$, $\aleph>0$. Then for all $M,\,\e>0$ there exists $\lambda_0>0$  such that for $\lambda\in \Lambda $ with $\Re \lambda \geq \lambda_0$ and $\Im \lambda \geq -M\log \Re \lambda$,
\begin{equation}
\label{e:transRange}\sup_{N>0}\inf_{|\xi'|_g\leq 1}\frac{r_N}{2c^{-1}l_N}-\e\leq \Im \lambda \leq \inf_{N>0}\sup_{|\xi'|_g\leq 1}\frac{r_N}{2c^{-1}l_N}+\e.\nonumber
\end{equation}
Moreover, for every $\aleph$, $c$ as above, and $K>0$, this bound is sharp in the region $\Im \lambda \geq -K$ when $\Omega=B(0,1)\subset \re^2$.
 
\end{theorem}
\begin{remarks}
\item The lower bound in Theorem \ref{thm:mainTransparent} is nontrivial, i.e. $|r(x',\xi')|>0$, if either $c<1$ and $\aleph<c^{-1}$, or $c>1$ and $\aleph>c^{-1}$. This corresponds to \emph{transverse electric waves} (TE). The opposite case, when there is no lower bound, corresponds to \emph{transverse magnetic waves} (TM). In the TM case, the angle at which $r(x',\xi')=0$ is called the \emph{Brewster angle} (\cite[Chapter 13]{Brewster}). At this angle, there is complete transmission of the wave in the ray dynamics picture.
\item The upper bound in Theorem \ref{thm:mainTransparent} is nontrivial if $c>1$. When $c<1$, Popov and Vodev \cite{popovNear} show that the presence of total internal reflection (see Figure \ref{fig:refraction}) produces resonances $\{\lambda_n\}_{n=1}^\infty$ with $\Re \lambda_n\to \infty$ and $\Im \lambda_n=\O{}((\Re \lambda_n)^{-\infty}).$ 
\item The bounds for resonances given in Theorem \ref{thm:mainTransparent} match our prediction \eqref{eqn:heurRes}.
\end{remarks}

\begin{figure}
\includegraphics[scale=.75]{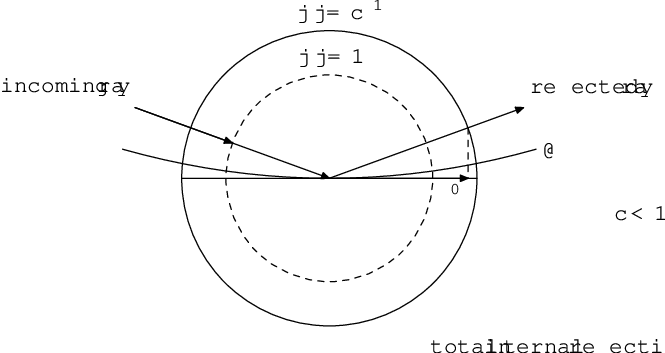}
\includegraphics[scale=.75]{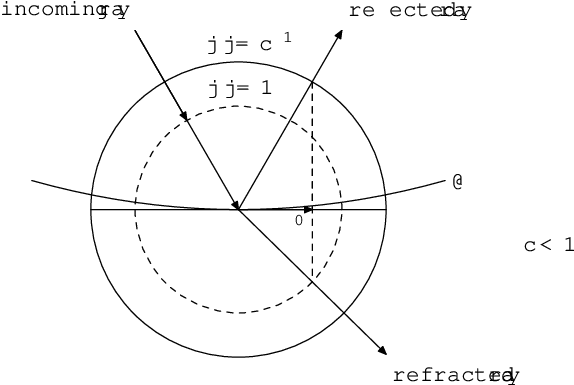}\\[20pt]
\includegraphics[scale=.75]{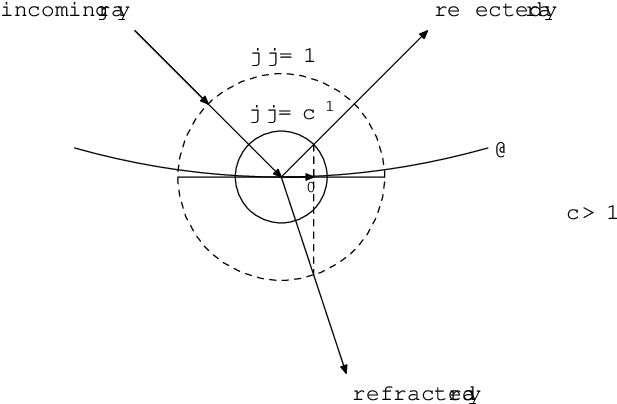}
\caption{\label{fig:refraction} The figure shows the geometry of reflection and refraction at the boundary of an interface between a medium with speed of light $c$ and one with speed of light $1$. Total internal reflection occurs when the incoming ray does not project onto the ball of radius 1 in the $\xi'$ variable.}
\end{figure}

Theorem \ref{thm:mainTransparent} improves upon the results of Cardoso--Popov--Vodev \cite{Card,Card2} by giving sharp estimates on the sizes of the resonance free regions as well as expanding the range of parameters, $\aleph$, for which we have only a band of resonances. 

\begin{figure}
\centering
\begin{subfigure}[b]{\textwidth}
\includegraphics[width=\textwidth]{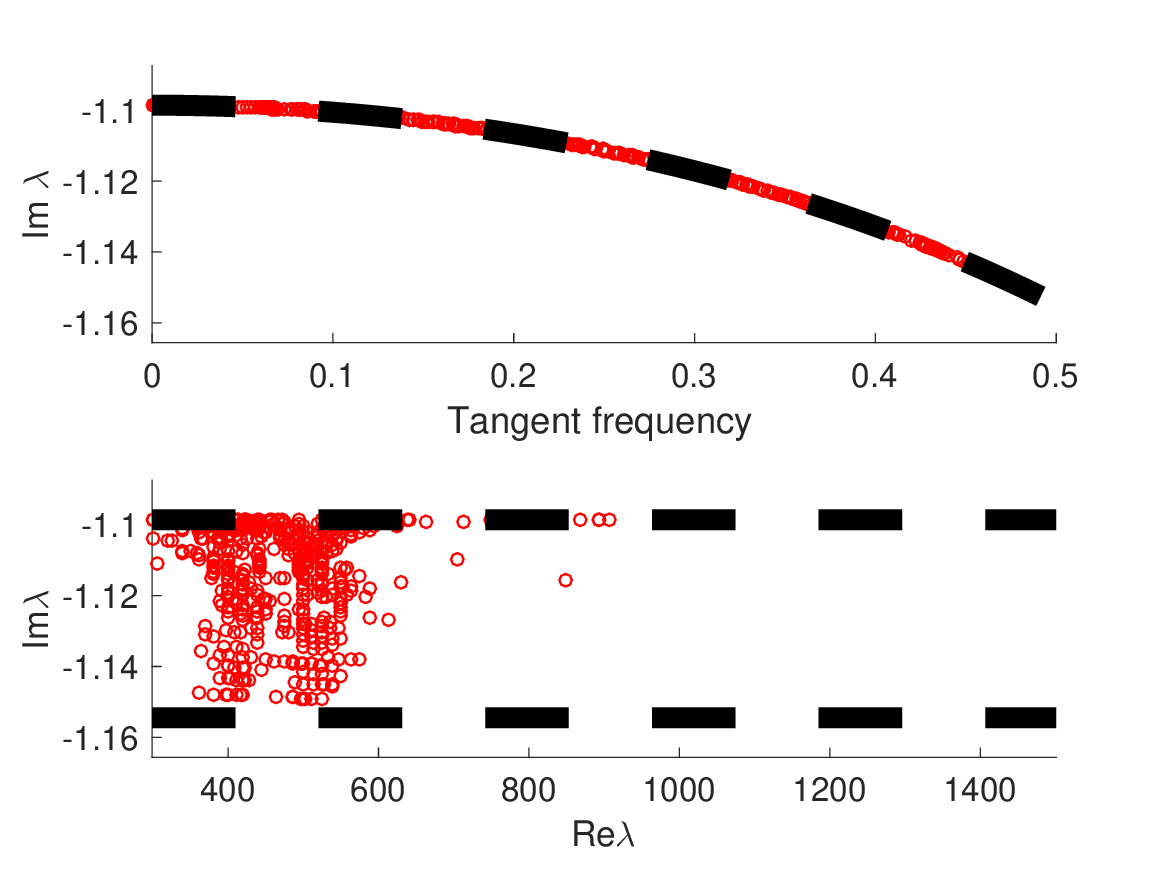}
%\caption{$\alpha=1$ $c=2$}
\end{subfigure}\\
\vspace{.4cm}
\begin{subfigure}[b]{\textwidth}
\centering
\begin{tabular}{|>{\centering\arraybackslash} m{4cm}|>{\centering\arraybackslash} m{2cm}|>{\centering\arraybackslash} m{4cm}|}%{|c|c|c|}
\hline
&Decay Rate&Tangential Frequency\\
\hline
Quantum Quantity&$\Im \lambda $&\raisebox{-.2cm}{$\displaystyle\frac{n}{\Re \lambda}$}\\[3ex]
\hline
Classical Quantity&$\displaystyle\frac{cr_1}{2l_1}$&\raisebox{-.2cm}{$c\xi'$}\\[3ex]
\hline
\end{tabular}
\end{subfigure}
\caption{We show numerically computed resonances for the transparent obstacle problem with $c=2$ and $\aleph=1$ when $\Omega=B(0,1)\subset \re^2$. (See Figure \ref{fig:circleLots} for other values of $c$ and $\aleph$.) In this case, we expand the solutions to \eqref{intro-e:transparent} as $u_i(r,\theta)=\sum_{n}u_{i,n}(r)e^{in\theta}$ and solve for some of the resonances with $\Re \lambda \sim 500$. In the lower graph, the red circles show $\Im \lambda$ vs. $\Re \lambda$. The dashed black lines show the upper and lower bounds for $\Im \lambda$ (since $\aleph$ is in the TE range with have both an upper and lower bound) from Theorem \ref{thm:mainTransparent}. Notice that by orthogonality of $e^{in\theta}$ and $e^{im\theta}$ for $m\neq n$, the pair $(u_{1,n}e^{in\theta}, u_{2,n}e^{in\theta})$ satisfies \eqref{intro-e:transparent}. In the top graph, the red circles show $\Im \lambda$ vs. $n/\Re \lambda$ for such pairs.  The dashed curve shows a plot of $c\frac{r_1}{2l_1}(c\xi')$, the decay rate predicted for a billiards trajectory traveling with scaled tangent frequency $c\xi'$. See the table for the relationship between the points $(\Im \lambda, n/\Re \lambda)$ and $(cr_1/2l_1(c\xi'),c\xi')$ predicted by the quantum Sabine law.\label{fig:circle}}
\end{figure}

\subsection{Highly Frequency Dependent Delta Potentials}
\label{sec:potential}
Let $\Ph{\infty}{}(\pO)$ denote the set of semiclassical pseudodifferential operators of all orders whose seminorms are bounded by a constant independent of $h$ so that $h^{-N}\Ph{\infty}{}(\pO)$ denotes those whose seminorms are bounded by $h^{-N}$ (see section \ref{sec:semiclassicalPreliminaries} for more details).

We next consider operators of the form 
\begin{equation}-h^2\Delta +h(h\delta_{\pO}\otimes V)=:-h^2\Deltad{\pO}.\label{eqn:delta}\end{equation}
where $h\in (0,1]$, is a semiclassical parameter that should be thought of as the wavenumber (i.e. the inverse of the frequency), $V\in h^{-N}\Ph{\infty}{}(\pO)$, and for $u,w\in \Cc(\re^d)$
\begin{equation}
\label{e:tensorAction}\la (\delta_{\pO}\otimes V)u,w\ra:=\int_{\pO}(Vu)(x)w(x)d\sigma(x)
\end{equation}
and $\sigma$ is the surface measure of $\pO$.
(See \cite[Section 2.1]{GS} for the formal definition of this operator.)
These operators are used as models for quantum corrals \cite{Heller,Crommie} as well as concert halls, leaky quantum graphs \cite{Exner} and other thin barriers. 

In a typical physical system, the interaction between a potential and wave depends on the frequency of the interacting wave. Therefore, we are motivated to consider $h$-dependent potentials $V$. Moreover, if one considers the delta interaction in 1 dimension
$$-\Delta+\delta(x_1)\otimes 1$$
and rescales to $y=hx$, we obtain
\begin{equation}
\label{eqn:quantumPoint}
-h^2\Delta_y^2+\delta(y_1/h)\otimes 1=-h^2\partial_y^2+h\delta(y_1)\otimes 1
\end{equation}
which corresponds to $V=h^{-1}$ in \eqref{eqn:delta}. The operator \eqref{eqn:quantumPoint} describes the \emph{quantum point} interaction \cite{Miller}.

Another motivation for highly frequency dependent delta potentials is the following wave equation
$$\left\{\begin{gathered}(\partial_t^2-\Delta +i(\delta_{\pO}\otimes(\la a(x),\partial_x\ra +a_0(x)\partial_t))u=F\text{ in }\re^d\\
F\in L^2_{\comp}((0,\infty)_t\times \re^d),\quad u=0\text{ on }t<0
\end{gathered}\right.$$
where $a, a_0\in C^\infty(\pO;\re)$, and the tensor product acts as in \eqref{e:tensorAction}.
Then, taking the time Fourier transform 
$$\mc{F}_{t\to \lambda}u(x,\lambda):=\int_0^\infty e^{it\lambda}u(x,t)dt,$$
gives with $\lambda=z/h$,
$$(-h^2\Delta -z^2+z(h\delta_{\pO}\otimes(\la z^{-1}a, hD_x\ra + a_0))\mc{F}_{t\to z/h}u=\mc{F}_{t\to z/h}F.$$
\begin{remark} Note that we have switched the usual convention for the Fourier transform in our definition of $\mc{F}_{t\to \lambda}$ so that the integral converges absolutely for $\Im \lambda >0$.
\end{remark}

In \cite{GS}, Smith and the author show that the set of scattering resonances, $\Lambda(h)$, is equal to the set of $z$ such that there is a non-zero solution to 
$$\begin{cases}(-h^2\Delta-z^2)u_1=0&\text{ in }\Omega\\
(-h^2\Delta -z^2)u_2=0&\text{ in }\re^d\setminus \overline{\Omega}\\
u_1=u_2&\text{ on }\pO\\
\partial_\nu u_1-\partial_\nu u_2+Vu_1=0&\text{ on }\pO\\
u_2\text{ is }z/h\text{ outgoing}.
\end{cases}$$
Denote by 
\begin{equation}
\label{e:lLog}\Lambda_{\log}(h):=\{z\in \Lambda(h)\,|\, z\in [1-Ch,1+Ch]+i[-Mh\log h^{-1}, 0]\}.
\end{equation}
For $V\in h^{-N}\Ph{\infty}{}(\pO)$ with real valued symbol, $\sigma(V)$, the reflectivity, $r\in C^\infty(B^*\pO)$, is given by
$$r(x',\xi'):=\frac{h\sigma(V)}{2i\sqrt{1-|\xi'|_g^2}-h\sigma(V)}$$
with $r_N(q)$ and $l_N(q)$ as in \eqref{eqn:definitionsTransmision}.
For a more general definition of $r$ see \eqref{eqn:reflect} and for $r_N$ see \eqref{eqn:defineAverageReflection}.

Let $\Ph{m}{}(\pO)$ denote the set of semiclassical pseudodifferential operators of order $m$ (see Section \ref{sec:semiclassicalPreliminaries}) and $\Lambda_{\log}(h)$ be as in \eqref{e:lLog}. Next, let
\begin{equation}
\label{e:Airy1} 
\begin{gathered} Ai(s):=\frac{1}{2\pi}\int e^{i(st+t^3/3)}dt,\quad A_-(s):=Ai(e^{2\pi i/3}s),\quad \Phi_-(s):=A_-'(s)/A_-(s),\\
0>\zeta_1>\zeta_2>\dots \text{ be the zeros of }Ai(s).
\end{gathered}
\end{equation}
Finally, let $Q(x,\xi')\in C^\infty(T^*\pO)$ be the symbol of the second fundamental form to $\pO$. Then we have:
\begin{theorem}
\label{thm:deltaMain}
Let $\Omega \Subset \re^d$ be strictly convex with smooth boundary, $\alpha \geq -1$, and suppose that $V\in h^\alpha\Ph{1}{}(\pO)$ is self adjoint with $\sigma(V)\geq 0$ and 
$\sigma(V)>c>0$ in a neighborhood of $\{|\xi'|_g=1\}$.
\begin{enumerate} 
\item Suppose that $\alpha>-5/6$. Then for all $\e,\,N_1>0$ there exist $\e_1>0$, $h_0>0$ such that for $0<h<h_0$ 
$$\Lambda_{\log}(h)\subset \left\{\frac{\Im z}{h}\leq \inf_{N\leq N_1}\sup_{|\xi'|<1-\e_1}\frac{r_N}{2l_N}+\e\right\}.$$
\item Suppose that $-5/6\geq \alpha \geq -1$. Then for all $\e>0$, $M>0$, there exists $h_0>0$  such that for $0<h<h_0$  
$$\Lambda_{\log}(h)\subset \bigcup_{j=1}^M\left\{ B_{\min{}} -\e\leq \frac{h^{2/3}\Im z}{\Im \Phi_-(\zeta_j)} \leq B_{\max{}}+\e\right\}\bigcup \left\{\frac{h^{2/3}\Im z}{\Im \Phi_{-}(\zeta_{M+1})}\geq B_{\min{}}-\e\right\}$$
where 
$$B_{\max}:=\sup_{|\xi'|_g=1}\frac{2^{1/3}Q(x,\xi')^{4/3}}{|\sigma(V)(x,\xi')|^2},\quad\quad B_{\min{}}:=\inf_{|\xi'|_g=1}\frac{2^{1/3}Q(x,\xi')^{4/3}}{|\sigma(V)(x,\xi')|^2}.$$
\end{enumerate}
Moreover, these estimates are sharp in the case of $\Omega=B(0,1)\subset \re^2$ with $V\equiv 1.$
\end{theorem}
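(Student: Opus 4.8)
The plan is to deduce Theorem~\ref{thm:deltaMain} from the general result Theorem~\ref{thm:mainGeneral} together with a dedicated analysis at the glancing set $|\xi'|_g=1$, which is exactly where the behaviour of the problem depends on the exponent $\alpha$. The two parts of the theorem then correspond to whether the potential is, or is not, strong enough near glancing to trap a wave long enough to register in $\Lambda_{\log}(h)$.

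\textbf{Step 1 (reduction and the non-glancing region).} First I would invoke the characterization from \cite{GS} so that $z\in\Lambda(h)$ is equivalent to solvability of the transmission system for $(u_1,u_2)$; this places the problem among the ``generalized eigenvalue'' problems to which Theorem~\ref{thm:mainGeneral} applies once its hypotheses are checked. Strict convexity of $\Omega$ supplies the needed properties of the billiard ball map $\beta$ (Section~\ref{sec:billiard}); the interior and exterior operators $-h^2\Delta$ are self-adjoint and elliptic with the correct principal/subprincipal structure; and on $\pO$ the jump relations $u_1=u_2$, $\partial_\nu u_1-\partial_\nu u_2+Vu_1=0$ assemble, after inverting the interior and exterior semiclassical Dirichlet-to-Neumann operators, into a transmission operator whose symbol where $V$ is elliptic is the reflectivity $r(x',\xi')=h\sigma(V)/(2i\sqrt{1-|\xi'|_g^2}-h\sigma(V))$, matching \eqref{eqn:reflect} and \eqref{eqn:defineAverageReflection}. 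Away from $|\xi'|_g=1$ this symbol is smooth, and since $\sigma(V)\ge0$ forces $|r|\le1$, the general theorem squeezes $\Im z/h$ between two dynamical averages, the lower one nonpositive; restricting the $\sup$ to $|\xi'|<1-\e_1$ and to $N\le N_1$ then gives the upper bound of part (1), \emph{provided} no resonances of $\Lambda_{\log}(h)$ are generated near glancing.

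\textbf{Step 2 (the glancing analysis; the main obstacle).} The delicate point is precisely the glancing hypersurface, where $\beta$ degenerates and the DtN computation above fails, so an Airy model must be used instead. After second microlocalizing near $|\xi'|_g=1$ and rescaling the deficiency $1-|\xi'|_g^2$ by $h^{-2/3}$, the interior half-problem on the boundary is modelled by an Airy-type operator $D_s^2+s-\zeta$ governed by $Ai$, while the outgoing exterior half-problem is governed by an analytic continuation $A_-$; the transmission condition collapses, to leading order, to a relation between $\Phi_-(s)=A_-'(s)/A_-(s)$, the glancing curvature invariant $Q(x,\xi')$, and $h\sigma(V)$. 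Balancing the size in $h$ of the potential term against the curvature term in that relation produces the threshold $\alpha=-5/6$ visible in the statement: for $\alpha>-5/6$ the potential is too weak near glancing for the effective per-bounce loss to rise above the $\log$-threshold, so $\Lambda_{\log}(h)$ receives no glancing contributions and part (1) follows from Step~1; for $-5/6\ge\alpha\ge-1$ the two terms are comparable, the model relation is solvable, and its solutions cluster at the zeros $\zeta_j$ of $Ai$ (the poles of the interior logarithmic-derivative symbol), so solving it at $\zeta_j$ gives $h^{2/3}\Im z/\Im\Phi_-(\zeta_j)\in[B_{\min},B_+]$, with the last set in part (2) collecting the clusters beyond $\zeta_M$ that have merged. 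Making the Airy model uniform in $z$ across $\Lambda_{\log}(h)$, and pinning the threshold exponent, is the technical heart of the proof.

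\textbf{Step 3 (optimality on the disk).} For $\Omega=B(0,1)\subset\re^2$ and $V\equiv1$ I would separate variables, $u_i(r,\theta)=\sum_n u_{i,n}(r)e^{in\theta}$, so that $u_{1,n}$ is a Bessel function $J_{|n|}(zr)$ and $u_{2,n}$ an outgoing Hankel function, and the transmission conditions reduce to a transcendental equation in $(z,n)$. Uniform Bessel/Hankel asymptotics, whose three regimes $|n|/\Re z<1-\delta$, $|n|/\Re z\approx1$, and $|n|/\Re z>1+\delta$ correspond to the non-glancing, glancing, and total-internal-reflection pictures, then locate the zeros of this equation; verifying that they fill, up to $\e$, the bands predicted by parts (1) and (2) gives sharpness. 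This is the computation behind Figure~\ref{fig:circle}; the only real labour is keeping the Airy-regime asymptotics near $|n|\approx\Re z$ precise enough to resolve the $\zeta_j$-clusters of part (2).
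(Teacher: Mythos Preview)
Your outline is correct and follows the same route as the paper, but you are doing far more work than is necessary. The entire glancing analysis you describe in Step~2 --- the second microlocalization, the Airy model, the clustering at the zeros $\zeta_j$, and the emergence of the threshold $\alpha=-5/6$ --- is already built into Theorem~\ref{thm:mainGeneral}: it is precisely the content of its alternatives (1), (2), (3). The paper's proof of Theorem~\ref{thm:deltaMain} (Section~\ref{sec:appDelta}) is therefore only a few lines: one checks that the delta potential fits the framework with $a_1=0$ and $\Im V=0$ (since $V$ is self-adjoint), verifies the ellipticity hypothesis \eqref{resFree-e:ellipticAssumption} using $\sigma(V)\ge 0$, and then reads off part~(1) from condition~(2) of Theorem~\ref{thm:mainGeneral} (which requires exactly $\alpha>-5/6$ together with $\sigma(\Im V)\ll h^{-2/3}$) and part~(2) from condition~(3) (whose band endpoints, with $a_1=0$ and $\Im V_1=0$, collapse to $Q(2hQ)^{1/3}\Im\Phi_-(\zeta_j)/|\sigma(hV)|^2$ and rescale to the stated $B_{\min},B_+$). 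So your Step~1 is right, but your Step~2 should be replaced by ``observe that $a_1=0$, $\Im V=0$, and apply the appropriate alternative of Theorem~\ref{thm:mainGeneral}.''

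For the optimality on the disk, the paper does not redo the computation but simply refers to \cite{GalkCircle}; your Step~3 would reproduce that argument, which is fine but not needed here.
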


\begin{figure}
\includegraphics{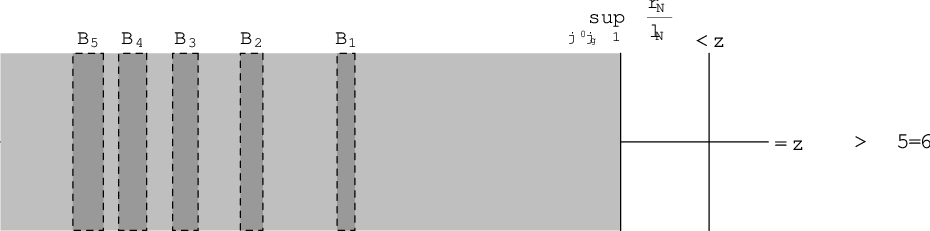}\\[20pt]
\includegraphics{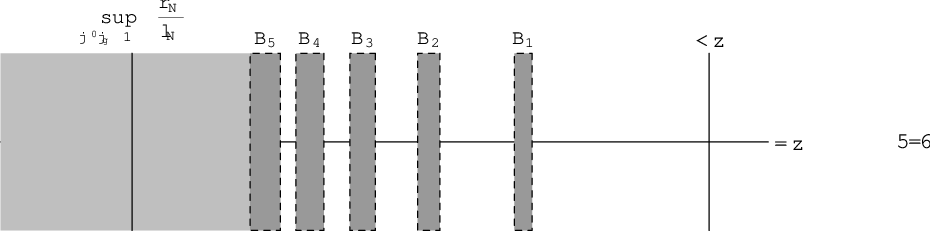}
\caption{This figure shows a schematic representation of the resonance free regions from Theorem \ref{thm:deltaMain} for $\alpha>-5/6$ on the top and $\alpha\leq -5/6$ on the bottom. Resonances lie in the dark grey bands, 
$\displaystyle\mc{B}_j:=\left\{ B_{\min{}} -\e\leq \frac{h^{2/3}\Im z}{\Im \Phi_-(\zeta_j)} \leq B_{\max{}}+\e\right\},$
 or the light gray shaded region, but not in the white regions. Note that the bands start to group closer together as they go deeper into the complex plane. Thus, there will be only a finite number of bands if 
$\frac{B_{\max{}}}{B_{\min{}}}\neq 1.$ See also Figures \ref{fig:numRes2} and \ref{fig:resBands} for numerically computed resonances in the case of the disk where $\frac{B_{\max{}}}{B_{\min{}}}=1$ when $V\equiv h^\alpha$.}
\end{figure}

\begin{figure}
\includegraphics[width=\textwidth]{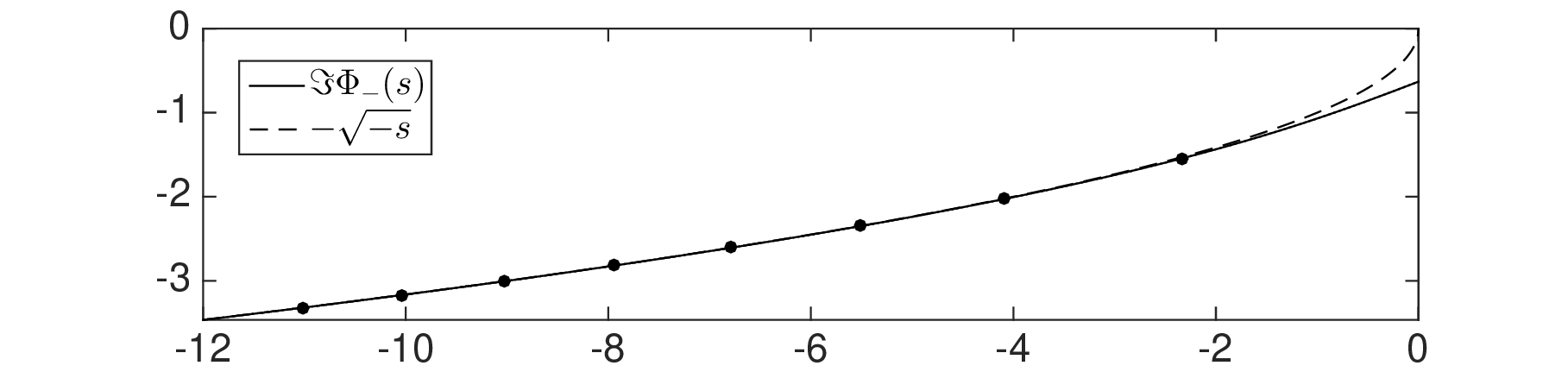}
\caption{\label{fig:phim}This figure shows $\Im \Phi_-$ in the solid line and $-\sqrt{-s}$ in the dashed line. The black dots are placed at $(\zeta_j,\Im \Phi_-(\zeta_j)).$}
\end{figure}

\begin{figure}
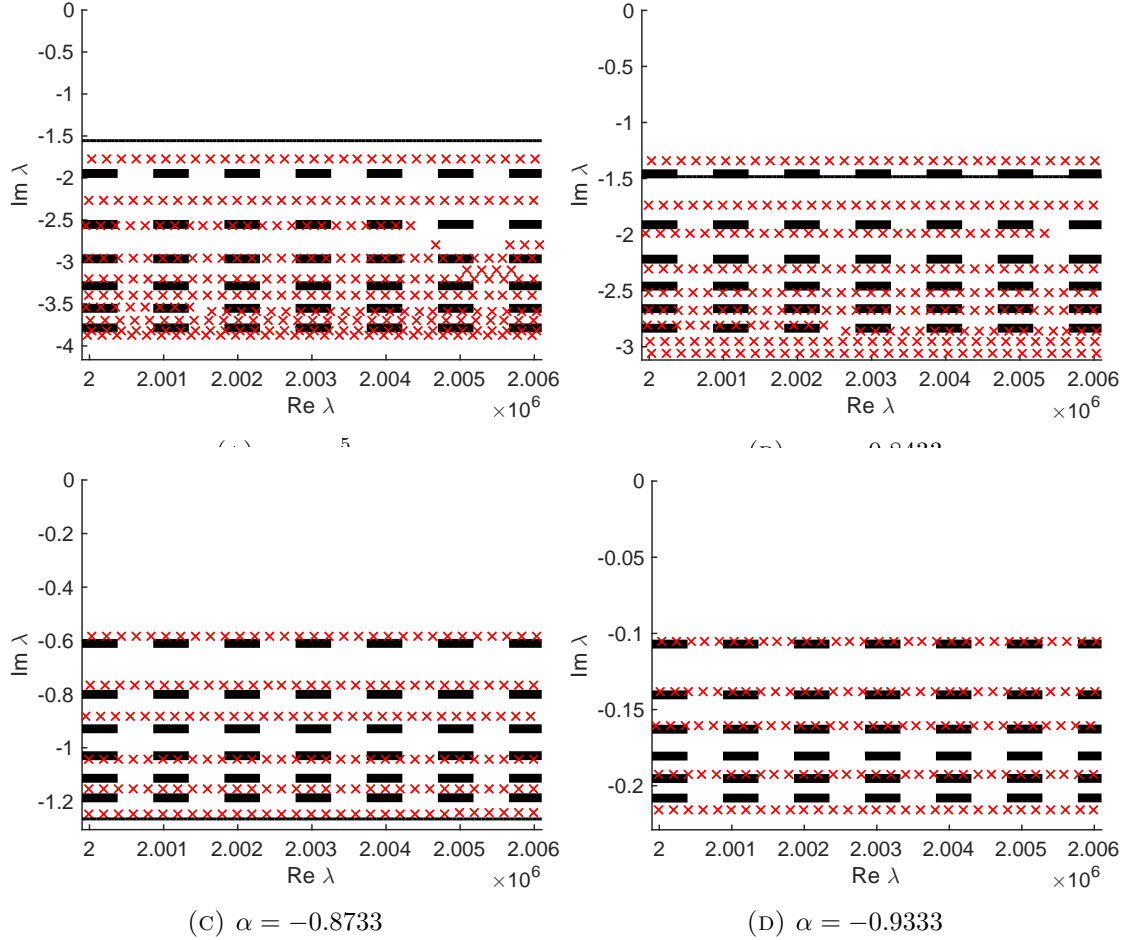

\centering
\begin{subfigure}[b]{.45\textwidth}
\includegraphics[width=\textwidth]{5-6.eps}
\caption{$\alpha=-\frac{5}{6}$}
\end{subfigure}
\begin{subfigure}[b]{.45\textwidth}
\includegraphics[width=\textwidth]{0-8433.eps}
\caption{$\alpha=-0.8433$}
\end{subfigure}
\begin{subfigure}[b]{.45\textwidth}
\includegraphics[width=\textwidth]{0-8733.eps}
\caption{$\alpha=-0.8733$}
\end{subfigure}
\begin{subfigure}[b]{.45\textwidth}
\includegraphics[width=\textwidth]{0-9333.eps}
\caption{$\alpha=-0.9333$}
\end{subfigure}
\caption[Numerical computation of resonances for the disk with highly frequency dependent potential large $\Re \lambda$]{We show resonances for the delta potential on the circle with $\Re \lambda \sim 10^{6}$, $V\equiv (\Re \lambda)^{-\alpha}$ and several $\alpha$. The plots show $\Im \lambda $ vs. $\Re \lambda $ in each case. The solid black line shows the (logarithmic) bound for resonances coming from non-glancing trajectories and the dashed black lines show the first few (polynomial) bands of resonances from near glancing trajectories. Since the solid black line is above the dashed black lines at $\alpha=-5/6$, it is necessary to go to still larger $\Re \lambda$ to see the transition to resonances with fixed size imaginary parts. However, at $\alpha<-5/6$, we start to see better agreement with the bands of resonances predicted in Theorem \ref{thm:deltaMain}.}
\label{fig:numRes2}
\end{figure}
\begin{figure}
\centering
\includegraphics[width=.75\textwidth]{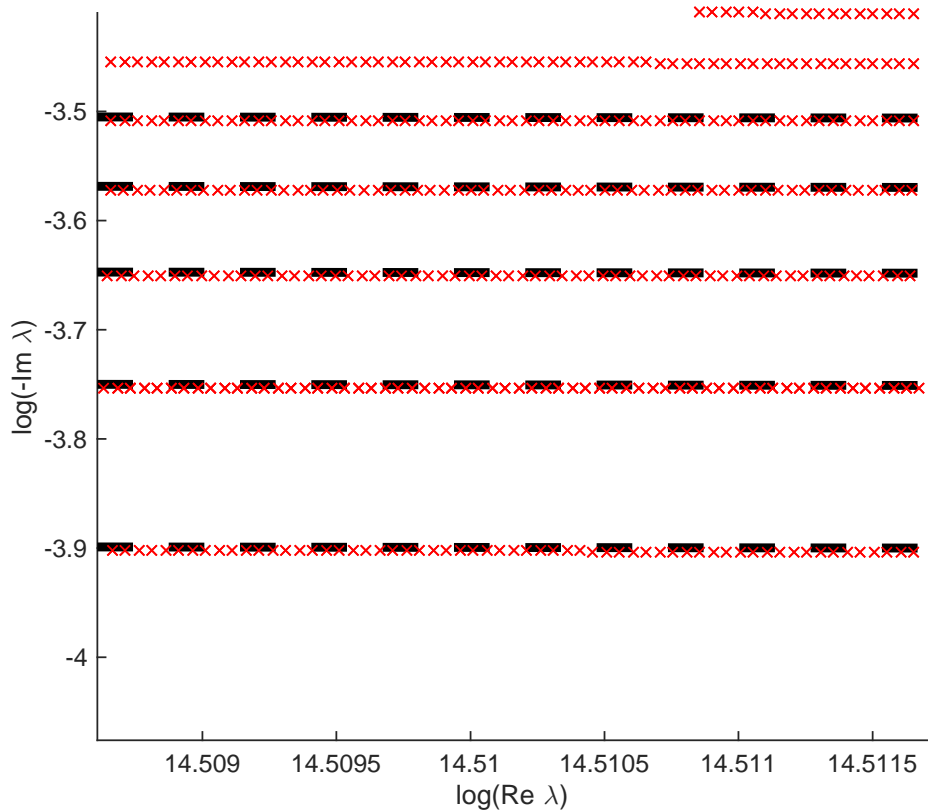}
\caption[Numerical computation of resonance bands for the disk (log-log plot)]{We show a plot of resonances for the delta potential on the disk with $V\equiv \Re \lambda$. In particular, we show $\log(\Re \lambda)$ vs. $\log(-\Im \lambda)$ for $\Re \lambda \sim 10^6$. The bands predicted by Theorem \ref{thm:deltaMain} are shown by the black dashed lines. \label{fig:resBands}}
\end{figure}
Theorem \ref{thm:deltaMain} verifies several conjectures from \cite{Galk} and generalizes the results from \cite{GalkCircle} to arbitrary convex domains. It also provides a second general class of examples that may have resonances with $-\Im z/h\sim ch^\gamma$ for some $\gamma>0$. That is, resonances converging to the real axis at a fixed polynomial rate, but no faster. Compared to the work in \cite[Theorem 5.4]{Galk}, Theorem \ref{thm:deltaMain} allows for potentials that depend more strongly on frequency. When the dependence is strong enough ($\alpha\leq -5/6$), the new phenomenon of a band structure appears. 

\begin{remarks}
\item Under the pinching condition, 
$$\frac{B_{\min{}}}{B_{\max{}}}>\frac{\Im \Phi_{-}(\zeta_j)}{\Im \Phi_-(\zeta_{j+1})}$$
there is a gap between the $j^{\text{th}}$ and $(j+1)^{\text{th}}$ band of resonances given by Theorem \ref{thm:deltaMain} for $\alpha\leq -5/6$. For a plot of $\Im \Phi_-(s)$ see Figure \ref{fig:phim}.
\item To see that the resonance bands in Theorem \ref{thm:deltaMain} for $\alpha\leq -5/6$ agree with those in \cite{GalkCircle}, observe that 
$$\Im \frac{A_-'(\zeta_j)}{A_-(\zeta_j)}=\Im \frac{2\pi Ai'(\zeta_j)A_-'(\zeta_j)}{e^{5\pi i/6}}=-\frac{2\pi Ai'(\zeta_j)Ai'(\zeta_j)}{2|A_-(\zeta_j)Ai'(\zeta_j)|^3(2\pi)^3}=-\frac{1}{8\pi ^2|A_-(\zeta_j)^3Ai'(\zeta_j)|}.$$
\end{remarks}

\subsection{Boundary stabilization problem}
Our final application of Theorem \ref{thm:mainGeneral} is to a boundary stabilized wave equation
\begin{equation}
\label{eqn:boundStable}\left\{\begin{aligned}(\partial_t^2-\Delta)u&=F\text{ in }\Omega\\
\partial_\nu u+a(x)\partial_t u&=0\text{ on }\pO\\
F&\in L^2_{\comp}((0,\infty)_t\times \Omega)\\
u&\equiv 0\text{ on }t<-1
\end{aligned}\right.
\end{equation}
with $0\leq  a(x)\in C^\infty(\pO;\re)$. It is not hard to see that the energy
$$E(t):=\frac{1}{2}\left(\|\partial_t u\|^2+\|\nabla u\|^2\right)$$
for the corresponding initial value problem is nonincreasing. The study of \eqref{eqn:boundStable} has a long history, see \cite{BardLebeau} and the references therein. In \cite{BardLebeau}, Bardos, Lebeau, and Rauch give nearly sharp conditions on $a$ to guarantee exponential decay of the energy. 

Here, we impose the strongly dissipative condition $0<a_0\leq a$ and study the asymptotic ($|\Re \lambda|\gg 1$) spectral gap for the corresponding stationary problem. That is, taking the Fourier transform in time, we study
\begin{equation}\label{eqn:stationaryBoundStable}\left\{\begin{aligned}(-\Delta -\lambda^2)\mc{F}_{t\to \lambda}u&=\mc{F}_{t\to \lambda}F&\text{ in }\Omega\\
(\partial_\nu  -i\lambda a(x))\mc{F}_{t\to \lambda}u&=0&\text{ on }\pO.
\end{aligned}\right.
\end{equation} 

In \cite{CardVod}, the authors show the existence of a spectral gap in a much more general, but still strongly dissipative, situation. Here, we give estimates on the size of the gap. Let $\Lambda$ denote the set of $\lambda$ so that \eqref{eqn:stationaryBoundStable} has a nonzero solution. The reflectivity, $r\in C^\infty(B^*\pO)$, for this problem is given by
$$r(x',\xi'):=\frac{\sqrt{1-|\xi'|_g^2}-a(x')}{a(x')+\sqrt{1-|\xi'|_g^2}}$$
and $l_N,\,r_N$ as in \eqref{eqn:definitionsTransmision}.
\begin{theorem}
\label{thm:boundaryStable}
Let $\Omega\Subset \re^d$ be strictly convex with smooth boundary and $a(x)\geq a_0>0$. Then for all $\e, M>0$ there exist $\lambda_0>0$ such that for $\lambda\in \Lambda$ with $|\Re \lambda|>\lambda_0$ and $\Im \lambda\geq -M\log |\Re \lambda|$,
\begin{equation}
\label{eqn:bstable}
\sup_{N>0 }\inf_{|\xi'|_g\leq 1}\frac{r_N}{2l_N}-\e\leq \Im \lambda \leq \inf_{N>0}\sup_{|\xi'|_g\leq 1}\frac{r_N}{2l_N}+\e.
\end{equation}
\end{theorem}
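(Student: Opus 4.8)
The plan is to recognize the stationary problem \eqref{eqn:stationaryBoundStable} as a (degenerate) instance of the transmission problems governed by Theorem~\ref{thm:mainGeneral}, to identify its reflectivity, and then to quote that theorem. First I would pass to the semiclassical regime: by the symmetry $\lambda\mapsto-\overline\lambda$, $u\mapsto\overline u$ of $\Lambda$ we may assume $\Re\lambda>0$, and then with $h=(\Re\lambda)^{-1}$ and $z=h\lambda$ the hypothesis $\Im\lambda\geq-M\log|\Re\lambda|$ places $z$ in a region $|\Re z-1|\leq Ch$, $-Mh\log h^{-1}\leq\Im z\leq 0$. Multiplying the boundary condition in \eqref{eqn:stationaryBoundStable} by $h$, a nonzero solution exists exactly when
\begin{equation*}
(-h^2\Delta-z^2)u=0\ \text{ in }\Omega,\qquad (h\partial_\nu-iz\,a(x))u=0\ \text{ on }\pO,
\end{equation*}
equivalently when $\mc{N}_h(z)-iz\,a$ has nontrivial kernel on $\pO$, where $\mc{N}_h(z):=h\mc{N}(z/h)$ is the semiclassical interior Dirichlet-to-Neumann map of $-h^2\Delta-z^2$. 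This is precisely the boundary reduction demanded by Theorem~\ref{thm:mainGeneral}, with an order-$0$ operator $-iz\,a(x)\in\Ph{0}{}(\pO)$ multiplying $u$ and the identity multiplying $h\partial_\nu u$; the one genuinely new feature is that the ``exterior'' piece of the transmission problem is absent, the loss being produced not by radiation into $\re^d\setminus\overline\Omega$ but by absorption at $\pO$ through the impedance term.

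Second I would verify the hypotheses of Theorem~\ref{thm:mainGeneral} and compute the reflectivity. In the elliptic region $|\xi'|_g>1$ the symbol $\sqrt{|\xi'|_g^2-z^2}$ of $\mc{N}_h(z)$ has positive real part, and since $a\geq a_0>0$ forces $\Im(-iz\,a)=-\Re z\,a\leq-a_0+O(h)<0$, the operator $\mc{N}_h(z)-iz\,a$ is elliptic there up to and including the glancing set from that side; hence no resonant state concentrates in $|\xi'|_g>1$. In the hyperbolic region $|\xi'|_g<1$ the interior solution splits microlocally as $u=u_{\mathrm{in}}+u_{\mathrm{out}}$ with normal momenta $\pm\sqrt{1-|\xi'|_g^2}$ (sign $+$ for the branch heading toward $\pO$), on which $h\partial_\nu$ acts to leading order by $\pm i\sqrt{1-|\xi'|_g^2}$. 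Imposing $(h\partial_\nu-iz\,a)(u_{\mathrm{in}}+u_{\mathrm{out}})=0$ and using $z=1+o(1)$ gives, at the level of principal symbols,
\begin{equation*}
\bigl(\sqrt{1-|\xi'|_g^2}+a(x')\bigr)\,u_{\mathrm{out}}=\bigl(\sqrt{1-|\xi'|_g^2}-a(x')\bigr)\,u_{\mathrm{in}},
\end{equation*}
so the outgoing-to-incoming reflection coefficient is $r(x',\xi')=\dfrac{\sqrt{1-|\xi'|_g^2}-a(x')}{a(x')+\sqrt{1-|\xi'|_g^2}}$, as claimed. The strong dissipativity $a\geq a_0>0$ is used precisely to keep $a(x')+\sqrt{1-|\xi'|_g^2}$ bounded away from $0$ on all of $\overline{B^*\pO}$, so that the reflection operator is well behaved up to glancing.

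Finally, with the symbol $r$ and the above ellipticity in hand, Theorem~\ref{thm:mainGeneral} yields \eqref{eqn:bstable} directly: a resonance $\lambda=z/h$ in the stated region forces $1$ into the spectrum of the weighted billiard monodromy operator --- microlocally the quantization of the billiard ball map $\beta$ composed with multiplication by $r\,e^{iz\,l/h}$ --- and the norm of its $N$-th iterate on wave packets tracking the billiard orbit of $(x',\xi')$ is, to leading exponential order, $\exp\bigl(\tfrac{N}{2}r_N(x',\xi')-N\,l_N(x',\xi')\,\Im z/h\bigr)$; requiring that for some $N$ this be neither exponentially small uniformly in $\xi'$ (which rules out $\Im\lambda$ too large) nor exponentially large uniformly in $\xi'$ (which rules out $\Im\lambda$ too negative) pins $\Im\lambda=\Im z/h$ between $\sup_{N>0}\inf_{|\xi'|_g\leq1}\frac{r_N}{2l_N}$ and $\inf_{N>0}\sup_{|\xi'|_g\leq1}\frac{r_N}{2l_N}$ up to $\e$, with wave speed $c=1$ here. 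The step I expect to be the real obstacle is not this last deduction but the part absorbed into Theorem~\ref{thm:mainGeneral}: showing that the impedance boundary value problem meets the hypotheses of that theorem, in particular obtaining uniform control of $\mc{N}_h(z)$ and of the reduction to the billiard monodromy near the glancing set $|\xi'|_g=1$, where $|r|\to1$ and the chord lengths degenerate, and checking that the reduction still functions in the present situation where there is no exterior region to glue in. Once those inputs are in place, the present statement follows by inserting the symbols computed above into the hypotheses of the general theorem.
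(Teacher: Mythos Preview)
Your approach is essentially the same as the paper's: reduce the stationary boundary stabilization problem to the general framework of Theorem~\ref{thm:mainGeneral} (equivalently Theorem~\ref{thm:fullGenerality}), verify the ellipticity hypotheses \eqref{resFree-e:ellipticAssumption} using $a\geq a_0>0$, compute the reflectivity (you recover the correct formula), and read off the bounds. Two small points where the paper differs from your presentation are worth noting. First, your worry that ``the exterior piece is absent'' is not an obstacle in the paper's framework: the general setup decomposes $B=hN_2(z/h)+hV$ as a definition of $V$, not as a physical requirement, so here one simply takes $hV=-iza-hN_2$; the single-layer machinery then replaces both the interior and exterior Dirichlet-to-Neumann maps, and you never need to control $\mc{N}_h(z)$ directly. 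Second, the paper explicitly invokes the analog of Lemma~\ref{lem:transparentToGlance} to show that the dynamical quantity $r_N/(2l_N)$ extends continuously to $|\xi'|_g=1$ (so that the $\sup$/$\inf$ over $|\xi'|_g\leq 1-h^{\e_0}$ in \eqref{eqn:hyperbolicSabine} matches the $\sup$/$\inf$ over the closed ball in \eqref{eqn:bstable}); you allude to the glancing degeneration but do not carry out this limit, which is a necessary step to pass from the conclusion of Theorem~\ref{thm:mainGeneral} to \eqref{eqn:bstable} as stated.
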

Note that Theorem \ref{thm:boundaryStable} can also be obtained from the results of Koch--Tataru \cite{KochTataruStable}. Indeed, the result contained there actually implies a stronger estimate than \eqref{eqn:bstable} in the case of \eqref{eqn:stationaryBoundStable}. We include this application to give a new proof of those results in this special case and to show that our analysis may be applied even to non-transmission problems. Moreover, note that the operator $a\partial_t$ can be replaced by a much more general pseudodifferential operator and our methods still apply.

\subsection{The general setup - a generalized boundary damped wave equation}
\label{sec:mainThm}
Theorems \ref{thm:mainTransparent}, \ref{thm:deltaMain}, and \ref{thm:boundaryStable} are a consequence of analysis of the boundary damped problem 
\begin{equation} 
\label{eqn:mainGeneral}
\left\{\begin{aligned} (-h^2\Delta -z^2)u=w&\text{ in }\Omega\\
h\partial_\nu u+Bu=hv&\text{ on }\pO
\end{aligned}\right.
\end{equation} 
with $\Re z \sim 1$. Here, the operator $B$ plays the role of damping waves upon interaction with the boundary and encodes the interaction with the exterior of $\Omega$ in the case of scattering problems. 

Let $N_2(z/h)$ denote the outgoing Dirichlet to Neumann map for $\re^d\setminus \overline\Omega$. That is, the map given by $C^\infty(\Omega)\ni f\mapsto -\partial_\nu u$ where $u$ solves
$$\begin{cases}(-h^2\Delta -z^2)u=0&\text{ in }\re^d\setminus\overline{\Omega}\\
u|_{\pO}=f\\
u\text{ is $z/h$ outgoing.}
\end{cases}
$$
We assume that $B=hN_2(z/h)+hV(z)$ where $V$ is in a certain second microlocal class of pseudodifferential operators which we specify later.  
\begin{remark}
By replacing $\tilde{h}=hE$ and $B(h)=EB(\tilde{h}/E)$, $\tilde{z}=Ez$ we may work with  $\Re z\sim E$. Notice that $z/h=\tilde{z}/\tilde{h}$ so operators that are functions of $z/h$ do not change under this rescaling.
\end{remark}

We first introduce some notation.. Let 
$$D_{M}(h):=[1-h,1+h]+i[-Mh\log h^{-1},Mh\log h^{-1}].$$
Let $\gamma:H^s(\re^d)\to H^{s-1/2}(\pO)$, $s>1/2$ be the restriction operator. Then the \emph{single layer operator} is given by
$$G(z/h):=\gamma R_0(z/h)\gamma^*.$$
Recall that $R_0(\lambda)$ is the meromorphic continuation of $(-\Delta-\lambda^2)^{-1}$.  
From \cite[Lemma 4.25]{Galk} \cite[Proposition 4.1]{HaZe} (see also Lemma \ref{lem:decompose}), we have that 
$$G(z/h)=G_{\Delta}(z/h)+G_B(z/h)+G_g(z/h)+\O{\mc{D}'(\pO)\to C^\infty(\pO)}(h^\infty)$$
where $G_{\Delta}$ is pseudodifferential, $G_B$ is a semiclassical Fourier integral operator associated to the billiard ball map (see section \ref{sec:semiclassicalPreliminaries} for the definition of semiclassical Fourier integral operators), and $G_g$ is microlocalized near $|\xi'|_g=1$. Let $m\geq 0$ and $\Ph{0,m}{2/3}(|\xi'|_g=E')$ denote the set of pseudodifferential operators that are second microlocalized near $|\xi'|_g=E$ (see section \ref{sec:secondMicrolocal}).

We now introduce assumptions on $V$.  For $a_1\in \re$, $\alpha\geq -1$, $E'\in \re\setminus \{1\}$, $\delta>0$, $M,M_1,M_2>0$, $0<\e<1/2$. 
Let $\la \cdot \ra \in C^\infty(T^*\pO)$ be given by $\la \xi'\ra:=(1+|\xi'|_g^2)^{1/2}$. We assume that 

\begin{gather} 
V= a_1N_2(z/h)+V_1,\quad\quad  V_1\in  h^\alpha \Ph{0,m}{2/3}(|\xi'|_g=E'),\quad\quad
 V \text{ is elliptic on }\left||\xi'|_g-1\right|<\delta,\label{a1}\\
\begin{aligned}
\left|1+\frac{h\sigma(V)}{2\sqrt{|\xi'|_g^2-1}}\right|&\geq \delta\left( \left\la \frac{h^{1+\alpha}}{\sqrt{|\xi'|_g^2-1}}\right\ra+\la \xi'\ra^{m-1}\right)\quad\quad&&|\xi'|_g>1+M_1h^{2/3},\\
\left|1+\frac{hi\sigma(V)}{2\sqrt{1-|\xi'|_g^2}}\right|&\geq \delta \left\la \frac{h^{1+\alpha}}{\sqrt{1-|\xi'|_g^2}}\right\ra\quad\quad&&|\xi'|_g\leq 1-h^{\e},
\end{aligned}\label{a2}\\
  V(z)\text{ is an analytic family of operators for }z\in D_{M}(h),\label{a3}\\
\log \left(1+\frac{h\sigma(V)}{\sqrt{|\xi'|_g^2-1}}\right)\text{ exists and is smooth on }T^*\pO\setminus\{|\xi'|_g\leq M_2\}.\label{a4}
\end{gather}
We say that $\mc{AV}(a_1,\alpha, E', m, \delta,M,M_1,M_2,\e)$ holds when \eqref{a1}-\eqref{a4} hold.

We now give a heuristic understanding of \eqref{a1}-\eqref{a4}. The assumption \eqref{a1} describes the structure of the operator $V$ in particular, allowing us to include copies of $N_2(z/(hE'))$ which encodes the exterior behavior of waves at speed $\sqrt{E}^{-1}$. We assume that $V$ is elliptic on $|\xi'|_g=1$, the glancing set for the problem inside $\Omega$, to simplify some of our analysis and guarantee that glancing effects play a nontrivial role in the analysis. Notice in particular that if $\WFh(V)\cap\{ |\xi'|_g=1\}=\emptyset$, then waves near glancing escape $\Omega$ essentially without reflection. This ellipticity assumption is not necessary for our analysis, but since the main advantage of the present paper over \cite{Galk} is the analysis near glancing, we include it to simplify our presentation. 

Next, \eqref{a2} guarantees that the problem is locally elliptic in the sense that if a singularity emerges from $(x',\xi')\in T^*\partial\Omega$, then there must be a singularity coming in to $(x',\xi')$. That is, the boundary cannot produce singularities spontaneously. Furthermore, this guarantees that there are no solutions microlocalized in the elliptic region $|\xi'|_g>1$. 

Finally, \eqref{a3} and \eqref{a4} are used to guarantee that the resolvent operator corresponding to \eqref{eqn:mainGeneral} is meromorphic and hence that it makes sense to discuss its poles.

\begin{remarks}
\item For the definition of ellipticity of $V$, see sections \ref{s:ell1} and \ref{s:ell2}.
\item 
These are not quite the most general assumptions we can make on $V$, but in practice all situations we have in mind fall into this category. For the most general assumptions, see \eqref{resFree-e:ellipticAssumption} and for the statement of the Theorem in that case, see Theorem \ref{thm:fullGenerality}.
\item We make the assumption that $V$ is elliptic near glancing so there is no rapid loss of energy near glancing. We could remove this assumption, but there would be no new phenomena and the analysis near glancing would be more complicated.
\item The final assumption \eqref{a4} (used to prove that the underlying problem is Fredholm) is satisfied for example when $m<1$, or when $m\geq 1$ and for some $\theta_0$ fixed and $\sigma(\tilde{V})$ real valued
$$\sigma(V)=e^{i\theta_0}\sigma(\tilde{V}),\quad \quad |\xi'|_g\geq M_2.$$
\end{remarks} 
Lt $\chi\in \Cc(\re)$ with $\chi \equiv 1$ near 0, and define 
\begin{gather}\label{eqn:reflect}\begin{gathered}\chi_\e\in C^\infty(T^*\pO),\quad\quad \chi_\e(x,\xi'):=1-\chi\left(\frac{1-|\xi'|_g}{h^\e}\right),\\
 R:=-(I+G_{\Delta}^{1/2}VG_{\Delta}^{1/2})^{-1}G_\Delta^{1/2}VG_{\Delta}^{1/2}\oph(\chi_\e),\end{gathered}\\
 T(z):=G_{\Delta}^{-1/2}(z)G_B(z)G_{\Delta}^{-1/2}(z)\label{eqn:transition}
 \end{gather}
  where $G_B$ is the Fourier integral operator component of  $G(z)$. (See section \ref{sec:semiclassicalPreliminaries} for an explanation of the quantization procedure $\oph$.) Note also that the inverse $(I+G_{\Delta}^{1/2}VG_{\Delta}^{1/2})^{-1}$ makes sense microlocally on $\supp \chi_\e$ by \eqref{a2}.

Let $\tilde{\sigma}$ denote the compressed shymbol (see \cite[Section 2.3]{Galk} or Section \ref{sec:shymbol}). Then let $l_N,\,r_N(z): B^*\partial\Omega\to \re$ (recall that $B^*\pO$ is the coball bundle of the boundary and $\beta$ is the billiard ball map) be
\begin{gather}
\label{eqn:defineLength}
l(q,q'):=|\pi_x(q)-\pi_x(q')|,\quad\quad\quad l_N(q):=\frac{1}{N}\sum_{k=1}^Nl(\beta^{k-1}(q),\beta^k(q))\\
\label{eqn:defineAverageReflection}
r_N(q):=\frac{\Im z}{h}l_N(q)+\frac{1}{2N}\log\tilde{\sigma}(((RT(z))^*)^N(RT(z))^N)(q).
\end{gather}
The term $\tfrac{\Im z}{h}l_N$ in \eqref{eqn:defineAverageReflection} serves to cancel the growth of $T(z)$ in the definition of $r_N$.
\begin{remark}
Note that we use the notion of the compressed shymbol instead of a variable order symbol since we do not wish to make any apriori assumption on how the symbol of $V$ varies from point to point. Moreover, the order of the symbol will vary also as a function of $\Im z.$
\end{remark}

 In fact,
for $0<N$ independent of $h$ we have
\begin{equation}
\label{def:averageReflection}
r_N(q)=\frac{1}{2N}\sum_{n=1}^N\log\left|\left(\tilde{\sigma}(R)\composed \beta^n(q) +\O{}(h^{I_{R}(q)+1-2\e})\right)\right|^2
\end{equation}
where $I_{R}(q)$ is the local order of $R$ at $q$ (see \cite[Section 2.3]{Galk} or Section \ref{sec:shymbol}]). The expression \eqref{def:averageReflection} illustrates that $r_N$ is the logarithmic average reflectivity over $N$ iterations of the billiard ball map. 

Let 
\begin{equation}
\label{e:Pintro}\mc{P}(z):=\begin{pmatrix}-h^2\Delta-z^2\\\partial_\nu +B\end{pmatrix}:\Hh^{s+2}(\Omega)\to \Hh^s(\Omega)\oplus H^{s+1/2-\max(m-1,0)}_h(\pO)
\end{equation}
where $\Hh^m$ denotes the semiclassical Sobolev space with norm 
\begin{equation} \label{e:semiSob} \|u\|_{\Hh^m}:=\|\la hD\ra ^mu\|_{L^2}.\end{equation}
(See \cite[Section 7.1]{EZB} for a more precise definition.)
Let $\Phi_-(s)$ and $\zeta_j$ be as in \eqref{e:Airy1} and $Q\in C^\infty(T^*\pO)$ be the symbol of the second fundamental form to $\pO$ (as in Theorem \ref{thm:deltaMain}), and define $f_j(\cdot;h)\in C^\infty(T^*\pO)$ for $j=1,2,\dots$ by
$$f_j(q;h):=\frac{Q(q)((2hQ(q))^{1/3}(1+a_1)\Im\Phi_-(\zeta_j)+\sigma(h\Im V_1)(q))}{|\sigma(hV)(q)|^2}.$$
Let $S^*\pO$ denote the cosphere bundle of $\pO$ and 
\begin{multline*}
\mc{B}_{j,\pm}(\e,C;h):=\\
\left\{z\in D_M(h)\,:\,\inf_{S^*\pO}\left(f_j(q;h)-Ch^{-\alpha}\right)(1\mp\e)\leq \frac{\Im z}{h} 
\leq \sup_{S^*\pO}\left(f_j(q;h)+Ch^{-\alpha}\right)(1\pm\e)\right\}.\end{multline*}
Then
Theorems \ref{thm:mainTransparent}, \ref{thm:deltaMain}, and \ref{thm:boundaryStable} are a consequence of the following:
\begin{theorem}
\label{thm:mainGeneral}
Let $\Omega\Subset\re^d$ be strictly convex with smooth boundary. Fix $\e>0$, $M>0$, $N_1, N_2>0$, $m\geq 0$ and suppose that $\mc{AV}(a_1,\alpha, E', m,\delta_0,M,M_1,M_2,\e_0)$ holds.  Then there exist $h_0>0, C,c,N>0$, so that if $0<h<h_0$, $z\in D_M(h)$,
\begin{equation}
\label{eqn:hyperbolicSabine} \frac{\Im z}{h}\leq\sup_{N\leq N_1}\inf_{|\xi'|_g\leq 1-h^{\e_0}}\frac{r_N}{2l_N}(1-\e)\quad \text{or}\quad\frac{\Im z}{h}\geq \inf_{N\leq N_1}\sup_{|\xi'|_g\leq 1-h^{\e_0}}\frac{r_N}{2l_N}(1+\e),
\end{equation}
$\pm \Im z\geq 0$, $z\notin \cup_{j=1}^{N_2}\mc{B}_{j,\pm}(\e, C;h)$, and 
\begin{equation*}\frac{\Im z}{h}\geq \sup_{S^*\pO}\left(\begin{gathered}f_{N_2+1}(q;h)+Ch^{-\alpha}\end{gathered}\right)(1\pm \e).
\end{equation*}

\noindent then $\mc{P}(z)$ is invertible and moreover if 
$$\mc{P}(z)u=\begin{pmatrix}0\\v\end{pmatrix},$$
then
\begin{equation} \label{eqn:estpsi1}\|u|_{\partial\Omega}\|_{\Hh^m}\leq ch^{-N}\|v\|_{L^2}.\end{equation}
\end{theorem}
Observe that Theorem \ref{thm:mainGeneral} (in particular, \eqref{eqn:hyperbolicSabine}) takes the same form as \eqref{eqn:heurRes}. Thus, the poles of $\mc{P}(z)^{-1}$ are controlled by the average reflectivity in the hyperbolic region. To see that this continues up to the glancing set and hence that Theorem \ref{thm:mainGeneral} is a quantum version of the Sabine law, observe that \eqref{eqn:heurRes} matches \eqref{eqn:hyperbolicSabine}. Moreover, using Lemma \ref{lem:dynStrictlyConvex}, that $V$ is elliptic near $|\xi'|_g=1$ and
$$\sigma(R)=\frac{\sigma(hV)}{2i\sqrt{1-|\xi'|^2}-hV},$$
we have that for $q=(x,\xi')\in B^*\pO$ with $\sqrt{1-|\xi'|^2_g}\ll h^{1+\alpha}$ 
\begin{equation}
\label{eqn:nearGlanceSabine}\frac{\log \left|R(\beta(q))\right|^2}{2l(q,\beta(q))}=\frac{-Q(x,\xi')(\sqrt{1-|\xi'|^2}-\Im hV)}{|\sigma(hV)|^2}+\O{}(h^{-\alpha-1}\sqrt{1-|\xi'|^2}),
\end{equation}
where, as above, $Q(x,\xi')$ is the symbol of the second fundamental form to $\pO$. 
Now,
$$\Im \Phi_-(s)\sim -\sqrt{-s},\quad\quad s\to -\infty\quad\quad(\text{see Figure \ref{fig:phim}}).$$
Therefore \eqref{eqn:nearGlanceSabine} matches the bounds in Theorem \ref{thm:mainGeneral} modulo:
\begin{enumerate}
\item modes cannot concentrate closer than $h^{2/3}$ to $\{|\xi'|_g=1\}$ (the glancing set)
\item a quantization involving the zeros of the Airy function happens at scale $h^{2/3}$ near glancing
\item replacing $-\sqrt{-s}$ by $\Im \Phi_-(s)$.
\end{enumerate}
\subsection{Outline of the Proof}
Proving Theorem \ref{thm:mainGeneral} amounts to understanding the location of resonances, which correspond to $z$ so that $\mc{P}(z)$ is not invertible. We proceed by proving the estimate \eqref{eqn:estpsi1} on solutions to \eqref{eqn:mainGeneral} which implies an estimate on $\mc{P}(z)^{-1}.$ 

 To avoid analyzing the microlocally complicated interior Dirichlet to Neumann map, we change the boundary condition. In particular, we have
\begin{equation}
\label{eqn:newBV}(I+GV)\psi=Gv.
\end{equation}
We then proceed similarly to \cite{Galk} and decompose the boundary microlocally into the hyperbolic, glancing, and elliptic regions given respectively by
\begin{align*}  
\mc{H}&=\{(x,\xi')\in T^*\pO\,|\,|\xi'|_g\leq 1-h^\e\},\\
\mc{G}&=\{(x,\xi')\in T^*\pO\,|\, ||\xi'|_g-1|\leq h^\e\},\\
\mc{E}&=\{(x,\xi')\in T^*\pO\,|\,|\xi'|_g\geq 1+h^\e\}.
\end{align*}
Then, letting $\mathbb{1}_U$ be an operator microlocally equal to the identity on $U$ and $U'$ be a slight enlargement of $U$, we have 
$$(I-\mathbb{1}_{U'})G\mathbb{1}_{U}=\O{\Ph{-\infty}{}}(h^\infty)$$
where $U$ is any of $\mc{H},$ $\mc{G}$, or $\mc{E}.$ This allows us to work with each region separately. 

For notational convenience, let $\psi=u|_{\partial\Omega}$ and recall that 
\begin{equation}\label{e:uSolve}\mc{P}(z)u=\begin{pmatrix}0\\v\end{pmatrix}
\end{equation}
where $\mc{P}(z)$ is as in \eqref{e:Pintro}.
 We first consider $\mc{E}$. Here, $G$ is a pseudodifferential operator and our assumptions on $V$ allow us to prove estimates on $\mathbb{1}_\mc{E}\psi$ in terms of $v$. We then consider the hyperbolic region, $\mc{H}$. Here the situation is more complicated because $G$ consists of two pieces: $G_B$, a Fourier integral operator (FIO) associated to the billiard ball map, and $G_\Delta$, a pseudodifferential operator. Using the calculus of FIO's, we are able to reduce estimating solutions to \eqref{eqn:newBV} microlocally in $\mc{H}$ to estimating solutions to 
$$(I-(RT)^N)u=Av$$
for some $A$. Then, again using the calculus of FIOs, we see that $I-(RT)^N$ is microlocally invertible under the conditions given in \eqref{eqn:hyperbolicSabine}. 

Up to this point, the analysis in the present paper requires only minor changes from that in \cite{Galk}. However, the analysis near glancing is substantially different and heavily uses the microlocal model for $G$ and $\Sl:=1_{\Omega}R_0(z/h)\gamma^*$ near glancing given in \cite[Section 4.5]{Galk}. The analysis in \cite[Chapter 5]{Galk} uses only the microlocal model for $G$ and does so simply to obtain a norm bound on $G$ near glancing. Here we use the precise microlocal properties of $G$ and $\Sl$ near glancing. 

 We start by analyzing $I+GV$ as a second microlocal paseudodifferential operator on 
$$\mc{G}_+:=\{(x,\xi')\in T^*\pO\,|\,1-Mh^{2/3}\leq |\xi'|_g\leq 1+h^\e\}$$
which is the microlocal region closest to glancing.
When $\alpha$ is sufficiently small, ($\alpha <-2/3$) we see that $I+GV$ is elliptic on $\mc{G}_+$ outside of a union of $h^{2/3}$ thickened hypersurfaces given by
\begin{gather*} 
\mc{G}_N:=\bigcup_{j=1}^N\mc{G}_j,\quad
\mc{G}_j:=\left\{\left.(x,\xi')\in T^*\pO\,\right|\, \left|\frac{|\xi'|_g^2-1}{(2Q(x,\xi'))^{2/3}}-h^{2/3}\zeta_j\right|\leq \delta h^{2/3}\right\}.
\end{gather*}
Since we have microlocal invertibilty on $\mc{G}_+$ off of $\mc{G}_N$, resonance states must concentrate on $\mc{G}_N$. This is the quantization condition which occurs at scale $h^{2/3}$. 

To get this quantization property, we have used the microlocal structure of $G$. To obtain estimates the remaining part of $\psi$, i,e, on $\psi_g:=(\mathbb{1}_{\mc{G}_N}+\mathbb{1}_{\mc{G}_-})\psi$ where 
$$\mc{G}_-:= \{(x,\xi')\in T^*\pO\,|\, |\xi'|_g\leq 1-Mh^{2/3}\}$$
we will use the microlocal structure of $\Sl$. 

We have that $u$ solves \eqref{e:uSolve}. Integrating by parts in $\Omega$, we have
\begin{equation}
\label{eqn:greensIntro} 
\left(\frac{2\Re z\Im z}{h}\|u\|_{L^2}^2-\Im \la B\psi,\psi\ra\right)= -\Im\la h v,\psi\ra .
\end{equation}
Then, letting $\D$ denote the double layer potential and using a classical boundary layer formula together with the boundary condition from \eqref{eqn:mainGeneral}, we have
\begin{equation*}
u=h^{-1}\S h\partial_\nu u-\D u=-(h^{-1}\S B +\D )\psi+\S v=-\Sl V\psi+\Sl v.
\end{equation*}
So, we can write $u$ in terms of $\psi$ via the boundary layer potential, $\Sl$. Another technical innovation in our proof is to use the model for $\Sl$ near glancing to identify $\Sl^*\Sl$ as a second microlocal pseudodifferential operator on $\mc{G}$. We are then able to apply the sharp G$\mathring{\text{a}}$rding inequality to obtain upper and lower bounds on 
$$\left(\frac{2\Re z\Im z}{h}\|u_g\|_{L^2}^2-\Im \la B\psi_g,\psi_g\ra\right)$$
where $u_g=-h^{-1}\Sl V\psi_g$. Together with \eqref{eqn:greensIntro}, this allows us to estimate $\psi_g$ in terms of $v$.

Combining the estimates on $\mc{E}$, $\mc{G}$, and $\mc{H}$, we are able to estimate $\psi$ in terms of $v$.  In order to prove that condition (3) of Theorem \ref{thm:mainGeneral} together with \eqref{eqn:hyperbolicSabine} implies \eqref{eqn:estpsi1}, we refine our estimates on $\mc{G}$ when $|\Im z|\geq ch^N$ for some $N>0$. 

Because we have polynomial bounds on the interior Dirichlet to Neumann map, $N_1(z/h)$, in this region and 
$$(N_1+N_2)G=I=G(N_1+N_2),$$ 
we are able to show that if 
$$(I+GV)\tilde{\psi}=w,$$
then there exists $v=(N_1+N_2)w$ such that $(I+GV)\tilde{\psi}=Gv$ and hence there exists $\tilde{u}$ solving \eqref{eqn:mainGeneral} with $v$ replaced by $(N_1+N_2)w=\O{L^2\to L^2}(h^{-N})w$. 

Returning to the original problem, $(I+GV)\psi =Gv$, we see that for $\delta$ small enough,  $\mc{G}_j$ are separated by $\delta h^{2/3}$. Hence, we can find $\psi_j$ microlocalized $\delta h^{2/3}$ close to $\mc{G}_j$ so that 
$$(I+GV)\psi_j=w_j,\quad\quad \|w_j\|\leq Ch^{-M}\|v\|.$$
Therefore, we can find $u_j$ solving \eqref{eqn:mainGeneral} with $u_j|_{\pO}=\psi_j$ and $v=v_j=h(N_1+N_2)w_j$ and, repeating the analysis above using boundary layer operators, we can obtain estimates on $\psi_j$. Together with knowledge of the symbol of $N_2$ and that of $\Sl^*\Sl$, this finishes the proof of Theorem \ref{thm:mainGeneral}.

\subsection{Organization of the paper}
The paper is organized as follows. We start by introducing the necessary standard semiclassical tools as well as the shymbol from \cite{Galk} in Sections \ref{sec:semiclassicalPreliminaries} and \ref{sec:shymbol}. Then in Section \ref{sec:secondMicrolocal}, we introduce the second microlocal calculus from \cite{SjoZwDist}. We conclude the preliminary material with Section \ref{sec:billiard} where we introduce the billiard ball flow and map. 

As a guide for the general case, Section \ref{sec:Friedlander} analyzes the single and double layer potentials, respectively 
\begin{equation}
\label{e:layerDef}
\begin{aligned} 
\Sl(\lambda)f(x)&:=\int_{\pO}R_0(\lambda)(x,y)f(y)dS(y)&x\in \Omega\\
\D(\lambda)f(x)&:=\int_{\pO}\partial_{\nu_y}R_0(\lambda)(x,y)f(y)dS(y)&x\in \Omega
\end{aligned} 
\end{equation}
and operators, respectively
\begin{align*}
G(\lambda)f(x)&:=\int_{\pO}R_0(\lambda)(x,y)f(y)dS(y)&x\in \pO\\
\Dl(\lambda)f(x)&:=\int_{\pO}\partial_{\nu_y}R_0(\lambda)(x,y)f(y)dS(y)&x\in \pO
\end{align*}
in the special case of the Friedlander model. Section \ref{sec:Layer} contains the analysis of the boundary layer potentials and operators in the general strictly convex case. Next, Section \ref{sec:genBStable} gives the proof of Theorem \ref{thm:mainGeneral} including the Fredholm property and meromorphy of the resolvent for $\mc{P}$. Sections \ref{sec:appTrans}, \ref{sec:appDelta}, and \ref{sec:appStable} respectively contain the necessary material to deduce Theorems \ref{thm:mainTransparent}, \ref{thm:deltaMain}, and \ref{thm:boundaryStable} from Theorem \ref{thm:mainGeneral}. Finally, Section \ref{sec:optimalTrans} gives the proof that Theorem \ref{thm:mainTransparent} is sharp in the case of the unit disk.

\noindent
{\sc Acknowledgemnts.} The author would like to thank Maciej Zworski for  encouragement and many valuable suggestions. Thanks also to St\'{e}phane Nonnenmacher, John Toth, Dimitry Jakobson, Richard Melrose, Jeremy Marzuola, and Semyon Dyatlov for stimulating conversations about the project. Thanks also to the anonymous referee for many helpful comments. The author is grateful to the Erwin Schr\"{o}dinger Institue for support during the program on the Modern Theory of Wave Equations, to the National Science Foundation for support under the Mathematical Sciences Postdoctoral Research Fellowship  DMS-1502661 and to the Centre de Recherches Mathematique for support under the CRM postdoctoral fellowship.

%%%%%%%%%%%%%%%%%%%%%%%%%%%%%%%%%%%%%%%%%

%%%%%%%%%%%%%%%%%%%%%%%%%%%%%%%%%%%%%%%%%%

\section{Semiclassical preliminaries}

\label{sec:semiclassicalPreliminaries}
In this section, we review the methods of semiclassical analysis which are needed throughout the rest of our work. The theories of pseudodifferential operators, wavefront sets, and the local theory of Fourier integral operators are standard and our treatment follows that in \cite{DyGui} and \cite{EZB}.  We introduce the notion \emph{shymbol} from \cite{Galk} which is a notion of sheaf-valued symbol that is sensitive to local changes in the semiclassical order of a symbol. 

%%%%%%%%%%%%%%%%%%%%%%%%%%%%%%%%%%%%%%%%%%%5555
%%%%%%%%%%%%%%%%%%%%%%%%%%%%%%%%%%%%%%%%%%%%%
\subsection{Notation}
We review the relevant notation from semiclassical analysis in this section. For more details, see \cite{d-s} or \cite{EZB}.
\subsubsection{Big $\O{}$ notation}
The $\O{}(\cdot)$ and $\o{}(\cdot)$ notations are used in this paper in the following ways:
we write $u=\O{\mc X}(F)$ if the norm of $u$ in the functional space $\mc X$ is bounded
by the expression $F$ times a constant. We write $u=\o{\mc{X}}(F)$ if the norm of $u$  has
$$\lim_{s\to s_0}\frac{\|u(s)\|_{\mc{X}}}{F(s)}=0$$
where $s$ is the relevant parameter. If no space $\mc{X}$ is specified, then $u=\O{}(F)$ and $u=\o{}(F)$ mean 
\begin{equation}
\label{e:oDef}|u(s)|\leq C|F(s)|\quad \text{and} \quad \lim_{s\to s_0}\frac{|u(s)|}{F(s)}=0
\end{equation}
respectively. 
\subsubsection{Phase space}
Let $M$ be a $d$-dimensional manifold without boundary. Then we denote an element of the cotangent bundle to $M$, $(x,\xi)$ where $\xi\in T_x^*M$.
\subsection{Symbols and Quantization}
We start by defining the exotic symbol class $f(h)S^m_\delta(M)$.
\begin{defin}
\label{def:symbol}
Let $a(x,\xi; h)\in C^\infty (T^*M\times [0,h_0) )$, $f\in C^\infty((0,h_0))$, $m\in \re$, and $\delta\in[0,1/2)$. Then, we say that $a\in f(h)S^m_{\delta}(T^*M)$ if for every $K\Subset M$ and ${\varsigma},\,{\varpi}$ multiindeces, there exists $C_{{\varsigma}{\varpi} K}$ such that
\begin{equation} |\partial_x^{\varsigma}\partial_\xi^{\varpi} a(x,\xi; h)|\leq C_{{\varsigma}{\varpi} K}f(h)h^{-\delta(|{\varsigma}|+|{\varpi}|)}\la \xi\ra^{m-|{\varpi}|}\}\label{eqn:defSymbol}\end{equation}
We denote $S_\delta^\infty:=\cup_m S_\delta^m$, $S_\delta^{-\infty}:=\cap_mS_\delta^m$ and when one of the parameters $\delta$ or $m$ is 0, we suppress it in the notation. 

We say that $a(x,\xi;h)\in S_\delta^{\comp}(M)$  if $a\in S_{\delta}(M)$ and $a$ is supported in some $h$-independent compact set.
\end{defin}
This definition of a symbol is invariant under changes of variables (see for example \cite[Theorem 9.4]{EZB} or more precisely, the arguments therein). 

\subsection{Pseudodifferential operators}
We follow \cite[Section 14.2]{EZB} to define the algebra $\Ph{m}{\delta}(M)$ of pseudodifferential operators with symbols in $S^m_\delta(M)$. (For the details of the construction of these operators, see for example \cite[Sections 4.4, 14.12]{EZB}. See also \cite[Chapter 18]{HOV3} or \cite[Chapter 3]{gr-s}.) Since we have made no assumption on the behavior of our symbols as $x\to \infty$, we do not have control over the behavior of $\Ph{k}{\delta}(M)$ near infinity in $M$. However, we do require that all operators $A\in \Ph{m}{\delta}(M)$ are \emph{properly supported}. That is, the restriction of each projection map $\pi_x,\pi_{x'}:M\times M\to M$ to the support of $K_A(x,x';h)$, the Schwartz kernel of $A$, is a proper map. For the construction of such a quantization procedure, see for example \cite[Proposition 18.1.22]{HOV3}. An element in $A\in \Ph{m}{\delta}(M)$ acts $H^s_{h,\loc}(M)\to H^{s-m}_{h,\loc}(M)$ where $H^s_{h,\loc}(M)$ denotes the space of distributions locally in the semiclassical Sobolev space $\Hh^s(M)$. The definition of these spaces can be found for example in \cite[Section 7.1]{EZB}. Finally, we say that a properly supported operator, $A$, with
$$A:\mc{D}'(M)\to C^\infty(M)$$ 
and each seminorm $\O{}(h^\infty)$ is $\O{\Ph{-\infty}{}}(h^\infty)$. We include operators that are $\O{\Ph{-\infty}{}}(h^\infty)$  in all pseuodifferential classes. 

With this definition, we have the semiclassical principal symbol map
\begin{equation}\label{eqn:princSymbol}\sigma:\Psi^m_{\delta}(M)\to \quotient{S^m_{\delta}(M)}{h^{1-2\delta}S^{m-1}_\delta(M)}\end{equation}
and a non-canonical quantization map
$$\oph:S^m_\delta(M)\to \Ph{m}{\delta}(M)$$
with the property that $\sigma\composed \oph$ is the natural projection map onto  
$$\quotient{S^m_{\delta}(M)}{h^{1-2\delta}S^{m-1}_\delta(M)}.$$

Henceforward, we will take $\sigma(A)$ to be any representative of the corresponding equivalence class in the right-hand side of \eqref{eqn:princSymbol}. We do not include the sub-principal symbol because then the calculus of pseudodifferential operators would be more complicated. With this in mind, the standard calculus of pseudodifferential operators with symbols in $S^m_\delta$ gives for $A\in \Ph{m_1}{\delta}(M)$ and $B\in \Ph{m_2}{\delta}(M)$, 
\begin{gather*}
\sigma(A^*)=\overline{\sigma(A)}+\O{S^{m_1-1}_\delta(M)}(h^{1-2\delta})\\
\sigma(AB)=\sigma(A)\sigma(B)+\O{S^{m_1+m_2-1}_{\delta}(M)}(h^{1-2\delta})\\
\sigma([A,B])=-ih\{\sigma(A),\sigma(B)\}+\O{S^{m_1+m_2-2}(M)}(h^{2(1-2\delta)}).
\end{gather*}
Here $\{\cdot,\cdot\}$ denotes the Poisson bracket and we take adjoints with respect to $L^2(M)$. 

\subsubsection{Wavefront sets and microsupport of pseudodifferential operators}
In order to define a notion of wavefront set that captures both $h$-microlocal and $C^\infty$ behavior, we define the \emph{fiber radially compactified cotangent bundle}, $\overline{T}^*M$, by $\overline T
^*M=T^*M\sqcup S^*M$ where 
$$S^*M:=\quotient{\left(T^*M\setminus\{M\times 0\}\right)}{\re_+}$$
and the $\re_+$ action is given by $(t,(x,\xi))\mapsto(x,t\xi).$ Let $|\cdot|_g$ denote the norm induced on $T^*M$ by the Riemannian metric $g$. Then a neighborhood of a point $(x_0,\xi_0)\in S^*M$ is given by $V\cap \{|\xi|_g\geq K\}$ where $V$ is an open conic neighborhood of $(x_0,\xi_0)\in T^*M$. 

For each $A\in \Ph{m}{\delta}(M)$ there exists $a\in S^m_\delta(M)$ with $A=\oph(a)+\O{\Ph{-\infty}{}}(h^\infty).$ Then the \emph{semiclassical wavefront set of $A$}, $\WFhp(A)\subset \overline T^*M$, is defined as follows. A point $(x,\xi)\in \overline T^*M$ does not lie in $\WFhp(A)$ if there exists a neighborhood $U$ of $(x,\xi)$ such that each $(x,\xi)$ derivative of $a$ is $\O{}(h^\infty \la \xi\ra^{-\infty})$ in $U$. As in \cite{Alexandrova}, we write 
$$\WFhp(A)=:\WFhpf(A)\sqcup\WFhpi(A)$$
where $\WFhpf(A)=\WFh(A)\cap T^*M$ and $\WFhpi(A)=\WFh(A)\cap S^*M.$

Operators with compact wavefront sets in $T^*M$ are called \emph{compactly microlocalized}. These are operators of the form 
$$\oph(a)+\O{\Ph{-\infty}{}}(h^\infty)$$ for some $a\in S^{\comp}_\delta(M).$ The class of all compactly microlocalized operators in $\Ph{m}{\delta}(M)$ are denoted by $\Ph{\comp}{\delta}(M)$. 

We will also need a finer notion of microsupport on $h$-dependent sets. 

\begin{defin}\label{d-microlocal-vanishing}
An operator $A\in \Ph{\comp}{\delta}(M)$ is \emph{microsupported} on an $h$-dependent family of sets $V(h)\subset T^*M$ if we can write $A=\oph(a)+\O{\Ph{-\infty}{}}(h^\infty)$, where for each compact set $K\subset T^*M$, each differential operator $\partial^{\varsigma}$ on $T^*M$, and each $N$, there exists a constant $C_{{\varsigma} N K}$ such that for $h$ small enough,
$$\sup_{(x,\xi)\in K\setminus V(h)}|\partial^{\varsigma} a(x,\xi;h)|\leq C_{{\varsigma} NK}h^N.$$
We then write $$\MSp(A)\subset V(h).$$
\end{defin}

%%%%%%%%%%%%%%%%%%%%%%%%%%%%%%% END DEFI %%%%%%%%%%%%%%%%%%%%%%%%%%%%%%%
%
The change of variables formula for the full symbol of a
pseudodifferential operator~\cite[Theorem~9.10]{EZB} contains an asymptotic expansion in powers
of $h$ consisting of derivatives of the original symbol. Thus
definition~\ref{d-microlocal-vanishing} does not depend on the choice
of the quantization procedure $\oph$.  Moreover, since we take $\delta<1/2$, if
$A\in\Ph{\comp}{\delta}$ is microsupported inside some $V(h)$ and
$B\in\Ph{m}{\delta}$, then $AB$, $BA$, and $A^*$ are also microsupported
inside $V(h)$. This implies the following.
\begin{lemma}
Suppose that $A,B\in \Ph{\comp}{\delta}$ and $\MSp(A)\cap\MSp(B)=\emptyset.$ Then 
$$\WFhp(AB)=\emptyset.$$
\end{lemma}

For $A\in \Ph{\comp}{\delta}(M)$, $(x,\xi)\notin \WFh(A)$ if and only if there exists an $h$-independent neighborhood $u$ of $(x,\xi)$ such that $A$ is microsupported on the complement of $U$. However, $A$ need only be microsupported on any $h$-independent neighborhood of $\WFhp(A)$, not on $\WFhp(A)$ itself. Also, notice that by Taylor's formula if $A\in \Ph{\comp}{\delta}(M)$ is microsupported in $V(h)$ and $\delta'>\delta$, then $A$ is also microsupported on the set of all points in $V(h)$ which are at least $h^{\delta'}$ away from the complement of $V(h)$.

\begin{remark} Notice that since we are working with $A\in \Ph{\comp}{\delta}(M)$ for $0\leq\delta<1/2$ we have $a\in S_\delta^{\comp}(T^*M)$ and $a$ can only vary on a scale $\sim h^{-\delta}$. This implies that the set $\MSp(A)$ will respect the uncertainty principle.
\end{remark}

\subsubsection{Ellipticity and $L^2$ operator norm}
\label{s:ell1}
For $A\in\Ph{m}{\delta}(M)$, define its \emph{elliptic set}
$\Ell(A)\subset T^*M$ as follows: $(x,\xi)\in\Ell(A)$ if and
only if there exists a neighborhood $U$ of $(x,\xi)$ in $\overline
T^*M$ and a constant $C$ such that $|\sigma(A)|\geq
C^{-1}\langle\xi\rangle^m$ in $U\cap T^*M$.  The following statement
is the standard semiclassical elliptic estimate;
see~\cite[Theorem~18.1.24']{HOV3} for the closely related microlocal
case and for example~\cite[Section~2.2]{zeeman} for the semiclassical
case.
%
%%%%%%%%%%%%%%%%%%%%%%%%%%%%%% BEGIN PROP %%%%%%%%%%%%%%%%%%%%%%%%%%%%%%
%
\begin{lemma}
\label{lem:microlocalElliptic}
Suppose that $P\in\Ph{m}{\delta}(M)$ and $A\in\Ph{m'}{\delta}(M)$ with $\WFhp(A)\subset\Ell(P)$. Then for each $\chi\in\Cc(M)$,  there exist $Q_i\in \Ph{m'-m}{\delta}(M)$ such that 
$$\chi A=\chi Q_1P+\O{\Ph{-\infty}{\delta}}(h^\infty)=\chi PQ_2+\O{\Ph{-\infty}{}}(h^\infty).$$
In particular, for each $s\in \re$ and $u\in \Hh^{s+m'}$ there exists $C>0$ such that for all $N>0$, and $\chi_1\in C^\infty(M)$ with $\chi_1 \equiv 1 $ on $\supp \chi$, 
$$\|\chi Au\|_{\Hh^s}\leq C\|\chi Pu\|_{\Hh^{s+m'-m}}+\O{}(h^\infty)\|\chi_1 u\|_{\Hh^{-N}}.$$
\end{lemma}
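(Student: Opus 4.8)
This is the standard semiclassical elliptic parametrix construction, carried out in the exotic class $\Ph{\bullet}{\delta}$; the only point requiring attention is that the iteration converges, which is exactly where $\delta<1/2$ is used. Since $\WFhp(A)\subset\Ell(P)$ and $\Ell(P)$ is open in $\overline T^*M$, I would first choose $b\in S^0_\delta(M)$ with $b\equiv 1$ on a neighborhood of $\WFhp(A)$ and $\supp b$ contained in $\Ell(P)$. On $\supp b$ one has $|\sigma(P)|\geq C^{-1}\la\xi\ra^m$, and the Leibniz rule applied to the reciprocal, together with the symbol bounds \eqref{eqn:defSymbol} for $P$, shows $b/\sigma(P)\in S^{-m}_\delta(M)$ (the choice of representative of $\sigma(P)$ is immaterial, since two choices differ by an element of $h^{1-2\delta}S^{m-1}_\delta$, absorbed in the iteration below). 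Put $Q^{(0)}:=\oph(b/\sigma(P))\in\Ph{-m}{\delta}(M)$, which we take properly supported. The composition formula for $S_\delta$-symbols gives
$$Q^{(0)}P=\oph(b)+E_1,\qquad E_1\in h^{1-2\delta}\Ph{-1}{\delta}(M),\qquad \WFhp(E_1)\subset\WFhp(\oph(b)).$$

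Next I would iterate: replace $\oph(b)$ by $\oph(b)-Q^{(0)}E_1$, correct the resulting error, and continue. Because $1-2\delta>0$, the $j$-th correction lies in $h^{j(1-2\delta)}\Ph{-m-j}{\delta}(M)$, so the corresponding symbols form a genuine asymptotic series; by Borel summation there is a properly supported $Q_1^\flat\in\Ph{-m}{\delta}(M)$ with
$$Q_1^\flat P=\oph(b)+\O{\Ph{-\infty}{\delta}}(h^\infty).$$
Since $b\equiv 1$ near $\WFhp(A)$, the composition calculus gives $A\,\oph(b)=A+\O{\Ph{-\infty}{}}(h^\infty)$, hence with $Q_1:=AQ_1^\flat\in\Ph{m'-m}{\delta}(M)$ we obtain $\chi A=\chi AQ_1^\flat P+\O{\Ph{-\infty}{}}(h^\infty)=\chi Q_1P+\O{\Ph{-\infty}{}}(h^\infty)$. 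The second identity is symmetric: build a right parametrix $Q_2^\flat\in\Ph{-m}{\delta}(M)$ with $PQ_2^\flat=\oph(b)+\O{\Ph{-\infty}{\delta}}(h^\infty)$, use $\oph(b)A=A+\O{\Ph{-\infty}{}}(h^\infty)$, and set $Q_2:=Q_2^\flat A\in\Ph{m'-m}{\delta}(M)$, so that $\chi A=\chi PQ_2+\O{\Ph{-\infty}{}}(h^\infty)$.

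For the a priori estimate, given $u\in\Hh^{s+m'}$ and $\chi_1\in\Cc(M)$ with $\chi_1\equiv 1$ on $\supp\chi$ enlarged to contain the (compact, by proper support) $x'$-support of the kernel of $\chi Q_1$, write $\chi Au=\chi Q_1Pu+Ru$ with $R=\O{\Ph{-\infty}{\delta}}(h^\infty)$. Proper support of $Q_1$ gives $\chi Q_1Pu=\chi Q_1(\chi_1 Pu)$, and $\Hh^{s+m'-m}\to\Hh^s$ boundedness of the operator $\chi Q_1\in\Ph{m'-m}{\delta}$ together with the $\Hh^{-N}\to\Hh^s$ bound of norm $\O{}(h^\infty)$ for the residual smoothing operator $\chi R\chi_1$ (inserting $\chi_1$ again by proper support) yields
$$\|\chi Au\|_{\Hh^s}\leq C\|\chi Pu\|_{\Hh^{s+m'-m}}+\O{}(h^\infty)\|\chi_1u\|_{\Hh^{-N}}.$$

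\textbf{Main obstacle.} Everything reduces to the convergence of the parametrix iteration in the exotic calculus: one must check that each correction genuinely gains a factor $h^{1-2\delta}$ (so the expansion is asymptotic) and that the resulting symbol series can be asymptotically summed to stay in $S^{-m}_\delta$ — this is precisely where the restriction $\delta<1/2$ enters, and it is the only non-bookkeeping step. The cutoff/proper-support manipulations in the a priori estimate are routine.
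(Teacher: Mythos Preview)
Your proposal is correct and is exactly the standard elliptic parametrix construction the paper has in mind; note that the paper does not actually supply a proof of this lemma, but simply records it as the ``standard semiclassical elliptic estimate'' and cites \cite[Theorem~18.1.24']{HOV3} and \cite[Section~2.2]{zeeman}. Your write-up is thus more detailed than what the paper provides.

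One small point in the final estimate: as written, proper support of $Q_1$ only gives $\chi Q_1Pu=\chi Q_1(\chi_1 Pu)$ once $\chi_1\equiv 1$ on the (compact) $x'$-projection of $\supp K_{\chi Q_1}$, which in general is larger than $\supp\chi$; yet the lemma allows any $\chi_1\equiv 1$ on $\supp\chi$, and the right-hand side carries $\|\chi Pu\|$ rather than $\|\chi_1 Pu\|$. This is easy to fix: since pseudodifferential operators are $\O{}(h^\infty)$-pseudolocal (kernels are $\O{}(h^\infty)$ off the diagonal), you may from the outset cut the Schwartz kernel of $Q_1$ to an arbitrarily small neighborhood of the diagonal, introducing only an additional $\O{\Ph{-\infty}{}}(h^\infty)$ error. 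Choosing this neighborhood so that the $x'$-support of $\chi Q_1$ lies inside $\{\chi_1=1\}$, and using pseudolocality again to replace $\chi_1 Pu$ by $\chi Pu$ up to an $\O{}(h^\infty)\|\chi_1 u\|_{\Hh^{-N}}$ error, yields the estimate exactly as stated. This is bookkeeping rather than a genuine gap.
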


We also recall the estimate for the $L^2\to L^2$ norm of a pseudodifferential operator (see for example \cite[Chapter 13]{EZB}).
\begin{lemma}
\label{lem:goodL2Bound}
Suppose that $A\in \Ph{}{\delta}(M)$. Then there exists $C>0$ such that 
$$\|A\|_{L^2\to L^2}\leq \sup_{T^*M} |\sigma(A)|+Ch^{1-2\delta}.$$
\end{lemma}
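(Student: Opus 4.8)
The plan is to adapt the classical $T^*T$ (square-root) argument that underlies the sharp $L^2$ bound for pseudodifferential operators; see \cite[Chapter 13]{EZB} for the model case. Set $\Lambda:=\sup_{T^*M}|\sigma(A)|$, which is finite since $A\in\Ph{}{\delta}(M)$ has order $0$. Because $A$ is properly supported we may write $A=\oph(a)+\O{\Ph{-\infty}{}}(h^\infty)$ with $a\in S^0_\delta(M)$ a representative of $\sigma(A)$, and since changing the quantization or the representative only alters the symbol by an element of $h^{1-2\delta}S^{-1}_\delta(M)$ we may assume $\sup_{T^*M}|a|\le\Lambda+Ch^{1-2\delta}$. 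The $\O{\Ph{-\infty}{}}(h^\infty)$ term contributes $\O{}(h^\infty)$ to the $L^2\to L^2$ norm, so it is enough to bound $\|\oph(a)\|_{L^2\to L^2}$.

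First I would dispose of the degenerate regime $\Lambda=0$: then $a\in h^{1-2\delta}S^{-1}_\delta(M)$, so $\oph(a)=h^{1-2\delta}\oph(b)$ with $b\in S^{-1}_\delta(M)\subset S^0_\delta(M)$, and the Calderón--Vaillancourt bound (implicit in the mapping property $\Ph{0}{\delta}(M):\Hh^s\to\Hh^s$) gives $\|\oph(b)\|_{L^2\to L^2}\le C$, hence $\|A\|_{L^2\to L^2}\le Ch^{1-2\delta}$. For the main step, consider the self-adjoint operator
$$B:=(\Lambda+C_0h^{1-2\delta})^2\,\Id-\oph(a)^*\oph(a).$$
By the symbol calculus $\sigma(\oph(a)^*\oph(a))=|a|^2+\O{S^{-1}_\delta(M)}(h^{1-2\delta})$ and $|a|^2\le\Lambda^2+Ch^{1-2\delta}$, so for $C_0$ large enough $B$ admits a principal symbol representative that is nonnegative on all of $T^*M$. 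Applying the sharp Gårding inequality in the class $\Ph{0}{\delta}(M)$, $\delta<1/2$ (see \cite[Chapter 13]{EZB}), we get $\langle Bu,u\rangle\ge-C_1h^{1-2\delta}\|u\|_{L^2}^2$ for all $u\in L^2(M)$, which rearranges to
$$\|\oph(a)u\|_{L^2}^2\le\big((\Lambda+C_0h^{1-2\delta})^2+C_1h^{1-2\delta}\big)\|u\|_{L^2}^2.$$
When $\Lambda$ is bounded below this already yields $\|\oph(a)\|_{L^2\to L^2}\le\Lambda+Ch^{1-2\delta}$ after extracting the square root; for small $\Lambda$ one instead localizes, using a partition $1=\chi_++\chi_-$ with $\supp\chi_+$ near $\{|\sigma(A)|\ge\Lambda/2\}$, treating $\oph(\chi_+)\oph(a)$ by the above and $\oph(\chi_-)\oph(a)$ by the elliptic parametrix for the square root of $(\Lambda+Ch^{1-2\delta})^2-|\sigma(A)|^2$ (which is $\gtrsim h^{1-2\delta}$ there), via Lemma \ref{lem:microlocalElliptic}, so that the $h^{1-2\delta}$ error stays genuinely additive.

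I expect the main obstacle to be precisely this last point: obtaining the \emph{additive} error $Ch^{1-2\delta}$ rather than the cruder $Ch^{(1-2\delta)/2}$ that comes from naively bounding $\|A\|^2=\|A^*A\|$ and taking a square root. The resolution is the uniform-in-$h$ construction of an approximate square root of $(\Lambda+Ch^{1-2\delta})^2\,\Id-A^*A$, which must be carried out by separating the set where $|\sigma(A)|$ is near-extremal (handled by sharp Gårding) from its complement (handled by ellipticity and the standard iterative symbol construction). Everything else — the reduction to $\oph(a)$, the symbol computations, and the Cotlar--Stein/Calderón--Vaillancourt input — is routine, and for the final write-up it may be cleanest simply to invoke \cite[Chapter 13]{EZB} for the Euclidean model and transfer by the coordinate-invariance of the calculus noted after Definition \ref{def:symbol}.
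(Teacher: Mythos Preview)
The paper does not give its own proof; it simply records the lemma as standard and cites \cite[Chapter~13]{EZB}, which is precisely what you recommend in your closing sentence. Your sketch follows the usual square-root/sharp G{\aa}rding route behind that reference, and while the small-$\Lambda$ localization you propose is not quite right as written (on $\supp\chi_-$ the quantity $(\Lambda+Ch^{1-2\delta})^2-|\sigma(A)|^2$ is only $\gtrsim\Lambda^2$, not $\gtrsim h^{1-2\delta}$), this is moot since both you and the paper ultimately defer to the literature.
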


\subsection{Semiclassical microlocalization of distributions and operators}
\subsubsection{Semiclassical wavefront sets and microsupport for distributions}
An $h$-dependent
family $u(h):(0,h_0)\to \mc D'(M)$ is called \emph{h-tempered} if
for each open $U\Subset M$, there exist constants
$C$ and $N$ such that 
\begin{equation}
  \label{tempered}
\|u(h)\|_{H^{-N}_{h}(U)}\leq Ch^{-N}.
\end{equation}
For a tempered distribution $u$, we say that $(x_0,\xi_0)\in \overline T^*M$ does not lie in the wavefront set $\WFh(u)$, if there exists a
neighborhood $V$ of $(x_0,\xi_0)$ such that for each
$A\in\Ph{}{}(M)$ with $\WFhp(A)\subset V$, we have $Au=\O{C^\infty}(h^\infty)$. As above, we write 
$$\WFh(u)={\WFh}^f(u)\sqcup{\WFh}^i(u)$$
where ${\WFh}^i(u)=\WFh(u)\cap S^*M$.
By Lemma~\ref{lem:microlocalElliptic},
$(x_0,\xi_0)\not\in\WFh(u)$ if and only if there exists compactly
supported $A\in\Ph{}{}(M)$ elliptic at $(x_0,\xi_0)$ such that
$Au=\O{C^\infty}(h^\infty)$.  The wavefront set of $u$ is a
closed subset of $\overline T^*M$. It is empty if and only if
$u=\O{C^\infty(M)}(h^\infty)$. We can also verify that for
$u$ tempered and $A\in\Ph{m}{\delta}(M)$,
$\WFh(Au)\subset\WFhp(A)\cap\WFh(u)$.

\begin{defin}
\label{d:wavefront}
A tempered distribution $u$ is said to be \emph{microsupported} on an $h-$dependent family of sets $V(h)\subset T^*M$ if for $\delta\in[0,1/2)$, $A\in \Ph{}{\delta}(M)$, and $\MSp(A)\cap V=\emptyset$, $\WFh(Au)=\emptyset.$
\end{defin}

\subsubsection{Semiclassical wavefront sets of tempered operators}
An $h$- dependent family of operators $A(h):\mc{S}(M)\to \mc{S}'(M')$ is called \emph{h-tempered} if for each $U\Subset M$, there exists $N\geq 0$ and $k\in \ints^+$, such that
\begin{equation}
\label{eqn:temperedOp}
\|A(h)\|_{\Hh^k(U)\to H_{h,\loc}^{-k}(M')}\leq Ch^{-N}
\end{equation}

For an $h$-tempered family of operators, we write that the wavefront set of $A$ is given by
$${\WFh}'(A):=\{(x,\xi,y,\eta)\,|,(x,\xi,y,-\eta)\in \WFh(K_A)\}$$
where $K_A$ is the Schwartz kernel of $A$. 
\begin{defin}
\label{d:microlocal-vanishingOps}
A tempered operator $A$ is said to be
\emph{microsupported} on an $h$-dependent family of sets
$V(h)\subset T^*M\times T^*M'$, if for all $\delta\in [0,1/2)$ and each $B_1\in \Ph{}{\delta}(M')$ and $B_2\in \Ph{}{\delta}(M)$ with $(\MSp(B_1)\times\MSp(B_2))\cap V=\emptyset$, we have $\WFh(B_1AB_2)=\emptyset.$ We then write 
$${\MS}'(A)\subset V(h).$$
\end{defin}

\begin{remark}
With the definitions above, we have for $A\in \Ph{m}{\delta}(M)$, $${\WFh}'(A)=\{(x,\xi,x,\xi)\,:\, (x,\xi)\in \WFhp(A)\}.$$ 
In addition, we have that if $A\in \Ph{\comp}{\delta}$, then $\MSp(A)\subset V(h)$ if and only if 
$${\MS}'(A)\subset \{(x,\xi,x,\xi)\,:\, (x,\xi)\in V(h)\}.$$
Since there is a simple relationship between $\WFhp$ and $\WFh$, as well as $\MSp$ and $\MS$, we will only use the notation without $\Psi$ from this point forward and the correct object will be understood from context.
\end{remark}

\subsection{Semiclassical Lagrangian distributions}
  \label{s:prelim.lagrangian}

In this subsection, we review some facts from the theory of
semiclassical Lagrangian distributions.  See~\cite[Chapter~6]{g-s}
or~\cite[Section~2.3]{svn} for a detailed account,
and~\cite[Section~25.1]{HOV4} or~\cite[Chapter~11]{gr-s} for the microlocal case. We do not attempt to define the principal symbol as a globally invariant object. Indeed, it is not always possible to do so in the semiclassical setting. When it is possible to do so, i.e. when the Lagrangian is exact, we define the symbol modulo the Maslov bundle. Taking symbols modulo the Maslov bundle makes the theory considerably simpler. We can make this simplification since for all of our symbolic computations, we work only in a single coordinate chart and, moreover, we always work with exact Lagrangians.

%%%%%%%%%%%%%%%%%%%%%%%%%%%%%%%%%%%%%%%%%%%%%%%%%%%%%%%%%%%%%%%%%%%%%%%%%%%%%%%%
\subsubsection{Phase functions}
Let $M$ be a manifold without boundary.  We denote its dimension
by $d$.  Let $\varphi(x,\theta)$ be a smooth real-valued function on
some open subset $U_\varphi$ of $M\times \mathbb R^L$, for some $L$;
we call $x$ the \emph{base variable} and $\theta$ the \emph{oscillatory
variable}. As in \cite[Section 21.2]{HOV3}, we say that $\varphi$ is a \emph{phase
function} if the differentials
$(\partial_{\theta_1}\varphi),\dots,d(\partial_{\theta_L}\varphi)$ on the \emph{critical set}
\begin{equation}
  \label{e:c-varphi}
C_\varphi:=\{(x,\theta)\mid \partial_\theta \varphi=0\}\subset U_\varphi
\end{equation}
are independent
Note that
\[
\Lambda_\varphi:=\{(x,\partial_x\varphi(x,\theta))\mid (x,\theta)\in C_\varphi\}\subset T^*M
\]
is an immersed Lagrangian submanifold (we will shrink the domain of $\varphi$ to make it
embedded). 

\subsubsection{Symbols}
Let $\delta\in [0,1/2)$. A smooth function $a(x,\theta;h)$ is called a
compactly supported symbol of type $\delta$ on $U_\varphi$, if it is supported in some compact $h$-independent subset of $U_\varphi$, and for each differential operator $\partial^{\varsigma}$ on $M\times \mathbb R^L$, there exists a constant $C_{\varsigma}$ such that
$$
\sup_{U_\varphi}|\partial^{\varsigma} a|\leq C_{\varsigma} h^{-\delta|{\varsigma}|}.
$$
As above, we write $a\in
S^{\comp}_\delta(U_\varphi)$ and denote $S^{\comp}:=S^{\comp}_0$. 

%%%%%%%%%%%%%%%%%%%%%%%%%%%%%%%%%%%%%%%%%%%%%%%%%%%%%%%%%%%%%%%%%%%%%%%%%%%%%%%%
\subsubsection{Lagrangian distributions}
Given a phase function $\varphi$ and a symbol $a\in S_\delta^{\comp}(U_{\varphi})$, consider the $h$-dependent family of
functions
\begin{equation}
  \label{e:lagrangian-basic}
u(x;h)=(2\pi h)^{-(d+2L)/4}\int_{\mathbb R^L} e^{i\varphi(x,\theta)/h}a(x,\theta;h)\,d\theta.
\end{equation}
We call $u$ a \emph{Lagrangian distribution} of type $\delta$ generated by $\varphi$ and denote this by $u\in I^{\comp}_{\delta}(\Lambda_{\varphi})$.

By the method of non-stationary phase, if
$\supp a$ is contained in some $h$-dependent compact
set $K(h)\subset U_\varphi$, then
\begin{equation}
  \label{e:lag-wf}
\MS(u)\subset\{(x,\partial_x\varphi(x,\theta))\mid (x,\theta)\in C_\varphi\cap K(h)\}\subset\Lambda_\varphi.
\end{equation}
\begin{remark} We are using the fact that $a\in S_\delta(U_\varphi)$ for some $\delta<1/2$ here.
\end{remark}

\subsubsection{Principal Symbols}
We define the principal symbol of a Lagrangian distribution independently of the choice of $\varphi$. To do this, we will need to use half-densities on $\Lambda_{\varphi}$ (see, for example \cite[Chapter 9]{EZB} for a definition).

Following \cite[Section 25.1]{HOV4}, letting 
$$\Phi=\left(\begin{array}{cc}\varphi_{xx}''&\varphi_{x\theta}''\\ 
\varphi_{\theta x}''&\varphi_{\theta\theta}''\end{array}\right),$$ 
\begin{lemma}
\label{l:lagrangian-basic}
Modulo Maslov factors, and a factor $e^{iA/h}$ for some constant $A\in\re$ depending on $\varphi$, the \emph{principal symbol} 
$$\sigma(u)\in \quotient{S_{\delta}^{\comp}(\Lambda_\varphi;\Omega^{1/2})}{h^{1-2\delta}S_{\delta}^{\comp}(\Lambda_{\varphi};\Omega^{1/2})}$$ is a half density given by 
$$\sigma(u)(x,\xi)=|d\xi|^{1/2}a(x,\theta) e^{i\pi/4\sgn\Phi}|\det \Phi|^{-1/2}.$$
\end{lemma}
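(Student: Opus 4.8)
The plan is to obtain the formula as the semiclassical transcription of the classical principal‑symbol construction for Lagrangian (Fourier integral) distributions, i.e. to follow \cite[Section~25.1]{HOV4} (see also \cite[Chapter~6]{g-s}, \cite[Section~2.3]{svn}) and to verify that the exotic amplitude class $S^{\comp}_\delta$ with $\delta<1/2$ introduces no difficulty. Two things must be checked: (i) the right‑hand side defines a half‑density on $\Lambda_\varphi$, i.e. it is invariant under changes of the base variable $x$ and of the oscillatory variable $\theta$; and (ii) the resulting half‑density is, modulo the Maslov factor and an overall constant phase $e^{iA/h}$, independent both of the choice of non‑degenerate phase function $\varphi$ generating $\Lambda_\varphi$ and of the amplitude $a$ representing $u$ modulo $h^{1-2\delta}S^{\comp}_\delta$.

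For (i) I would first note that at a point of $C_\varphi$ one has $\det\Phi\neq 0$ if and only if $\Lambda_\varphi$ is locally a graph over the fibre variable $\xi$ — the rank‑$L$ property of the lower block of $\Phi$ is exactly the non‑degeneracy hypothesis on $\varphi$ — and that this is the only regime in which the literal symbol $|d\xi|^{1/2}$ makes sense; at caustics one reads the formula as an abbreviation for $a$ times the square root of the canonical Leray density $d_{C_\varphi}$, normalized by $|dx\,d\theta|=d_{C_\varphi}\otimes|d(\partial_{\theta_1}\varphi)\wedge\cdots\wedge d(\partial_{\theta_L}\varphi)|$ and transported to $\Lambda_\varphi$ via $(x,\theta)\mapsto(x,\partial_x\varphi)$. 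One then checks by the chain rule, reading every identity \emph{on} $C_\varphi$ (where $\partial_\theta\varphi$ vanishes, so the Hessian $\Phi$ transforms tensorially), that the Jacobians produced by $|d\xi|$ and by $|\det\Phi|$ cancel, while $\sgn\Phi|_{C_\varphi}$ is locally constant; hence the expression is a genuine half‑density on $\Lambda_\varphi$.

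For (ii) I would invoke H\"ormander's equivalence‑of‑phase‑functions theorem \cite[Theorem~21.2.16]{HOV3}, whose proof is unchanged for compactly supported amplitudes: two non‑degenerate phase functions generating the same Lagrangian near a point are related, after shrinking, by finitely many moves of the following three types: (a) a fibre‑preserving diffeomorphism $(x,\theta)\mapsto(x,\Theta(x,\theta))$; (b) addition of a non‑degenerate quadratic form $\langle Q\eta,\eta\rangle$ in new oscillatory variables; (c) addition of a real constant $A$. Under (a) one changes variables in the defining integral \eqref{e:lagrangian-basic}; the Jacobian is absorbed into the amplitude, the new full Hessian is congruent to the old one on $C_\varphi$, and the transported half‑density is unchanged. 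Under (b) one evaluates the Gaussian integral in $\eta$ exactly, $\int e^{i\langle Q\eta,\eta\rangle/h}\,d\eta=(2\pi h)^{L'/2}|\det 2Q|^{-1/2}e^{i\pi\sgn Q/4}$: the power of $2\pi h$ is absorbed precisely into the normalization $(2\pi h)^{-(d+2L)/4}$, and since the new Hessian is block‑diagonal, $\widetilde{\Phi}=\Phi\oplus 2Q$ on the new critical set, the factors $|\det 2Q|^{-1/2}e^{i\pi\sgn Q/4}$ combine with $|\det\Phi|^{-1/2}e^{i\pi\sgn\Phi/4}$ into exactly the new‑phase expression $|\det\widetilde{\Phi}|^{-1/2}e^{i\pi\sgn\widetilde{\Phi}/4}$. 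Move (c) produces the stated $e^{iA/h}$. Independence of the amplitude modulo $h^{1-2\delta}S^{\comp}_\delta$ follows from the stationary‑phase analysis already underlying \eqref{e:lag-wf}, and patching charts in which different phase functions are used is exactly where the Maslov factor enters; since the paper only ever works with exact Lagrangians inside a single coordinate chart, we do not need to track it.

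Finally I would remark that the exotic class $S^{\comp}_\delta$ enters only through the stationary‑ and non‑stationary‑phase expansions used above, where the hypothesis $\delta<1/2$ guarantees that each differentiation of an amplitude costs $h^{-\delta}$ against a gain of $h$, so all remainders are genuinely of one lower order and land in $h^{1-2\delta}S^{\comp}_\delta$; this is the one point beyond a verbatim citation of \cite[Section~25.1]{HOV4}. I expect the main obstacle to be the bookkeeping of the determinant and signature factors through move (a): $\Phi$ transforms as a tensor only after restriction to $C_\varphi$, so one must be disciplined about carrying out every manipulation on the critical set, where $\Lambda_\varphi$ actually lives, rather than on the ambient $U_\varphi$.
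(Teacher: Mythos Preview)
Your proposal is correct and matches the paper's approach: the paper gives no proof of this lemma at all, merely prefacing the statement with ``Following \cite[Section~25.1]{HOV4}'', and your sketch is precisely a faithful semiclassical transcription of that construction together with the observation that $\delta<1/2$ keeps the stationary-phase remainders in $h^{1-2\delta}S^{\comp}_\delta$.
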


\begin{remark}
In the case that $\Lambda_{\varphi}$ is exact the factor $e^{iA/h}$ can be removed.
\end{remark}

\begin{defin}
  \label{d:lagrangian}
Let $\Lambda\subset T^*M$ be an embedded Lagrangian submanifold.
We say that an $h$-dependent family of functions
$u(x;h)\in \Cc(M)$ is a (compactly supported
and compactly microlocalized) 
Lagrangian distribution of type $\delta$ associated to $\Lambda$, if
it can be written as a sum of finitely many functions
of the form~\eqref{e:lagrangian-basic}, for different phase functions
$\varphi$ parametrizing open subsets of $\Lambda$, plus an
$\O{\Cc}(h^\infty)$ remainder. 
Denote by $I^{\comp}_\delta(\Lambda)$ the space of all such distributions,
and put $I^{\comp}(\Lambda):=I^{\comp}_0(\Lambda)$.
\end{defin}

The action of a pseudodifferential operator on a Lagrangian
distribution is given by the following Lemma, following from
 the method of stationary
phase:
%
%%%%%%%%%%%%%%%%%%%%%%%%%%%%%% BEGIN PROP %%%%%%%%%%%%%%%%%%%%%%%%%%%%%%
%
\begin{lemma}\label{l:lagrangian-mul}
Let $u\in I^{\comp}_\delta(\Lambda)$ and $P\in \Ph{m}{\delta}(M)$.
Then $Pu\in I^{\comp}_\delta(\Lambda)$ and
$$\sigma(Pu)=\sigma(P)|_{\Lambda}\cdot\sigma(u)
+\O{}(h^{1-2\delta})_{S^{\comp}_\delta(\Lambda)}.
$$
\end{lemma}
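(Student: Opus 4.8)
The plan is to reduce to a single oscillatory-integral chart and then apply the method of stationary phase in the auxiliary variables introduced by the quantization of $P$. By Definition~\ref{d:lagrangian} it suffices to treat $u$ of the form~\eqref{e:lagrangian-basic}, namely $u(x;h)=(2\pi h)^{-(d+2L)/4}\int_{\re^L}e^{i\varphi(x,\theta)/h}a(x,\theta;h)\,d\theta$ with $a\in S^{\comp}_\delta(U_\varphi)$: the general $u\in I^{\comp}_\delta(\Lambda)$ is a finite sum of such terms plus an $\O{\Cc}(h^\infty)$ error, and that error is mapped by the properly supported, $h$-tempered operator $P$ into another $\O{\Cc}(h^\infty)$ error. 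Likewise we may replace $P$ by $\oph(p)$ for a representative $p\in S^m_\delta(M)$, since the smoothing remainder $\O{\Ph{-\infty}{}}(h^\infty)$ again contributes $\O{\Cc}(h^\infty)$.

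Writing $\oph(p)u$ with a left (Kohn--Nirenberg) quantization gives, up to $\O{\Cc}(h^\infty)$,
$$(Pu)(x;h)=(2\pi h)^{-(d+2L)/4-d}\iiint e^{i(\langle x-y,\eta\rangle+\varphi(y,\theta))/h}\,p(x,\eta)\,a(y,\theta;h)\,dy\,d\eta\,d\theta.$$
I would first carry out stationary phase in $(y,\eta)\in\re^d\times\re^d$. The phase $\langle x-y,\eta\rangle+\varphi(y,\theta)$ has a unique nondegenerate critical point $y=x$, $\eta=\partial_x\varphi(x,\theta)$, and its $(y,\eta)$-Hessian, in block form $\bigl(\begin{smallmatrix}\varphi''_{yy}&-I\\-I&0\end{smallmatrix}\bigr)$, has determinant $(-1)^d$ and signature $0$, independently of $\varphi$. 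Because $p,a\in S_\delta$ with $\delta<1/2$, the stationary-phase expansion is valid and each successive term is smaller by a genuine power of $h$ (the worst derivative loss being $h^{-2\delta}$).

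Since the critical value of the phase is $\varphi(x,\theta)$ and the Hessian contributes only the factor $e^{i\pi\cdot 0/4}|(-1)^d|^{-1/2}=1$ together with the normalizing power of $(2\pi h)$, one obtains
$$(Pu)(x;h)=(2\pi h)^{-(d+2L)/4}\int_{\re^L}e^{i\varphi(x,\theta)/h}\,b(x,\theta;h)\,d\theta+\O{\Cc}(h^\infty),$$
where $b=p(x,\partial_x\varphi(x,\theta))\,a(x,\theta;h)$ modulo $h^{1-2\delta}S^{\comp}_\delta(U_\varphi)$. Hence $Pu\in I^{\comp}_\delta(\Lambda_\varphi)$, and since the generating phase is unchanged the matrix $\Phi$, its signature, and the factor $|\det\Phi|^{-1/2}$ appearing in Lemma~\ref{l:lagrangian-basic} are all the same; feeding $b$ into that formula gives $\sigma(Pu)=|d\xi|^{1/2}\,p(x,\partial_x\varphi)\,a\,e^{i\pi\sgn\Phi/4}|\det\Phi|^{-1/2}=\sigma(P)|_\Lambda\cdot\sigma(u)$ modulo $h^{1-2\delta}S^{\comp}_\delta(\Lambda)$. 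Summing over the finitely many phase charts, and recalling that $\sigma(u)$ is only defined modulo Maslov factors (so the patching is automatically consistent), completes the argument.

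The main obstacle is controlling the stationary-phase remainder in the exotic symbol class: each $x$-, $y$- or $\theta$-derivative may cost a factor $h^{-\delta}$, so one must verify that the order-$k$ remainder is $\O{}(h^{k(1-2\delta)})$ in all $S^{\comp}_\delta$ seminorms, which is exactly what the quantitative stationary-phase lemma yields because $1-2\delta>0$. Everything else --- invariance under changes of the oscillatory variable and of the phase, and the absence of any new Maslov contribution, guaranteed by the signature-$0$ Hessian --- is routine within the Lagrangian-distribution calculus recalled above and in~\cite{HOV4}.
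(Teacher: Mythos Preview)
Your argument is correct and is precisely the stationary-phase computation the paper alludes to; the paper does not give a proof but simply states that the lemma follows ``from the method of stationary phase.'' Your reduction to a single chart, the $(y,\eta)$-stationary-phase step with the signature-$0$ Hessian, and the handling of the $h^{1-2\delta}$ remainder in the exotic class are all standard and accurate.
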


\subsection{Fourier integral operators}
\label{sec:semiFIO}

A special case of Lagrangian distributions are Fourier integral
operators associated to canonical graphs.  Let $M$ be a
manifolds of dimension $d$. Consider a Lagrangian submanifold $\Lambda\subset T^*M\times T^*M$ given by 
$$\Lambda=\{(\kappa(y,\eta), y, -\eta)\}$$
where $\kappa$ is a symplectomorphism.

A compactly supported operator $U:\mc D'(M')\to \Cc(M)$ is
called a (semiclassical) \emph{Fourier integral operator} of type
$\delta$ associated to $\kappa$ 
if its Schwartz kernel $K_U(x,x')$ lies
in $I^{\comp}_\delta(\Lambda)$. We write $U\in
I^{\comp}_\delta(C)$ where 
$$C=\{(x,\xi,y,\eta)\,|\, (x,\xi,y,-\eta)\in \Lambda\}.$$ 
 The numerology $h^{-(d+2L)/4}$ in \eqref{e:lagrangian-basic} is explained by the fact that
the normalization for Fourier integral operators is chosen so that
$$\|U\|_{L^2(M)\to L^2(M)}\sim 1$$ when $C$ is the generated by a symplectomorphism.

We will need the following lemma from the calculus of Fourier integral operators 
\begin{lemma}
\label{lem:FIOcomp}
Let $A\in I_{\delta}^{\comp}(M\times M, C)$ and $P\in \Ph{\comp}{\delta}(M)$. Then, $A^*PA\in \Ph{\comp}{\delta}(M)$ and 
$$\sigma(A^*PA)(q)=|\sigma(A)(q,\kappa(q))|^2\sigma(P)(\kappa(q)).$$
\end{lemma}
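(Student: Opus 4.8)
The plan is to represent $A^*PA$ as a single semiclassical oscillatory integral, apply the method of stationary phase, and read off the principal symbol from the half-density formula of Lemma \ref{l:lagrangian-basic}.

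Using a partition of unity on the (compact) microsupport of $A$ we may assume $A$ is microsupported near a single point $(q_0,\kappa(q_0))\in C$, so that modulo $\O{\Cc}(h^\infty)$
$$K_A(x,y)=(2\pi h)^{-(2d+2L)/4}\int_{\re^L}e^{i\varphi(x,y,\theta)/h}a(x,y,\theta;h)\,d\theta,$$
with $\varphi$ a phase function parametrizing a neighborhood of $(q_0,\kappa(q_0))$ through $\Lambda$ and $a\in S_\delta^{\comp}$. Since $C$ is the graph of the symplectomorphism $\kappa$, the projection from this piece of $C$ onto the second factor of $T^*M$ is a diffeomorphism; this is the geometric fact that will force the composition to be pseudodifferential. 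By Lemma \ref{l:lagrangian-mul}, $PA\in I_\delta^{\comp}(C)$ is represented by the same $\varphi$ with amplitude $\sigma(P)(x,\partial_x\varphi(x,y,\theta))\,a(x,y,\theta;h)+\O{S_\delta^{\comp}}(h^{1-2\delta})$. Using $K_{A^*}(y,x)=\overline{K_A(x,y)}$ gives, modulo $\O{\Cc}(h^\infty)$,
$$K_{A^*PA}(y,y')=c(h)\int e^{i(\varphi(x,y',\theta')-\varphi(x,y,\theta))/h}\,\overline{a(x,y,\theta;h)}\,\sigma(P)(x,\partial_x\varphi(x,y',\theta'))\,a(x,y',\theta';h)\,dx\,d\theta\,d\theta'$$
with $c(h)$ a power of $h$.

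Next I would apply stationary phase in $(x,\theta,\theta')$. The critical set is $\{\partial_\theta\varphi(x,y,\theta)=\partial_{\theta'}\varphi(x,y',\theta')=0,\ \partial_x\varphi(x,y,\theta)=\partial_x\varphi(x,y',\theta')\}$; at such a point both $((x,\xi),(y,\eta))$ and $((x,\xi),(y',\eta'))$ lie in $C$ with $\xi=\partial_x\varphi$, and since $C$ is a graph this forces $(y,\eta)=(y',\eta')$, in particular $y=y'$. Hence $K_{A^*PA}$ is, modulo $\O{}(h^\infty)$, the kernel of an element of $\Ph{\comp}{\delta}(M)$; the non-degeneracy of the phase Hessian in $(x,\theta,\theta')$ along the critical manifold is exactly the graph condition on $C$, and the compact microlocalization is inherited from $A$ and $P$ (here we use $\delta<1/2$ throughout so the calculus and stationary phase with $S_\delta^{\comp}$ amplitudes apply). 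Equivalently, this step is the semiclassical composition theorem for Fourier integral operators applied to $C^{-1}\circ C$, which is the identity relation.

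Finally I would evaluate the symbol. The leading term of the stationary phase expansion, at $q=(y,\eta)$ with $(x,\xi)=\kappa(q)$, is proportional to $|a|^2\,\sigma(P)(\kappa(q))$ with a Jacobian factor $|\det\Phi|$ appearing once from $K_A$ and once from $\overline{K_A}$, and with the signature factors $e^{i\pi/4\sgn\Phi}$ from $A$ and $e^{-i\pi/4\sgn\Phi}$ from $A^*$ cancelling. Matching against Lemma \ref{l:lagrangian-basic}: the $|\det\Phi|^{-1/2}$ factors combine with the change-of-variables Jacobians to turn $|a|^2$ into $|\sigma(A)|^2$ as a density on $C$, and the graph projection $C\to T^*M$ converts this to a function on $T^*M$, yielding
$$\sigma(A^*PA)(q)=|\sigma(A)(q,\kappa(q))|^2\,\sigma(P)(\kappa(q)).$$
Patching the partition-of-unity pieces changes the symbol only by $\O{}(h^{1-2\delta})$, the intrinsic ambiguity in $\sigma$. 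The main obstacle is precisely this last bookkeeping step: carrying the half-density factors, the Jacobian of the graph projection, and the Maslov/signature data carefully enough to see that all phases cancel, no spurious power of $h$ survives, and the modulus squared $|\sigma(A)|^2$ (rather than an uncancelled $\sigma(A)\overline{\sigma(A)}$ with an extra phase) emerges — which is why it is cleaner to run stationary phase once on $A^*PA$ directly than to compose two symbol formulas in succession.
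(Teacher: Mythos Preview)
The paper does not supply its own proof of this lemma; it is stated (in the sentence introducing it) as a standard fact from the calculus of semiclassical Fourier integral operators, and is later invoked in the proof of Lemma~\ref{lem:EgorovSheaf}. Your outline is exactly the standard argument: write $A$ locally by a phase $\varphi$, absorb $P$ into the amplitude via Lemma~\ref{l:lagrangian-mul}, and apply stationary phase in $(x,\theta,\theta')$ to the kernel of $A^*PA$. The key geometric observation---that the graph condition on $C$ forces the critical set onto the diagonal $y=y'$ and simultaneously guarantees non-degeneracy of the Hessian---is correctly identified, and the cancellation of the signature factors between $A$ and $A^*$ is the reason the answer involves $|\sigma(A)|^2$ with no residual Maslov phase. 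The final bookkeeping you flag (matching Jacobians and half-density factors so that $|a|^2$ becomes $|\sigma(A)|^2$ on $C$, then pushing forward along the graph projection $C\to T^*M$) is genuinely where the work lies, but it is routine once set up carefully; see for instance \cite[Chapter~6]{g-s} or the microlocal analogue in \cite[Section~25.2]{HOV4}.
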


\section{The shymbol}
\label{sec:shymbol}
%\subsection{Definition}
It will be useful to calculate symbols of operators whose semiclassical order may vary from point to point in $T^*M$.
One can often handle this type of behavior by using weights to compensate for the growth. However, this requires some a priori knowledge of how the order changes and limits the allowable size in the change of order. In this section, we will develop a notion of a sheaf valued symbol, the \emph{shymbol}, that can be used to work in this setting without such a priori knowledge. 

Let $M$ be a compact manifold. Let $\mc{T}(T^*M)$ be the topology on $T^*M$. For $s\in \re$, denote the symbol map 
$$ \sigma_s:h^{s}\Psi^{\comp}_\delta\to h^sS^{\comp}_\delta/h^{s+1-2\delta}S^{\comp}_\delta.\,\,$$
Suppose that for some $N>0$ and $\delta \in [0,1/2)$, $A\in h^{-N}\Psi^{\comp}_\delta(M)$. We define a finer notion of symbol for such a pseudodifferential operator. Fix $0<\e\ll 1-2\delta$. For each open set $U\in \mc{T}(T^*M)$, define \emph{the $\e$-order of $A$ on $U$} 
$$ I_A^\e(U):=\sup_{s\in \mc{S}_\e}s+1-2\delta\,\,$$
where
$$\mc{S}_\e:=\left\{s\in \e\mathbb{Z} \,\left|\,\begin{gathered}\text{ there exists }\chi\in \Cc(T^*M),\,\chi|_U=1,\\\sigma_s(\oph(\chi) A\oph(\chi))|_U\equiv 0\end{gathered}\right.\right\}.$$
Then it is clear that for any $V\Subset U$ there exists $\chi\in \Cc(U)$ with $\chi=1$ on $V$ such that $\oph(\chi) A\oph(\chi)\in h^{I^\e_A(U)}\Ph{\comp}{\delta}(M).$

Give $\mc{T}(T^*M)$ the ordering that $U\leq V$ if $V\subset U$ with morphisms $U\to V$ if $U\leq V$. Notice that $U\leq V$ implies $I_A^\e(U)\leq I_A^\e(V).$ Then define the functor $F_A^\e:\mc{T}(T^*M)\to \textbf{Comm} $ (the category of commutative rings)
by 
$$F_A^\e(U)=
\begin{cases}
h^{I_A^\e(U)}S^{\comp}_\delta(M)|_U\,/\,h^{I_A^\e(U)+1-2\delta}S^{\comp}_\delta(M)|_U  & I_A^\e(U)\neq \infty\\
\{0\}  &I_A^\e(U)=\infty 
\end{cases},
$$
$$F_A^\e(U\to V)=
\begin{cases}
h^{I_A^\e(V)-I_A^\e(U)}|_V&I_A^\e(V)\neq \infty \\ 
0 &I_A^\e(V)=\infty
\end{cases}.$$

Then $F_A^\e$ is a presheaf on $T^*M$. We sheafify $F_A^\e$, still denoting the resulting sheaf by $F_A^\e$, and say that {\emph{$A$ is of $\e$-class $F_A^\e.$}} We define the \emph{stalk} of the sheaf at $q$ by $F_A^\e(q):=\varinjlim_{q\in U}F_A^\e(U).$

Now, for every $U\subset \mc{T}(T^*M)$, $I_A^\e(U)\neq \infty$, there exists $\chi_U\in \Cc(T^*M)$ with $\chi_U\equiv 1$ on $U$ such that  
$ \sigma_{I_A^\e(U)}(\oph(\chi_U) A\oph(\chi_U))|_U\neq 0.$
Then we define the \emph{$\e$-shymbol of $A$} to be the section of $F_A^\e$, $\tilde\sigma^\e_{(\cdot)}(A):\mc{T}(T^*M)\to F_A^\e(\cdot)$, given by
$$\tilde{\sigma}^\e_U(A) :=\begin{cases}\sigma_{I_A^\e(U)}(\oph(\chi_U) A\oph(\chi_U))|_U&I_A^\e(U)\neq \infty\\0&I_A^\e(U)=\infty\end{cases}.$$
Define also the \emph{$\e$-stalk shymbol}, $\tilde{\sigma}^\e(A)_q$ to be the germ of $\tilde{\sigma}^\e(A)$ at $q$ as a section of $F_A^\e.$ 

Now, define 
$$I_A^\e(q):=\sup\{I_A^\e(U)\mid q\in U\}.$$  We then define the simpler \emph{compressed shymbol}. Let $U_n\downarrow \{q\}$ be a sequence of open sets.
\begin{equation}
\label{def:symbFunc}
\begin{gathered}
\tilde{\sigma}^\e(A):T^*M\to \bigsqcup_q\quotient{h^{I_A^\e(q)}\complex}{h^{I_A^\e(q)+1-2\delta}\complex}\quad \text{by}\\
\tilde{\sigma}^\e(A)(q):=\begin{cases} 0 &I_A^\e(q)=\infty\\
\lim\limits_{n}^{}\tilde{\sigma}^\e_{U_n}(A)(q)&I_A^\e(q)<\infty\end{cases}\end{gathered}
\end{equation}

The limit in \eqref{def:symbFunc} exists since if $I_A^\e(q)<\infty$, then there exists $U\ni q$ such that for all $V\subset U$, $I_A^\e(V)=I_A^\e(U).$ This also shows that the limit is independent of the choice of sequence of $U_n\downarrow q.$
It is easy to see from standard composition formulae that the compressed shymbol has
$$\tilde{\sigma}^\e(AB)(q)=\tilde{\sigma}^\e(A)(q)\tilde{\sigma}(B)(q),\quad A\in h^{-N}\Psi_{\delta}^{\comp}\text{ and }B\in h^{-M}\Psi_{\delta}^{\comp}.$$
Moreover, 
\m \tilde{\sigma}^\e([A,B])(q)=-ih\left\{\tilde{\sigma}^\e(A)(q),\tilde{\sigma}^\e(B)(q)\right\}.\m

The following lemma follows from standard formulas for the composition of FIOs combined with the definitions above:
\begin{lemma}
\label{lem:EgorovSheaf}
Suppose that $A\in \Psi^{\comp}_\delta$ and let $T$ be a semiclassical FIO associated to the symplectomorphism $\kappa$ with elliptic symbol $t\in S_\delta$. Then for $N>0$ independent of $h$ 
$(AT)_N:=(T^*A^*)^N\left(AT\right)^N$ has $$ \tilde{\sigma}^\e((AT)_N)(q)=\prod\limits_{i=1}^N\left(|\tilde{\sigma}^\e(A)t|^2\composed \kappa^i(q)+\O{}\left(h^{I_{A_i}^\e(\beta^k(q))+1-2\delta}\right)\right).$$
\end{lemma}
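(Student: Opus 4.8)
The plan is to combine the composition formula for Fourier integral operators associated to canonical graphs (Lemma \ref{lem:FIOcomp}) with the definition of the compressed shymbol from \eqref{def:symbFunc}, working one factor at a time and tracking the local $\e$-order. First I would note that, since $T$ has elliptic symbol $t\in S_\delta$ and is associated to a symplectomorphism $\kappa$, the iterate $T^N$ is again a compactly microlocalized FIO associated to $\kappa^N$ with symbol $\prod_{i=0}^{N-1} t\composed\kappa^i$ (modulo the usual $\O{}(h^{1-2\delta})$ corrections), and $(T^*A^*)^N$ is its formal adjoint composed with $(A^*)^N$. The object $(AT)_N=(T^*A^*)^N(AT)^N$ is therefore, by Lemma \ref{lem:FIOcomp} applied iteratively (alternating $\Psi$DO and FIO factors), a compactly microlocalized \emph{pseudodifferential} operator, so its compressed shymbol is defined at every $q\in T^*M$.

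The key computation is to expand $(AT)_N = T^*A^*T^*A^*\cdots T^*A^*\cdot AT\cdots ATAT$ and push all the $T$'s through the $A$'s using Egorov's theorem, i.e. $A T = T (T^*AT)$ where $T^*AT$ is a $\Psi$DO with principal symbol $\sigma(A)\composed\kappa$. Conjugating repeatedly and pairing $T$ with $T^*$ (which contribute $|t|^2\composed\kappa^i$ to the symbol at the $i$-th stage), one is left with a product of $\Psi$DO symbols of $A$ evaluated along the orbit $\kappa(q),\kappa^2(q),\dots,\kappa^N(q)$. The subtlety, and where the \emph{shymbol} rather than the ordinary symbol is essential, is that $\sigma(A)$ may vanish to different local orders at different points of the orbit: at a point $q'$ where $A$ has local $\e$-order $I_A^\e(q')$, the contribution of the corresponding factor is $\tilde\sigma^\e(A)(q')$ together with a remainder of size $\O{}(h^{I_A^\e(q')+1-2\delta})$ coming from the gap in the symbol filtration $h^{I}S_\delta/h^{I+1-2\delta}S_\delta$. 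Multiplying these factors and using the product formula $\tilde\sigma^\e(BC)(q)=\tilde\sigma^\e(B)(q)\tilde\sigma^\e(C)(q)$ for the compressed shymbol gives exactly
$$\tilde\sigma^\e((AT)_N)(q)=\prod_{i=1}^N\Bigl(|\tilde\sigma^\e(A)\,t|^2\composed\kappa^i(q)+\O{}\bigl(h^{I_{A_i}^\e(\beta^k(q))+1-2\delta}\bigr)\Bigr),$$
with $I_{A_i}^\e$ the local order governing the $i$-th factor.

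The main obstacle I expect is not the algebra of symbol composition — that is routine once the FIO calculus is in hand — but rather the careful bookkeeping of \emph{where} the order drops occur and verifying that the remainder in each factor is genuinely $\O{}(h^{I_{A_i}^\e+1-2\delta})$ uniformly, so that the product of $N$ such factors (with $N$ fixed, $h$-independent) does not accumulate worse powers than claimed. Concretely, one must check that cutting off each $A$-factor by $\oph(\chi)$'s supported in small neighborhoods of the relevant orbit point (as in the definition of $I_A^\e$) is compatible with the FIO compositions — i.e. that conjugation by $T$ moves microsupports by $\kappa$ and does not spread them, which follows since $\delta<1/2$ and $T$ is associated to a symplectomorphism. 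Once this localization is set up, the lemma follows by induction on $N$, the base case $N=1$ being Lemma \ref{lem:FIOcomp} read through the definition of the compressed shymbol.
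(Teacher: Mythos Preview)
Your proposal is correct and is essentially the paper's own argument: fix $q$, localize each occurrence of $A$ by cutoffs supported in shrinking neighborhoods of the orbit points $\kappa^i(q)$ (which is legitimate because the FIO $T$ moves microsupport by $\kappa$), and then apply the FIO composition formula of Lemma \ref{lem:FIOcomp} together with the definition of the compressed shymbol. The paper's proof is terser---it localizes at $q$ first and observes this forces the $i$-th factor of $A$ to sit over $\kappa^i(q)$, then invokes Lemma \ref{lem:FIOcomp}---but the content is the same as what you wrote.
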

\begin{proof}
Fix $q\in T^*M$. Let $\chi_k\in \Cc(T^*M)$ have $\chi_k= 1$ on $B\left(q,\recip{k}\right)$, the open ball of radius $k^{-1}$ around $q$, and $\supp \chi_k\subset B\left(q,\frac{2}{k}\right).$ Then let $D:=\oph(\chi_k)(AT)_N\oph(\chi_k).$ We have that 
\m D=\oph(\chi_k)(A_NTA_{N-1}T\dots A_{1}T)^*(A_NTA_{N-1}T\dots A_{1}T)\oph(\chi_k)+\O{\Psi^{\comp}_{\delta}}(h^\infty)\,\,\m
where $A_i=\oph(\psi_{k,i})A\oph(\psi_{k,i})$ with $C_c^\infty(T^*M)\ni \psi_{k,i}=1$ in some neighborhood of $\beta^i(q)$ and is supported inside a neighborhood $U_{k,i}$ of $\beta^i(q)$ such that $U_{k,i}\downarrow q.$ Then the result follows from standard composition formulae in Lemma \ref{lem:FIOcomp}.
\end{proof}
Now, since $\e>0$ is arbitrary, we define the \emph{semiclassical order of $A$ at $q$} by $I_A(q):=\sup_{\e>0}I_A^\e(q)$ with the understanding that $f=\O{}(h^{I_A(q)})$ means that for any $\e>0$, 
$$|f(q)|\leq C_\e h^{I_A(q)-\e}.$$
Furthermore, we suppress the $\e$ in the notation $\tilde{\sigma}^\e(A)(q)$ and denote the \emph{compressed shymbol}, $\tilde{\sigma}(A)(q)$, again with the understanding that for any $\e>0$,
$$\tilde{\sigma}(A)(q)\in \quotient{h^{I_A(q)-\e}\mathbb{C}}{h^{I_A(q)+1-2\delta-\e}\mathbb{C}}.$$

%%%%%%%%%%%%%%%%%%%%%%%%%%%%%%%%%%%%%%%%%%%

\section{A second microlocal calculus}
\label{sec:secondMicrolocal}

In the present work, it will be necessary to localize $h^{2/3}$ near the glancing submanifold in $T^*\pO$. In order to do this, we present the second microlocal calculus from \cite{SjoZwDist}. 

\subsection{The local model}
We start by considering the model case of $\Sigma_0=\{\xi_1=0\}\subset T^*\re^d$. Suppose that $U$ is a neighborhood of $(0,0)$ and $a\in \Cc(U)$. In that case, we write $a=a(x,\xi,\lambda;h)$ with $\lambda=h^{-\delta}\xi_1$. Suppose that $\e<\min(1/2,\delta)$, and $\e+\delta\leq 1$. We say that $a\in S^{k_1}_{\delta,\e}(\Sigma_0)$ if and only if 
\begin{equation} 
\label{semi-d:localSymb}
\partial_x^{\varsigma} \partial_\xi^{\varpi}\partial_\lambda^ka(x,\xi,\lambda;h)=\O{}(h^{-\e(|{\varsigma}|+|{\varpi}|)}\la h^\e\lambda\ra^{k_1-k}).
\end{equation}
We will write 
$$a=\wt{\O{\e}}(\la h^\e\lambda \ra^{k_1})\quad\text{ if and only if }\quad \text{\eqref{semi-d:localSymb} holds}.$$
For such $a$, we define the exact quantization 
$$\wt{\oph}(a)u=\frac{1}{(2\pi h)^d}\int a\left(\frac{x+y}{2},\xi,h^{-\delta}\xi_1;h\right)e^{\frac{i}{h}\la x-y,\xi\ra}u(y)dyd\xi.$$
Then, 
\begin{lemma}
Suppose that $a=\wt{\O{\e}}(\la h^\e\lambda\ra^{k_1})$ and $b=\wt{\O{\e}}(\la h^\e\lambda\ra^{k_2}).$ Then,
$$\wt{\oph}(a)\composed \wt{\oph}(b)=\wt{\oph}(a\sharp b).$$
where 
$$a\sharp b=\left.e^{ihA(D)}(a|_{\lambda=h^{-\delta}\xi_1}b|_{\mu=h^{-\delta}\eta_1})\right|_{\substack{y=x\\\xi=\eta}}=\wt{\O{\e}}(\la h^\e\lambda\ra^{k_1+k_2})$$
where 
$$A(D)=\frac{1}{2}\sigma((D_x,D_\xi),(D_y,D_\eta))=:\frac{1}{2}\la QD,D\ra.$$
Moreover if $\e+\delta<1$, 
$$a\sharp b=\left.\sum_{k=0}^\infty \frac{i^kh^k}{k!}A(D)^k(a|_{\lambda=h^{-\delta}\xi_1}b|_{\mu=h^{-\delta}\eta_1})\right|_{\substack{y=x\\\eta=\xi}}\mod h^\infty \Ph{-\infty}{}.$$
\end{lemma}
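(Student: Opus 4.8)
The plan is to reduce this to the classical composition law for Weyl-quantized operators and then do the second-microlocal symbol bookkeeping forced by the extra variable $\lambda=h^{-\delta}\xi_1$; the algebraic manipulations are identical to the usual Weyl calculus (cf.\ \cite{SjoZwDist}) and only the symbol estimates need genuinely new input. First I would compute the Schwartz kernel of $\wt{\oph}(a)\composed\wt{\oph}(b)$, which (because $a,b\in\Cc(U)$) is the absolutely convergent integral
\[
\frac{1}{(2\pi h)^{2d}}\int e^{\frac{i}{h}\left(\la x-z,\xi\ra+\la z-y,\eta\ra\right)}\,a\big(\tfrac{x+z}{2},\xi,h^{-\delta}\xi_1\big)\,b\big(\tfrac{z+y}{2},\eta,h^{-\delta}\eta_1\big)\,dz\,d\xi\,d\eta .
\]
Introducing $\zeta=\tfrac12(\xi+\eta)$, $\theta=\xi-\eta$, $z'=z-\tfrac{x+y}{2}$ turns the phase into $\la x-y,\zeta\ra-\la z',\theta\ra$, and the resulting $(z',\theta)$-integral is, up to the factor $(2\pi h)^{d}$, exactly $e^{ihA(D)}$ applied to the amplitude $a|_{\lambda=h^{-\delta}\xi_1}\,b|_{\mu=h^{-\delta}\eta_1}$ and then restricted to $y=x$, $\eta=\xi$ — this is the standard identity underlying the Weyl product, the symmetric convention $a(\tfrac{x+z}{2},\cdot\,)$ being exactly what makes the full symplectic $A$ appear. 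Since $A$ is a real nondegenerate quadratic form, $e^{ihA(D)}$ is a Fourier multiplier with unimodular symbol, hence unitary on $L^2$ and continuous on Schwartz space; as the amplitude is $\Cc$, the formula for $a\sharp b$ is an exact identity and $\wt{\oph}(a)\composed\wt{\oph}(b)=\wt{\oph}(a\sharp b)$. (The resulting $a\sharp b$ is Schwartz but not compactly supported; a non-stationary-phase estimate on the Gaussian kernel of $e^{ihA(D)}$ shows it is $\O{}(h^\infty)$ outside any fixed neighbourhood of $\supp(ab)$, which is all one needs.)

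The substance is the estimate $a\sharp b=\wt{\O{\e}}(\la h^\e\lambda\ra^{k_1+k_2})$, i.e.\ $a\sharp b\in S^{k_1+k_2}_{\delta,\e}(\Sigma_0)$. Here I would analyze the $(z',\theta)$-integral by carrying out the $z'$-integration \emph{first}: since $a,b$ are compactly supported and every base-variable derivative costs only $h^{-\e}$ by \eqref{semi-d:localSymb}, the partial Fourier transform in $z'$ is $\O{}(\la h^{\e-1}\theta\ra^{-\infty})$, so the effective $\theta$-support is $|\theta|\lesssim h^{1-\e}$. On that set the last arguments $h^{-\delta}\xi_1=h^{-\delta}\zeta_1+h^{-\delta}\theta_1/2$ and $h^{-\delta}\eta_1=h^{-\delta}\zeta_1-h^{-\delta}\theta_1/2$ of $a$ and $b$ differ from $h^{-\delta}\zeta_1$ by $\O{}(h^{1-\e-\delta})=\O{}(1)$, using $\e+\delta\le1$; combined with the pointwise bound $|a|\,|b|\le C\la h^\e\lambda\ra^{k_1}\la h^\e\mu\ra^{k_2}$ and the Peetre-type inequality $\la h^\e\mu\ra\le C\la h^\e\lambda\ra\la h^\e(\mu-\lambda)\ra$, this yields $|a\sharp b(x,\zeta)|\le C\la h^\e\lambda\ra^{k_1+k_2}$ with $\lambda=h^{-\delta}\zeta_1$ now the fibre variable of the composed symbol, and differentiating in $(x,\zeta)$ — which after the restriction produces only further base-variable derivatives of $a,b$ (cost $h^{-\e}$) together with the chain-rule factor $h^{-\delta}$ in the $\zeta_1$ direction — propagates this to every seminorm of $S^{k_1+k_2}_{\delta,\e}(\Sigma_0)$. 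When $\e+\delta<1$ the variation of $h^{-\delta}\xi_1,h^{-\delta}\eta_1$ over the effective support is $\o{}(1)$, so one may Taylor-expand $a$ and $b$ in their last argument; this turns the $(z',\theta)$-integral into the stationary-phase series $\sum_{k\ge0}\tfrac{(ih)^k}{k!}A(D)^k\big(a|_\lambda\,b|_\mu\big)\big|_{y=x,\,\eta=\xi}$, with $N$-th Taylor remainder $\O{}(h^{N(1-\e-\delta)})$ in every seminorm, hence the asymptotic identity modulo $h^\infty\Ph{-\infty}{}$ after asymptotic summation.

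I expect the main obstacle to be exactly this symbol estimate in the regime where $\delta$ need not be $<\tfrac12$ — indeed $\delta=2/3$ in the intended application near glancing — which is why one cannot simply invoke the crude stationary-phase-with-remainder estimate: its error is controlled by the supremum of $\sim 2N$ derivatives of the amplitude and so costs $h^{-2N\delta}$. One must instead exploit that the phase $-\la z',\theta\ra$ is an \emph{exact} nondegenerate pairing, integrating in $z'$ first so as to localize $\theta$ to the scale $h^{1-\e}$; it is essentially at this step that the relation $\e+\delta\le1$ (strictly, $\e+\delta<1$, for the convergent expansion) is consumed, the other standing conditions $\e<\tfrac12$ and $\e<\delta$ being what make the exotic quantization $\wt{\oph}$ and the second-microlocal refinement well posed. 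The remaining point — tracking the two weights $\la h^\e\lambda\ra^{k_1}$, $\la h^\e\mu\ra^{k_2}$ through the restriction $\eta=\xi$ and through successive differentiation — is routine given the Peetre-type inequality above.
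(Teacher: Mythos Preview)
Your proposal is correct and shares the paper's overall structure: both recognize that the composition identity is the standard Weyl product formula and that the only new content is the symbol estimate $a\sharp b\in S^{k_1+k_2}_{\delta,\e}(\Sigma_0)$. Where you diverge is in how that estimate is obtained. The paper proves the equivalent statement that $e^{ihA(D)}$ preserves the class $\wt{\O{\e}}(\la h^\e\lambda\ra^{k_1}\la h^\e\mu\ra^{k_2})$ by an anisotropic rescaling $(x_1',y_1')\mapsto h^{1-\delta}(\tilde x_1,\tilde y_1)$, $(\xi_1',\eta_1')\mapsto h^{\delta}(\tilde\xi_1,\tilde\eta_1)$ in the direction normal to $\Sigma_0$, which removes $h$ from the phase entirely; a cutoff at unit scale then separates a trivially bounded near part from a far part handled by integration by parts. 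Your route instead integrates in the spatial variable $z'$ first to localize $\theta$ to scale $h^{1-\e}$, on which the shift $h^{-\delta}\theta_1/2$ of the blown-up argument is $\O{}(h^{1-\e-\delta})=\O{}(1)$, and then estimates directly via Peetre. Both correctly circumvent the obstruction you identify (that $\delta$ may exceed $1/2$, so naive stationary-phase-with-remainder loses); the paper's rescaling makes the pairing $h^{1-\delta}\cdot h^{\delta}=h$ explicit and handles all seminorms uniformly, while your argument makes the consumption of the hypothesis $\e+\delta\le 1$ more visible. One point you should make explicit: after the integral, re-introduce $\lambda$ as a variable independent of $\zeta_1$ by writing $h^{-\delta}(\zeta_1\pm\theta_1/2)=\lambda\pm h^{-\delta}\theta_1/2$, since membership in $S^{k_1+k_2}_{\delta,\e}(\Sigma_0)$ requires separate control of $\partial_\lambda$-derivatives with the accompanying gain in the weight; this is implicit in your sketch but needs to be said for the seminorm argument to close.
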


We say that $a(x,\xi,y,\eta,\lambda,\mu)=\wt{\O{\e}}(\la h^\e\lambda\ra^{k_1}\la h^\e \mu\ra^{k_2})$ if 
$$|\partial_x^{{\varsigma}_1}\partial_y^{{\varsigma}_2} \partial_\xi^{{\varpi}_1}\partial_\eta^{{\varpi}_2}\partial_\lambda^{m_1}\partial_\mu^{m_2}a|\leq C_{{\varsigma}{\varpi} m}h^{-\e(|{\varsigma}|+|{\varpi}|)}\la h^\e\lambda\ra^{k_1-m_1}\la h^\e \mu\ra^{k_2-m_2}.$$ 
The only part of this lemma that is non-standard is the following. The rest follows from applying stationary phase.
\begin{lemma}
$$e^{ihA(D)}:\wt{\O{\e}}(\la h^\e\lambda\ra^{k_1}\la h^\e \mu\ra^{k_2})\to \wt{\O{\e}}(\la h^\e\lambda\ra^{k_1}\la h^\e \mu\ra^{k_2}).$$
\end{lemma}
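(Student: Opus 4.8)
The statement to prove is the mapping property
\[
e^{ihA(D)}:\wt{\O{\e}}(\la h^\e\lambda\ra^{k_1}\la h^\e \mu\ra^{k_2})\to \wt{\O{\e}}(\la h^\e\lambda\ra^{k_1}\la h^\e \mu\ra^{k_2}),
\]
where $A(D)=\tfrac12\la QD,D\ra$ is a constant-coefficient second-order operator in the variables $(x,\xi,y,\eta)$ and $\lambda=h^{-\delta}\xi_1$, $\mu=h^{-\delta}\eta_1$ are the rescaled dual variables appearing after the substitution $\lambda\mapsto h^{-\delta}\xi_1$, $\mu\mapsto h^{-\delta}\eta_1$ has been made.

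\textbf{Approach.} The plan is to realize $e^{ihA(D)}$ as a Fourier multiplier and estimate directly, tracking how the rescaled variables interact with the Gaussian phase. First I would write, for a symbol $a$ of the stated class (now viewed as a function of $(x,\xi,y,\eta)$ after freezing $\lambda=h^{-\delta}\xi_1$, $\mu=h^{-\delta}\eta_1$),
\[
e^{ihA(D)}a(X)=(2\pi)^{-2d}\int e^{i\la X-X',\Xi\ra}e^{-\frac{ih}{2}\la Q\Xi,\Xi\ra}a(X')\,dX'\,d\Xi,
\]
with $X=(x,\xi,y,\eta)$. The key difficulty is that the derivative bounds defining the class $\wt{\O\e}$ are anisotropic: a derivative $\partial_{\xi_1}$ costs a factor $h^{-\delta}$ through the chain rule once the freezing is undone, while $\partial_{x}$, $\partial_\xi$ (away from the $\xi_1$ direction) cost only $h^{-\e}$, and the weights $\la h^\e\lambda\ra$, $\la h^\e\mu\ra$ only see the $\xi_1,\eta_1$ directions. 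So the main step is to split the phase $A(D)$ into the part involving $D_{\xi_1},D_{\eta_1}$ and the rest, and to rescale the former.

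\textbf{Key steps.} (1) Change variables in the $\xi_1,\eta_1$ slots, setting $\lambda=h^{-\delta}\xi_1$, $\mu=h^{-\delta}\eta_1$ as genuine integration/differentiation variables; then $\partial_{\xi_1}=h^{-\delta}\partial_\lambda$, etc. Under this rescaling, the operator $hA(D)$ becomes a sum of three types of terms: (a) pure terms in $hD_x,hD_y,hD_\xi,hD_\eta$ (not touching $\xi_1,\eta_1$), which are handled by the standard fact that $e^{ihA_0(D)}$ preserves symbol classes of type $\e$ with $\e<1/2$ — this is the usual stationary-phase/Calder\'on–Vaillancourt-type argument and costs powers of $h^{1-2\e}>0$; (b) mixed terms like $hD_{x_j}D_{\xi_1}=h^{1-\delta}D_{x_j}D_\lambda$, where the gain is $h^{1-\delta-\e}$ per application (since $D_{x_j}$ costs $h^{-\e}$), and $1-\delta-\e\ge \e>0$ by the hypothesis $\e+\delta\le 1$ and $\e<\delta$ — wait, one needs $1-\delta-\e\ge 0$, which is exactly $\e+\delta\le 1$; (c) the pure $D_{\xi_1}D_{\eta_1}$ term, $hD_{\xi_1}D_{\eta_1}=h^{1-2\delta}D_\lambda D_\mu$, with gain $h^{1-2\delta}$, and $1-2\delta>0$ since $\delta<1/2$. (2) For each class of term, run the non-stationary/stationary phase expansion of $e^{ihA(D)}$ as an asymptotic series; the point is that every term in the expansion is better by a positive power of $h$, so the series (or its finite truncation with controlled remainder) stays in the class. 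Crucially, when a $\partial_\lambda$ falls on the weight $\la h^\e\lambda\ra^{k_1}$ it produces $\la h^\e\lambda\ra^{k_1-1}\cdot h^\e$, which lowers the weight and gains $h^\e$ — so these are strictly better, never worse. (3) Assemble the estimates to conclude the derivative bounds \eqref{semi-d:localSymb} for $e^{ihA(D)}a$ with the same $k_1,k_2$.

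\textbf{Main obstacle.} The genuinely delicate point is the uniformity in the large parameters $\lambda,\mu$ (equivalently, uniformity up to the edge of the support where $|\xi_1|,|\eta_1|\sim 1$, i.e. $|\lambda|,|\mu|\sim h^{-\delta}$): one must check that the weights $\la h^\e\lambda\ra^{k_i}$ are preserved and that the Gaussian integral $\int e^{-\frac{ih}{2}\la Q\Xi,\Xi\ra}\cdots d\Xi$ does not generate spurious powers of $\lambda$ or $\mu$ — this is why the bilinear form must be split so that no pure $D_{\xi_1}^2$ or $D_{\eta_1}^2$ term multiplies a weight without a compensating power of $h^{1-2\delta}$ or $h^\e$. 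The cleanest way to make this rigorous is to treat $e^{ihA(D)}$ via the exact integral formula, complete the square, and estimate the resulting oscillatory integral by repeated integration by parts in $\Xi$, using the vector field $\la\Xi\ra^{-2}(1+\Xi\cdot D_\Xi)$-type operators adapted to the anisotropic scaling; each integration by parts lands a $\partial_\Xi$ either on $a(X-hQ\Xi)$ (producing a factor $h$ times a derivative of $a$, hence a net gain as tallied in (1)) or on the elliptic symbol $\la\Xi\ra^{-2}$, and one counts powers carefully to see convergence. Everything else — the stationary phase expansions and the composition formula — is standard and, as the excerpt notes, already covered; only this single mapping property requires the anisotropic bookkeeping above.
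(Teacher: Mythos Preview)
Your outline is in the right direction but contains an error and is less direct than the paper's argument. The error: in item (c) you invoke ``$1-2\delta>0$ since $\delta<1/2$''. That hypothesis is not available; the standing assumptions are only $\e<\min(1/2,\delta)$ and $\e+\delta\le 1$, and the principal application is $\delta=2/3$. Fortunately the claim does no damage, because $A(D)=\tfrac12\sigma((D_x,D_\xi),(D_y,D_\eta))$ contains no $D_{\xi_1}D_{\eta_1}$ term at all---the symplectic pairing is $D_\xi\cdot D_y - D_x\cdot D_\eta$---so your case (c) is vacuous. (Your example in (b), $hD_{x_j}D_{\xi_1}$, is likewise not a term of $A$; the actual mixed terms are $hD_{\xi_1}D_{y_1}$ and $hD_{x_1}D_{\eta_1}$, for which the gain $h^{1-\delta-\e}$ you write down is the correct one.)

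The paper's execution avoids the term-by-term $h$-bookkeeping and the integration-by-parts-in-the-dual-variable sketch. It factors $e^{ihA(D)}$ as the product of the one-dimensional piece acting in $(x_1,\xi_1,y_1,\eta_1)$ and the piece in the remaining variables; the latter preserves $S_\e$ for $\e<1/2$ by the standard argument. For the one-dimensional factor one writes the exact convolution kernel $Ch^{-2}\int e^{-\frac{i}{h}\varphi_1(w)}a(\,\cdot\,-w)\,dw$ and rescales the integration variable anisotropically, sending $(x_1',y_1')$ to scale $h^{1-\delta}$ and $(\xi_1',\eta_1')$ to scale $h^{\delta}$. The Jacobian cancels the prefactor $h^{-2}$, the phase $h^{-1}\varphi_1(w)$ becomes $\varphi_1(\tilde w)$ with no $h$ present, and---this is the point---the shifts in the $\lambda,\mu$ slots become $\lambda-\tilde\xi_1$, $\mu-\tilde\eta_1$ (unit-size, since $h^{-\delta}\cdot h^{\delta}=1$). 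One then inserts a cutoff $\chi(\tilde w)$ near the origin: on $\supp\chi$ the estimate is immediate, and on $\supp(1-\chi)$ one integrates by parts with $L=-|\partial\varphi_1(\tilde w)|^{-2}\la\partial\varphi_1,D_{\tilde w}\ra$. This works uniformly at the endpoint $\e+\delta=1$, where your asymptotic-expansion approach has zero $h$-gain per term and the remainder control is not supplied by what you wrote.
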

\begin{proof}
We start by considering the case of one dimension. Let $w_1=(x'_1,\xi'_1,y_1',\eta'_1)$ and 
$$\varphi_1(w_1)=\frac{1}{2}(\la \xi_1',y_1'\ra-\la \eta_1',x_1'\ra).$$
Then, with $z=(x_1,\xi_1,y_1,\eta_1)$,
$$ c:=(e^{ihA(D)}a)(z,\mu)=
Ch^{-2}\int e^{-\frac{i}{h}\varphi_1(w)}a(w-z,\lambda-h^{-\delta}\xi_1,\mu-h^{-\delta}\eta_1)dw
$$
Then, rescale $(x_1',y_1')=(\tilde{x}_1,\tilde{y}_1)h^{-(1-\delta)}$, and $(\xi_1',\eta_1')=(\tilde{\xi}_1,\tilde{\eta}_1)h^{-\delta}.$ 
We have that with $\tilde{w}=(\tilde{x}_1,\tilde{\xi}_1,\tilde{y}_1,\tilde{\eta}_1)$, 
\begin{align*} 
c&=C\int e^{-i\varphi_1(\tilde{w})}\left(\chi(\tilde{w}) a(x_1-h^{1-\delta}\tilde{x}_1,\xi_1-h^{\delta}\tilde{\xi}_1,y_1-h^{1-\delta}\tilde{y}_1,\eta_1-h^{\delta}\tilde{\eta}_1,\lambda-\tilde{\xi}_1,\mu-\tilde{\eta}_1)\right.\\
&\quad\quad+ (1-\chi(\tilde{w}))a(x_1-h^{1-\delta}\tilde{x}_1,\xi_1-h^{\delta}\tilde{\xi}_1,y_1-h^{1-\delta}\tilde{y}_1,\eta_1-h^{\delta}\tilde{\eta}_1,\lambda-\tilde{\xi}_1,\mu-\tilde{\eta}_1)d\tilde{w}\\
&=:A+B
\end{align*} 
where $\chi \in \Cc(\re^{4})$ has $\chi\equiv 1$ on $B(0,1)$ and $\supp \chi\subset B(0,2)$. 
$$|\partial^{\varsigma} A(z,\lambda,\mu)|\leq C\sup_{|\tilde{w}|\leq 2}|\partial^{\varsigma} a(x_1-h^{1-\delta}\tilde{x}_1,\xi_1-h^{\delta}\tilde{\xi}_1,y_1-h^{1-\delta}\tilde{y}_1,\eta_1-h^{\delta}\tilde{\eta}_1,\lambda-\tilde{\xi}_1,\mu-\tilde{\eta}_1)|$$
and hence $A=\wt{\O{\e}}(\la h^\e\lambda \ra^{k_1}\la h^\e \mu\ra^{k_2}).$ Letting 
$$L:=\frac{-\la \partial\varphi(\tilde{w}), D_{\tilde{w}}\ra}{|\partial\varphi(\tilde{w})|^2}$$ 
and integrating by parts sufficiently many times shows also that 
$B=\wt{\O{\e}}(\la h^\e\lambda \ra^{k_1}\la h^\e \mu\ra^{k_2}).$

To obtain the general case, we simply observe that 
$$e^{ihA(D_x,D_\xi,D_y,D_\eta)}=e^{ihA(D_{x'},D_{\xi'},D_{y'},D_{\eta'})}e^{ihA(D_{x_1},D_{\xi_1},D_{y_1}D_{\eta_1})}$$
and use that 
$$e^{ihA(D_{x'},D_{\xi'},D_{y'},D_{\eta'})}:S_\e\to S_\e.$$
\end{proof}

Now, rewriting the asymptotic expansion, and assuming that $|\xi_1|\leq C$ so that 
$$h^{1-2\e}\leq Ch^{1-\delta-\e}\la h^\e \lambda\ra^{-1}$$
 we have if $\e+\delta<1$, taking $p_1>\frac{1-2\e}{1-\delta-\e}$
\begin{align*} 
a\sharp b(x,\xi,\lambda;h)&=\sum_{k=0}^\infty \left.\frac{i^kh^k}{2^kk!}(\sigma(D_x,D_{\xi_1}+h^{-\delta}D_\lambda,D_{\xi'},D_y,D_{\eta_1}+h^{-\delta}D_\mu,D_{\eta'}))^kab\right|_{\substack{y=x,\,\eta=\xi\\\lambda=\mu}}\\
&=ab+\frac{1}{2i}h^{1-\delta}(\partial_\lambda b\partial_{x_1}a-\partial_\lambda a\partial_{x_1}b)+\frac{h}{2i}\{a,b\}\\
&\quad\quad +\sum_{k=2}^{p_1}\left.\frac{i^kh^{k(1-\delta)}}{2^kk!}(\sigma(D_{x_1},D_\lambda,D_{y_1},D_\mu))^kab\right.|_{\substack{y=x,\,\eta=\xi\\\lambda=\mu}}+\wt{\O{\e}}\left(h^{2-3\e-\delta}\la h^\e\lambda\ra^{k_1+k_2-1}\right)
\end{align*}

\subsubsection{Ellipticity and Boundedness  in the local model}
\label{s:ell2}
We now present the analogs of microlocal elliptic estimates and the sharp G$\mathring{\text{a}}$rding inequalities in the second microlocal setting. 
Suppose that $\e+\delta<1$ and $a=\wt{\O{\e}}(\la h^\e\lambda\ra^{k_1})$. We define the \emph{elliptic set} of $a$, $\Ell(a)$ by 
$(x,\xi,\lambda)\in \Ell(a)$ if there exists a neighborhood, $U$ of $(x,\xi,\lambda)$ and $c>0$ so that $|a|>c\la h^\e\lambda\ra^{k_1}$ on $U$. 
\begin{lemma}
\label{semi-l:elliptic}
Suppose that $p=\wt{\O{\e}}(\la h^\e\lambda\ra^{k_1})$, $b=\wt{\O{\e}}(\la h^\e\lambda\ra^{k_2})$ and that $\supp b\subset \Ell(p)$. Then there exists $a_i=\O{\e}(\la h^\e\lambda\ra^{k_2-k_1})$, $i=1,2$ so that 
$$\wt{\oph}(a_1)\wt{\oph}(p)=\wt{\oph}(p)\wt{\oph}(a_2)+\O{\Ph{-\infty}{}}(h^\infty)=\wt{\oph}(b)+\O{\Ph{-\infty}{}}(h^\infty).$$
\end{lemma}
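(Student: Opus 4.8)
\textbf{Proof proposal for Lemma \ref{semi-l:elliptic}.}

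The plan is to mimic the standard parametrix construction for elliptic pseudodifferential operators, carried out in the second microlocal symbol class with the $\sharp$-product from the preceding lemma replacing the usual semiclassical composition. First I would construct $a_1$ by a formal Neumann-type iteration. On a neighborhood of $\supp b$ contained in $\Ell(p)$, set $q_0:=b/p$, which is well defined and lies in $\wt{\O{\e}}(\la h^\e\lambda\ra^{k_2-k_1})$ by the ellipticity hypothesis (one checks the symbol bounds \eqref{semi-d:localSymb} directly, using that $|p|\gtrsim \la h^\e\lambda\ra^{k_1}$ on $U$ and differentiating the quotient). Then $\wt{\oph}(q_0)\wt{\oph}(p)=\wt{\oph}(q_0\sharp p)$ and the asymptotic expansion of $\sharp$ gives $q_0\sharp p=b+r_1$ with $r_1\in \wt{\O{\e}}(h^{1-\delta-\e}\la h^\e\lambda\ra^{k_2})$, since every term beyond the leading one in the expansion carries a gain of at least $h^{1-\delta}$ (or $h$), and the worst derivative loss costs $h^{-\e}$. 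Note that I must cut off: one uses a symbol $\chi$ equal to $1$ near $\supp b$ and supported in $U$, so that all quotients make sense globally; the error from the cutoff is $\O{\Ph{-\infty}{}}(h^\infty)$ because outside $\supp\chi$ the symbol $b$ vanishes and the $\sharp$-product is local modulo $h^\infty$.

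Next I would iterate: having found $q_j$ with $\big(\sum_{i=0}^j q_i\big)\sharp p = b + r_{j+1}$ and $r_{j+1}\in \wt{\O{\e}}(h^{(j+1)(1-\delta-\e)}\la h^\e\lambda\ra^{k_2})$, set $q_{j+1}:=-r_{j+1}/p\in \wt{\O{\e}}(h^{(j+1)(1-\delta-\e)}\la h^\e\lambda\ra^{k_2-k_1})$, so that $q_{j+1}\sharp p = -r_{j+1}+r_{j+2}$ with an error one power of $h^{1-\delta-\e}$ smaller. Because $\e+\delta<1$ the gain $h^{1-\delta-\e}$ is a genuine positive power of $h$, so one can Borel-sum the series $a_1\sim\sum_j q_j$ to obtain $a_1\in\wt{\O{\e}}(\la h^\e\lambda\ra^{k_2-k_1})$ with $a_1\sharp p = b \mod h^\infty\Ph{-\infty}{}$. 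The construction of $a_2$ (the right parametrix relation $\wt{\oph}(p)\wt{\oph}(a_2)=\wt{\oph}(b)$) is identical with the roles of the factors in the $\sharp$-product swapped, using $p\sharp q_0 = b + \text{lower order}$; alternatively one deduces it by taking formal adjoints, noting that $\wt{\oph}(a)^* = \wt{\oph}(\bar a)\mod h^\infty$ in this calculus and that $\Ell(\bar p)=\Ell(p)$.

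The main obstacle is purely bookkeeping: verifying that the quotient $r_j/p$ stays in the correct second microlocal class \eqref{semi-d:localSymb}, i.e. that dividing by the elliptic symbol $p$ does not spoil the $\la h^\e\lambda\ra$-weighted derivative estimates, and that each application of $A(D)$ in the expansion of $\sharp$ really produces the advertised gain $h^{1-\delta-\e}$ in this mixed scaling (the variable $\lambda=h^{-\delta}\xi_1$ is differentiated with the anisotropic weight, so one must track the interplay between the $h^{1-\delta}$ from the $x_1,\lambda$ part of the symplectic form and the $h^{-\e}$ derivative losses, exactly as in the expansion of $a\sharp b$ displayed just before this subsection). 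All of this is routine given the composition lemma and the boundedness statement $e^{ihA(D)}:\wt{\O{\e}}(\la h^\e\lambda\ra^{k_1}\la h^\e\mu\ra^{k_2})\to \wt{\O{\e}}(\la h^\e\lambda\ra^{k_1}\la h^\e\mu\ra^{k_2})$ already proven above, so I would state the parametrix construction and relegate the symbol estimates to a remark that they follow as in the classical case (e.g. \cite[Section 2.2]{zeeman} or \cite[Theorem 18.1.24']{HOV3}).
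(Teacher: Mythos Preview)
Your proposal is correct and follows essentially the same route as the paper: set $q_0=b/p$, use the $\sharp$-expansion to produce a remainder with a gain of $h^{1-\delta-\e}$, iterate, and Borel-sum. The paper is slightly more explicit in one place (it writes out the Fa\`a di Bruno--type identity for $\partial^\alpha p^{-1}$ to verify $bp^{-1}\in\wt{\O{\e}}(\la h^\e\lambda\ra^{k_2-k_1})$) and also tracks an additional drop in the $\lambda$-order of the remainders ($e_j\in\wt{\O{\e}}(\la h^\e\lambda\ra^{k_2-j})$ rather than just $\la h^\e\lambda\ra^{k_2}$), but your coarser bookkeeping is already enough for the asymptotic sum to converge.
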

\begin{proof}
By elementary analysis, one sees that
$$\partial^{\varsigma} p^{-1}=p^{-1}\sum_{k=1}^{|{\varsigma}|}\sum_{\substack{{\varsigma}={\varpi}^1+\dots +{\varpi}^k\\|{\varpi}^j|\geq 1}}C_{{\varpi}^1,\dots,{\varpi}^k}\prod_{j=1}^k(p^{-1}\partial^{{\varpi}_j}p)$$
 (see for example the proof of \cite[Theorem 4.32]{EZB}). Thus, since $|p|\geq c \la h^\e\lambda\ra^{k_1}$ on $\supp b$, 
 $$q_0:bp^{-1}=\wt{\O{\e}}(\la h^\e\lambda\ra^{k_2-k_1}).$$
So, 
$$\wt{\oph}(q_0)\wt{\oph}(p)=\wt{\oph}(b)+h^{1-\delta-\e}\wt{\oph}(e_1)+\O{\Ph{-\infty}{}}(h^\infty)$$
where $e_1=\wt{\O{\e}}(\la h^\e\lambda\ra^{k_2-1})$ with $\supp e_1\subset \Ell(p)$. Thus, setting $r_1=h^{1-\delta-\e}e_1$ and letting $q_1=-r_1p^{-1}=h^{1-\delta-\e}\wt{\O{\e}}(\la h^\e\lambda\ra^{k_2-k_1-1}).$ Continuing in this way, we obtain 
$$q_n= h^{n(1-\delta-\e)}\wt{\O{\e}}(\la h^\e\lambda\ra^{k_2-k_1-n})$$
 so that with 
$a_1\sim \sum q_i$, 
$$\wt{\oph}(a_1)\wt{\oph}(p)=\wt{\oph}(b)+\O{\Ph{-\infty}{}}(h^\infty).$$
A similar argument, yields $a_2$. 
\end{proof}

\begin{lemma}
Suppose that $a=\wt{\O{\e}}(\la h^\e\lambda\ra^{k_1}).$ Then
$$\|\wt{\oph}(a)\|_{L^2\to L^2}\leq C\la h^{(\e-\delta) k_1}\ra .$$ 
\end{lemma}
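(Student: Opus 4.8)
The plan is to peel the weight $\la h^\e\lambda\ra^{k_1}$ off as an explicit Fourier multiplier in the $x_1$-variable, and then, after an anisotropic rescaling, to reduce the remaining order-zero operator to a quantization covered by the classical Calder\'on--Vaillancourt bound. Throughout I read the hypothesis as $a=\wt{\O{\e}}(\la h^\e\lambda\ra^{k_1})$, i.e. $a\in S^{k_1}_{\delta,\e}(\Sigma_0)$.

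\emph{Step 1: reduction to $k_1=0$.} Since $a$ has compact support, fix $R$ with $\supp a\subset\{|\xi_1|\le R\}$ and pick $\tilde\chi\in\Cc(\re)$ with $\tilde\chi\equiv1$ on $\{|\xi_1|\le R+1\}$. Set $w:=\la h^\e\lambda\ra^{k_1}\tilde\chi(\xi_1)$; then $w=\wt{\O{\e}}(\la h^\e\lambda\ra^{k_1})$ and $w\equiv\la h^\e\lambda\ra^{k_1}$ on $\supp a$, so $\supp a\subset\Ell(w)$. By the second microlocal elliptic parametrix (Lemma~\ref{semi-l:elliptic}, applied with $p=w$, $b=a$, $k_2=k_1$) there is $a_0=\wt{\O{\e}}(1)$ with
$$\wt{\oph}(w)\,\wt{\oph}(a_0)=\wt{\oph}(a)+\O{\Ph{-\infty}{}}(h^\infty).$$
Now $w|_{\lambda=h^{-\delta}\xi_1}=\la h^{\e-\delta}\xi_1\ra^{k_1}\tilde\chi(\xi_1)=:m(\xi_1)$ depends only on $\xi_1$, so $\wt{\oph}(w)$ is the semiclassical Fourier multiplier $m(hD_{x_1})$ in $x_1$ (tensored with the identity in $x'$) and $\|\wt{\oph}(w)\|_{L^2\to L^2}=\sup_{\xi_1}|m(\xi_1)|=\sup_{\xi_1\in\supp\tilde\chi}\la h^{\e-\delta}\xi_1\ra^{k_1}$. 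A one-line estimate, using $\e<\delta$ (so $h^{\e-\delta}\to\infty$), shows this supremum is $\le C\la h^{(\e-\delta)k_1}\ra$ (it is attained at the edge of $\supp\tilde\chi$ when $k_1\ge0$ and at $\xi_1=0$ when $k_1<0$). Hence
$$\|\wt{\oph}(a)\|_{L^2\to L^2}\le\|\wt{\oph}(w)\|\,\|\wt{\oph}(a_0)\|+\O{}(h^\infty)\le C\la h^{(\e-\delta)k_1}\ra\,\|\wt{\oph}(a_0)\|+\O{}(h^\infty),$$
and it remains to show $\|\wt{\oph}(a_0)\|\le C$ for $a_0=\wt{\O{\e}}(1)$.

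\emph{Step 2: the case $k_1=0$.} For $a_0=\wt{\O{\e}}(1)$, set $b(x,\xi;h):=a_0(x,\xi,h^{-\delta}\xi_1;h)$; comparing the two integral formulas, $\wt{\oph}(a_0)$ is exactly the $h$-Weyl quantization of $b$. The function $b$ is uniformly bounded, but a chain-rule $\xi_1$-derivative produces the term $h^{-\delta}(\partial_\lambda a_0)=\O{}(h^{-\delta}\la h^\e\lambda\ra^{-1})$, so a priori $b$ lies only in the (possibly too rough) class $S_\delta$, and Lemma~\ref{lem:goodL2Bound} need not apply because $\delta$ may exceed $1/2$. To fix this I would conjugate $\wt{\oph}(a_0)$ by the anisotropic unitary rescaling of size $h^{1-\delta}$ in $x_1$ and $h^{1/2}$ in $x'$, with the dual rescalings in frequency. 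A short computation — using $\e+\delta\le1$ for the $x_1$-scaling, $\e<\delta$ so that the factor $h^{-\delta}$ above is absorbed against the improving weight $\la h^\e\lambda\ra^{-1}\le1$, and $\e<1/2$ for the $x'$-scaling — shows that $\wt{\oph}(a_0)$ is unitarily equivalent to the $h=1$ Weyl quantization $\op^W_1(\tilde b)$ of a symbol $\tilde b$ that, together with all of its derivatives, is bounded uniformly in $h$. The classical Calder\'on--Vaillancourt theorem then gives $\|\wt{\oph}(a_0)\|=\|\op^W_1(\tilde b)\|\le C\sum_{|\gamma|\le M(d)}\|\partial^\gamma\tilde b\|_{L^\infty}\le C$, which completes the argument.

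The hard part is the point flagged in Step 2: the substitution $\lambda=h^{-\delta}\xi_1$ does not respect the standard pseudodifferential calculus — one $\xi_1$-derivative costs $h^{-\delta}$, and $\delta$ is as large as $2/3$ in the glancing applications — so neither Lemma~\ref{lem:goodL2Bound} nor a crude Schur estimate on the Schwartz kernel yields the stated bound. The anisotropic rescaling is what repairs this, by spreading the operator out at the correct direction-dependent scale and turning it into an honest $h$-independent bounded quantization; the remaining content is bookkeeping, the only other delicate point being the identification in Step~1 of the operator norm of the pure $x_1$-multiplier $\wt{\oph}(w)$ with exactly the factor $\la h^{(\e-\delta)k_1}\ra$.
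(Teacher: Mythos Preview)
Your argument is correct, but it takes a different and somewhat longer route than the paper's.  The paper does not factor off the weight at all: it applies a single \emph{isotropic} dilation $(\tilde x,\tilde\xi)=(h^{-(1-\delta)/2}x,\,h^{-(1+\delta)/2}\xi)$, under which $\wt{\oph}(a)$ becomes the $h=1$ Weyl quantization of $a_h(\tilde x,\tilde\xi)=a(h^{(1-\delta)/2}\tilde x,\,h^{(1+\delta)/2}\tilde\xi,\,h^{(1-\delta)/2}\tilde\xi_1)$, and then invokes the classical Calder\'on--Vaillancourt bound $\|\op_1^W(a_h)\|\le C\sup_{|\gamma|\le Md}\|\partial^\gamma a_h\|_{L^\infty}$; the factor $\la h^{(\e-\delta)k_1}\ra$ drops out directly from the supremum of $\la h^\e\lambda\ra^{k_1}$ over the (compact) $\xi_1$-support of $a$.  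By contrast, you first spend the elliptic parametrix (Lemma~\ref{semi-l:elliptic}) to peel the weight off as a pure $x_1$-Fourier multiplier, and then use an \emph{anisotropic} rescaling for the order-zero remainder.  Your approach makes the source of the constant $\la h^{(\e-\delta)k_1}\ra$ very transparent (it is literally the operator norm of a multiplier), at the cost of invoking the parametrix lemma---which in the paper carries the strict hypothesis $\e+\delta<1$, so your argument as written does not cover the borderline $\e+\delta=1$.  The paper's direct rescaling needs no such auxiliary input and handles all $k_1$ in one stroke.
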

\begin{proof}
The proof for $h=1$ follows from for example \cite[Theorem 4.23]{EZB}. Suppose that $u(x)\in \mc{S}$. The proof follows that in \cite[Theorem 5.1]{EZB}. We have that 
$$\|\wt{\op}(a)\|_{L^2\to L^2}\leq C\sup_{|{\varsigma}|\leq Md}|\partial^{\varsigma} a|.$$
So, we rescale $\tilde{\xi}=h^{-\frac{(1+\delta)}{2}}\xi,$ $\tilde{x}=h^{-\frac{1-\delta}{2}}x$ and $\tilde{u}(\tilde{x})=h^{\frac{(1-\delta )d}{4}}u(h^{\frac{1-\delta}{2}}\tilde{x}).$ Then,
$$\wt{\oph}(a)u(x)=h^{-\frac{d(1-\delta)}{4}}\wt{\op}(a_h)\tilde{u}(\tilde{x}).$$
where 
$$a_h(\tilde{x},\tilde{\xi}):=a(h^{\frac{1-\delta}{2}}\tilde{x},h^{\frac{1+\delta}{2}}\tilde{\xi},h^{\frac{1-\delta}{2}}\tilde{\xi}_1).$$
Therefore, 
\begin{align*} 
\|\wt{\oph}(a)u\|_{L^2_x}&=\|\wt{\op}(a_h)\tilde{u}(\tilde{x})\|_{L^2_{\tilde{x}}}\leq \|\wt{\op}(a_h)\|_{L^2\to L^2}\|\tilde{u}\|_{L_{\tilde{x}}^2}\\
&\leq C\sup_{|{\varsigma}|\leq Md}|\partial^{\varsigma} a_h|\|u\|_{L_x^2}\\
&\leq C\sup_{|({\varsigma},{\varpi},k)|\leq Md}h^{(|{\varsigma}|+|{\varpi}|+k)\frac{1-\delta}{2}}|\partial_x^{\varsigma}\partial_\xi^{\varpi}\partial_\lambda^ka|\\
&\leq C\sup_{|({\varsigma},{\varpi},k)|\leq Md}h^{(|{\varsigma}|+|{\varpi}|+k)\frac{1-\delta}{2}}(1+\la h^{\e-\delta}\ra^{k_1-k})
\end{align*}
\end{proof}

We now prove an analog of the Sharp G$\mathring{\text{a}}$rding inequality for the second microlocal operators. 
\begin{lemma}
\label{semi-l:garding}
Suppose that $a=\wt{\O{0}}(\la \lambda\ra^{0})$ and $a\geq 0$. Then
$$\la \wt{\oph}(a)u,u\ra \geq -Ch^{1-\delta}\|u\|_{L^2}^2.$$
\end{lemma}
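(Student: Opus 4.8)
The plan is to reduce the second microlocal sharp G\aa rding inequality to the standard semiclassical sharp G\aa rding inequality by a rescaling argument, exactly parallel to the $L^2$-boundedness lemma proved just above. First I would record the standard (first microlocal, $h=1$) sharp G\aa rding inequality: if $b\in S^0_0(\re^d)$ with $b\geq 0$, then $\la \op(b)u,u\ra\geq -C\|u\|_{L^2}^2$, with $C$ controlled by finitely many seminorms of $b$; see \cite[Theorem 4.32]{EZB} or the Friedrichs-symmetrization proof therein. The key point is that the constant depends only on $\sup_{|\alpha|\leq N_d}|\partial^\alpha b|$ for some dimensional $N_d$.

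Next I would set up the rescaling. Given $a=\wt{\O{}}(\la\lambda\ra^0)$ with $a\geq 0$ and $\lambda=h^{-\delta}\xi_1$, introduce $\tilde x=h^{-(1-\delta)/2}x$, $\tilde\xi=h^{-(1+\delta)/2}\xi$, and $\tilde u(\tilde x)=h^{-d(1-\delta)/4}u(h^{(1-\delta)/2}\tilde x)$, so that as in the boundedness proof $\wt{\oph}(a)u(x)=h^{-d(1-\delta)/4}\wt{\op}(a_h)\tilde u(\tilde x)$ with
$$a_h(\tilde x,\tilde\xi):=a\bigl(h^{(1-\delta)/2}\tilde x,\,h^{(1+\delta)/2}\tilde\xi,\,h^{(1-\delta)/2}\tilde\xi_1\bigr).$$
Unitarity of the rescaling gives $\la\wt{\oph}(a)u,u\ra_{L^2_x}=\la\wt{\op}(a_h)\tilde u,\tilde u\ra_{L^2_{\tilde x}}$, so the inequality for $a$ is equivalent to the $h=1$ sharp G\aa rding inequality applied to $a_h\geq 0$. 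I then need to check that $a_h\in S^0_0$ with seminorms bounded uniformly in $h$: differentiating, each $\tilde x$-derivative brings a factor $h^{(1-\delta)/2}$ against the bound $h^{-\e|\alpha|}$ for $x$-derivatives of $a$, and since $\e<\delta<1$ we have $(1-\delta)/2-\e\geq 0$ only if $\e\leq (1-\delta)/2$ — here I should be a bit careful. Actually the relevant gain is $(1-\delta)/2$ per derivative against a loss $h^{-\e}$, and since in this lemma $k_1=0$ there is no $\la h^\e\lambda\ra$ weight to worry about; the $\xi$-derivatives similarly gain $h^{(1+\delta)/2}$, and the $\lambda$-direction gains $h^{(1-\delta)/2}$ against $\la h^\e\lambda\ra^{-k}$ which is $\O{}(1)$. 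So provided $\e$ is small relative to $1-\delta$ (which is part of the standing hypotheses $\e<\min(1/2,\delta)$, $\e+\delta\leq 1$, and can be arranged), all seminorms of $a_h$ are $\O{}(1)$, uniformly in $h$.

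Finally, applying the classical sharp G\aa rding inequality to $a_h$ gives $\la\wt{\op}(a_h)\tilde u,\tilde u\ra\geq -C\|\tilde u\|^2$ with $C$ depending only on the (uniformly bounded) seminorms of $a_h$; unwinding the rescaling yields $\la\wt{\oph}(a)u,u\ra\geq -C\|u\|_{L^2}^2$. To get the sharper $h^{1-\delta}$ loss stated in the lemma rather than just $\O{}(1)$, I would instead keep track of the scale more carefully: the classical sharp G\aa rding bound, when applied with the full semiclassical parameter restored, produces a loss equal to the effective ``$h$'' of the rescaled problem, which is $h^{1-\delta}$ (the area scale $h^{(1-\delta)/2}\cdot h^{(1+\delta)/2}=h$ in the $(x_1,\xi_1)$ plane is actually $h$, but the anisotropy concentrated in the $\lambda=h^{-\delta}\xi_1$ direction means the relevant gain per commutator step is $h^{1-\delta}$, matching the remainder $h^{1-\delta}(\partial_\lambda b\,\partial_{x_1}a-\cdots)$ appearing in the composition formula above). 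I expect the main obstacle to be precisely this bookkeeping: verifying that the Friedrichs symmetrization / classical G\aa rding argument, when transported through the anisotropic rescaling, loses exactly $h^{1-\delta}$ and not something worse, and confirming the seminorm bounds on $a_h$ hold under the stated relations among $\e$ and $\delta$. An alternative that sidesteps the rescaling is a direct Friedrichs-symmetrization argument in the $\wt{\oph}$ calculus: set $F:=\wt{\oph}(\sqrt a)^*\wt{\oph}(\sqrt a)$ and use the composition formula established above to show $F=\wt{\oph}(a)+h^{1-\delta}\wt{\oph}(e)$ with $e=\wt{\O{}}(\la h^\e\lambda\ra^{-1})$, whence $\la\wt{\oph}(a)u,u\ra=\|\wt{\oph}(\sqrt a)u\|^2-h^{1-\delta}\la\wt{\oph}(e)u,u\ra\geq -Ch^{1-\delta}\|u\|^2$ by the boundedness lemma; the only subtlety there is that $\sqrt a$ need not be smooth where $a$ vanishes, handled in the usual way by replacing $a$ with $a+h^{1-\delta}$ or by the standard smoothing-of-the-square-root trick.
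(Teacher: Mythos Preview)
Your rescaling approach has a genuine gap: after passing to the $h=1$ calculus via $\tilde x=h^{-(1-\delta)/2}x$, $\tilde\xi=h^{-(1+\delta)/2}\xi$, the seminorms of $a_h$ are uniformly $\O{}(1)$ (exactly as in the $L^2$-boundedness proof just above), so the classical sharp G\aa rding inequality only yields $\la\wt{\op}(a_h)\tilde u,\tilde u\ra\geq -C\|\tilde u\|^2$ with a constant $C$ independent of $h$. Unwinding gives $\la\wt{\oph}(a)u,u\ra\geq -C\|u\|^2$, not $-Ch^{1-\delta}\|u\|^2$. Your suggestion to ``keep track of the scale more carefully'' is not a fix: the $h=1$ sharp G\aa rding bound is a dimensional constant times seminorms, and those seminorms do not decay in $h$. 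The small parameter $h^{1-\delta}$ has to come from somewhere else.

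The paper takes a different route, which is also what your ``alternative'' at the end is groping toward. Set $\gamma=h^{1-\delta}/\tilde h$ for $\tilde h$ small but fixed, and show directly that $\wt{\oph}(a)+\gamma$ is invertible by constructing a parametrix $\wt{\oph}((a+\gamma)^{-1})$ in the second microlocal calculus. The crucial symbolic input is the elementary inequality $|\partial a|\leq C a^{1/2}$ for nonnegative $a$ (and its $\lambda$-weighted analogue $|\partial_\lambda a|\leq C\la\lambda\ra^{-1}a^{1/2}$), which forces $(a+\gamma)^{-1}\in \gamma^{-1}S^0_{\delta+(1-\delta)/2,(1-\delta)/2}$; then the composition formula gives $(a+\gamma)\sharp(a+\gamma)^{-1}=1+\O{}(\tilde h)$, invertible for $\tilde h$ small. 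Hence $\Spec(\wt{\oph}(a))\subset[-\gamma,\infty)$ and the bound follows. This is the standard proof of sharp G\aa rding in \cite[Theorem~4.32]{EZB}, transported into the second microlocal calculus; the rescaling-to-$h=1$ argument you lead with does not produce the gain and should be replaced by this invertibility argument. Your Friedrichs-symmetrization variant with $\sqrt{a+h^{1-\delta}}$ would also work and relies on the same $|\partial a|\lesssim a^{1/2}$ bound to control the symbol class of the square root, but you did not carry it out.
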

\begin{proof}
We again follow the proof in the classical case. (See for example \cite[Theorem 4.32]{EZB}). Fix $\tilde{h}$ sufficiently small and let $\gamma=h^{\e}/\tilde{h}$. We will show that 
$q=(a+\gamma)^{-1}$ satisfies
\begin{equation} 
\label{semi-e:sharpGardGoal} \partial_x^{\varsigma}\partial_\xi^{\varpi}\partial_\lambda^k q=\O{}(h^{-\e}\tilde{h}(\tilde{h} h)^{-\frac{\e}{2}(|{\varsigma}|+|{\varpi}|+k)}\la \lambda\ra^{-k}).
\end{equation}
That is $q\in h^{-\e}\tilde{h}S^{0}_{\delta+\e/2,\e/2}(\Sigma_0)$. We will then be able to invert $a+\gamma$ when $\e\leq 1-\delta$. 

First, since $a\geq 0$ and $a=\wt{\O{0}}(\la \lambda\ra^{0})$, $|\partial_\lambda a|\leq C\la \lambda\ra^{-1}a^{1/2}$. (see for example \cite[Lemma 4.31]{EZB}) Moreover, $|\partial_x a|+|\partial_\xi a|\leq Ca^{1/2}$. Then recall that 
\begin{equation} 
\label{semi-e:inverse}\partial^{\varsigma} (a+\gamma)^{-1}=(a+\gamma)^{-1}\sum_{k=1}^{|{\varsigma}|}\sum_{\substack{{\varsigma}={\varpi}^1+\dots +{\varpi}^k\\|{\varpi}^j|\geq 1}}C_{{\varpi}^1,\dots,{\varpi}^k}\prod_{j=1}^k((a+\gamma)^{-1}\partial_x^{{\varpi}_{j,1}}\partial_\xi^{{\varpi}_{j,2}}\partial_\lambda^{{\varpi}_{j,3}}a).
\end{equation}
Now,
$$|\partial_\lambda a|(a+\gamma)^{-1}\leq C\gamma^{-1/2}\la \lambda\ra^{-1}$$
and for $|{\varpi}|=1$
$$(|\partial_x^{\varpi} a|+|\partial_\xi^{\varpi} a|)(a+\gamma)^{-1}\leq C\gamma^{-1/2}.$$
Moreover, for $|({\varsigma},{\varpi}, k)|\geq 2$, 
$$|\partial_x^{\varsigma} \partial_\xi^{\varpi}\partial_\lambda^ka|(a+\gamma)^{-1}\leq C\gamma^{-1}\la \lambda\ra^{-k}.$$
So, 
$$\left|\prod_{j=1}^k(a+\gamma)^{-1}\partial_x^{{\varpi}_{j,1}}\partial_{\xi}^{{\varpi}_{j,2}}\partial_{\lambda}^{{\varpi}_{j,3}}a\right|\leq C\prod_{|{\varpi}|\geq 2}\gamma^{-1}\la \lambda\ra^{-{\varpi}_{j,3}}\prod_{|{\varpi}|=1}\gamma^{-1/2}\la \lambda\ra^{-{\varpi}_{j,3}}\leq C\la \lambda\ra^{-{\varsigma}_3}\gamma^{-|{\varsigma}|/2}$$
Plugging this into \eqref{semi-e:inverse} gives \eqref{semi-e:sharpGardGoal}.

We now choose $\e=1-\delta$. So, $a+\gamma\in S^0_{\delta,0}(\Sigma_0)\subset S^0_{\delta+\e/2,\e/2}(\Sigma_0).$ Then, write 
$a_1(x,\xi,\lambda_1)$ for the function so that
$$\wt{\oph_{\delta}}(a)=\wt{\oph_{\delta+\e/2}}(a_1).$$
Write also $q_1=(a_1+\gamma)^{-1}.$
 So we can define 
$(a_1+\gamma)\sharp q_1$ Then, using Taylor's formula and letting $w=(x,\xi)$, $z=(y,\eta)$, 
\begin{multline*} 
(a_1+\gamma)\sharp q_1=\left.e^{ihA(D)}(a_1+\gamma)|_{\lambda=h^{-\delta-\e/2}\xi_1}q_1|_{\mu=h^{-\delta-\e/2}\xi_1}\right|_{w=z}\\
=1+\int_0^1(1-t)\left.e^{ithA(D)}(ihA(D))^2(a_1(w,h^{-\delta-\e/2}\xi_1)q_1(z,h^{-\delta-\e/2}\eta_1))\right|_{w=z}dt\\
=:1+r(z).
\end{multline*}
Note that we have used that $\{a_1+\gamma, (a_1+\gamma)^{-1}\}=0$. Now, 
$(ihA(D))^2(a_1+\gamma)\sharp q_1\in \tilde{h}S^0_{\delta+\e/2,\e/2}(\Sigma_0)$. So, 
$$\|\wt{\oph\limits_{\delta+\e/2}}(r)\|_{L^2\to L^2}\leq C\tilde{h}\leq \frac{1}{2}$$ 
for $\tilde{h}$ small enough. Thus, $\wt{\oph\limits_{\delta+\e/2}}(q)$ is an approximate right (and similarly left) inverse for $\wt{\oph\limits_{\delta}}(a)+\gamma$.
This implies that $(\wt{\oph\limits_{\delta}}(a)+\gamma+\gamma_1)^{-1}$ exists for any $\gamma_1\geq0$ Therefore, 
$$\Spec(\wt{\oph_{\delta}}(a))\subset [-\gamma,\infty).$$
Thus, by \cite[Theorem C.8]{EZB} 
$$\la \wt{\oph}(a)u,u\ra \geq -\gamma \|u\|_{L^2}^2.$$
\end{proof}

Using the Sharp G$\mathring{\text{a}}$rding inequality, it is not hard to prove that
\begin{lemma}
\label{lem:upperBound}
Suppose $a=\widetilde{O}(\la \lambda\ra^0)$. Then,
$$\la \wt{\oph}(a)^*\wt{\oph}(a)u,u\ra\leq (\sup |a|+Ch^{1-\delta})\|u\|_{L^2}^2.$$
\end{lemma}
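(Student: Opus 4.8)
The plan is to deduce this from the composition calculus for $\sharp$ together with the sharp G{\aa}rding inequality (Lemma~\ref{semi-l:garding}). First I would note that, since $\wt{\oph}$ is a Weyl-type quantization (the amplitude is evaluated at $\frac{x+y}{2}$), conjugating the Schwartz kernel shows that $\wt{\oph}(a)^*=\wt{\oph}(\bar a)$ exactly, with no remainder. Hence
\[
\wt{\oph}(a)^*\wt{\oph}(a)=\wt{\oph}(\bar a)\wt{\oph}(a)=\wt{\oph}(\bar a\sharp a).
\]
I would then expand $\bar a\sharp a$ via the asymptotic expansion for $\sharp$ recorded above: the leading term is $|a|^2$, and each subsequent term carries a factor $h^{1-\delta}$ or $h$ and belongs to $\wt{\O{\e}}(\la h^\e\lambda\ra^{-1})$ or something smaller, while the stated remainder $\wt{\O{\e}}(h^{2-3\e-\delta}\la h^\e\lambda\ra^{-1})$ is negligible for $\e$ small. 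Writing $\bar a\sharp a=|a|^2+r$, the $L^2\to L^2$ bound for second microlocal operators proved above, applied termwise with order $k_1=-1$ and using $\e<\delta$ (so that $\la h^{\delta-\e}\ra$ stays bounded), gives $\|\wt{\oph}(r)\|_{L^2\to L^2}\leq Ch^{1-\delta}$.

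Next I would set $M:=\sup|a|^2$ and apply Lemma~\ref{semi-l:garding} to the symbol $M-|a|^2$. This symbol is nonnegative and lies in $\wt{\O{}}(\la\lambda\ra^0)$, being a constant minus a product of symbols from that class; also $\wt{\oph}(|a|^2)=\wt{\oph}(|a|^2)^*$, so its quadratic form is real and Lemma~\ref{semi-l:garding} yields
\[
M\|u\|_{L^2}^2-\la\wt{\oph}(|a|^2)u,u\ra=\la\wt{\oph}(M-|a|^2)u,u\ra\geq -Ch^{1-\delta}\|u\|_{L^2}^2.
\]
Combining this with $\wt{\oph}(a)^*\wt{\oph}(a)=\wt{\oph}(|a|^2)+\wt{\oph}(r)$ and the crude estimate $|\la\wt{\oph}(r)u,u\ra|\leq\|\wt{\oph}(r)\|_{L^2\to L^2}\|u\|_{L^2}^2\leq Ch^{1-\delta}\|u\|_{L^2}^2$ produces
\[
\la\wt{\oph}(a)^*\wt{\oph}(a)u,u\ra\leq\left(\sup|a|^2+Ch^{1-\delta}\right)\|u\|_{L^2}^2,
\]
which is the asserted bound (it comes out naturally with $\sup|a|^2$; in the applications one has $\sup|a|\leq1$, so this also gives the form stated above).

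I do not anticipate a genuine obstacle: this is the standard ``$T^*T$ is controlled by the square of the sup of the symbol'' argument, transplanted to the second microlocal calculus, with Lemma~\ref{semi-l:garding} supplying the lower bound on $M-|a|^2$. The only step requiring a little care is checking that the whole remainder $r$ in $\bar a\sharp a=|a|^2+r$ really defines an operator of norm $O(h^{1-\delta})$ on $L^2$; this is exactly where the hypothesis $\e<\delta$ is used, ensuring that the weights $\la h^\e\lambda\ra^{-k}$ produced by the $\sharp$-expansion are absorbed harmlessly by the boundedness lemma.
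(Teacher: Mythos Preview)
Your argument is correct and is exactly the approach the paper intends: the paper does not spell out a proof, saying only ``Using the Sharp G\r{a}rding inequality, it is not hard to prove that\ldots,'' and your write-up fills this in via $\wt{\oph}(a)^*\wt{\oph}(a)=\wt{\oph}(|a|^2)+\wt{\oph}(r)$ and then Lemma~\ref{semi-l:garding} applied to $\sup|a|^2-|a|^2\geq 0$. Your observation that the stated bound should read $\sup|a|^2$ rather than $\sup|a|$ is also correct; the applications in the paper (e.g.\ the estimate $\|(RT)^Nu\|^2\leq(\sup|\tilde\sigma((RT)_N)|+\ldots)\|u\|^2$, where $(RT)_N$ is already the $T^*T$ operator) use it in that form.
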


\subsection{The global second microlocal calculus}
Let $\Sigma\subset T^*M$ be a smooth compact hypersurface. Let $V_i$ denote vector fields tangent to $\Sigma$ and $W_i$ denote any vector fields.  Let $0\leq \delta <1$. We define the symbol class $S^{k_1,k_2}_{\delta}(M;\Sigma)$ by 
$a\in S^{k_1,k_2}_{\delta}(M;\Sigma)$ if and only if 
\begin{equation}
\label{semi-d:secondDef}\left\{\begin{gathered} \text{ near } \Sigma: V_1\dots V_{l_1}W_1\dots W_{l_2}a=\O{}(h^{-\delta l_2}\la h^{-\delta}d(\Sigma, \cdot)\ra^{k_1}),\\
\text{ away from }\Sigma: \partial_x^{\varsigma}\partial_\xi^{\varpi} a(x,\xi; h)=\O{}(h^{-\delta k_1}\la \xi\ra^{k_2-|{\varpi}|}).
\end{gathered}\right.
\end{equation}
where $d(\Sigma,\cdot)$ denotes the absolute value of any defining function of $\Sigma$ that behaves like $\la\xi\ra$ near fiber infinity. Then we have the following
\begin{lemma}
For $0\leq \delta <1$, there exists a class of operators, $\Ph{k_1,k_2}{\delta}(M;\Sigma)$, acting on $C^\infty(M)$ and maps
\begin{align*} 
\opht{\Sigma}&:S^{k_1,k_2}_{\delta}(T^*M;\Sigma)\to \Ph{k_1,k_2}{\delta}(M;\Sigma)\\
\sigma_{\Sigma}&:\Ph{k_1,k_2}{\delta}(M;\Sigma)\to \quotient{S^{k_1,k_2}_{\delta}(T^*M;\Sigma)}{h^{1-\delta}S^{k_1-1,k_2-1}_{\delta}(T^*M;\Sigma)}
\end{align*} 
such that
\begin{gather*}\sigma_{\Sigma}(A\composed B)=\sigma_{\Sigma}(A)\sigma_{\Sigma}(B),\\
\begin{aligned}
0&\to h^{1-\delta}\Ph{k_1-1,k_2-1}{\delta}(M;\Sigma)\to \Ph{k_1,k_2}{\delta}(M;\Sigma)\overset{\sigma_{\Sigma}}{\to}\quotient{S^{k_1,k_2}_{\delta}(T^*M;\Sigma)}{h^{1-\delta}S^{k_1-1,k_2-1}_{\delta}(T^*M;\Sigma)}
\to 0
\end{aligned}
\end{gather*}
is a short exact sequence, and 
$$\sigma_{\Sigma}\composed \opht{\Sigma}:S^{k_1,k_2}_{\delta}(T^*M;\Sigma)\to \quotient{S^{k_1,k_2}_{\delta}(T^*X;\Sigma)}{h^{1-\delta}S^{k_1-1,k_2-1}_{\delta}(T^*M;\Sigma)}$$
is the natural projection map. 
\end{lemma}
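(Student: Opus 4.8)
The plan is to obtain the global calculus by patching the local model of the previous subsection together with the standard semiclassical calculus $\Ph{}{\delta}(M)$ away from $\Sigma$, following \cite{SjoZwDist}. First I would cover a neighborhood of $\Sigma$ in $T^*M$ by finitely many coordinate charts in which $\Sigma$ is straightened to $\Sigma_0=\{\xi_1=0\}$; on each such chart the exact quantization $\wt{\oph}$ and the rescaled symbol classes with $\lambda=h^{-\delta}\xi_1$ are available, and the condition in \eqref{semi-d:secondDef} near $\Sigma$ is exactly the local condition \eqref{semi-d:localSymb} with $k_1$ the $\lambda$-order. Away from $\Sigma$, i.e. where $d(\Sigma,\cdot)\gtrsim h^{\delta}\la\xi\ra$, the weight $\la h^{-\delta}d(\Sigma,\cdot)\ra^{k_1}$ is comparable to $(h^{-\delta}d(\Sigma,\cdot))^{k_1}$ and the second line of \eqref{semi-d:secondDef} is an ordinary $S^{k_2}_{\delta}$ estimate with a harmless power-of-$h$ weight, so one uses the usual $\oph$ there. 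Choosing a partition of unity $1=\sum_j\chi_j+\chi_\infty$ subordinate to this cover with $\supp\chi_\infty$ away from $\Sigma$, one sets $\opht{\Sigma}(a):=\sum_j(\text{chart pullback of }\wt{\oph}(\chi_j a))+\oph(\chi_\infty a)$ and defines $\sigma_{\Sigma}$ chart by chart.

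The content of the Lemma then reduces to checking that the class $S^{k_1,k_2}_{\delta}(T^*M;\Sigma)$ and the associated operators are invariant under changes of variables preserving $\Sigma$, and this invariance is the crux. A diffeomorphism of $T^*M$ fixing $\{\xi_1=0\}$ has the form $(x,\xi)\mapsto(\tilde x,\tilde\xi)$ with $\tilde\xi_1=\xi_1 g(x,\xi)$ for a non-vanishing $g$; under it the rescaled variable transforms into $\tilde\lambda=\lambda\, g$ up to corrections that are lower order by the stationary phase expansion, and the tangential/normal bookkeeping of derivatives in \eqref{semi-d:secondDef} is preserved because vector fields $V_i$ tangent to $\Sigma$ remain tangent. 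The change of variables formula for the full symbol of a pseudodifferential operator (cf.\ \cite[Theorem 9.10]{EZB}), which in the present setting acquires in addition $\partial_\lambda$-terms carrying a weight $h^{-\delta}$, shows that the top-order symbol transforms as a genuine element of the quotient $S^{k_1,k_2}_{\delta}/h^{1-\delta}S^{k_1-1,k_2-1}_{\delta}$ — the weight $\la h^{-\delta}d(\Sigma,\cdot)\ra^{k_1}$ transforming correctly since $d(\Sigma,\cdot)$ is well defined up to a non-vanishing factor — and that the correction lies in $h^{1-\delta}S^{k_1-1,k_2-1}_{\delta}$, the gain being $h^{1-\delta}$ rather than $h^{1-2\delta}$ precisely because of the $h^{-\delta}D_\lambda$ contributions. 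The local composition formula, together with the local elliptic estimate (Lemma~\ref{semi-l:elliptic}) and the Gårding inequalities (Lemmas~\ref{semi-l:garding} and~\ref{lem:upperBound}) already established, supply all the pieces needed to run this argument chart by chart; this is exactly the content of \cite{SjoZwDist} adapted to our normalization.

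With invariance in hand the remaining assertions are formal. The multiplicativity $\sigma_{\Sigma}(A\composed B)=\sigma_{\Sigma}(A)\sigma_{\Sigma}(B)$ is local and follows from the expansion $a\sharp b=ab+\O{}(h^{1-\delta})$ of the preceding subsection. Surjectivity of $\sigma_{\Sigma}$ onto the quotient is immediate from the partition-of-unity definition of $\opht{\Sigma}$. That $\ker\sigma_{\Sigma}=h^{1-\delta}\Ph{k_1-1,k_2-1}{\delta}(M;\Sigma)$, and hence exactness of the short sequence, follows by applying the quantization to a symbol of order one lower, subtracting, and iterating by an asymptotic (Borel) summation in powers of $h^{1-\delta}$; the same iteration shows $\sigma_{\Sigma}\composed\opht{\Sigma}$ is the natural projection. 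The main obstacle throughout is the coordinate-invariance step of the second paragraph: one must track carefully how the anisotropic rescaling $\lambda=h^{-\delta}\xi_1$ interacts with diffeomorphisms and confirm that the error terms genuinely gain the factor $h^{1-\delta}$ while dropping one order in each of $k_1$ and $k_2$. Everything else is bookkeeping on top of the local model already constructed.
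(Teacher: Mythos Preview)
Your proposal is correct and follows essentially the same approach as the paper, which in fact does not give a proof of this lemma at all but simply defers to \cite{SjoZwDist}, noting that near $\Sigma$ one reduces to the model $\Sigma_0=\{\xi_1=0\}$ and then inherits all properties from the local calculus. Your sketch expands exactly this reduction-plus-patching construction; the only quibble is that the elliptic and G\r{a}rding lemmas you invoke are not actually needed to build the calculus or the symbol map---they are consequences, not ingredients---but this does not affect the argument.
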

As in, \cite{SjoZwDist} near $\Sigma$ it is possible to reduce all computations to the case where $\Sigma=\Sigma_0:=\{\xi_1=0\}$. We then have analogs of all the properties from the model case for the global calculus. We sometimes suppress $M$ and $T^*M$ in our notation, writing only $S^{k_1,k_2}_{\delta}(\Sigma)$ and $\Ph{k_1,k_2}{\delta}(\Sigma)$. We also sometimes suppress the $\Sigma$ in $\opht{\Sigma}$ to simplify notation.

%%%%%%%%%%%%%%%%%%%%%%%%%%%%%%%%%%%%%%%%%%%

\section{The billiard ball flow and map}
\label{sec:billiard}
Recall that $\Omega\Subset \re^d$ is an open set with smooth boundary $\partial\Omega$. 
We need notation for the billiard ball flow and billiard ball map. Write $\nu$ for the outward pointing unit normal to $\pO$. Then
\m S^*\re^d|_{\partial\Omega}=\partial\Omega_+\sqcup\partial\Omega_-\sqcup\partial\Omega_0\,\,\m
where $(x,\xi)\in \partial\Omega_+$ if $\xi$ is pointing out of $\Omega$ (i.e. $\nu(\xi)>0$), $(x,\xi)\in\partial\Omega_-$ if it points inward (i.e $\nu(\xi)<0$), and $(x,\xi)\in\partial\Omega_0$ if $(x,\xi)\in S^*\partial\Omega$. The points $(x,\xi)\in \partial\Omega_0$ are called \emph{glancing} points. Let $B^*\partial\Omega$ be the unit coball bundle of $\partial\Omega$ and denote by $\pi_{\pm}:\partial\Omega_{\pm}\to B^*\partial\Omega$ and $\pi:S^*\re^d|_{\partial\Omega}\to \overline{B^*\partial\Omega}$ the canonical projections onto $\overline{B^*\partial\Omega}$. Then the maps $\pi_{\pm}$ are invertible. Finally, write 
\m t_0(x,\xi)=\inf\{t>0:\exp_t(x,\xi)\in T^*\re^d|_{\partial\Omega}\}\,\,\m
where $\exp_t(x,\xi)$ denotes the lift of the geodesic flow to the cotangent bundle. That is, $t_0$ is the first positive time at which the geodesic starting at $(x,\xi)$ intersects $\partial\Omega$.

\begin{figure}
\centering
\begin{tikzpicture}
\begin{scope}[shift={(4,3)}, scale =1.5]
\draw[ultra thin] (0,0) circle [radius=1];
\draw[dashed, thick, ->] (0,0) to (.8,.6);
\draw[ultra thick,->] (0,0)to (.373,.799);
\draw[thin, dashed] (0.373,0.799)to (0.8,0.6);
\draw (.7,-.9)node[right]{$S_{\pi_x(\beta(q))}^*\re^d$};
\end{scope}
\begin{scope}[scale=1.5]
\draw[ultra thin] (0,0) circle [radius=1];
\draw[ultra thick,->] (0,0)node[below]{$x$} to (.8,0)node[below]{$\xi$};
\draw[thin, dashed](.8,0) to (.8,.6);
\draw[dashed, thick, ->] (0,0) to (.8,.6);
\draw (-1,0)node[left]{$S_x^*\re^d$};
\end{scope}
\draw [thin](0,0) to [out=0, in =-115] (4,3) to [out=65, in =-85] (4.25,4.5);
\draw[ dashed, thin] (0,0) to (4,3);
\end{tikzpicture}
\caption[Billiard ball map]{\label{fig:billiardBallmap} The figure shows how the billiard ball map is constructed. Let $q=(x,\xi)\in B^*\partial\Omega$. The solid black arrow on the left denotes the covector $\xi\in B_x^*\partial\Omega$ and that on the right $\xi(\beta(q))\in B_{\pi_x(\beta(q))}^*\partial\Omega.$ The center of the left circle is $x$ and that of the right is $\pi_x(\beta(q)).$ }
\end{figure}
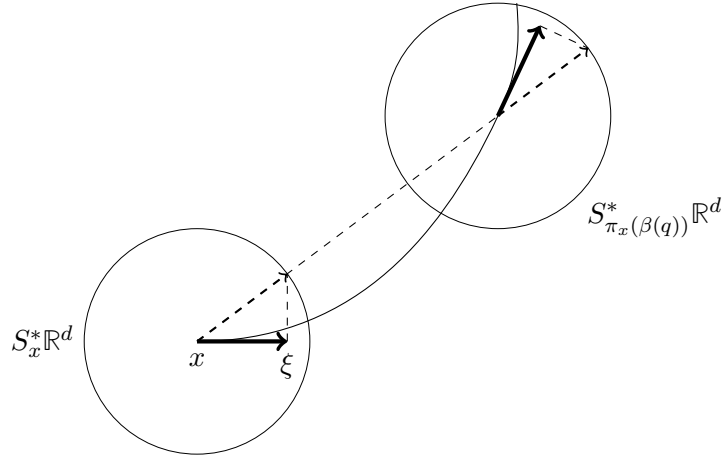

We define the broken geodesic flow as in \cite[Appendix A]{DyZw}. Without loss of generality, we assume $t_0> 0$. Fix $(x,\xi)\in S^*\re^d$ and denote $t_0=t_0(x,\xi)$. If $\exp_{t_0}(x,\xi)\in \partial\Omega_0$, then the billiard flow cannot be continued past $t_0$. Otherwise there are two cases: $\exp_{t_0}(x,\xi)\in \partial\Omega_+$ or $\exp_{t_0}(x,\xi)\in \partial\Omega_-$. We let 
$$(x_0,\xi_0)=\begin{cases}
\pi_-^{-1}(\pi_+(\exp_{t_0}(x,\xi)))\in \partial\Omega_-\,,&\text{if }\exp_{t_0}(x,\xi)\in \partial\Omega_+\\
\pi_+^{-1}(\pi_-(\exp_{t_0}(x,\xi)))\in \partial\Omega_+\,,&\text{if }\exp_{t_0}(x,\xi)\in \partial\Omega_-
\end{cases}.$$
We then define $\varphi_t(x,\xi)$, the \emph{broken geodesic flow}, inductively by putting 
$$\varphi_t(x,\xi)=\begin{cases}\exp_t(x,\xi)&0\leq t<t_0\\
\varphi_{t-t_0}(x_0,\xi_0)&t\geq t_0
\end{cases}.$$

We introduce notation from \cite{Saf1} for the billiard flow. Let $K$ be the set of ternary fractions of the form $0.k_1k_2,\dots$, where $k_j=0$ or $1$ and $S$ denote the left shift operator 
\m S(0.k_1k_2\dots)=0.k_2k_3\dots.\m

\noindent For $k\in K$, we define the billiard flow of type $k$, $G_k^t:S^*\re^d\to S^*\re^d$ as follows. For $0\leq t\leq t_0$,
\begin{equation}
\label{eqn:billiardFlow}G_k^t(x,\xi)=\begin{cases}\varphi_t(x,\xi)&\text{if }k_1=0\\
\exp_t(x,\xi)&\text{if }k_1=1
\end{cases}
\end{equation}
Then, we define $G_k^t$ inductively for $t>t_0$ by 
\begin{equation}
\label{eqn:billiardFlow2}
G_k^t(x,\xi)=G_{Sk}^{t-t_0}(G_k^{t_0}(x,\xi)).
\end{equation}
We call $G_k^t$ the billiard flow of type $k$. By \cite[Proposition 2.1]{Saf1}, $G_k^t$ is measure preserving.

\begin{remarks}
\item In \cite{Saf1}, geodesics could be of multiple types when total internal reflection occurred. However, in our situation, the metrics on either side of the boundary match, so there is no total internal reflection and geodesics are uniquely identified by their starting points and $k\in K$. 
\item In general, there exist situations where $G_k^t$ intersects the boundary infinitely many times in finite time. However, since we work in convex domains, we need not consider this situation. For a proof of this fact see the proof of Lemma \ref{lem:dynStrictlyConvex}. Note of course that the number of possible reflection in a given time $T$ grows as one approaches glancing points.
\end{remarks}
Now, for $k\in K $ and $T>0$, we define the set $\mc{O}_{T,k}\subset S^*\re^d$ to be the complement of the set of $(x,\xi)$ such that one can define the flow $G_k^t$ for $t\in[0,T]$. That is, $\mc{O}_{T,k}$ is the set for which the billiard flow of type $k$ is glancing in time $0\leq t\leq T.$ Last, define the set
\begin{equation}
\label{eqn:glancingSet}
\mc{O}_T=\bigcup_{k\in K}\mc{O}_{k,T}.
\end{equation}

The billiard ball map reduces the dynamics of $G_0^k$ to the boundary. We define the billiard ball map as in \cite{HaZel}. Let $(x,\xi')\in B^*\partial\Omega$ and $(x,\xi)=\pi_-^{-1}(x,\xi')\in \partial\Omega_-$ be the unique inward pointing covector with $\pi(x,\xi)=(x,\xi')$. Then, the billiard ball map $\beta:B^*\partial\Omega\to \overline{B^*\partial\Omega}$ maps $(x,\xi')$ to the projection onto $T^*\partial\Omega$ of the first intersection of the billiard flow with the boundary. That is,
\begin{equation}
\label{eqn:billiardMap}\beta:(x,\xi')\mapsto \pi(\exp_{t_0(x,\xi)}(x,\xi)).\end{equation}

\begin{remarks} 
\item Just like the billiard flow, the billiard ball map is not defined for $(x,\xi')\in \pi(\partial\Omega_0)=S^*\partial\Omega$. However, since we consider convex domains, $\beta:B^*\Omega\to B^* \Omega$ and $\beta^n$ is well defined on $B^*\partial\Omega$.
\item Figure \ref{fig:billiardBallmap} shows the process by which the billiard ball map is defined.
\end{remarks}

The billiard ball map is symplectic. This follows from the fact that the Euclidean distance function $|x-x'|$ is locally a generating function for $\beta$; that is, the graph of $\beta$ in a
neighborhood of $(x_0, \xi_0, y_0,\eta_0)$ is given by
\begin{equation}
\label{eqn:locBillRelation}
\{(x\,,\,-d_x|x-y|\,,\,y\,,\,d_y|x-y|\,)\,:(x,y)\in \partial\Omega\times \partial\Omega\}.
\end{equation}
 We denote the graph of $\beta$ by $C_b$. For strictly convex $\Omega$, $C_b$ is given globally by \eqref{eqn:locBillRelation}.
 
 We also write 
 $$\beta_E:=(x(\beta(x,\xi/\sqrt{E})),\sqrt{E}\xi(\beta(x,\xi/\sqrt{E}))):B_E^*\pO\to B_E^*\pO)$$
 where $B_E^*\pO$ is the coball bundle of radius $\sqrt{E}$.

\subsection{Dynamics in Strictly Convex Domains}
\label{sec:dynamicsConvex}
We are interested in the behavior of the billiard ball map, $\beta(q)$ when $|\xi'(q)|_g$ is close to 1. Our interest in this region comes from a desire to understand how the reflection coefficients $R$ from \eqref{eqn:reflect} behaves when a wave travels nearly tangent to a strictly convex boundary.

Fix $q=(x_0,\xi_0)\in B^*\pO$ so that $\partial \Omega$ is strictly convex near $x_0$ and $|\xi_0|_g^2$ is sufficiently close to 1. Let $\gamma:[0,\delta)\to \partial \Omega$ be the unique length minimizing geodesic connecting $x_0$ and $\pi_x(\beta(q)).$ The existence and uniqueness of such a geodesic is guaranteed for $|\xi_0|_g^2$ close enough to 1 by the strict convexity of $\pO$. Indeed, this follows from the fact that $l(q,\beta(q))\to 0$ as $|\xi_0|_g^2\to 1$ and the fact that the exponential map is a diffeomorphism for small times.  

Let $s\in [0,\delta)$ have $\gamma(s)=\pi_x(\beta(q)).$ We first examine how the normal component to $\partial \Omega$ changes under the billiard ball map. Let $\Delta_{\xi_d}$ denote the change in the normal component under $\beta$. Then
\begin{align*}
\Delta_{\xi_d}&=\frac{((\gamma(s)-\gamma(0))\cdot \nu(0)-(\gamma(0)-\gamma(s))\cdot\nu(s))}{|\gamma(s)-\gamma(0)|}\\
&=\frac{(\gamma(s)-\gamma(0))\cdot(\nu(0)+\nu(s))}{|\gamma(s)-\gamma(0)|}.
\end{align*}
Here $|\cdot|$ is the euclidean norm in $\re^d$ and $\nu$ is the inward pointing unit normal.

First, note that 
\begin{gather*} 
\gamma''(s)=\kappa(s)\nu(s),\qquad\quad \nu'(s)\cdot \gamma'(s)=-\kappa(s),\\
\gamma'(s)\cdot \nu(s)=0,\qquad \quad\|\gamma'(s)\|=\|\nu(s)\|=1
\end{gather*}
where $\kappa(s)$ is the curvature of the geodesic $\gamma$ as a curve in $\re^d$. 
Then, expanding in Taylor series gives
\begin{align}
\Delta_{\xi_d}\left[s+\O{}(s^2)\right]&=\left[\gamma'(0)s+\gamma''(0) \tfrac{s^2}{2} +\gamma^{(3)}(0)\tfrac{s^3}{6} +\O{}(s^4)\right]\cdot \left[2\nu(0)+\nu'(0)s+\nu''(0)\tfrac{s^2}{2}+\O{}(s^3)\right]\nonumber\\
\Delta_{\xi_d}\left[1+\O{}(s)\right]&=2\gamma'(0)\cdot \nu(0)+\left(\gamma'\cdot \nu\right)'(0)s+\left(2\gamma^{(3)}(0)\cdot \nu(0)+3(\gamma'\cdot \nu')'(0)\right)\tfrac{s^2}{6}+\O{}(s^3)\nonumber\\
\Delta_{\xi_d}&=\left[2(\kappa'(0)\nu(0)-\kappa(0)\nu'(0))\cdot\nu(0)-3\kappa'(0)\right]\tfrac{s^2}{6}+\O{}(s^3)\nn
\label{eqn:dynamicsApprox}\Delta_{\xi_d}&=(2\kappa'(0)-3\kappa'(0))\tfrac{s^2}{6}+\O{}(s^3)=-\kappa'(0)\tfrac{s^2}{6}+\O{}(s^3).
\end{align}
Next observe that
$$\sqrt{1-|\xi'(q)|_g^2}=\frac{\gamma(s)-\gamma(0)}{|\gamma(s)-\gamma(0)|}\cdot\nu(0) =\frac{\kappa(0)}{2}s+\O{}(s^2)$$
Now, using $\kappa(0)>c>0$ for $\Omega$ strictly convex this implies 
$$s=\frac{2\sqrt{1-|\xi'(q)|_g^2}}{\kappa(0)}+\O{}((1-|\xi'|_g^2))$$
and therefore, 
$$l(q,\beta(q))=|\gamma(s)-\gamma(0)|=s+\O{}(s^2)=\frac{2}{\kappa(0)}\sqrt{1-|\xi'|_g^2}+\O{}(1-|\xi'|_g^2).$$
Summarizing, we have
\begin{lemma}
\label{lem:dynStrictlyConvex}
Let $\Omega\subset \re^d$ be strictly convex. Then, for $q\in B^*\partial\Omega$ sufficiently close to $S^*\pO$
\begin{gather*} \sqrt{1-|\xi'(\beta(q))|^2_g}=\sqrt{1-|\xi'(q)|^2_g}+\O{}(1-|\xi'(q)|_g^2)\\
l(q,\beta(q))=\frac{2}{\kappa(0)}\sqrt{1-|\xi'|_g^2}+\O{}(1-|\xi'|_g^2). 
\end{gather*}
\end{lemma}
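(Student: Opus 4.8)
The plan is to reduce both assertions to a single third-order Taylor expansion along the length-minimizing geodesic of $\pO$ joining $x_0 := \pi_x(q)$ to $\pi_x(\beta(q))$, which is precisely the computation carried out in the discussion preceding the statement. First I would use strict convexity of $\Omega$ to guarantee that, for $|\xi_0|_g$ close enough to $1$, this geodesic $\gamma:[0,\delta)\to \pO$ exists, is unique, depends smoothly on $q$, and may be taken unit speed with $\gamma(0)=x_0$ and $\gamma(s)=\pi_x(\beta(q))$ for the appropriate $s=s(q)\to 0$ as $q\to S^*\pO$. Along $\gamma$ I would record the Frenet-type identities $\gamma''=\kappa\nu$, $\gamma'\cdot\nu=0$, $\nu'\cdot\gamma'=-\kappa$, and $|\gamma'|=|\nu|=1$, where $\nu$ is the inward unit normal to $\pO$ and $\kappa$ its normal curvature in the direction $\gamma'$; strict convexity gives $\kappa\geq c>0$ near $x_0$.

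Next I would translate the two quantities of interest into this language. The billiard chord is the straight segment from $\gamma(0)$ to $\gamma(s)$, so $l(q,\beta(q))=|\gamma(s)-\gamma(0)|$, while the magnitudes of the normal components of the unit chord direction against $\nu(0)$ and $\nu(s)$ are $\sqrt{1-|\xi'(q)|_g^2}$ and $\sqrt{1-|\xi'(\beta(q))|_g^2}$ respectively (with the usual sign conventions of the billiard reflection). Writing $\Delta_{\xi_d}$ for the difference of these normal components, one has the exact identities
\[
\Delta_{\xi_d}=\frac{(\gamma(s)-\gamma(0))\cdot(\nu(0)+\nu(s))}{|\gamma(s)-\gamma(0)|},\qquad \sqrt{1-|\xi'(q)|_g^2}=\frac{(\gamma(s)-\gamma(0))\cdot\nu(0)}{|\gamma(s)-\gamma(0)|},
\]
which are the formulas displayed above.

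The remainder is purely Taylor expansion in $s$. Expanding $\gamma$ and $\nu$ about $s=0$ and substituting the Frenet identities, the leading contributions to $\Delta_{\xi_d}$ cancel through order $s^2$, leaving $\Delta_{\xi_d}=-\tfrac{1}{6}\kappa'(0)s^2+\O{}(s^3)$; in particular the normal component changes only at second order. Similarly $\sqrt{1-|\xi'(q)|_g^2}=\tfrac{\kappa(0)}{2}s+\O{}(s^2)$, which, since $\kappa(0)>c>0$, inverts to $s=\tfrac{2}{\kappa(0)}\sqrt{1-|\xi'(q)|_g^2}+\O{}(1-|\xi'(q)|_g^2)$. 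Feeding this back into $l(q,\beta(q))=|\gamma(s)-\gamma(0)|=s+\O{}(s^2)$ gives the second claimed identity, and feeding it into $\Delta_{\xi_d}=\O{}(s^2)=\O{}(1-|\xi'(q)|_g^2)$ gives the first.

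The only genuinely delicate step is the bookkeeping of the error terms: one must check that all the $\O{}(\cdot)$ constants are uniform in $q$ in a fixed neighborhood of $S^*\pO$, which amounts to uniform $C^3$ control of $\gamma$, $\nu$, and $\kappa$ — and this follows from smoothness of $\pO$ together with the smooth, hence locally uniformly $C^3$, dependence of the minimizing geodesic on its endpoints. A secondary point, relevant when $d>2$, is to read ``$\kappa$'' as the normal curvature of $\pO$ in the chord direction (equivalently, the second fundamental form of $\pO$ evaluated on $\gamma'(0)$), so that the strict convexity hypothesis really does give $\kappa(0)>0$; with that identification the one-dimensional expansion above goes through unchanged.
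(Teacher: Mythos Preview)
Your proposal is correct and is essentially identical to the paper's own argument: the proof of the lemma is precisely the Taylor-expansion computation along the minimizing boundary geodesic carried out in the paragraphs immediately preceding the statement, which you have reproduced (including the key identities $\Delta_{\xi_d}=-\tfrac{1}{6}\kappa'(0)s^2+\O{}(s^3)$ and $\sqrt{1-|\xi'|_g^2}=\tfrac{\kappa(0)}{2}s+\O{}(s^2)$). Your added remarks on uniformity of the $\O{}$-constants and on reading $\kappa$ as the normal curvature in higher dimensions are reasonable clarifications the paper leaves implicit.
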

This implies that set of $\O{}(h^\e)$ near glancing points is stable under the billiard ball map. This also follows from the equivalence of glancing hypersurfaces \cite{MelroseGlanceSurf}.

%%%%%%%%%%%%%%%%%%%%%%%%%%%%%%%%%%%%%%%%%%

%%%%%%%%%%%%%%%%%%%%%%%%%%%%%%%%%%%%%%%%%%

\section[Friedlander Model -- layer potentials and operators]{Boundary layer operators and potentials in the non-nomogeneous Friedlander model}
\label{sec:Friedlander}
Our goal is to give microlocal descriptions of the boundary layer operators and potentials near a glancing point. We start by considering the non-homogeneous Friedlander model problem 
\begin{gather} 
((hD_{x_d})^2-\mu {x_d} +hD_{y_1})u=0,\quad\quad u(0,y)=f(y)\nonumber\\
u|_{x_d>0}\text{ outgoing},\quad\quad\quad \|u\|_{L^2((-\infty,0]\times \re^{d-1})}<\infty.\label{eqn:FriedBCs}
\end{gather} 
Then, let $\Fh(u)$ denote the semiclassical Fourier transform in $y$,
$$\Fh{u}(x_d,\eta):=\frac{1}{(2\pi h)^{d-1}}\int u(x_d,y)e^{-\frac{i}{h}\la y,\eta\ra}dy.$$
Rescaling $w=h^{-2/3}\mu^{1/3}{x_d}$ gives that 
$$(h^{2/3}\mu^{-1/3}(D_w^2-w+h^{-2/3}\mu^{-2/3}{\eta_1})\Fh(u)(w,\eta)=0\,,\,\quad \quad \Fh(u)(0,\eta)=\Fh(f)(\eta).$$
Hence, using \eqref{eqn:FriedBCs}
$$\Fh(u)({x_d},\eta)=\begin{cases}\frac{Ai(-h^{-2/3}\mu^{1/3}{x_d}+h^{-2/3}\mu^{-2/3}{\eta_1})}{Ai(h^{-2/3}\mu^{-2/3}{\eta_1})}\Fh(f)(\eta)&{x_d}<0\\
\frac{A_-(-h^{-2/3}\mu^{1/3}{x_d}+h^{-2/3}\mu^{-2/3}{\eta_1})}{A_-(h^{-2/3}\mu^{-2/3}{\eta_1})}\Fh(f)(\eta)&{x_d}>0
\end{cases}
.$$
So, the Dirichlet to Neumann map for the interior problem $({x_d}<0)$ is given by 
$$\Fh(N_1f)(\eta)=-h^{-2/3}\mu^{1/3}\frac{Ai'(h^{-2/3}\mu^{-2/3}{\eta_1})}{Ai(h^{-2/3}\mu^{-2/3}{\eta_1})}\Fh(f)(\eta)$$
and that for the exterior problem $(x_d>0)$ by 
$$\Fh( N_2f)(\eta)=h^{-2/3}\mu^{1/3}\frac{A_-'(h^{-2/3}\mu^{-2/3}{\eta_1})}{A_-(h^{-2/3}\mu^{-2/3}{\eta_1})}\Fh(f)(\eta).$$
\begin{remark}
Since the goal of this section is only to present a simple model where the calculations are exact, we ignore the poles in $N_1$. It is possible to find the single and double layer operators and potentials without using the Dirichlet to Neumann map $N_1$ (see \cite[Section 4.5]{Galk} see also \cite[Section 7.11]{Taylor} for a general introduction to layer potential methods), but it simplifies the presentation to do so here.
\end{remark}
So, letting $\Theta_h(\eta)=h^{-2/3}\mu^{-2/3}{\eta_1}$, the single layer operator is given by 
\begin{align*} 
\Fh(Gf)(\eta)&=\Fh((N_1+N_2)^{-1}f)(\eta)=h^{2/3}\mu^{-1/3}\frac{Ai(\Theta_h)A_-(\Theta_h)}{A_-'(\Theta_h)Ai(\Theta_h)-Ai'(\Theta_h)A_-(\Theta_h)}\Fh(f)(\eta)\\
&=h^{2/3}\mu^{-1/3}2\pi e^{\pi i/6}Ai(\Theta_h)A_-(\Theta_h)\Fh(f)(\eta)
\end{align*} 
and the double layer operator is given by 
\begin{align*}
\Fh (\Dl f)(\eta)&=\frac{1}{2}\Fh(f)(\eta)-\Fh(GN_2f)(\eta)\\
&=\left(\frac{1}{2}-2\pi e^{\pi i/6}Ai(\Theta_h)A_-'(\Theta_h)\right)\Fh(f)(\eta)
\end{align*} 
Therefore, since 
$$\gamma^+\S  =G\,\quad \quad \gamma^+\D =-\frac{1}{2}I+\Dl$$
and both solve the Friedlander model equation away from ${x_d}=0$,
\begin{align*}
\Fh(\S f)&=h^{2/3}\mu^{-1/3}2\pi e^{\pi i/6}Ai(-h^{-2/3}\mu^{1/3}{x_d}+\Theta_h)A_-(\Theta_h)\Fh(f)(\eta)\\
\Fh(\D f)&=-2\pi e^{\pi i/6}Ai(-h^{-2/3}\mu^{1/3}{x_d}+\Theta_h)A_-'(\Theta_h)\Fh(f)(\eta)
\end{align*} 
Now, consider the kernel of $\S ^*\S $,
\begin{align*}
\S ^*\S (x',y')&=\frac{4\pi^2\mu^{-2/3}h^{4/3}}{(2\pi h)^{2d-2}}\iint_{-\infty}^0 \overline{Ai(-h^{-2/3}\mu^{1/3}w_1+\Theta_h(\eta))}\overline{A_-(\Theta_h(\eta))}\\
&\quad \quad Ai(-h^{-2/3}\mu^{1/3}w_1+\Theta_h(\xi))
 A_-(\Theta_h(\xi))e^{\frac{i}{h}(\la x'-w',\eta\ra +\la w'-y',\xi\ra}dw_1d\xi dw'd\eta \\
 &=\frac{4\pi^2\mu^{-1}h^{2}}{(2\pi h)^{d-1}}\iint_{\Theta_h(\xi)}^\infty |Ai(s)|^2|A_-(\Theta_h(\xi))|^2e^{\frac{i}{h}\la x'-y',\xi\ra}d\xi\\
 &=\frac{h^2}{\mu}\frac{1}{(2\pi h)^{-d+1}}\int \Psi_{\S }(\Theta_h(\xi))e^{\frac{i}{h}\la x'-y',\xi\ra }dsd\xi
\end{align*} 
Similarly, 
\begin{align*} 
\S ^*\D (x',y')&=-\frac{h^{4/3}}{\mu^{2/3}}\frac{1}{(2\pi h)^{d-1}}\int \overline{\Psi_{\D \S }(\Theta_h(\xi))}e^{\frac{i}{h}\la x'-y',\xi\ra}d\xi\\
\D ^*\S (x',y')&=-\frac{h^{4/3}}{\mu^{2/3}}\frac{1}{(2\pi h)^{d-1}}\int \Psi_{\D \S }(\Theta_h(\xi))e^{\frac{i}{h}\la x'-y',\xi\ra}d\xi\\
\D ^*\D (x',y')&=\frac{h^{2/3}}{\mu^{1/3}}\frac{1}{(2\pi h)^{d-1}}\int \Psi_{\D }(\Theta_h(\xi))e^{\frac{i}{h}\la x'-y',\xi\ra}d\xi
\end{align*} 
where
\begin{equation}
\label{e:layerSymbDef}
\begin{aligned} 
\Psi_{\S }(x)&:=4\pi ^2\int_{x}^\infty |Ai(s)|^2|A_-(x)|^2ds&&=4\pi^2|A_-(x)|^2[(Ai'(x))^2-x(Ai(x))^2]\\
\Psi_{\D \S }(x)&:=4\pi ^2\int_{x}^\infty |Ai(s)|^2A_-(x)\overline{A_-'(x)}ds&&=4\pi ^2A_-(x)\overline{A_-'(x)}[(Ai'(x))^2-x(Ai(x))^2]\\
\Psi_{\D }(x)&:=4\pi ^2\int_{x}^\infty |Ai(s)|^2|A_-'(x)|^2ds&&=4\pi ^2|A_-'(x)|^2[(Ai'(x))^2-x(Ai(x))^2]
\end{aligned} 
\end{equation}
since
$$\int_{x}^\infty (Ai(s))^2ds=(Ai'(x))^2-x(Ai(x))^2.$$
Using the Wronskian we have that $\Psi_{\S }(\zeta_j)=1$ where $\zeta_j$ is a zero of the Airy function, i.e. $Ai(\zeta_j)=0$. Moreover, using asymptotics for the Airy function, as $x\to -\infty$, 
$$\Psi_{\S }(x)\sim  1\,,\quad \Psi_{\D }(x)\sim -x\,,\quad \Psi_{\D \S }\sim i(-x)^{1/2}.$$
\begin{figure}
\centering
\includegraphics[width=.75\textwidth]{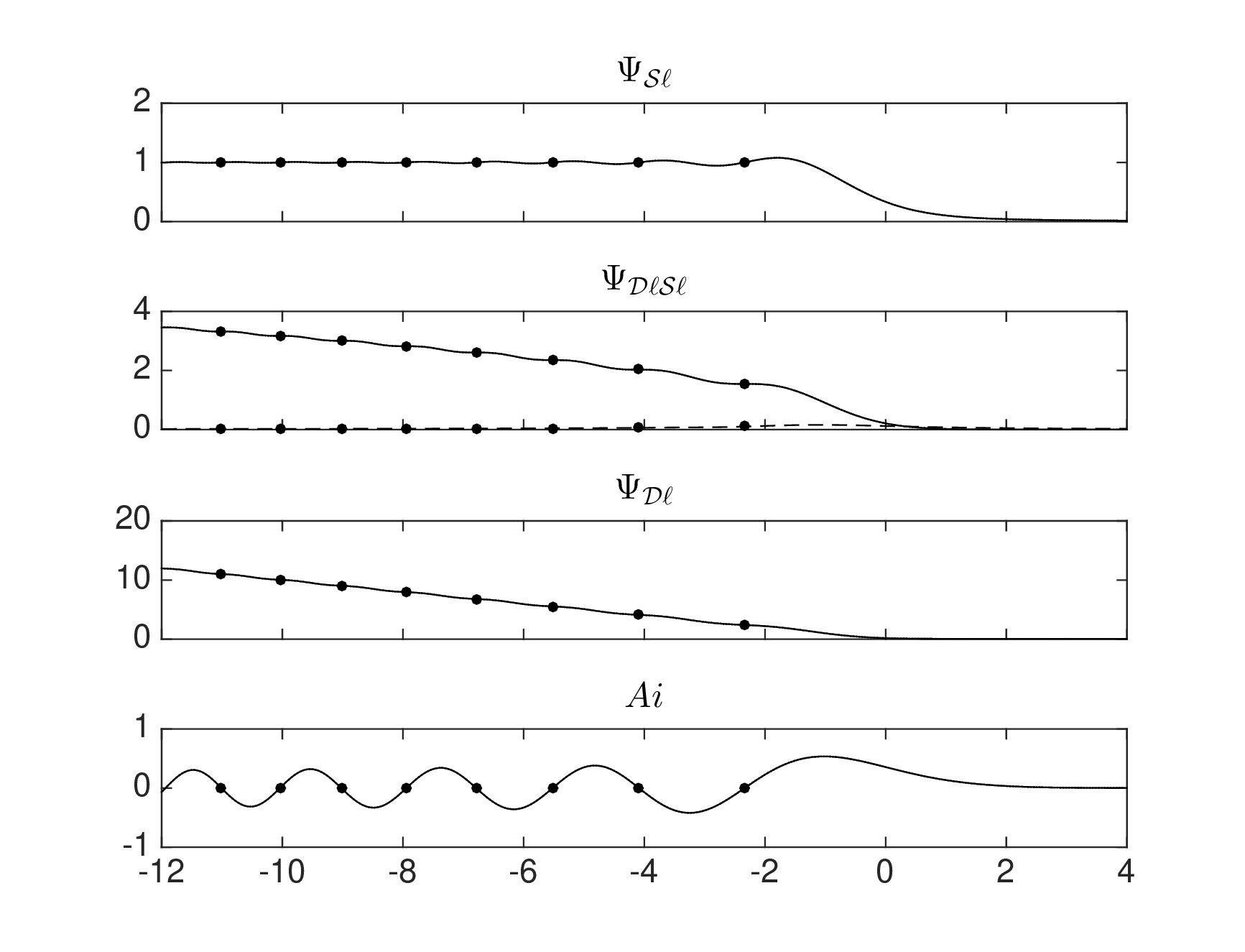}
\caption{We plot the symbols of $\Sl^*\Sl$, $\D^*\Sl$ and $\D^*\D$. From top to bottom, the graphs show $\Psi_{\Sl}$, $\Psi_{\D\Sl}$, $\Psi_{\D}$. The bottom graph shows $Ai$ for reference. In the graph of $\Psi_{\D\Sl}$, the imaginary part is shown in the solid line, and the real part in the dashed line. The black dots in each graph show $(\zeta_j,f(\zeta_j))$ where $\zeta_j$ are the zeros of $Ai(s)$ and $f$ is one of $\Psi_{\Sl}$, $\Psi_{\D\Sl}$, $\Psi{\D}$ or $Ai$ as described at the top of each graph.}
\end{figure}

\section{Analysis of the boundary layer operators and potentials near glancing}
\label{sec:Layer}
Our next task is to show that analogs of all of the formulas for the boundary layer operators and potentials from Section \ref{sec:Friedlander} hold in the general case.
 
\subsection{Preliminaries for the General Case}
In order to make an analysis similar to that for the model case, we use the microlocal models for $G$, $\Dl$, $\Sl$, and $\D$ developed in \cite[Section 4.5]{Galk} We recall the results here. The idea is to write a parametrix for the solution to the problem 
$$(-h^2\Delta-z^2)u=L^*\delta_{\partial\Omega}\otimes g_1+\delta_{\partial\Omega}\otimes g_2
$$
where $f_i$ are microlocalized near glancing and $\delta_{\partial\Omega}$ denotes the surface measure on $\partial\Omega$. The parametrix for the problem will be a sum of oscillatory integrals of the form
\begin{equation} 
\label{layer-e:parametrix} 
\begin{aligned}
H_1F&=(2\pi h)^{-d+1}\int (f_0Ai(h^{-2/3}\rho)+ih^{1/3}f_1Ai'(h^{-2/3}\rho))A_-(h^{-2/3}\Theta)e^{\frac{i}{h}\theta}\mc{F}_h(F)(\xi')d\xi'\\
H_2F&=(2\pi h)^{-d+1}\int (f_0Ai(h^{-2/3}\rho)+ih^{1/3}f_1Ai'(h^{-2/3}\rho))A_-'(h^{-2/3}\Theta)e^{\frac{i}{h}\theta}\mc{F}_h(F)(\xi')d\xi'.
\end{aligned}
\end{equation}
where $f_i$ solve certain transport equations and $\rho,\theta$ certain eikonal equations. The boundary values of $f_0$ and $f_1$ are determined by the limiting behavior of $\D g_1$ and $\Sl g_0$ at $\partial\Omega$.

Let $ z=1+i\mu$ with $|\mu|\leq Mh\log h^{-1}$. Then let 
$\e(h):=\max(h,|\mu|)$
Let $(x_0,\xi_0)\in S^*\pO$ and suppose that in coordinates $(x',x_d)$ near $x_0$, with $\pO=\{x_d=0\}$ and $x_d>0$ in $\Omega$,
$$-h^2\Delta= \sum_{ij} a^{ij}hD_{x_i}hD_{x_j}+h\Big(\sum_i b^ihD_{x_i}+c\Big).$$

Then there exist
$$\rho(x,\xi';h)=\rho_0+\sum_j\rho_j\e(h)^j,\quad\quad \theta(x,\xi';h)=\theta_0+\sum_j\theta_j\e(h)^j$$ 
solving the eikonal equations 
\begin{equation*}
\begin{cases}
z^2+\O{}(h^\infty)=\la ad\theta,d\theta\ra -\rho \la ad\rho,d\rho\ra \\
\O{}(h^\infty)=2\la d\theta,d\rho\ra
\end{cases}
\end{equation*}
on $\rho_0\leq 0$ and in Taylor series at $\rho_0=0$, $x_d=0$. Here, $\rho_0,\theta_0$ are real valued solving
\begin{equation*}
\begin{cases}
1=\la ad\theta_0,d\theta_0\ra -\rho \la ad\rho_0,d\rho_0\ra \\
0=2\la d\theta_0,d\rho_0\ra 
\end{cases}
\end{equation*}
on $\rho_0\leq 0$ and in Taylor series at $\rho_0=0$, $x_d=0$. 
We need a few additional properties of $\rho$ and $\theta$. In particular, 
\begin{equation}\label{eqn:propPhase}
\rho_0|_{\pO}=\xi_1,\quad\quad \partial_{x_d}\rho_0|_{\pO}>0,\quad\quad \partial^2_{x'\xi'}\theta_0|_{\pO}\neq 0\end{equation}
 and $\theta_{0b}:=\theta_0|_{\pO}$ has that 
\begin{equation}
\label{eqn:kappa}\kappa:(\partial_{\xi'}\theta_{0b}(x',\xi'),\xi')\mapsto (x',\partial_{x'}\theta_{0b}(x',\xi'))
\end{equation}
is a symplectomorphism reducing the billiard ball map for the Friedlander model case to that for $\Omega$. We also write $\theta_b=\theta|_{\pO}$. 
Next, let 
\begin{gather*}\Theta:=\rho|_{\pO}=\xi_1+i\e(h),\quad\quad \Theta_0:=\rho_0|_{\pO}=\xi_1.\end{gather*}

Finally, there exist
$$f_i\sim \sum_{j=0}^\infty f_{i,j}h^j,\quad\quad i=0,1$$
with $f_{0b}:=f_{0}|_{\pO}$ having $|f_{0b}|>c>0$ and $g_1|_{\pO}=0$ solving
\begin{equation}
\label{eqn:transEuc}
\left\{\begin{aligned}
\begin{gathered}2\la a d\theta_0,df_{0,n}\ra+2\rho_0\la ad\rho_0,df_{1,n}\ra  +\la b, df_{0,n}\ra \\
+\la ad\rho_0,d\rho_0\ra f_{1,n}-P_2\theta_0 f_{1,n}-\rho_0 (P_2\rho_0) f_{1,n}\end{gathered}
=F_{1,n}(\theta,\rho,f_{i,m<n},\mu)\\
{}\\
\begin{gathered}2\la ad\rho_0,df_{0,n}\ra-2\la a d\theta_0,df_{1,n}-\la b,df_{1,n}\ra \ra\\
-(P_2\rho_0)f_{0,n} +(P_2\theta_0)f_{1,n}\end{gathered}=F_{2,n}(\theta,\rho,f_{i,m<n},\mu).
\end{aligned}\right.
\end{equation}
on $\rho_0\leq 0$ and in Taylor series at $\rho_0=0$, $x_d=0$ so that for $H_i$ as in \eqref{layer-e:parametrix} $(-h^2\Delta-z^2)H_iF=\O{\Ph{-\infty}{}}(h^\infty)F$ whenever $F$ is supported $h^\e$ close to $\xi_1=0$. If $|\mu|\leq Ch$, then this also holds when $F$ is supported $\delta$ close to $\xi_1=0$ for $\delta$ small enough.

\subsubsection{Identification of $\partial_{x_d}\rho_0|_{x_d=0}$ and $|\partial_{y'}\theta_{0b}|_g^2$}
It will be useful to have the value of $\partial_{x_d}\rho_0|_{x_d=0}$ and $|\partial_{y'}\theta_{0b}|_g^2$. To obtain these, we simply write the eikonal equations in normal geodesic coordinates
Recall that in normal geodesic coordinates $(y',x_d)$ with $x_d>0$ in $\Omega$,
$$-h^2\Delta=(hD_{x_d})^2+R(y',hD_{y'})+2x_dQ(x_d,y',hD_{y'})+hF(x_d,y')hD_{x_d}$$
where 
\begin{gather*} R(y',D_{y'})=-\Delta_{\pO}=\bar{g}^{-1/2}\sum_{ij}D_{y_i}\bar{g}^{1/2}g^{ij}D_{y_j}\,,\quad \bar{g}=(\det (g^{ij}))^{-1/2}\\
Q(0,y',D_{y'})=\sum_{ij}D_{y_j}\bar{g}^{1/2}a_{ij}D_{y_i}
\end{gather*} 
where $Q(y',\xi')=\sum_{ij}a_{ij}(y')\xi_i\xi_j$ is the second fundamental form of $\pO$ lifted to $T^*\pO$, $g^{ij}=g^{ij}(y')$ is the metric on $T^*\pO$, and 
$R(y',\xi')=\sum_{ij}g^{ij}\xi_i\xi_j$ is the symbol of $-h^2\Delta_{\pO}$. 

Using the eikonal equations for $\rho_0$ and $\theta_0$ in these coordinates,
\begin{equation*}
\begin{cases}
\begin{aligned}
1&=(\partial_{x_d}\theta_0)^2+R(y',\partial_{y'}\theta_0)+2x_d(Q(y',\partial_{y'}\theta_0)+\O{}(x_d)) \\
&\quad \quad- \rho_0\left[(\partial_{x_d}\rho_0)^2+R(y',\partial_{y'}\rho_0)+2x_d(Q(y',\partial_{y'}\rho_0)+\O{}(x_d))\right]
\end{aligned}\\
0=2(\partial_{x_d}\theta_0\partial_{x_d}\rho_0+g^{ij}\partial_{y_i}\theta_0\partial_{y_j}\rho_0+2x_d(a_{ij}\partial_{y_i}\theta_0\partial_{y_j}\rho_0+\O{}(x_d)).
\end{cases}
\end{equation*}
Now, we know that $\rho_0|_{x_d=0}=\xi_1$ and $\partial_{x_d}\rho_0|_{x_d=0}>0$. So,
evaluation at $x_d=0$ shows
\begin{align*} 
1&=(\partial_{x_d}\theta_0)^2+R(y',\partial_{y'}\theta_0)-\xi_1(\partial_{x_d}\rho_0)^2=R(y',\partial_{y'}\theta_0)-\xi_1(\partial_{x_d}\rho_0)^2\\
0&=\partial_{x_d}\theta_0
\end{align*} 

Moreover, differentiating the first equation in $x_d$ and the second in $y'$ and evaluating at $x_d=0$ shows
\begin{align*} 
0&=2g^{ij}\partial^2_{x_dy_j}\theta_0\partial_{y_i}\theta_0+2Q(y',\partial_{y'}\theta_0)-(\partial_{x_d}\rho_0)^3-2\xi_1 \partial^2_{x_d}\rho_0\partial_{x_d}\rho_0\\
0&=2(\partial^2_{y'x_d}\theta_0\partial_{x_d}\rho_0)
\end{align*} 
Hence, 
\begin{gather*}
\label{eqn:rhoDer}(\partial_{x_d}\rho_0)^3|_{x_d=0}=2Q(y',\partial_{y'}\theta_0)-2(\xi_1\partial_{x_d}^2\rho_0\partial_{x_d}\rho_0)|_{x_d=0}=2Q(y;\partial_{y'}\theta_{0b})+\O{}(\xi_1)\\
\label{eqn:thetaDer}
R(y',\partial_{y'}\theta_{0b})=|\partial_{y'}\theta_{0b}|_g^2=1+\xi_1(\partial_{x_d}\rho_0)^2|_{x_d=0}
\end{gather*}
The implicit function theorem then implies that with $\xi'=\partial_{y'}\theta_{0b}$,
\begin{equation}
\label{eqn:xi1}\xi_1=\frac{|\xi'|_g^2-1}{(2Q(y,\xi'))^{2/3}}+\O{}((|\xi'|_g^2-1)^2),\quad\quad \partial_{x_d}\rho=2Q(y,\xi')+\O{}(|\xi'|_g^2-1).
\end{equation}

Now, in coordinates $(x,\xi)=\kappa^{-1}(y,\eta)$ where $\kappa$ is as in \eqref{eqn:kappa}, we have 
$$
\beta(x,\xi)=(x_1-2\sqrt{-\xi_1},x',\xi)
$$
since $\kappa$ reduces the Friedlander model to the billiard ball map for $\Omega$.
Let $\varphi_i$ be a partition of unity on $1-\e\leq |\xi'|_g\leq 1+ \e$ for some $\e>0$ small enough so that on $\supp \varphi_i$ $\kappa_i^{-1}$, with $\kappa_i$ given by \eqref{eqn:kappa}, is well defined. Let 
\begin{equation}
\label{eqn:approxInterp1}\Xi:=\sum_i\varphi_i \xi_1(\kappa_i^{-1}(x,\xi)).\end{equation}
Then we have the following lemma given the existence of an \emph{approximate interpolating Hamiltonian} for the billiard ball map. In particular, the lemma follows from the equivalence of glancing hypersurfaces \cite{MelroseGlanceSurf} (see \cite[Proposition 3.1]{PopovInterpolating} for a proof, see also \cite{MarviziMelrose})
\begin{lemma}
\label{lem:approxInterp}
Let $\Xi$ be as in \eqref{eqn:approxInterp1}. Then at $S^*\pO$, $\Xi=0$, $|d\Xi|>0$ and $\Xi<0$ in $B^*\pO$. Moreover,
\begin{gather*} \Xi\composed \beta(q)-\Xi(q)=\O{}((|\xi'|_g^2-1)^\infty),\\
\beta(q)-\exp(-2\sqrt{-\Xi}H_\Xi)(q)=\O{}((|\xi'|_g^2-1)^\infty),\\
\Xi(x',\xi')=\frac{|\xi'|_g^2-1}{(2Q(x',\xi'))^{2/3}}+\O{}((|\xi'|_g^2-1)^2).
\end{gather*}
\end{lemma}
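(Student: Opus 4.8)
The plan is to read off all four assertions from Melrose's equivalence of glancing hypersurfaces \cite{MelroseGlanceSurf} (see also \cite{MarviziMelrose}, and \cite[Proposition 3.1]{PopovInterpolating} for an argument in exactly this form), transported into the coordinates $(x,\xi)=\kappa_i^{-1}(y,\eta)$ fixed by \eqref{eqn:kappa}. I would begin by recording the model picture: in the Friedlander model the billiard ball map is $\beta_0(x,\xi)=(x_1-2\sqrt{-\xi_1},x',\xi)$, which preserves $\xi_1$ exactly; since $H_{\xi_1}=\partial_{x_1}$ and $\xi_1$ is a constant of motion, $\beta_0=\exp(-2\sqrt{-\xi_1}\,H_{\xi_1})$, the time-one Hamilton flow of $\tfrac{4}{3}(-\xi_1)^{3/2}$. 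The content of Melrose's theorem --- equivalently, what makes the reduction in \eqref{eqn:kappa} possible --- is that each $\kappa_i$ conjugates $\beta$ to $\beta_0$ modulo $\O{}((|\xi'|_g^2-1)^\infty)$ on $\supp\varphi_i$, and that a conjugating symplectomorphism is unique to infinite order at $S^*\pO$. The uniqueness half yields the well-definedness of \eqref{eqn:approxInterp1}: on any overlap of $\supp\varphi_i$ with $\supp\varphi_j$ the functions $\xi_1\circ\kappa_i^{-1}$ and $\xi_1\circ\kappa_j^{-1}$ agree to infinite order at $S^*\pO$, so $\Xi=\xi_1\circ\kappa_i^{-1}+\O{}((|\xi'|_g^2-1)^\infty)$ on each patch and, to that order, is independent of the partition of unity.

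With this in hand, the first three claims are pullbacks of elementary facts about $\xi_1$. Since $\xi_1$ vanishes on $\{\xi_1=0\}=\kappa_i^{-1}(S^*\pO)$, is strictly negative on $\kappa_i^{-1}(B^*\pO)$, and has nonvanishing differential, pulling back by $\kappa_i$ gives $\Xi|_{S^*\pO}=0$, $\Xi<0$ on $B^*\pO$ near $S^*\pO$, and $d\Xi\ne0$ on $S^*\pO$; strict convexity ($Q>0$) keeps the leading coefficient $(2Q)^{-2/3}$ bounded away from $0$ and $\infty$, which is what is needed for nondegeneracy. The expansion $\Xi(x',\xi')=\frac{|\xi'|_g^2-1}{(2Q(x',\xi'))^{2/3}}+\O{}((|\xi'|_g^2-1)^2)$ is then just \eqref{eqn:xi1} re-read in the coordinate $\xi'=\partial_{y'}\theta_{0b}$.

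For the invariance I would compute on $\supp\varphi_i$, using $\xi_1\circ\beta_0=\xi_1$ and $\kappa_i^{-1}\circ\beta=\beta_0\circ\kappa_i^{-1}+\O{}((|\xi'|_g^2-1)^\infty)$,
\begin{align*}
\Xi\circ\beta&=\xi_1\circ\kappa_i^{-1}\circ\beta+\O{}((|\xi'|_g^2-1)^\infty)=\xi_1\circ\beta_0\circ\kappa_i^{-1}+\O{}((|\xi'|_g^2-1)^\infty)\\
&=\Xi+\O{}((|\xi'|_g^2-1)^\infty).
\end{align*}
For the Hamiltonian-flow statement I would use that a symplectomorphism intertwines Hamilton vector fields and their flows, so $\kappa_i\circ\exp(tH_{\xi_1})\circ\kappa_i^{-1}=\exp(tH_{\xi_1\circ\kappa_i^{-1}})$; applying this with the reparametrization by the conserved function $-2\sqrt{-(\xi_1\circ\kappa_i^{-1})}$ gives
$$\exp(-2\sqrt{-(\xi_1\circ\kappa_i^{-1})}\,H_{\xi_1\circ\kappa_i^{-1}})=\kappa_i\circ\beta_0\circ\kappa_i^{-1}=\beta+\O{}((|\xi'|_g^2-1)^\infty).$$
Replacing $\xi_1\circ\kappa_i^{-1}$ by $\Xi$ alters the Hamiltonian, hence its vector field and the time-one flow, by only $\O{}((|\xi'|_g^2-1)^\infty)$, since passing to the vector field and the flow loses at most finitely many orders (and $\infty$ minus finite is still $\infty$) and the square root is harmless because $\Xi$ and $\xi_1\circ\kappa_i^{-1}$ are comparable positive multiples of $1-|\xi'|_g^2$; this yields $\beta(q)-\exp(-2\sqrt{-\Xi}\,H_\Xi)(q)=\O{}((|\xi'|_g^2-1)^\infty)$.

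The main obstacle is not any individual computation but the bookkeeping of the $\O{}((|\xi'|_g^2-1)^\infty)$ remainders as they propagate through compositions, through the passage from a Hamiltonian to its vector field and flow, and through the partition-of-unity gluing --- and, underneath all of this, invoking both the existence and the infinite-order uniqueness parts of Melrose's glancing-hypersurface equivalence to justify that the coordinates $\kappa_i$ conjugate $\beta$ to the Friedlander model to the order required.
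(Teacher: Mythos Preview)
Your proposal is correct and takes the same approach as the paper: the paper does not give a proof at all but simply records that the lemma follows from Melrose's equivalence of glancing hypersurfaces \cite{MelroseGlanceSurf}, referring to \cite[Proposition~3.1]{PopovInterpolating} and \cite{MarviziMelrose} for the interpolating-Hamiltonian statement. Your sketch is exactly how one extracts the lemma from those references --- pull back the exact invariance and flow identities for $\xi_1$ in the Friedlander model via the local reductions $\kappa_i$, use the infinite-order uniqueness of the interpolating Hamiltonian to make the patched $\Xi$ well defined modulo $\O{}((|\xi'|_g^2-1)^\infty)$, and read off the leading expansion from \eqref{eqn:xi1}; the only cosmetic point is that the uniqueness you need is that of $\xi_1\circ\kappa_i^{-1}$ (the interpolating Hamiltonian) rather than of $\kappa_i$ itself, since the model admits automorphisms fixing $\xi_1$.
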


\subsubsection{Microlocal description of the boundary layer potentials and operators}
We now recall the microlocal descriptions of the boundary layer potentials and operators near glancing from \cite[Section 4.5]{Galk}.
Let $\mc{A}i$, $\mc{A}i'$, $\mc{A}_-$, and $\mc{A}_-'$ denote the Fourier multiplier with multiplier $Ai(\Theta_h)$, $Ai'(\Theta_h)$, $A_-(\Theta_h)$, and $A_-'(\Theta_h)$ where for convenience, we define
 \begin{gather*} 
 \Theta_h:=h^{-2/3}\Theta,\quad\quad\Theta_{0h}:=h^{-2/3}\Theta_0,\quad\quad \rho_h:=h^{-2/3}\rho, \quad\quad \rho_{0h}:=h^{-2/3}\rho.\end{gather*}
 Next, let 
\begin{align*} 
Jf&:=(2\pi h)^{-d+1}\int f_{0b}e^{\frac{i}{h}(\theta_0+\la x'-y',\xi'\ra)}f(y')dy'd\xi',\\
JCf&:=(2\pi h)^{-d+1}\int f_{0b}(\partial_{x_d}\rho+ih\partial_{x_d}g_1)|_{x_d=0}e^{\frac{i}{h}(\theta_0+\la x'-y',\xi'\ra)}f(y')dy'd\xi',\\
JBf&:=(2\pi h)^{-d+1}\int f_{0b}\partial_{x_d}g_0|_{x_d=0}e^{\frac{i}{h}(\theta_0+\la x'-y',\xi'\ra)}f(y')dy'd\xi'.
\end{align*}
Then $J$ is an elliptic semiclassical Fourier integral operator quantizing the reduction of the Friedlander glancing pair to the glancing pair $\pO$, $S^*\re^d$ and it is not hard to check that $B,\,C\in \Ph{}{}(\partial\Omega)$ so that for any $\delta>0$,
$$\sigma(JCJ^{-1})=(2Q(x,\xi'))^{1/3}+\O{S_\delta}( h^{1-2\delta})$$
where $Q$ is the second fundamental form lifted to the cotangent bundle, $T^*\pO$. Thus $C$ is elliptic.
 
\begin{lemma}
\label{lem:layerNearGlance}
Suppose that $(x_0,\xi_0)\in S^*\pO$ and $\zeta\in \Cc(\re^d)$ have $\zeta\equiv 1$ on $[-1,1]$ with $\supp \zeta\subset [-2,2]$. Then there exists $\delta>0$ such that for any $M,\,\e>0$ if $|\Im z|\leq Mh\log h^{-1}$, 
\begin{align*} GXg&=h^{2/3}\omega^{-1}\chi J\mc{A}i\mc{A}_-C^{-1}J^{-1}Xg+\O{\Ph{-\infty}{}}(h^\infty)g\\
\Dl Xg&=\left(\frac{1}{2}\Id -\omega^{-1}\chi J(\mc{A}i\mc{A}_-'+h^{2/3}\mc{A}_-\mc{A}iC^{-1}B)J^{-1}\right)Xg+\O{\Ph{-\infty}{}}(h^\infty)g\\
(\Sl Xg)|_{\Omega}&=\omega^{-1}h^{2/3}A_{1,g}JC^{-1}J^{-1}Xg+\O{\mc{D}'\to C^\infty}(h^\infty)g\\
(\D Xg)|_{\Omega}&=-\omega^{-1}A_{2,g}Xg-h^{2/3}\omega^{-1}A_{1,g}JC^{-1}BJ^{-1}Xg+\O{\mc{D}'\to C^\infty}(h^\infty)g
\end{align*}
where $\omega=\frac{e^{-\pi i/6}}{2\pi}$,
\begin{gather*} 
\chi:=\zeta((3\delta)^{-1}|x-x_0|),\quad\quad A_{1,g}:=\chi H_1J^{-1},\quad\quad A_{2,g}:=\chi H_2J^{-1}\\
X:=\oph\left[\zeta\left(\delta^{-1}(|x-x_0|+|\xi'-\xi_0|_g)\right)\zeta\left(h^{-\e}\delta^{-1}||\xi'|_g-1|\right)\right].
\end{gather*}
If we only allow $|\Im z|\leq Mh$, then we can set $\e=0$.
\end{lemma}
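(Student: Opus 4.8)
The plan is to transplant the exact computations of Section~\ref{sec:Friedlander} to the general strictly convex setting using the microlocal parametrices $H_1,H_2$ of \eqref{layer-e:parametrix}, while carrying the dependence on $z=1+i\mu$ through the expansions of $\rho,\theta,g_i$ in powers of $\e(h)=\max(h,|\mu|)$. First we localize: by \cite[Lemma 4.25]{Galk} (recalled in Lemma~\ref{lem:decompose}) $G$ agrees with a pseudodifferential operator up to $\O{\Ph{-\infty}{}}(h^\infty)$ away from $S^*\pO$, and its billiard-ball Fourier integral component is microsupported away from glancing; $\Sl,\D,\Dl$ are similarly well understood away from $S^*\pO$. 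Hence only a fixed conic neighborhood of $(x_0,\xi_0)\in S^*\pO$, and only data microsupported in $\{\,\left||\xi'|_g-1\right|\leq h^\e\,\}$, enter the four formulas --- this is precisely what the cutoffs $X$ (localizing $h^\e$-close to glancing and $\delta$-close in $(x,\xi')$) and $\chi$ (localizing near $x_0$) record, and it keeps the Airy arguments $\alpha_h=h^{-2/3}\alpha$ in the regime where the second-microlocal calculus of Section~\ref{sec:secondMicrolocal} applies.

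Next we pin down the potentials on $\Omega$ by a uniqueness argument. By \eqref{layer-e:parametrix} the operator $\omega^{-1}h^{2/3}A_{1,g}JC^{-1}J^{-1}X=\omega^{-1}h^{2/3}\chi H_1 C^{-1}J^{-1}X$ maps into solutions of $(-h^2\Delta-z^2)(\cdot)=\O{\Ph{-\infty}{}}(h^\infty)$ in $\Omega$, is $L^2(\Omega)$-bounded (the factor $Ai(h^{-2/3}\rho)$ decays in the shadow region $\rho_0>0$), and --- using $\rho|_{\pO}=\alpha$, $\theta|_{\pO}=\theta_b$, $g_1|_{\pO}=0$ --- has Dirichlet trace $\omega^{-1}h^{2/3}\chi J\mc{A}i\mc{A}_-C^{-1}J^{-1}X$. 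Matching this across $\pO$ with an outgoing exterior parametrix built on the $\mc{A}_-$ profile, the Wronskian normalization $Ai\,A_-'-Ai'\,A_-=\omega$ forces the constant $\omega^{-1}$ and the operator $C^{-1}$ inverting the normal-derivative symbol, for which $\sigma(JCJ^{-1})=(2Q)^{1/3}+\O{}(h^{1-2\delta})$ by \eqref{eqn:xi1}; the parallel computation with $A_-'$ in place of $A_-$, together with the $\partial_{x_d}g_0$ term and the $ih^{1/3}g_1Ai'$ half of the parametrix, produces the $\D$ formula with its $B$-correction. A microlocal elliptic estimate (Lemma~\ref{lem:microlocalElliptic}) for $-h^2\Delta-z^2$ in $\Omega$ with this Dirichlet data then upgrades ``solves the equation with the correct trace'' to equality modulo $\O{\mc{D}'\to C^\infty}(h^\infty)$, giving the formulas for $(\Sl Xg)|_\Omega$ and $(\D Xg)|_\Omega$.

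For the boundary operators $G$ and $\Dl$ we apply $\gamma^+$ to the expressions just obtained and invoke the jump relations $\gamma^+\Sl=G$ and $\gamma^+\D=-\tfrac{1}{2}I+\Dl$ from Section~\ref{sec:Friedlander}: the $-\tfrac{1}{2}\Id$ is the standard double-layer jump, restriction to $x_d=0$ collapses $Ai(h^{-2/3}\rho)$ to $\mc{A}i$, and the compositions $J^{-1}J=\Id$ (microlocally) together with the mapping properties of the Airy multipliers --- handled by Lemma~\ref{lem:FIOcomp} and the second-microlocal calculus --- give $h^{2/3}\omega^{-1}\chi J\mc{A}i\mc{A}_-C^{-1}J^{-1}$ and $\omega^{-1}\chi J(\mc{A}i\mc{A}_-'+h^{2/3}\mc{A}_-\mc{A}iC^{-1}B)J^{-1}$. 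Finally, for the enlarged strip $Mh\leq|\Im z|\leq Mh\log h^{-1}$ we organize the eikonal and transport series in powers of $\e(h)=|\mu|$, which is only polynomially small in $h$; the $\O{\Ph{-\infty}{}}(h^\infty)$ and $\O{\mc{D}'\to C^\infty}(h^\infty)$ remainders then persist only after the $h^\e$-cutoff in $X$ keeps $\alpha_h=h^{-2/3}(\xi_1+i\e(h))$ away from the Airy turning point, whereas when $|\Im z|\leq Mh$ one has $\e(h)\sim h$ and $\e=0$ is admissible. The main obstacle is exactly this bookkeeping: products like $J\mc{A}i\mc{A}_-C^{-1}J^{-1}$, and the normal-derivative computations producing $B$ and $C$, lie outside the ordinary $h$-pseudodifferential and FIO calculus, so one must work throughout in the second-microlocal calculus near glancing and certify that every error term is genuinely $\O{}(h^\infty)$ in the claimed operator topology, uniformly for $z$ in the enlarged strip.
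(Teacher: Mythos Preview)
The paper does not prove this lemma here at all: the sentence immediately preceding it reads ``We now recall the microlocal descriptions of the boundary layer potentials and operators near glancing from \cite[Section 4.5]{Galk},'' and the lemma is simply quoted from that reference. Your outline is broadly the right shape --- build the Airy-function parametrices $H_1,H_2$, identify the Wronskian constant $\omega=Ai\,A_-'-Ai'\,A_-=e^{-\pi i/6}/(2\pi)$, read off boundary traces via the jump relations, and control the $\e(h)$-dependence in the enlarged strip --- and this is essentially what \cite[Section 4.5]{Galk} does.

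There is, however, a genuine soft spot in your uniqueness step. You write that ``a microlocal elliptic estimate (Lemma~\ref{lem:microlocalElliptic}) for $-h^2\Delta-z^2$ in $\Omega$'' upgrades ``same Dirichlet trace'' to equality modulo $\O{}(h^\infty)$. But $-h^2\Delta-z^2$ is \emph{not} elliptic on its characteristic set $|\xi|=1$, so Lemma~\ref{lem:microlocalElliptic} does not apply, and the interior Dirichlet problem is only uniquely solvable away from the (dense in $h$) Dirichlet eigenvalues --- precisely the difficulty that motivates switching to \eqref{p1-e:boundaryMain} later in the paper. The route taken in \cite{Galk} avoids this entirely: one constructs a Melrose--Taylor style parametrix for the \emph{free} resolvent $R_0(z/h)$ itself near the glancing set, and then obtains $\Sl=1_\Omega R_0\gamma^*$, $\D=1_\Omega R_0\partial_\nu^*\gamma^*$, $G=\gamma R_0\gamma^*$, $\Dl=\gamma R_0\partial_\nu^*\gamma^*$ by directly restricting and differentiating that parametrix. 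The matching with $H_1,H_2$ and the factors $C^{-1},B$ then drops out of the explicit oscillatory-integral form, with no appeal to uniqueness of any boundary value problem. If you want a self-contained argument along your lines, you would need to replace the elliptic estimate either by this direct parametrix identification or by a propagation-of-singularities argument that handles the characteristic set and is uniform across Dirichlet eigenvalues.
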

A simple calculation shows that on $-Mh^{2/3}\leq\xi_1$, $AiA_-(\Theta_h)\in \Ph{-1/2,-1/2}{2/3}(\xi_1=0)$
and on $-Ch^\e\leq \xi_1\leq -Mh^{2/3}$, $AiA_-(\Theta_h)\in h^{-1/4+\e/4}\Ph{-1/2,-1/2}{1-\e/2}(\xi_1=0).$ Moreover, for $\xi_1\geq Mh^{2/3}$, 
$$2\pi e^{\pi i/6}AiA_-(\Theta_h)=\frac{h^{1/3}}{2\sqrt{\xi_1}}(1+\O{}(h(\xi_1)^{-3/2})).$$
So, using \eqref{eqn:xi1}
\begin{equation}
\label{eqn:Gsymb}
\begin{aligned}\sigma(Jh^{2/3}\omega^{-1}\chi \mc{A}i\mc{A}_-C^{-1}J^{-1}X)&=\frac{h}{2\sqrt{\xi_1(\kappa^{-1}(q))}}(1+\O{}(h(\xi_1)^{-3/2}))\frac{1}{\partial_{x_d}\rho(\kappa^{-1}(q))}\\
&\quad\quad\quad \zeta\left(\delta^{-1}(|x-x_0|+|\xi'-\xi_0|_g)\right)\zeta\left(h^{-\e}\delta^{-1}||\xi'|_g-1|\right)\\
&=\frac{h}{2\sqrt{|\xi'|_g^2-1}}(1+\O{}(h(|\xi'|_g^2-1)^{-3/2}))\\
&\quad\quad \quad\zeta\left(\delta^{-1}(|x-x_0|+|\xi'-\xi_0|_g)\right)\zeta\left(h^{-\e}\delta^{-1}||\xi'|_g-1|\right)
\end{aligned} 
\end{equation}

Finally, we recall the decomposition of the boundary layer operators away from glancing from \cite[Lemma 4.27]{Galk}. For a similar decomposition when $\Im z=0$ see \cite[Proposition 4.1]{HaZe}. 
\begin{lemma}
\label{lem:decompose}
Let $\Omega\subset \re^d$ be strictly convex with $\partial\Omega\in C^\infty$. Then for all $1/2>\e,\gamma >0$, and $z=E+\O{}(h^{1-\gamma})$ with $\Im z\geq -Ch\log h^{-1}$. Then\begin{gather*} 
G(z/h):=G_{\Delta}(z)+G_B(z)+G_g(z)+\O{\mc{D}'\to C^\infty}(h^\infty)\\
\Dl(z/h):=\Dl_{\Delta}(z)+\Dl_B(z)+\Dl_g(z)+\O{\mc{D}'\to C^\infty}(h^\infty)\\
\dDl(z/h):=\dDl_{\Delta}(z)+\dDl_B(z)+\dDl_g(z)+\O{\mc{D}'\to C^\infty}(h^\infty)
\end{gather*}
where $G_{\Delta}\in h^{1-\frac{\e}{2}}\Psi_{\e}^{-1}$, $\Dl_{\Delta}\in h^{1-2\e}\Ph{-1}{\e}$, $\dDl_{\Delta}\in h^{-1}\Ph{1}{\e}$, and  $G_B\in h^{1-\frac{\e}{2}}e^{(\Im z)_-d_{\Omega}/h}I^{comp}_{\delta}(C_b)$, $\Dl_B\in e^{(\Im z)_-d_{\Omega}/h}I^{\comp}_{\delta}(C_b)$,  and $\dDl_B\in h^{-1}e^{(\Im z)_-d_{\Omega}/h}I^{comp}_{\delta}(C_b)$ are FIOs associated to $\beta_E$ where $\delta=\max(\e,\gamma)$.
Moreover, 
\begin{gather*}{\MS}'((\cdot)_B)\subset \left\{\begin{gathered}(q,p)\in B_E^*\partial\Omega\times B_E^*\partial\Omega:\\\min(E-|\xi'(q)|_g,\,E-|\xi'(q)|_g\,,\,l(q,p))>ch^\e\end{gathered}\right\}\\
{\MS}'((\cdot)_g)\subset \left\{\begin{gathered}(q,p)\in T^*\partial\Omega\times T^*\partial\Omega:\\\max(|E-|\xi'(q)|_g|,\,|E-|\xi'(p)|_g|, l(q,p))<ch^\e\end{gathered}\right\}
\end{gather*}
\begin{gather*}
\sigma(G_{\Delta})=\frac{ih}{2\sqrt{E^2-|\xi'|_g^2}}\,,\quad \quad \sigma(\dDl_{\Delta})=\frac{ih^{-1}\sqrt{E^2-|\xi'|^2_g}}{2},\\
\sigma(G_Be^{\frac{\Im z}{h}\oph(l(q,\beta_E(q)))})=\frac{he^{\frac{i}{h}\Re\omega_0l(q,\beta_E(q))}}{2(E^2-|\xi'(\beta_E(q))|_g^2)^{1/4}(E^2-|\xi'(q)|_g^2)^{1/4}}dq^{1/2},\\
\sigma(\Dl_Be^{\frac{\Im z}{h}\oph(l(q,\beta_E(q)))})=\frac{-ie^{\frac{i}{h}\Re\omega_0l(q,\beta_E(q))}(E^2-|\xi'(q)|_g^2)^{1/4}}{2(E^2-|\xi'(\beta_E(q))|_g^2)^{1/4}}dq^{1/2},
\end{gather*}
\begin{gather*}
\begin{aligned}\sigma(\dDl_Be^{\frac{\Im z}{h}\oph(l(q,\beta_E(q)))})=&\\
&\!\!\!\!\!\!\!\!\!\!\!\!\!\!\!\!\!\!\!\!\!\!\!\!\!\!\!\!\!\!\!\!\!\!\!\!\!\!\!\!\!\!\!\!\!\!\!\!\!\!\!\!\!\!\!\frac{h^{-1}e^{\frac{i}{h}\Re\omega_0l(q,\beta_E(q))}(E^2-|\xi'(\beta_E(q))|_g^2)^{1/4}(E^2-|\xi'(q)|_g^2)^{1/4}}{2}dq^{1/2}.\end{aligned}
\end{gather*}
where we take $\sqrt{z}=\sqrt{|z|}e^{\frac{1}{2}\Arg(z)}$ for $-\pi/2<\Arg(z)<3\pi /2$.
\end{lemma}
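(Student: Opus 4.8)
The plan is to establish the three decompositions one microlocal region at a time and then patch them with a partition of unity on $T^*\partial\Omega\times T^*\partial\Omega$: on the region where $|\xi'|_g$ is bounded away from the glancing value $E$ we use the explicit kernel of the outgoing free resolvent, while near glancing we invoke the Friedlander-type model of Lemma \ref{lem:layerNearGlance}. I carry out the argument for $G(z/h)=\gamma R_0(z/h)\gamma^*$ in detail; the operators $\Dl(z/h)$ and $\dDl(z/h)$ are treated identically after replacing the kernel $R_0(z/h)(x,y)$ by $\partial_{\nu_y}R_0(z/h)(x,y)$ and using the corresponding lines of Lemma \ref{lem:layerNearGlance}. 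Fix $\phi\in\Cc(\re)$ with $\phi\equiv 1$ near $0$; write $G=\oph(1-\phi(h^{-\e}(E^2-|\xi'|_g^2)))\,G\,\oph(1-\phi(h^{-\e}(E^2-|\xi'|_g^2)))$ plus a remainder microsupported in $\{\,||\xi'|_g-E|\le 2ch^\e\,\}$, and split the first, away-from-glancing term further with a cutoff $\chi_0(|x-y|/\delta_1)$, $\delta_1>0$ fixed.

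For the near-diagonal piece $\chi_0 G$, I would Fourier-transform its kernel in $x'-y'$, using that on $\partial\Omega$ written in geodesic normal coordinates the kernel is $|x-y|^{-(d-2)}$ times a classical symbol in $h$ with leading oscillation $e^{i(z/h)|x-y|}$; equivalently one uses the known formula for the restriction of the free resolvent to a hypersurface. This identifies $\chi_0 G$ as a pseudodifferential operator with symbol $\tfrac{ih}{2\sqrt{z^2-|\xi'|_g^2}}$, the branch being the outgoing one fixed as in the statement of the lemma; since $z=E+\O{}(h^{1-\gamma})$ the principal symbol is $\tfrac{ih}{2\sqrt{E^2-|\xi'|_g^2}}$. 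Because this quantity, and its $\xi'$-derivatives rescaled by $h^\e$, is $\O{}(h^{1-\e/2})$ on the $h^\e$-thickened glancing set, the operator lies in $h^{1-\e/2}\Ph{-1}{\e}$; this is one piece of $G_\Delta$. For $\Dl$ and $\dDl$ the same computation with one or two extra factors $\partial_{\nu_y}$ produces a factor $\pm\sqrt{z^2-|\xi'|_g^2}$ and shifts the order, giving $\Dl_\Delta\in h^{1-2\e}\Ph{-1}{\e}$ with an extra $\tfrac12 I$ from the jump relation $\gamma^+\D=-\tfrac12 I+\Dl$, and $\dDl_\Delta\in h^{-1}\Ph{1}{\e}$ with $\sigma(\dDl_\Delta)=\tfrac{ih^{-1}\sqrt{E^2-|\xi'|_g^2}}{2}$.

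For the off-diagonal piece $(1-\chi_0)G$, on $|x-y|\ge\delta_1/2$ and $|z/h|\gg 1$ the large-argument Hankel asymptotics give $R_0(z/h)(x,y)=e^{i(z/h)|x-y|}a(x,y;h)$ with $a$ a classical symbol in $h$. Writing $e^{i(z/h)|x-y|}=e^{i\Re z\,|x-y|/h}e^{-\Im z\,|x-y|/h}$, the real exponential is bounded by $e^{(\Im z)_- d_\Omega/h}$ (finite since $\Im z\ge -Ch\log h^{-1}$), while $\Re z\,|x-y|=(E+\O{}(h^{1-\gamma}))|x-y|$ stays in the phase, forcing amplitude derivatives of size $h^{-\gamma}$, hence the exotic type $\delta=\max(\e,\gamma)$. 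By \eqref{eqn:locBillRelation}, $|x-y|$ is a global generating function of the billiard ball map on the strictly convex $\partial\Omega$, so $(1-\chi_0)G$ is a semiclassical Fourier integral operator with canonical relation $C_b$, associated to $\beta_E$; applying the Lagrangian distribution symbol formula of Lemma \ref{l:lagrangian-basic} to the phase $\Re z\,|x-y|$ and amplitude $a$ — a stationary-phase computation in $y\in\partial\Omega$ whose critical point is the second endpoint of the chord — yields exactly the stated principal half-density symbol of $G_B e^{\frac{\Im z}{h}\oph(l(q,\beta_E(q)))}$, and similarly for $\Dl_B$, $\dDl_B$, where $\partial_{\nu_y}$ produces the stated extra powers of $(E^2-|\xi'|_g^2)^{1/4}$. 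By non-stationary phase this operator is microsupported where $\min(E-|\xi'(q)|_g,\,E-|\xi'(p)|_g,\,l(q,p))>ch^\e$; this is $G_B$.

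Near glancing, on $\{\,||\xi'|_g-E|\le 2ch^\e\,\}$, I would apply Lemma \ref{lem:layerNearGlance}, giving $GX=h^{2/3}\omega^{-1}\chi J\mc{A}i\mc{A}_-C^{-1}J^{-1}X+\O{\Ph{-\infty}{}}(h^\infty)$, and analogously for $\Dl$, $\dDl$. Using the second-microlocal calculus of Section \ref{sec:secondMicrolocal} together with the recorded properties of $\mc{A}i\mc{A}_-(\alpha_h)$ — it lies in $\Ph{-1/2,-1/2}{2/3}(\xi_1=0)$ on $\xi_1\ge-Mh^{2/3}$ and equals $\tfrac{h^{1/3}}{2\sqrt{\xi_1}}(1+\O{}(h\xi_1^{-3/2}))$ up to constants on $\xi_1\ge Mh^{2/3}$ — I split this operator with a cutoff in $\xi_1$: the part microsupported on $\{\xi_1\le-Mh^{2/3}\}$, equivalently (by Lemma \ref{lem:dynStrictlyConvex}) on $\{l(q,p)>ch^\e\}$, is pseudodifferential, and by \eqref{eqn:Gsymb} together with the identities \eqref{eqn:xi1} for $\partial_{x_d}\rho$ and $\xi_1$ its symbol is $\tfrac{h}{2\sqrt{|\xi'|_g^2-E^2}}=\tfrac{ih}{2\sqrt{E^2-|\xi'|_g^2}}$, matching the symbol of $G_\Delta$ computed above, so the two pseudodifferential contributions glue into a single globally defined $G_\Delta\in h^{1-\e/2}\Ph{-1}{\e}$; the remaining part, microsupported where $\max(|E-|\xi'(q)|_g|,\,|E-|\xi'(p)|_g|,\,l(q,p))<ch^\e$, is $G_g$. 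The main obstacle, I expect, is the bookkeeping in this last patching step: verifying that the pieces produced by the diagonal resolvent parametrix, by the chord Fourier integral operator, and by the Friedlander model agree modulo $\O{}(h^\infty)$ on the overlaps of their microsupports so that $G_\Delta$, $G_B$, $G_g$ (and their $\Dl$, $\dDl$ analogues) are well defined — the consistency of the symbol of $G_\Delta$ across the two independent computations is forced by \eqref{eqn:Gsymb}--\eqref{eqn:xi1} — and, more delicately, tracking the exotic orders $h^{1-\e/2}$, $h^{1-2\e}$, $h^{-1}$ near the $h^\e$ glancing cutoffs, where one must count how many $\xi'$-derivatives cost $h^{-\e}$ against the blow-up of the $(E^2-|\xi'|_g^2)^{-1/2}$-type factors.
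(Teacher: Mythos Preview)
Your overall strategy --- direct analysis of the free resolvent kernel away from glancing (split into a near-diagonal pseudodifferential piece and an off-diagonal billiard-map FIO via a cutoff in $|x-y|$), together with the Friedlander model of Lemma~\ref{lem:layerNearGlance} near glancing, then patching --- is correct and is essentially the approach of \cite{Galk}, from which this lemma is recalled without proof here. Your away-from-glancing pieces are fine.

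The near-glancing paragraph, however, contains a genuine gap. You localize the Friedlander model $h^{2/3}\omega^{-1}\chi J\mc{A}i\mc{A}_-C^{-1}J^{-1}X$ by a cutoff at $\xi_1=-Mh^{2/3}$ and assert that the piece on $\{\xi_1\le -Mh^{2/3}\}$ is pseudodifferential with symbol matching $\sigma(G_\Delta)$. This is wrong on two counts. First, the formula you invoke, \eqref{eqn:Gsymb}, is stated for the \emph{elliptic} side $\xi_1\ge Mh^{2/3}$, where $Ai(\alpha_h)$ decays exponentially and $Ai\cdot A_-$ is a genuine symbol; you have the sign of the cutoff reversed. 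Second --- and this is the real issue --- on the hyperbolic side $\xi_1<-Mh^{2/3}$ the asymptotics \eqref{eqn:airyAsymps} give
\[
Ai(\alpha_h)A_-(\alpha_h)\;\sim\; c\,|\alpha_h|^{-1/2}\Bigl(e^{i\pi/4}+e^{-i\pi/4}e^{\,i\frac{4}{3}|\alpha_h|^{3/2}}\Bigr)
\;=\; c\,|\alpha_h|^{-1/2}\Bigl(e^{i\pi/4}+e^{-i\pi/4}e^{\,\frac{i}{h}\frac{4}{3}(-\xi_1)^{3/2}}\Bigr),
\]
so the multiplier is \emph{not} a symbol: it splits into a non-oscillatory term (which does match $G_\Delta$) and an oscillatory term with phase $\tfrac{4}{3}(-\xi_1)^{3/2}$. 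Conjugated by $J$, the oscillatory term is precisely the billiard-ball FIO (this phase generates $\beta$ in the Friedlander model, since $\partial_{\xi_1}\tfrac{4}{3}(-\xi_1)^{3/2}=-2\sqrt{-\xi_1}$), and on the overlap with the away-from-glancing region it must be matched with $G_B$, not dropped. Your ``equivalently $\{l(q,p)>ch^\e\}$ by Lemma~\ref{lem:dynStrictlyConvex}'' is also incorrect as stated: $l(q,\beta(q))\sim(-\xi_1)^{1/2}$, so $\xi_1\sim -h^{2/3}$ corresponds to $l\sim h^{1/3}$, not $h^\e$.

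The repair is exactly the Airy decomposition above: on the hyperbolic overlap one splits the Friedlander model into pseudo $+$ FIO, checks that each piece agrees (to $\O{}(h^\infty)$) with the corresponding piece from the direct kernel analysis so that $G_\Delta$ and $G_B$ are globally well defined, and declares $G_g$ to be what remains --- the transitional Airy region where the asymptotics \eqref{eqn:airyAsymps} break down. You correctly flagged this patching as the main obstacle, but your sketch as written does not carry out the hyperbolic-side Airy splitting, which is the crux of the construction.
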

\begin{remark}
The decomposition in \cite{HaZe} is slightly less precise than that in \cite{Galk} because the glancing pieces are microlocalized to a neighborhood of $S^*\partial\Omega\times S^*\partial\Omega$ rather than to a neighborhood of $S^*\partial\Omega\times S^*\partial\Omega\cap \Delta(T^*\partial\Omega)$ where $\Delta(T^*\partial\Omega)$ denotes the diagonal. 
\end{remark}

In particular, Lemma \ref{lem:decompose} together with \eqref{eqn:Gsymb} imply that there exists $M>0$ so that for $\chi=\chi(|\xi'|_g)\in \Ph{0,0}{2/3}(|\xi'|_g=1)$ with $\supp \chi\subset  \{|\xi'|_g\geq 1+Mh^{2/3}\}$, $G\oph(\chi)\in h^{2/3}\Ph{-1/2,-1/2}{2/3}(|\xi'|_g=1)$ with 
\begin{equation}
\label{eqn:Gsymb2}\sigma(G\oph(\chi))=\frac{h\chi(|\xi'|_g)}{2\sqrt{|\xi'|^2_g-1}}\left(1+\O{}(h(|\xi'|^2_g-1)^{-3/2})\right).
\end{equation}

\subsection{Analysis of $\Sl^*\Sl$, $\D^*\D$, and $\D^*\Sl$ near glancing}
Our next goal is to understand $\Sl^*\Sl$, $\D^*\D$, and $\D^*\Sl$ microlocally near glancing points. To do this, we will use the microlocal description of $\Sl$ and $\D$ from Lemma \ref{lem:layerNearGlance}. In particular, let $J_1$ be a microlocally unitary FIO quantizing $\kappa$ where $\kappa$ is as in \eqref{eqn:kappa}. Then we prove
\begin{lemma}
\label{lem:potentialGlance}
Fix $z=1+i\mu$ with $|\mu|\leq M h\log h^{-1}$. Then for any $\e>0$ and $\delta\leq 2/3$, for $\chi\in \Psi^{0,0}_{2/3}(|\xi'|_g=1)$ self adjoint with $\WFh(\chi) \subset \{||\xi'|_g -1|\leq h^\delta\}$, 
\begin{gather*} 
\chi\S^*\S\chi \in h^{2-\e}\Ph{0,0}{1-\delta/2}(\{|\xi'|_g=1\}),\quad \chi \D^*\S\chi ,\chi\S^*\D\chi \in h^{\frac{3}{2}-\frac{\delta}{4}-\e}\Ph{0,1/2}{1-\delta/2}(\{|\xi'|_g=1\}),\\
 \chi\D^*\D\chi\in h^{1-\frac{\delta}{2}-\e}\Ph{0,1}{1-\delta/2}(\{|\xi'|_g=1\}) 
\end{gather*}
Moreover, 
\begin{align*}
\sigma(J_1^*\chi\S ^*\S\chi J_1)&=\frac{h^2\Psi_{\S }(h^{-2/3}\Theta_0(\xi'))\chi^2(\kappa(x',\xi'))}{2Q(\kappa(x',\xi'))},\\
\sigma(J_1^*\chi\D ^*\S\chi J_1)&=\frac{h^{4/3}\Psi_{\D \S }(h^{-2/3}\Theta_0(\xi'))\chi^2(\kappa(x',\xi'))}{(2Q(\kappa(x',\xi')))^{2/3}},\\
\sigma(J_1^*\chi\S ^*\D\chi J_1)&=\frac{h^{4/3}\overline{\Psi_{\D \S }(h^{-2/3}\Theta_0(\xi'))}\chi^2(\kappa(x',\xi'))}{(2Q(\kappa(x',\xi')))^{2/3}},\\
\sigma(J_1^*\chi\D ^*\D\chi J_1)&=\frac{h^{2/3}\Psi_{\D }(h^{-2/3}\Theta_0(\xi'))\chi^2(\kappa(x',\xi'))}{(2Q(\kappa(x',\xi')))^{1/3}}.
\end{align*}
\end{lemma}

We prove this lemma using Lemma \ref{lem:layerNearGlance} to write a parametrix for $\S^*\S$. We then Taylor expand the Airy functions around their values at the boundary of $\Omega$ and estimate each of the terms. The higher order terms in the expansion will turn out to be lower order in $h$ and the symbols will be found by computing the first term. The operators $\D^*\S$, $\S^*\D$ and $\D^*\D$ are handled similarly.

\subsubsection{Estimates on the Remainder Terms}
We first give estimates on the size of terms that will be lower order. These terms arise from a Taylor expansion of the integrand when computing $\S^*\S$ using the mcirolocal model from Lemma \ref{lem:layerNearGlance}. In particular, consider an operator with kernel given by
\begin{align*} 
R_{ijklmno}&=(2\pi h)^{-2d+2}\iint_0^\infty b(w,x',y',\eta,\xi')h^{-2/3(j+k)}(\rho(w,\eta')-\rho(w,\xi'))^k(\Theta(\eta')-\Theta(\xi'))^jw_d^n\\
&\quad\quad Ai^{(l)}(\rho_h(w,\xi'))A_-^{(m)}(\Theta_h(\xi'))\overline{Ai^{(o)}(\rho_h(w,\xi'))A_-^{(i)}(\Theta_h(\xi'))}\\
&\quad\quad\quad e^{\frac{i}{h}(\theta(w,\xi')-\theta(w,\eta')-\theta_b(y',\xi')+\theta_b(x',\eta'))}dw_dd\xi'd\eta'dw'
\end{align*}
where $b\in S_\delta(\xi_1=0)$ is supported in $|\Theta(\xi')|,|\Theta(\eta')|\leq Ch^\delta$. First, observe that since $\partial_{x_d}\rho_0>0$ and for $t\gg 1$, 
$$Ai(t)\leq Ce^{-t^{3/2}},$$
 we may assume that $b$ is supported on $w_d<\e$ for any $\e>0$ by introducing an $\O{}(e^{-C/h})$ error. Next, notice that 
$$\theta(w,\xi')-\theta(w,\eta')=\theta_b(w',\xi')-\theta_b(w',\eta')+w_d^2\la \xi'-\eta', r(w,\xi',\eta')\ra.$$
So, 
$$\partial_{w'}\theta(w,\xi')-\partial_{w'}\theta(w,\eta')=(\partial^2_{x'\xi'}\theta_b(w',\eta')+w_d^2\partial_{w'}r)(\xi'-\eta')$$
and, using that $\partial^2_{x'\xi'}\theta_b\neq 0$, for $w_d$ small enough, the phase is stationary precisely at $\xi'=\eta'$. 

 We first change variables so that $W_d=h^{-2/3}\rho_0(w,\xi')$. Then, $w_d=h^{2/3}e(W_d,w',\xi')(W_d-h^{-2/3}\Theta_0(\xi'))$ where $e$ is elliptic. So, the kernel takes the form
\begin{align*} 
&R_{ijklmno}\\
&=(2\pi h)^{-2d+2}h^{2/3}\iint_{h^{-2/3}\Theta_0(\xi')}^\infty b_1(h^{2/3}W_d,w',x',y',\eta,\xi')h^{-2/3(j+k-n)}\\
&\quad\quad (\rho(w_d(W_d,w',\xi'),\eta')-h^{2/3}W_d-\e(h)\rho_1)^k(\Theta(\eta')-\Theta(\xi'))^j(W_d-h^{-2/3}\Theta_0(\xi'))^n\\
&\quad\quad Ai^{(l)}(W_d+h^{-2/3}\e(h)\rho_1(w,\xi'))A_-^{(m)}(\Theta_h(\xi'))\overline{Ai^{(o)}(W_d+h^{-2/3}\e(h)\rho_1(w,\xi'))A_-^{(i)}(\Theta_h(\xi'))}\\
&\quad\quad\quad e^{\frac{i}{h}(\theta_b(w',\xi')-\theta_b(w',\eta')-\theta_b(y',\xi')+\theta_b(x',\eta')+h^{4/3}(W_d-\Theta_0(\xi'))^2\la \xi'-\eta',r(W_d,w',x',\xi',\eta')\ra )}dW_dd\xi'd\eta'dw'
\end{align*}
Now, the integrand vanishes to order $|\xi'-\eta'|^{j+k}$, and the phase is stationary in $w'$ precisely at $\xi'=\eta'$. Hence, integrating by parts $j+k$ times in $w'$ and then applying stationary phase in the $w'$, $\eta'$ variables gives a finite sum of terms (possibly with additional positive powers of $h$) of the form
\begin{align*} 
&\frac{h^{2/3}}{(2\pi h)^{d-1}}\iint_{h^{-2/3}\Theta_0(\xi')}^\infty b_2(h^{2/3}W_d,x'+\O{}((W_d-h^{-2/3}\Theta_0(\xi'))^2h^{4/3}),x',y',\xi',\xi')\\
&\quad\quad h^{1/3(j+k+2(n-p-q))}\e(h)^{p+q}(W_d-h^{-2/3}\Theta_0(\xi'))^n Ai^{(l+p)}(W_d+h^{-2/3}\e(h)\rho_1)A_-^{(m)}(\Theta_h(\xi'))\\
&\quad\quad \overline{Ai^{(o+q)}(W_d+h^{-2/3}\e(h)\rho_1)A_-^{(i)}(\Theta_h(\xi'))}e^{\frac{i}{h}(\theta_b(x',\xi')-\theta_b(y',\xi'))}dW_dd\xi'.
\end{align*}
Note that we can apply stationary phase in the $w',\eta'$ variables since $\partial^2_{x'\xi'}\theta_{0b}\neq 0$. Next, change variables $\xi'\mapsto \Xi'(x',y',\xi')$ so that 
$$\theta_{0b}(x',\xi')-\theta_{0b}(y',\eta')=\la x'-y',\Xi'(x',y',\xi')\ra.$$
To  find such a change of variables, observe that 
$\Xi(x',x',\xi')=\partial_{x'}\theta_{0b}$ and hence $\partial_{\xi'}\Xi=\partial^2_{\xi'x'}\theta_{0b}\neq0$ so we can apply the implicit function theorem.
Then, integrating in $W_d$ and using the fact that on $\supp b_2$, $|\Theta(\xi')|\leq Ch^{\delta}$, we obtain 
$$R_{ijklmno}=(2\pi h)^{-d+1}\int b_3(x',y',\xi';h)e^{\frac{i}{h}\la x'-y',\xi'\ra }d\xi'$$
where, letting
$$r=\frac{2n+l+p+o+q+m+i-4}{2}+\frac{1}{4}(\delta_{o+q}^0+\delta_{l+p}^0+\delta_i^0+\delta_m^0),$$
$$b_3\in h^{2/3+\frac{1}{3}(j+k+p+q+2n)}(\log h^{-1})^{p+q}h^{\max(r,0)(\frac{1}{3}-\frac{\delta}{2})}S_{1-\delta/2}^{0,r}(\re^{d-1};\{\xi_1=0\})$$

Hence, the operator $R^{ijklmno}$ with kernel $R_{ijklmno}$ has for any $\e>0$,
\begin{gather*} 
R^{ijklmno}\in h^{1/3(j+k-l-m-i-o+2)+\delta/2(l+m+i+2n+o)-\e}\Ph{0,0}{1-\delta/2}(\{\xi_1=0\})\\
R^{ijklmno}\in h^{2/3+\frac{1}{3}(j+k+2n)}h^{\max(r,0)(\frac{1}{3}-\frac{\delta}{2})}\Psi_{1-\delta/2}^{0,r}(\re^{d-1};\{\xi_1=0\}).
\end{gather*}

\subsubsection{The Principal Part}
By the analysis above, we see that when microlocalized near glancing points $\S ^*\S $, $\D ^*\D $, and $\D ^*\S $ are pseudodifferential in a second microlocal class. We just need to compute the principal symbol of these operators. The symbols will turn out to be $\Psi_{\S }$, $\Psi_{\D }$, and $\Psi_{\D \S },$ respectively. 

First, using the principle of stationary phase, we compute that  
\begin{equation*} 
\begin{gathered} 
J^{-1}f=(2\pi h)^{-d+1}\int b_0(y',\xi')e^{\frac{i}{h}(\la x',\xi'\ra-\theta_b(y',\xi'))}f(y')dy'd\xi'\,,\\
C^{-1}J^{-1}f=(2\pi h)^{-d+1}\int b_1(y',\xi')e^{\frac{i}{h}(\la x',\xi'\ra-\theta_b(y',\xi'))}f(y')dy'd\xi'.
\end{gathered}
\end{equation*} 
where 
$$b_0=\frac{|\det\partial^2_{x'\xi'}\theta_b(y',\xi')|}{g_{0b}(y',\xi')}+\O{S}(h)\quad \text{ and }\quad b_1=\frac{b_0(y',\xi')}{\partial_{x_d}\rho(y',\xi')}+\O{S}(h).$$

Denote the kernels of $\S ^*\S $, $\D ^*\D $, and $\D ^*\S $, respectively by $K_{\S }$, $K_{\D }$, and $K_{\D \S }$ respectively. 
We explicitly consider $\S ^*\S $ and we record the end result for the others. The kernel of $\S $ is given by 
\begin{align*} 
\S (x,y)&=\frac{2\pi e^{\pi i/6}h^{2/3}}{(2\pi h)^{d-1}}\int \left(g_0(x,\xi')Ai(\rho_h(x,\xi'))+ih^{1/3}g_1(x,\xi')Ai'(\rho_h(x,\xi'))\right)\\
&\quad\quad A_-(\Theta_h(\xi'))b_1(y',\xi')e^{\frac{i}{h}(\theta(x,\xi')-\theta_b(y',\xi'))}d\xi'
\end{align*}
The kernel of $\S^*\S$ is given by
\begin{align*}
K_{\S }&= \frac{4\pi ^2h^{4/3}}{(2\pi h)^{2d-2}}\iint_0^\infty\left(g_0(w,\xi')Ai(\rho_h(w,\xi'))+ih^{1/3}g_1(w,\xi')Ai'(\rho_h(w,\xi'))\right)\\
&\quad \quad \left(\overline{g_0(w,\eta')Ai(\rho_h(w,\eta'))}-ih^{1/3}\overline{g_1(w,\eta')Ai'(\rho_h(w,\eta'))}\right)\\
&\quad\quad A_-(\Theta_h(\xi'))\overline{A_-(\Theta_h(\eta'))}b_1(y',\xi')\overline{b_1(x',\eta')}e^{\frac{i}{h}(\theta_b(x',\eta')-\theta(w,\eta')+\theta(w,\xi')-\theta_b(y',\xi'))}dw_ddw'd\xi'd\eta'
\end{align*}  
Taylor expanding the Airy functions around $\rho_h(w,\xi')$ and $\Theta_h(\xi')$ produces lower order terms of the form $h^{4/3}R_{0jkjk0}$, $(j,k)\neq (0,0)$, $h^{5/3}R_{0jk(j+1)k10}$, $h^{5/3}R_{0jkjk11}$ and $h^2R_{0jk(j+1)k21}$.  In particular, $\S ^*\S =A+\O{\Ph{}{1-\delta/2}(\{\xi_1=0\})}(h^{2+\delta/2-\e})$ where $A$ has kernel
\begin{align*}
A(x,y)&= \frac{4\pi ^2h^{4/3}}{(2\pi h)^{2d-2}}\iint_0^\infty g_0(w,\xi')Ai(\rho_h(w,\xi'))\overline{g_0(w,\eta')Ai(\rho_h(w,\xi'))}A_-(\Theta_h(\xi'))\overline{A_-(\Theta_h(\xi'))}\\
&\quad\quad b_1(y',\xi')\overline{b_1(x',\eta')}e^{\frac{i}{h}(\theta_b(x',\eta')-\theta(w,\eta')+\theta(w,\xi')-\theta_b(y',\xi'))}dw_ddw'd\xi'd\eta'
\end{align*} 
Then, changing variables $W_d\mapsto h^{-2/3}\rho_0(w,\xi')$ and performing stationary phase as in the analysis of $R_{jklmno}$ gives
\begin{align*}
A(x,y)&= \frac{4\pi ^2h^{2}}{(2\pi h)^{d-1}}\iint_{h^{-2/3}\Theta_0(\xi')}^\infty \frac{a_0(x',\xi')\overline{a_0(x',\xi')}+\O{S_{1-\delta/2}}(h^{1/3+\delta/2-\e})}{|\det\partial^2_{x'\xi'}\theta(x'\xi')|\partial_{x_d}\rho(x',\xi')}|Ai(W_d+h^{-2/3}\e(h)\rho_1)|^2\\
&\quad\quad |A_-(\Theta_h(\xi'))|^2b_1(y',\xi')\overline{b_1(x',\xi')}e^{\frac{i}{h}(\theta_b(x',\xi')-\theta_b(y',\xi'))}dW_ddw'd\xi'd\eta'
\end{align*} 
Using that the phase is stationary at $x'=y'$ to integrate by parts in $\xi'$ when terms of size $|x'-y'|$ appear, that for any $\e>0$,
\begin{equation*}Ai(W_d+h^{-2/3}\e(h)\rho_1)=\begin{cases} Ai(W_d)+\O{S}(h^{-2/3}\e(h)\la W_d\ra^{1/4})&W_d\leq C\\
Ai(W_d)+\O{S}(h^{-2/3}\e(h)\la W_d\ra^{1/4}e^{-2/3W_d^{3/2}})&W_d\geq C,\end{cases}\end{equation*}
 and using the definition of $\Psi_{\S }$ gives for any $\e>0$,
\begin{align*}
A(x,y)&= \frac{h^{2}}{(2\pi h)^{d-1}}\int \frac{|a_0(x',\xi')|^2|b_1(x',\xi')|^2\Psi_{\S }(\Theta_{0h}(\xi'))+\O{S_{1-\delta/2}}(h^{\delta/2-\e})}{|\det\partial^2_{x'\xi'}\theta_{0b}(x',\xi')|\partial_{x_d}\rho(x',\xi')}\\
&\quad\quad e^{\frac{i}{h}(\theta_b(x',\xi')-\theta_b(y',\xi'))}d\xi'
\end{align*} 
Now, let $J_1$ be a microlocally unitary semiclassical FIO quantizing $\kappa$ i.e. 
$$J_1f=(2\pi h)^{-d+1}\int c(x',\xi')e^{\frac{i}{h}(\theta_{0b}(x',\xi')-\la y',\xi'\ra)}d\xi'$$
where 
$$c=|\det \partial^2_{x'\xi'}\theta_{0b}(x',\xi')|^{1/2}+\O{}(h).$$
Applying stationary phase gives
\begin{align*}
&J_1^*AJ_1(x,y)\\
&= \frac{h^{2}}{(2\pi h)^{d-1}}\int \left.\frac{\bar{c}(w',\xi')c(z',\xi')|a_0(w',\xi')|^2|b_1(w',\xi')|^2\Psi_{\S }(\Theta_{0h}(\xi'))+\O{S_{1-\delta/2}}(h^{\delta/2-\e})}{|\det\partial^2_{x'\xi'}\theta_{0b}(w',\xi')|^2\partial_{x_d}\rho(w',\xi')|\det\partial^2_{x'\xi'}\theta_{0b}(z',\xi')|}\right|_{\substack{y'=\partial_{\xi'}\theta_{0b}(z',\xi')\\
x'=\partial_{\xi'}\theta_{0b}(w',\xi')}}\\
&\quad\quad e^{\frac{i}{h}(\la x'-y',\xi'\ra+\theta_{1b}(w',\xi')-\theta_{1b}(z',\xi'))}d\xi'
\end{align*} 
Again, using integration by parts on terms that are $\O{}(|x'-y'|)$, we can assume that $x'=y'$ in the amplitude and hence have 
\begin{align*}
&J_1^*AJ_1(x,y)\\
&= \frac{h^{2}}{(2\pi h)^{d-1}}\int \left.\frac{\bar{c}(w',\xi')c(w',\xi')|a_0(w',\xi')|^2|b_1(w',\xi')|^2\Psi_{\S }(\Theta_{0h}(\xi'))+\O{S_{1-\delta/2}}(h^{\delta/2-\e})}{|\det\partial^2_{x'\xi'}\theta_{0b}(w',\xi')|^2\partial_{x_d}\rho(w',\xi')|\det\partial^2_{x'\xi'}\theta_{0b}(w',\xi')|}\right|_{
x'=\partial_{\xi'}\theta_{0b}(w',\xi')}\\
&\quad\quad e^{\frac{i}{h}\la x'-y',\xi'\ra}d\xi'
\end{align*} 
So, plugging in the definition of $c$ and $b_1$, we have 
\begin{align}
\label{layer-e:singleLayer}
J_1^*\S ^*\S J_1&= \frac{h^{2}}{(2\pi h)^{d-1}}\int \left.\frac{\Psi_{\S }(\Theta_{0h}(\xi'))+\O{S_{1-\delta/2}}(h^{\delta/2-\e})}{|\partial_{x_d}\rho(w',\xi')|^2\partial_{x_d}\rho(w',\xi')}\right|_{
x'=\partial_{\xi'}\theta_{0b}(w',\xi')}e^{\frac{i}{h}\la x'-y',\xi'\ra}d\xi'
\end{align} 
Similar computations give 
\begin{equation}
\label{layer-e:layerFormulae}
\begin{aligned} 
J_1^*\D ^*\S J_1&=\frac{h^{4/3}}{(2\pi h)^{d-1}}\int \left.\frac{\Psi_{\D \S }(\Theta_{0h}(\xi'))+\O{S^{0,1/2}_{1-\delta/2}}(h^{\frac{1}{6}+\delta/4-\e})}{(\partial_{x_d}\rho(w',\xi'))^2}\right|_{
x'=\partial_{\xi'}\theta_{0b}(w',\xi')}e^{\frac{i}{h}\la x'-y',\xi'\ra}d\xi'\\
J_1^*\S ^*\D J_1&=\frac{h^{4/3}}{(2\pi h)^{d-1}}\int \left.\frac{\overline{\Psi_{\D \S }(\Theta_{0h}(\xi'))}+\O{S^{0,1/2}_{1-\delta/2}}(h^{\frac{1}{6}+\delta/4-\e})}{(\overline{\partial_{x_d}\rho(w',\xi'))^2}}\right|_{
x'=\partial_{\xi'}\theta_{0b}(w',\xi')}e^{\frac{i}{h}\la x'-y',\xi'\ra}d\xi'\\
J_1^*\D ^*\D J_1&=\frac{h^{2/3}}{(2\pi h)^{d-1}}\int \left.\frac{\Psi_{\D }(\Theta_{0h}(\xi'))+\O{S^{0,1}_{1-\delta/2}}(h^{\frac{1}{3}-\e})}{\partial_{x_d}\rho(w',\xi')}\right|_{
x'=\partial_{\xi'}\theta_{0b}(w',\xi')}e^{\frac{i}{h}\la x'-y',\xi'\ra}d\xi'.
\end{aligned}
\end{equation} 
Hence, all of the above operators are second microlocal pseudodifferential operators with respect to the glancing surface $\{|\xi'|_g=1\}.$ 

Plugging \eqref{eqn:xi1} into \eqref{layer-e:singleLayer} and \eqref{layer-e:layerFormulae} gives that 
\begin{align*}
\sigma(J_1^*\S ^*\S J_1)&=\frac{h^2\Psi_{\S }(h^{-2/3}\Theta_0(\xi'))}{2Q(\kappa(x',\xi'))}&
\sigma(J_1^*\D ^*\S J_1)&=\frac{h^{4/3}\Psi_{\D \S }(h^{-2/3}\Theta_0(\xi'))}{(2Q(\kappa(x',\xi')))^{2/3}}\\
\sigma(J_1^*\S ^*\D J_1)&=\frac{h^{4/3}\overline{\Psi_{\D \S }(h^{-2/3}\Theta_0(\xi'))}}{(2Q(\kappa(x',\xi')))^{2/3}}&
\sigma(J_1^*\D ^*\D J_1)&=\frac{h^{2/3}\Psi_{\D }(h^{-2/3}\Theta_0(\xi'))}{(2Q(\kappa(x',\xi')))^{1/3}}
\end{align*}
where $\kappa$ is as in \eqref{eqn:kappa}.

\section{Preliminary analysis of the generalized boundary damped equation}
\label{sec:genBStable}
We examine problems of the form 
\begin{gather}
\label{p1-e:main1}
\begin{cases}(-h^2\Delta -z^2)u=w&\text{ in } \Omega\\
h\partial_\nu u +Bu =hv&\text{ on }\partial\Omega\\
u|_{\pO}=\psi
\end{cases}\\
 \label{eqn:zrange} 
 z\in [1-ch,1+ch]+i[-Mh\log h^{-1},Mh\log h^{-1}].
 \end{gather}

 We then assume that
 $B=hN_2(z/h)+hV(z)$, with $V$ analytic for $z$ as in \eqref{eqn:zrange},
$V\in
h^{\alpha}(\Ph{0,m}{2/3}\{|\xi'|_g=E'\}\cup \Ph{0,m}{2/3}\{|\xi'|_g=1\})
$ for some $\alpha\geq -1$ and $m\in \re$. 
 
 Furthermore, suppose that for some $\delta>0$, $M,M_1>0$, and $0<\e<\frac{1}{2}$ 
\begin{equation} 
\label{resFree-e:ellipticAssumption} 
\begin{gathered} V\text{ is elliptic},\text{ on }\left||\xi'|_g-1\right|<\delta,\\
\begin{aligned}
\left|1+\frac{h\sigma(V)}{2\sqrt{|\xi'|_g^2-1}}\right|&\geq \delta \left(\left\la \frac{h^{1+\alpha}}{\sqrt{|\xi'|_g^2-1}}\right\ra+\la \xi'\ra^{m-1}\right)\quad\quad&&|\xi'|_g>1+Mh^{2/3}\\
\left|1+\frac{ih\sigma(V)}{2\sqrt{1-|\xi'|_g^2}}\right|&\geq \delta \left\la \frac{h^{1+\alpha}}{\sqrt{1-|\xi'|_g^2}}\right\ra\quad\quad&&|\xi'|_g\leq 1-h^{\e}
\end{aligned}\\
\log \left(1+\frac{h\sigma(V)}{\sqrt{|\xi'|_g^2-1}}\right)\text{ exists and is smooth on }T^*\pO\setminus\{|\xi'|_g\leq M_1\}
\end{gathered}
\end{equation}

The problem \eqref{p1-e:main1} is a highly generalized version of a standard boundary damped equation which was studied in the seminal work of Bardos--Lebeau--Rauch \cite{BardLebeau} see also \cite{KochTataruStable}.  In order to study this problem from the spectral point of view, we must see that the inverse operator is meromorphic with finite rank poles. This is similar to the analysis in the case of the standard damped wave equation (see for example \cite[Chapter 5]{EZB} and references therein).

 \subsection{Meromorphy of the Resolvent}
 For $s>-1/2$, let 
 $$\mc{P}(z):=\begin{pmatrix}-h^2\Delta -z^2\\\gamma \partial_\nu+h^{-1}B(z)\gamma  \end{pmatrix}:H^{s+2}(\Omega)\to H^s(\Omega)\oplus H^{s+1/2-\max(m-1,0)}(\pO).$$ 
 We will show that $\mc{P}(z)^{-1}$ is a meromorphic family of operators with finite rank poles. Our analysis is similar in spirit to that for potential and black box scattering see for example \cite[Chapters 2,3,4]{ZwScat}.

 Then, when $(I+VG)^{-1}:H^s(\pO)\to H^{s+\max(m-1,0)}(\pO)$ exists,
 \begin{align*}(\mc{P}^{-1})^t&=\begin{pmatrix}[I-\Sl(I+VG)^{-1}(\gamma \partial_\nu +h^{-1}B\gamma)]h^{-2}1_{\Omega}R_0(z/h)1_{\Omega}\\
 \Sl(I+VG)^{-1}\end{pmatrix}\\
 &:H^{s}(\Omega)\oplus H^{s+1/2-\max(m-1,0)}(\pO)\to H^{s+2}(\Omega).
 \end{align*}
 To check that this is the inverse, we simply apply the jumps formulas from for example \cite[Lemma 4.1 and Proposition 4.1.1]{Galk}. For the Sobolev mapping properties of $1_{\Omega}R_01_{\Omega}$, $\Sl$, $\D$, see for example \cite[Theorems 9, 10]{Epstein}.
  Now, 
$$(I+VG)^{-1}=I-V(I+GV)^{-1}G,\quad\quad (I+GV)^{-1}=I-G(I+VG)^{-1}V$$
therefore, $I+GV$ is invertible if and only if $I+VG$ is invertible.
 Thus, to check that $\mc{P}^{-1}$ has a meromorphic continuation from $\Im z>0$, it is enough to check that for $(I+GV)^{-1}$. To see this, we first show that $I+GV$ is a holomorphic family of Fredholm operators with index 0 on the domain of $R_0$. The condition \eqref{resFree-e:ellipticAssumption} and Lemma \ref{lem:decompose} imply that for $M$ sufficiently large and $0\leq \chi_0\in \Cc(\re)$ with $\chi_0\equiv 1$ on $|x|\leq M$ and $\supp \chi_0\subset \{|x|\leq M+1\},$ $(I+GV)(1-\chi(|hD'|_g))\in \Ph{\max(m-1),0)}{}(\pO)$ is elliptic on $|\xi'|_g\geq M+1$ with symbol
 $$f:=\sigma((I+GV)(1-\chi_0(|hD'|_g)))=\left(1+\frac{h\sigma(V)}{2\sqrt{|\xi'|_g^2-1}}\right)(1-\chi_0(|\xi'|_g)).$$ Then, for $k=1,2$, let $0\leq \chi_k\in \Cc(\re)$ with $\chi_k\equiv 1 $ on $|x|\leq M+1$ and $\supp \chi_k\subset \supp\chi_{k+1}$ with $\supp \chi_2\subset \{|x|\leq M+2\}$. 
Then, by assumption, $\log \frac{f}{|f|}$ is well defined  on $\supp \chi_2(|\xi'|_g)$ and hence for $K>0$ large enough
 $$q=f+K\chi_2(|\xi'|_g)\left(\frac{f}{|f|}\right)^{1-\chi_1(|\xi'|_g)}\in S^{m-1}\quad\quad\text{ has }|q|\geq c  \la \xi'\ra^{m-1}.$$
 
Now, $\oph(q):\Hh^{s+\max(m-1,0)}(\pO)\to H^s_h(\pO)$ is invertible for $h$ small enough and 
\begin{align*} 
\oph(q)(I+GV)&=I+A_1&(I+GV)\oph(q)&=I+A_2\\
\oph(q)(I+VG)&=I+A_3&(I+VG)\oph(q)&=I+A_4
\end{align*}
with $A_i:\Hh^s(\pO)\to \Hh^{s-1}(\pO)$.
Therefore, both $I+GV$ and $I+VG$ are Fredholm with index 0. The analysis below will show that there exists $z_0$ with $\Im z>0$ so that $I+GV$ is injective. Therefore, $(I+GV)^{-1}$ exists at $z_0$ and by the analytic Fredholm Theorem has a meromorphic continuation to $\mathbb{C}$ when $d$ is odd and to the logarithmic cover of $\mathbb{C}\setminus\{0\}$ when $d$ is even. 
 
Write
\begin{equation} 
\label{p1-e:boundaryMain1} 
(I+VG)\varphi=v.
\end{equation}
 Note that if $\varphi$ has \eqref{p1-e:boundaryMain1}, then $u=\Sl\varphi$ 
solves \eqref{p1-e:main} with $w=0$ and $\psi=G\varphi$. That is, 

\begin{equation}
\label{p1-e:main}
\begin{cases}(-h^2\Delta -z^2)u=0&\text{ in } \Omega\\
h\partial_\nu u +Bu =hv&\text{ on }\partial\Omega\\
u|_{\pO}=\psi
\end{cases}
\end{equation}
Similarly, if 
\begin{equation}\label{p1-e:boundaryMain} (I+GV)\psi=Gv,\end{equation}
then 
$$u=-\Sl V\psi+\Sl v$$
solves \eqref{p1-e:main}. 
Now, suppose that $u$ solves \eqref{p1-e:main}. Then
$$u=h^{-1}\Sl h\partial_\nu u-\D u|_{\pO}=-h^{-1}\Sl Bu|_{\pO}-\D u|_{\pO}+\Sl v=-\Sl Vu|_{\pO}+\Sl v$$
where we have used that in $\Omega$,
$$\Sl N_2+\D =0\quad\quad\text{ and hence } \quad\quad (h^{-1}\Sl B+\D)=\Sl V.$$
Therefore, taking $x\to \pO$ gives
$$u|_{\pO}=-GVu|_{\pO}+Gv\quad\quad\imply\quad  (I+GV)u|_{\pO}=Gv.$$
That is, $\psi:=u|_{\pO}$ solves \eqref{p1-e:boundaryMain}. Finally, if $\psi$ solves \eqref{p1-e:boundaryMain}, then $\varphi:=v-V\psi$ solves \eqref{p1-e:boundaryMain1}.
\begin{lemma}
\label{lem:equivalence}
The following are equivalent
\begin{enumerate}
\item $u$ solves \eqref{p1-e:main} 
\item $u=\Sl (v-Vu|_{\pO})$
\item $u|_{\pO}=\psi$ solves \eqref{p1-e:boundaryMain}
\item $v-Vu|_{\pO}=\varphi$ solves \eqref{p1-e:boundaryMain1}.
\end{enumerate}
\end{lemma}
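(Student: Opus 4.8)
The plan is to prove this by chaining together the classical Calderón representation formula, the jump relations for the single and double layer potentials, and the cancellation identity $\Sl N_2(z/h)+\D=0$ on $\Omega$, all of which are recalled or cited in the discussion preceding the lemma. First I would record the ingredients I need. Since $\Sl$ and $\D$ are built from the kernel of $R_0(z/h)=(-\Delta-(z/h)^2)^{-1}$, both $\Sl f$ and $\D f$ solve $(-h^2\Delta-z^2)(\cdot)=0$ in $\Omega$ (and in the exterior, where they are $z/h$--outgoing) for any $f$; applying $\gamma$ gives $\gamma\Sl=G$; and any $u$ solving $(-h^2\Delta-z^2)u=0$ in $\Omega$ satisfies the representation formula $u=h^{-1}\Sl(h\partial_\nu u)-\D(u|_{\pO})$. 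The identity $\Sl N_2(z/h)+\D=0$ on $\Omega$ comes from applying the exterior Green formula to the $z/h$--outgoing solution with Dirichlet data $f$ and evaluating at an interior point, where the boundary integral vanishes; the outgoing hypothesis in \eqref{p1-e:main} is precisely what makes the exterior Green formula legitimate. Writing $B=hN_2(z/h)+hV(z)$, this rearranges to $h^{-1}\Sl B+\D=\Sl V$ on $\Omega$, and the hypothesis $V\in h^\alpha\Ph{0,m}{2/3}(\cdot)$ together with the Sobolev mapping properties of $\Sl,\D,G$ ensures every composition below lands in a space where the jump formulas apply.

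The core step is $(1)\Leftrightarrow(2)$. For $(1)\Rightarrow(2)$: if $u$ solves \eqref{p1-e:main}, substitute $h\partial_\nu u=hv-Bu|_{\pO}$ into the representation formula to obtain $u=\Sl v-(h^{-1}\Sl B+\D)u|_{\pO}=\Sl v-\Sl Vu|_{\pO}=\Sl(v-Vu|_{\pO})$, which is $(2)$. For $(2)\Rightarrow(1)$: if $u=\Sl(v-Vu|_{\pO})$, then $(-h^2\Delta-z^2)u=0$ in $\Omega$ is automatic, $u|_{\pO}=G(v-Vu|_{\pO})$ follows from $\gamma\Sl=G$, and applying the interior normal-derivative jump relation for $\Sl$ to the quantity $h\partial_\nu u+Bu|_{\pO}-hv$ shows it vanishes --- essentially running the previous computation in reverse, using $h^{-1}\Sl B+\D=\Sl V$ once more.

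Then $(2)\Leftrightarrow(3)$ and $(2)\Leftrightarrow(4)$ are purely formal. Taking the trace of $(2)$ and using $\gamma\Sl=G$ gives $\psi:=u|_{\pO}=Gv-GV\psi$, i.e.\ $(I+GV)\psi=Gv$, which is \eqref{p1-e:boundaryMain}; conversely, given a solution $\psi$ of \eqref{p1-e:boundaryMain}, set $u:=\Sl(v-V\psi)$, so that $(-h^2\Delta-z^2)u=0$ in $\Omega$ and $u|_{\pO}=G(v-V\psi)=Gv-GV\psi=\psi$, whence $(2)$ holds for this $u$ and therefore so does $(1)$. Likewise, setting $\varphi:=v-Vu|_{\pO}$ in $(2)$ and noting $G\varphi=\gamma\Sl\varphi=\gamma u=u|_{\pO}$ gives $(I+VG)\varphi=\varphi+Vu|_{\pO}=v$, which is \eqref{p1-e:boundaryMain1}; and a solution $\varphi$ of \eqref{p1-e:boundaryMain1} yields $u:=\Sl\varphi$ with $u|_{\pO}=G\varphi$ and $\varphi=v-VG\varphi=v-Vu|_{\pO}$, so $(2)$ holds.

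The main obstacle here is not conceptual but bookkeeping: keeping the sign and orientation conventions consistent between the interior representation $u=h^{-1}\Sl(h\partial_\nu u)-\D(u|_{\pO})$, the single-layer normal-derivative jump used in $(2)\Rightarrow(1)$, and the direction of $\nu$ entering $N_2(z/h)$ and the identity $\Sl N_2(z/h)+\D=0$. One also has to confirm that $u|_{\pO}$ is regular enough --- via the mapping properties recalled above --- for all the layer-potential manipulations to be valid, but no new analytic input beyond what is already recorded is needed.
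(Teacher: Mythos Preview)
Your proposal is correct and follows essentially the same route as the paper: the argument there (given in the paragraphs immediately preceding the lemma rather than after it) uses the Calder\'on representation $u=h^{-1}\Sl(h\partial_\nu u)-\D(u|_{\pO})$, the cancellation $\Sl N_2+\D=0$ on $\Omega$ rewritten as $h^{-1}\Sl B+\D=\Sl V$, and the trace relation $\gamma\Sl=G$, exactly as you do. The only cosmetic difference is that the paper closes the cycle via $(4)\Rightarrow(1)$, $(3)\Rightarrow(1)$, $(1)\Rightarrow(2)\Rightarrow(3)\Rightarrow(4)$ rather than your $(1)\Leftrightarrow(2)$, $(2)\Leftrightarrow(3)$, $(2)\Leftrightarrow(4)$; the underlying computations are identical.
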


Note also that since $I+VG$ is Fredholm with index 0, it is not invertible if and only if there exists a nonzero solution $\psi$ to $(I+VG)\psi=0.$ 
Hence, together with Lemma \ref{lem:equivalence} we have proved the following 
\begin{lemma}
\label{lem:meromorphy}
The operator $\mc{P}^{-1}$ is meromorphic on the domain of $R_0(\lambda)$ and the following are equivalent
\begin{enumerate}
\item $\mc{P}^{-1}(z)$ has a pole at $z_0$.
\item There exists a nonzero solution $\psi$ to $(I+G(z_0)V(z_0)\psi=0.$
\item There exists a nonzero solution $\varphi$ to $(I+V(z_0)G(z_0)\varphi=0.$ 
\item There exists a nonzero solution $u$ to \eqref{p1-e:main} with $v=0$.
\end{enumerate}
\end{lemma}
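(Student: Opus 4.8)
The plan is to assemble the ingredients that are already in place: $I+VG$ and $I+GV$ are holomorphic families of Fredholm operators of index $0$ on the domain of $R_0(z/h)$; the explicit formula for $(\mc{P}^{-1})^t$ expresses it through $(I+VG)^{-1}$ composed with operators that are themselves holomorphic in $z$; the identities $(I+VG)^{-1}=I-V(I+GV)^{-1}G$ and $(I+GV)^{-1}=I-G(I+VG)^{-1}V$; and Lemma \ref{lem:equivalence}. The one genuinely substantive input is the invertibility of $I+VG$ at a single point $z_0$ with $\Im z_0>0$, which is supplied by the a priori estimates carried out in the rest of Section \ref{sec:genBStable}.

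First I would establish meromorphy. Invertibility of $I+VG$ at $z_0$ (injectivity suffices, since the index is $0$) lets one apply the analytic Fredholm theorem to the holomorphic family $I+VG$, producing a meromorphic continuation of $(I+VG)^{-1}$ to the domain of $R_0$ --- to $\mathbb{C}$ for $d$ odd and to the logarithmic cover of $\mathbb{C}\setminus\{0\}$ for $d$ even. Since $1_{\Omega}R_0(z/h)1_{\Omega}$, $\Sl$, $\D$, $\gamma\partial_\nu$, and $B(z)$ are all holomorphic in $z$ on the domain of $R_0$, the formula for $(\mc{P}^{-1})^t$ then shows $\mc{P}^{-1}$ is meromorphic there; that it is a genuine two-sided inverse away from its poles is checked from the jump relations as indicated above. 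In particular the poles of $\mc{P}^{-1}$ are contained among the $z_0$ at which $I+VG$ fails to be invertible.

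Next I would prove the four equivalences. The equivalence $(2)\Leftrightarrow(3)$ follows from the two algebraic identities, which show $I+GV$ is invertible iff $I+VG$ is; being Fredholm of index $0$, non-invertibility of either is equivalent to the existence of a nonzero kernel element, and under the correspondences $\varphi=-V\psi$, $\psi=G\varphi$ of Lemma \ref{lem:equivalence} one object vanishes iff the other does. Lemma \ref{lem:equivalence} (with $v=0$) likewise identifies these kernels with the nonzero solutions of \eqref{p1-e:main} via $u=\Sl\varphi$ and $\psi=u|_{\pO}$, giving $(2)\Leftrightarrow(3)\Leftrightarrow(4)$. For $(1)\Leftrightarrow(2)$: if $I+VG$ is invertible at $z_0$ then $\mc{P}^{-1}(z_0)$ is the bounded operator given by the explicit formula, so $z_0$ is not a pole; conversely, if some nonzero $\varphi$ solves $(I+V(z_0)G(z_0))\varphi=0$ then Lemma \ref{lem:equivalence} produces a nonzero $u\in\ker\mc{P}(z_0)$, so $\mc{P}(z_0)$ is not invertible, and since $\mc{P}^{-1}$ is a meromorphic two-sided inverse off a discrete set, $z_0$ must be one of its poles.

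The only real obstacle is the input borrowed from later in the section --- the existence of an invertibility point $z_0$ with $\Im z_0>0$ --- without which the analytic Fredholm theorem cannot even be started; everything else is bookkeeping with the Fredholm theory and Lemma \ref{lem:equivalence}. The one minor point to handle carefully is the matching of ``$z_0$ is a pole of $\mc{P}^{-1}$'' with ``$\mc{P}(z_0)$ is not invertible'': this uses that a meromorphic operator family which coincides with a two-sided inverse of $\mc{P}$ away from a discrete set is nonsingular at exactly the points where $\mc{P}$ is invertible, which follows by taking limits in the identity $\mc{P}(z)\mc{P}^{-1}(z)=\mc{P}^{-1}(z)\mc{P}(z)=I$.
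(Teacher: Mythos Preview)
Your proposal is correct and follows essentially the same approach as the paper: the paper's proof of Lemma~\ref{lem:meromorphy} is precisely the discussion preceding its statement, assembling the Fredholm-index-zero property of $I+GV$ and $I+VG$, the explicit formula for $(\mc{P}^{-1})^t$, the algebraic identities linking $(I+GV)^{-1}$ and $(I+VG)^{-1}$, the deferred invertibility point from the later analysis, and Lemma~\ref{lem:equivalence}. Your write-up is slightly more explicit about the $(1)\Leftrightarrow(2)$ direction (using the meromorphic-inverse argument to rule out cancellation of poles), but this is only a difference in level of detail, not in strategy.
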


\section{Microlocal analysis of the generalized boundary damped wave equation}
 We now proceed to study the poles of $\mc{P}(z)^{-1}$. It is convenient to study \eqref{p1-e:boundaryMain} because then the solution to \eqref{p1-e:main} has $u|_{\pO}=\psi$. From now on, we do so without comment.
 
 \subsection{Brief outline of the computations}
 
 The analysis in the next few sections proceeds as follows. We first study the elliptic region where there is no propagation and hence the analysis is relatively simple. Then, we study the hyperbolic region where standard propagation occurs. In this case, we use the decomposition of $G$ (Lemma \ref{lem:decompose}) to rewrite \eqref{p1-e:boundaryMain} in terms of the reflectivity operator, $R$ from \eqref{eqn:reflect} and transition operator $T$ from \eqref{eqn:transition}. We use the symbolic calculus of FIO's to show that this new operator has a microlocal inverse on the hyperbolic set. However, we must show that this inverse preserves the hyperbolic set up to a small remainder. This is done in Lemma \ref{p1-l:parametrixMS}. 
 
 Putting these two regions together leaves the glancing region to be analyzed. Here, we apply the microlocal models of $G$ and $\S$ near glancing from Lemmas \ref{lem:layerNearGlance} and \ref{lem:potentialGlance}. We start by using \eqref{p1-e:boundaryMain} together with the model for $G$ near glancing to further localize $\psi$ near certain 'almost glancing hypersurfaces'. Using that $\S V\psi$ solves \eqref{p1-e:main} with $v=0$, we obtain estimates on $\Im z$ from the description of $\Sl^*\Sl$ near glancing.
 
\subsection{Elliptic Region}
\label{sec:elliptic}
Fix $0<\e<\frac{1}{2}$ and $0<c_1<c_2<c$. We first estimate solutions to \eqref{p1-e:boundaryMain} in the elliptic region $\mc{E}:=\{|\xi'|_g\geq 1+ ch^{\e}\}$.

Let $\chi_1\in S_{\e}(|\xi'|_g=1)$ have $\chi_1 \equiv 1 $ on $|\xi'|_g\geq \{1+c_2h^{\e}\}$ and $\supp \chi_1\subset \{|\xi'|_g\geq 1+c_1h^{\e}\}$. Also, let $\chi_2\in S_{\e}(|\xi'|_g=1)$ have $\supp \chi_2\subset \{|\xi'|_g\geq 1+c_2 h^{\e}\}$ and $ \chi_2\equiv 1$ on $\{|\xi'|_g\geq 1+c h^{\e}\}$. Let $X_1=\oph(\chi_1)$ and $X_2=\oph(\chi_2).$

Let $\psi$ solve \eqref{p1-e:boundaryMain}. Then, we have 
\m (I+GV)X_1\psi= [GV,X_1]\psi+X_1Gv.\,\,\m 
Now, by Lemma \ref{lem:decompose}, $GV X_1=G_{\Delta}VX_1+\O{\Ph{-\infty}{}}(h^\infty)$ where $G_{\Delta}\in h^{2/3}\Ph{-1/2,-1}{2/3}(|\xi'|_g=1).$ 
 By our assumptions on $V$ and Lemma \ref{semi-l:elliptic}, there exists 
$$A\in h^{\max(-2/3-\alpha,0)}\Ph{1/2,\min(0,1-m)}{2/3}(|\xi'|_g=1)\cup \Ph{0,\min(0,1-m)}{2/3}(|\xi'|_g=E')$$
 so that 
$A(I+G_\Delta V)=X_2$ and $\MS(A)\subset \{\chi_1\equiv 1\}$. 
So, 
$$X_2\psi=A[G_{\Delta}V,X_1]\psi+AX_1Gv+\O{\Ph{-\infty}{}}(h^\infty)(\psi+v)$$
and hence, 
\begin{align*} 
\|X_2\psi\|_{\Hh^m}&\leq C(\|A[G_{\Delta}V,X_1]\psi\|_{L^2}+\|AX_1G_\Delta v\|_{\Hh^m}+\O{}(h^\infty)(\|\psi\|_{\Hh^{-N}}+\|v\|_{\Hh^{-N}})\\
&\leq C(h^{1-\e/2}\|v\|_{L^2}+\O{}(h^\infty)\|\psi\|).
\end{align*}

Summarizing,
\begin{lemma}
\label{p1-l:ellipticEstimate}
For all $0<\e<1/2$, $c>0$, and $N>0$, there exists $h_0=h_0(\e,c)>0$ such that for $0<h<h_0$, $\chi\in S^{0,0}_{\e}(|\xi'|_g=1)$ with $\supp \chi\subset \{|\xi'|_g\geq 1+c h^{\e}\}$, and $\psi$ solving \eqref{p1-e:boundaryMain}
$$\|\oph(\chi)\psi\|_{\Hh^m}\leq C(h^{1-\e/2}\|v\|_{L^2}+\O{}(h^\infty)\|\psi\|_{\Hh^{-N}}).$$
\end{lemma}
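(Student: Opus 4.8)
The statement to prove is Lemma~\ref{p1-l:ellipticEstimate}, the elliptic region estimate on solutions $\psi$ to $(I+GV)\psi = Gv$.

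\medskip

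\noindent\textbf{Proof proposal.}
The plan is to reduce the estimate to a microlocal elliptic inversion in the second microlocal class $\Ph{k_1,k_2}{2/3}(|\xi'|_g=1)$ away from the glancing hypersurface, exactly as in the computation sketched just before the statement. First I would fix cutoffs $\chi_1,\chi_2 \in S^{0,0}_{\e}(|\xi'|_g=1)$ with nested supports inside $\{|\xi'|_g \geq 1 + c_1 h^{\e}\}$, arranged so that $\chi_2 \equiv 1$ on $\supp\chi$ and $\chi_1 \equiv 1$ on $\supp\chi_2$; it suffices to prove the bound with $\oph(\chi)$ replaced by $X_2 := \oph(\chi_2)$, since $\oph(\chi) = \oph(\chi)X_2 + \O{\Ph{-\infty}{}}(h^\infty)$. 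Applying $X_1 := \oph(\chi_1)$ to the equation $(I+GV)\psi = Gv$ and commuting gives
\[
(I+GV)X_1\psi = [GV,X_1]\psi + X_1 Gv.
\]
By Lemma~\ref{lem:decompose}, on $\supp\chi_1$ the operator $G$ is purely pseudodifferential: $GVX_1 = G_\Delta V X_1 + \O{\Ph{-\infty}{}}(h^\infty)$ with $G_\Delta \in h^{1-\e/2}\Ph{-1}{\e}$, and more precisely (using \eqref{eqn:Gsymb2} and the second microlocal refinement near glancing) $G_\Delta \in h^{2/3}\Ph{-1/2,-1}{2/3}(|\xi'|_g=1)$ with $\sigma(G_\Delta) = \frac{h}{2\sqrt{|\xi'|_g^2-1}}(1+\O{}(h(|\xi'|_g^2-1)^{-3/2}))$.

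\medskip

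Next I would invert $I + G_\Delta V$ microlocally on $\supp\chi_2$. The ellipticity is precisely the content of the first inequality in \eqref{resFree-e:ellipticAssumption}: on $|\xi'|_g > 1 + Mh^{2/3}$ one has
\[
\left| 1 + \frac{h\sigma(V)}{2\sqrt{|\xi'|_g^2-1}} \right| \geq \delta\left( \left\la \frac{h^{1+\alpha}}{\sqrt{|\xi'|_g^2-1}} \right\ra + \la\xi'\ra^{m-1} \right),
\]
so $\sigma(I+G_\Delta V)$ is elliptic in the appropriate second microlocal class (a product class, since $V \in h^\alpha \Ph{0,m}{2/3}$ relative to $|\xi'|_g=1$ or $|\xi'|_g=E'$). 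Since $\e^{\e} \gg h^{2/3}$, the region $|\xi'|_g \geq 1 + c_1 h^{\e}$ lies well inside the ellipticity set. By the second microlocal elliptic parametrix construction (Lemma~\ref{semi-l:elliptic}, globalized), there exists
\[
A \in h^{\max(-2/3-\alpha,0)}\Ph{1/2,\min(0,1-m)}{2/3}(|\xi'|_g=1) \cup \Ph{0,\min(0,1-m)}{2/3}(|\xi'|_g=E')
\]
with $A(I+G_\Delta V) = X_2 + \O{\Ph{-\infty}{}}(h^\infty)$ and $\MS(A) \subset \{\chi_1 \equiv 1\}$. Applying $A$ to the commuted equation yields
\[
X_2\psi = A[G_\Delta V, X_1]\psi + AX_1 G_\Delta v + \O{\Ph{-\infty}{}}(h^\infty)(\psi + v).
\]

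\medskip

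Finally I would estimate the two main terms in $\Hh^m$. For the commutator term: $[G_\Delta V, X_1]$ is supported where $\chi_1$ is not locally constant, hence microlocalized away from $\supp\chi_2$ (in fact away from $\MS(A)$), so $A[G_\Delta V, X_1] = \O{\Ph{-\infty}{}}(h^\infty)$ and this term contributes only $\O{}(h^\infty)\|\psi\|_{\Hh^{-N}}$. For the source term: by Lemma~\ref{lem:decompose}, $\sigma(G_\Delta)$ carries a factor $h/(2\sqrt{|\xi'|_g^2-1})$, and on $\MS(A)$ we have $\sqrt{|\xi'|_g^2-1} \gtrsim h^{\e/2}$, so $G_\Delta$ contributes a factor $h^{1-\e/2}$; combined with the fact that $A$ maps into order-$\min(0,1-m)$ operators and that the net operator acts $L^2 \to \Hh^m$ boundedly (Lemma~\ref{lem:goodL2Bound} and its second microlocal analog), we get $\|AX_1 G_\Delta v\|_{\Hh^m} \leq C h^{1-\e/2}\|v\|_{L^2}$. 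Collecting these and renaming $X_2 = \oph(\chi)$ gives
\[
\|\oph(\chi)\psi\|_{\Hh^m} \leq C\left( h^{1-\e/2}\|v\|_{L^2} + \O{}(h^\infty)\|\psi\|_{\Hh^{-N}} \right),
\]
as claimed. The main obstacle is bookkeeping the second microlocal orders: one must verify that $I + G_\Delta V$ is genuinely elliptic \emph{as an operator in the product second-microlocal class adapted to both $|\xi'|_g=1$ and $|\xi'|_g=E'$} on the relevant $h$-dependent region, and that the parametrix $A$ lies in the stated class with the stated bounds on its order — this is where the precise exponent $1-\e/2$ and the threshold $h^{\e}$ versus $h^{2/3}$ must be tracked carefully, and where the ellipticity hypothesis \eqref{resFree-e:ellipticAssumption} is used in an essential way.
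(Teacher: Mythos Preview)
Your proposal is correct and follows essentially the same argument as the paper: nested cutoffs $X_1,X_2$, commuting $X_1$ through $(I+GV)$, replacing $G$ by $G_\Delta$ via Lemma~\ref{lem:decompose}, constructing a microlocal parametrix $A$ for $I+G_\Delta V$ using the ellipticity hypothesis \eqref{resFree-e:ellipticAssumption} and Lemma~\ref{semi-l:elliptic}, and then observing that $A[G_\Delta V,X_1]=\O{\Ph{-\infty}{}}(h^\infty)$ by disjoint microsupport while $AX_1G_\Delta v$ gives the $h^{1-\e/2}$ gain. The bookkeeping you flag as the ``main obstacle'' is exactly what the paper's displayed class for $A$ encodes, and your treatment of it is accurate.
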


\subsection{Hyperbolic Region}

\label{sec:appDynamics}
Recall from Lemma \ref{lem:decompose} that \m G=G_\Delta+G_B+G_g+\O{L^2\to C^\infty}(h^\infty).\m 

First suppose that $\MS(X)\subset \{|\xi'|_g\leq 1-ch^\e\}$ for some $0<\e<1/2$. 
Then, suppose that 
\m (I+GV)X\psi=f\,\,\m 
and let $G_{\Delta}^{-1/2}$ be a microlocal inverse for $G_{\Delta}^{1/2}$ on 
$$\mc{H}:=\{|\xi'|_g\leq 1-r_{\mc{H}}h^{\e}\}$$
where $r_{\mc{H}}\ll c$
Then
\begin{align*}(I+GV)X_1\psi&=(I+(G_\Delta+G_B)V)X_1\psi +\O{}(h^\infty)\psi \\
&=(I+G_\Delta^{1/2}(I+G_{\Delta}^{-1/2}G_BG_{\Delta}^{-1/2})G_{\Delta}^{1/2}V)X_1\psi+\O{}(h^\infty)\psi=f.
\end{align*}
Thus, $f$ is microlocalized on $\mc{H}$ and, following the formal algebra in \cite[Section 2]{Zaletel} multiplying by $G_\Delta^{1/2}V$, we have 
$$G_{\Delta}^{1/2}VX_1\psi=-G_\Delta^{1/2}VG_\Delta^{1/2}(I+G_{\Delta}^{-1/2}G_BG_{\Delta}^{-1/2})G_{\Delta}^{1/2}VX_1\psi+\O{}(h^\infty)\psi+G_{\Delta}^{1/2}Vf.$$

\begin{remark} By Lemma \ref{lem:dynStrictlyConvex}, a microlocal inverse on $\mc{H}$ will be a microlocal inverse on $\MS(G_BX_1)$.
\end{remark}

Writing $\varphi=G_{\Delta}^{1/2}VX_1 \psi$ and $T=G_{\Delta}^{-1/2}G_BG_{\Delta}^{-1/2}$, we have 
$$(I+G_{\Delta}^{1/2}VG_{\Delta}^{1/2})\varphi=-G_\Delta^{1/2}VG_{\Delta}^{1/2}T\varphi+\O{}(h^\infty)\psi+G_{\Delta}^{1/2}Vf.$$
Hence, letting 
\m R:=-(I+G_{\Delta}^{1/2}VG_{\Delta}^{1/2})^{-1}G_{\Delta}^{1/2}VG_{\Delta}^{1/2},\,\,\m 
we have 
\m \varphi=RT\varphi+\O{}(h^\infty)\psi-RG_{\Delta}^{-1/2}f.\,\,\m 
Here, $T$ is an FIO associated to the billiard map such that
$$\sigma(\exp\left(\frac{\Im z}{h}\oph (l(q,\beta(q)))\right)T)(\beta(q),q)=\exp\left(\frac{i\Re \omega_0l(\beta(q),q)}{h}\right)e^{-i \pi/4} dq^{1/2}\in S_{\e}$$
and $R\in \Ph{}{\e}\cup\Ph{0,0}{2/3}(|\xi'|_g=E') $ is as in \eqref{eqn:reflect}.

Thus by the wavefront set calculus we have for $N>0$ independent of $h$,
\begin{equation}\label{eqn:nonGlance}
(I-(RT)^N)\varphi=\O{}(h^\infty)\psi-\sum_{m=0}^{N-1}(RT)^mRG_{\Delta}^{-1/2}f\end{equation}
and by Egorov's theorem (Lemma \ref{lem:FIOcomp}), we have 
\begin{equation}
\label{eqn:appEgorov}
(RT)_N:=((RT)^*)^N(RT)^N=\oph(a_N)+\O{\Ph{-\infty}{}}(h^{\infty})
\end{equation}
where $a_N\in S_{\e}\cup S_{2/3}^{0,0}(|\xi'|_g=E')$. Moreover, with $\delta=\max(2\e,2/3)$ for $u$ with $\MS(u)\subset\mc{H}$, by the  Sharp G$\mathring{\text{a}}$rding inequality, Lemma \ref{semi-l:garding}, and Lemma \ref{lem:upperBound},
\begin{gather*} 
 \inf_{\mc{H}} \left(|\tilde{\sigma}((RT)_N)(q)|+\O{}(h^{I_{(RT)_N}(q)+1-\delta})\right)\|u\|_{L^2}\leq \|(R T)^Nu\|_{L^2}^2\\
\|(RT)^Nu\|^2\leq \sup_{\mc{H}} \left(|\tilde{\sigma}((RT)_N)(q)|+\O{}(h^{I_{(RT)_N}(q)+1-\delta})\right)\|u\|_{L^2}.
\end{gather*} 
Let 
$$\varkappa_1:=1-\sqrt{\sup_{\mc{H}}\tilde{\sigma}((R T)_N)}\quad\quad \varkappa_2:=\sqrt{\inf_{\mc{H}}\tilde{\sigma}((R T)_N)}-1.$$
Finally, let $\varkappa=\max(\varkappa_1,\varkappa_2)$. Then, we have 

\begin{lemma}
\label{p1-l:parametrixMS}
Suppose that $\varkappa>h^{\gamma_1}$ where $\gamma_1<\min(1/2-\e,1/6).$ Let $c>r_{\mc{H}}$ and $g\in L^2$ have $\MS(g)\subset \{1-Ch^{\e}\leq |\xi'|_g\leq 1-ch^{\e}\}. $ If 
$$(I-(RT)^N)u=g,$$
then for any $\delta>0$,
 $$\MS(u)\subset \{1-(C+\delta)h^{\e}\leq |\xi'|_g\leq 1-(c-\delta)h^{\e}\}.$$ 
 In particular, there exists an operator $A$ with $\|A\|_{L^2\to L^2}\leq 2\varkappa^{-1}$, 
$$A(I-(R T)^N)=I\text{ microlocally on }\mc{H}$$
and if $\MS(g)\subset \{1-Ch^{\e}\leq |\xi'|_g\leq 1-ch^{\e}\}$, then 
$$\MS(Ag)\subset \{1-(C+\delta)h^{\e}\leq |\xi'|_g\leq 1-(c-\delta)h^{\e}\}.$$
\end{lemma}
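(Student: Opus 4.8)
The plan is to build the approximate inverse $A$ for $I-(RT)^N$ directly from the operator $(RT)_N = ((RT)^*)^N(RT)^N$, which by \eqref{eqn:appEgorov} is a pseudodifferential operator (in the exotic class $S_\e\cup S^{0,0}_{2/3}(|\xi'|_g=E')$) with compressed shymbol $\tilde\sigma((RT)_N)$. First I would observe that the hypothesis $\varkappa > h^{\gamma_1}$ with $\gamma_1 < \min(1/2-\e,1/6)$ means precisely that $(RT)^N$ is uniformly bounded away from being an isometry: by the Sharp G\mathring{\text{a}}rding / upper bound pair of estimates stated just before the lemma, for $u$ with $\MS(u)\subset\mc H$ we have
\[
\|(RT)^Nu\|_{L^2}^2 \le \big(\sup_{\mc H}\tilde\sigma((RT)_N) + Ch^{1-\delta+I_{(RT)_N}}\big)\|u\|_{L^2}^2 \le (1-\varkappa_1)^2\|u\|^2 + o(\varkappa^2)\|u\|^2,
\]
and similarly $\|(RT)^Nu\|^2 \ge (1+\varkappa_2)^2\|u\|^2 - o(\varkappa^2)\|u\|^2$ is impossible unless $\varkappa_2 \le 0$; in the case $\varkappa = \varkappa_2$ one instead works with the adjoint or with $(RT)^{-N}$ built microlocally. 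The error terms are $\O{}(h^{1-\delta})$ with $\delta = \max(2\e,2/3)$, so $1-\delta \ge 1/3 > \gamma_1$... wait, one must be careful: the condition $\gamma_1 < 1/6$ is what guarantees $\varkappa \gg h^{1/3}$ dominates the $\Ph{0,0}{2/3}$ remainder (which costs $h^{1/3}$), and $\gamma_1 < 1/2-\e$ handles the $\Ph{}{\e}$ remainder; so in either case $|1 - \|(RT)^Nu\|/\|u\|| \ge \varkappa/2$ microlocally on $\mc H$ for $h$ small.

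Next I would construct $A$. Microlocally on $\mc H$, the operator $I - \oph(a_N)$ where $a_N = \tilde\sigma((RT)_N)$-ish has symbol bounded away from... no — the right object is: since $(RT)^N(RT)^{*N}$ and $(RT)^{*N}(RT)^N$ both have compressed shymbol differing from $1$ by at least $\varkappa(2-\varkappa)$ in absolute value on $\mc H$ (once we know $\tilde\sigma((RT)_N)$ is real, which follows from self-adjointness of $(RT)_N$), the operator $(I-(RT)^N)$ is microlocally injective with microlocally injective adjoint on $\mc H$, hence microlocally invertible there. Concretely, I set $A := \sum_{k=0}^{K}(RT)^{Nk}\,\mathbb 1_{\mc H} $ truncated — no, that series need not converge since $RT$ is not a contraction pointwise. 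Instead I use the standard trick: let $B$ be a microlocal inverse of $I - (RT)_N = I - (RT)^{*N}(RT)^N$ on $\mc H$, which exists by Lemma \ref{lem:microlocalElliptic} applied in the exotic calculus since its symbol is $\ge \varkappa(2-\varkappa) \ge \varkappa$ there, with $\|B\|_{L^2\to L^2} \le \varkappa^{-1}(1+o(1)) \le 2\varkappa^{-1}$ by Lemma \ref{lem:goodL2Bound}; then factor $I - (RT)^{*N}(RT)^N = (I-(RT)^{*N})(I + (RT)^N) + \text{(correction)}$... this is getting circular. The clean route: $(I - (RT)^N)$ has the explicit left microlocal parametrix $A := (I + (RT)^N + (RT)^{2N} + \cdots + (RT)^{(K-1)N}) \circ B_0$ where $B_0$ inverts $I - (RT)^{KN}$; choosing $K$ with $K\gamma_1 \ge $ large makes $\sup_{\mc H}\tilde\sigma((RT)^{KN}_{\,\cdot}) = (1-\varkappa)^{2K} \to 0$, so $I - (RT)^{KN}$ is elliptic in the exotic calculus with symbol near $1$, invertible by Lemma \ref{lem:microlocalElliptic} with bounded inverse; and $\|(RT)^{jN}\|_{L^2\to L^2} \le (1-\varkappa)^{j} + o(1)$ so the geometric sum is bounded by $\varkappa^{-1}$ up to constants, giving $\|A\|_{L^2\to L^2}\le 2\varkappa^{-1}$. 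I would pick $K = \lceil 2/(\text{gap in exponents})\rceil$ concretely so that the remainder $o(\varkappa^2)$ truly is swallowed.

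For the wavefront-set (microsupport) containment, I would use: $RT$ is microlocally (near $\mc H$) the composition of $R \in \Ph{}{\e}\cup\Ph{0,0}{2/3}(|\xi'|_g=E')$, which has $\MSp$ contained near $\mc H$, with the FIO $T$ associated to the billiard ball map $\beta$; by Lemma \ref{lem:dynStrictlyConvex} the set $\{|\xi'|_g \le 1 - c'h^\e\}$ is stable under $\beta$ (and $\beta$ only moves points a distance $\O{}(h^{\e/2})$... more precisely $\sqrt{1-|\xi'|_g^2}$ is preserved to leading order), so $(RT)^m$ maps $\MS(g)\subset\{1-Ch^\e\le|\xi'|_g\le 1-ch^\e\}$ into a set enlarged by at most $\delta h^\e$ in the $|\xi'|_g$-variable for each fixed $m$, and since $A$ is a finite sum of such, $\MS(Ag)\subset\{1-(C+\delta)h^\e\le|\xi'|_g\le 1-(c-\delta)h^\e\}$. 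Applying $A$ to $u = Ag + (I-A(I-(RT)^N))u$ and noting $I - A(I-(RT)^N) = \O{\Ph{-\infty}{}}(h^\infty)$ microlocally on $\mc H$ gives $\MS(u)$ in the same enlarged set. The main obstacle I expect is the bookkeeping of error orders: one must track that the $\O{}(h^{I_{(RT)_N}(q)+1-\delta})$ shymbol errors and the FIO-composition remainders stay genuinely below $\varkappa^2 > h^{2\gamma_1}$ uniformly on $\mc H$, which is exactly why the hypothesis reads $\gamma_1 < \min(1/2-\e,1/6)$ rather than $\gamma_1 < 1/2$; verifying this requires combining the $\e$-shymbol order bound $I_{(RT)_N}(q)\ge 0$ with the $h^{1-\delta}$, $\delta = 2/3$, cost of the second-microlocal remainder and checking $1 - 2/3 = 1/3 > 2\gamma_1$ would need $\gamma_1 < 1/6$, consistent with the statement.
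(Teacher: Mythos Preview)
Your construction of $A$ is muddled. When $\varkappa=\varkappa_1>h^{\gamma_1}$, the Sharp G\r{a}rding upper bound gives $\|(RT)^Nu\|_{L^2}\le(1-\varkappa_1+o(\varkappa))\|u\|_{L^2}$ on $\mc H$, so the Neumann series $\sum_{k\ge0}(RT)^{Nk}$ converges in operator norm with sum bounded by $2\varkappa^{-1}$; there is no need for your $K$-power trick. In fact that trick does not work as you state it: with $K$ fixed and $\varkappa\sim h^{\gamma_1}$, one has $\tilde\sigma((RT)_{KN})\approx(1-\varkappa)^{2K}=1-2K h^{\gamma_1}+\cdots$, which is \emph{not} close to $0$, so $I-(RT)^{KN}$ is nowhere near elliptic (and in any case $I-(RT)^{KN}$ is $I$ minus an FIO, not a pseudodifferential operator, so ``elliptic in the exotic calculus'' is the wrong framework). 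The paper simply uses Neumann series in the $\varkappa_1$ case and the factorization $I-(RT)^N=-(RT)^N(I-(RT)^{-N})$ in the $\varkappa_2$ case.

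The more serious gap is in your microsupport argument. You appeal to Lemma~\ref{lem:dynStrictlyConvex}, which says $\sqrt{1-|\xi'(\beta(q))|_g^2}=\sqrt{1-|\xi'(q)|_g^2}+\O{}(1-|\xi'(q)|_g^2)$, so in the $h^\e$-near-glancing band one iterate of $\beta$ moves $|\xi'|_g$ by $\O{}(h^{3\e/2})$. But the Neumann series for $A$ is not a finite sum: to have remainder $\O{}(h^\infty)$ one must keep roughly $K_0\sim\varkappa^{-1}\log h^{-1}\gtrsim h^{-\gamma_1}$ terms, so the accumulated drift is $\O{}(h^{3\e/2-\gamma_1})$, which is $\gg h^\e$ unless $\gamma_1<\e/2$---a much stronger restriction than the stated $\gamma_1<\min(1/2-\e,1/6)$. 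The paper closes this gap with a genuinely different ingredient: the approximate interpolating Hamiltonian $\Xi$ of Lemma~\ref{lem:approxInterp} (coming from Melrose's equivalence of glancing hypersurfaces), which satisfies $\Xi\circ\beta-\Xi=\O{}((|\xi'|_g^2-1)^\infty)$. Cutoffs $\chi_k(\Xi)$ then commute with the FIO $T$ modulo $\O{}(h^\infty)$ on the $h^\e$-band, and an iterated commutator scheme (solve $(I-(RT)^N)u_k=-[X_{k-1},(RT)^N]u_{k-1}$ with nested cutoffs $X_k=\oph(\chi_k(\Xi))$) produces a solution microsupported in the $\delta h^\e$-enlarged band. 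Without this infinite-order invariance, your argument cannot reach the lemma as stated.
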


\begin{proof}
In the case that $\varkappa_2>h^{\gamma_1}$, we write 
$$(I-(R T)^N)=-(R T)^N(I-(R T)^{-N})$$
microlocally on $\mc{H}$ and invert by Neumann series to see that for any $g$, $(I-(RT)^N)u=g$ has a unique solution modulo $h^\infty$ with $\|u\|\leq \varkappa^{-1}\|g\|$. On the other hand, if $\varkappa_1>h^{\gamma_1}$, $\|(R T)^N\|\leq 1-\varkappa_1$, and we have that for any $g$, $(I-(R T)^N)u=g$ has a unique solution with $\|u\|\leq \varkappa_1^{-1}\|g\|.$ 

We will consider the case of $\varkappa_1>h^{\gamma_1}$, the case of $\varkappa_2<h^{\gamma_1}$ being similar with $(R T)^N$ replace by $(R T)^{-N}$. Inversion by Neumann series already shows that we can solve 
$(I-(R T)^N)u_1=g$ with $\|u_1\|\leq \varkappa ^{-1} \|g\|.$ To complete the proof of the lemma, we need to show that this inverse has the required microsupport property. For this, we need a fine almost invariance result near the glancing set. In particular, by Lemma \ref{lem:approxInterp}, that there exists an approximate first integral $\Xi(x,\xi)\in C^\infty(\overline{B^*\pO})$ so that $\Xi=0$, $|d\Xi|>0$ on $S^*\pO$, $\Xi<0$ in $B^*\pO)$ and
\begin{equation}
\label{eqn:approxInterp}\Xi(\beta(q))-\Xi(q)=r(q)
\end{equation}
with $r(q)\in C^\infty(B^*\pO)$ vanishing to infinite order at $S^*\pO$. (See also \cite{PopovInterpolating,MarviziMelrose,popovNear}) In particular, we have that in neighborhood of $S^*\pO$, 
$$\Xi(x',\xi')=e(x',\xi)(|\xi'|_g^2-1)$$
with $e>c>0$. 

For $k\geq1$, let $\chi_k=\chi_k(\zeta)$ with $\chi_{k+1}\equiv 1$ on $\supp \chi_k$ and $\chi_1\equiv 1$ on $\MS(g)$ so that 
$$\supp \chi_k\subset \{1-(C+\delta)h^\e\leq |\xi'|_g\leq 1-(c-\delta)h^\e\}.$$ 
Let $X_k=\oph(\chi_k)$. Finally, let $\chi_\infty\in S_\e$ with $\chi_\infty\equiv 1$ on $\bigcup\limits_k\supp \chi_k$ and 
$$ \supp \chi_\infty \subset\{1-(C+2\delta)h^\e\leq |\xi'|_g\leq 1-(c-2\delta)h^\e\}. $$
Then \eqref{eqn:approxInterp} implies that
$$|\chi_k(\beta(q))-\chi_k(q)|=\O{}(h^\infty).$$

Suppose that $u$ is the unique solution of 
$$(I-(RT)^N)u=g.$$ 
We will show that $u$ is microlocalized as described in the lemma.
Letting $u_1=u$, we have
$$(I-(R T)^N)X_1u_1=g+\O{}(h^\infty)g+[X_1,(R T)^N]X_\infty u_1=:g+g_1.$$
Let $\delta=\max(2\e,2/3)$. Then 
$$[X_1,T]=T(T^{-1}X_1T-X_1)=Th^{1-\delta }B$$
with $B\in \Ph{}{\e}$. 
In fact, 
\begin{equation}
\label{eqn:commuteT} 
T^{-1}X_1T=\oph(\chi_1(\beta(q))+\O{\Ph{}{\e}}(h^{1-2\e}).
\end{equation}
Hence, since $X_\infty u$ is microlocalized $h^\e$ close to glancing, 
$$\MS([X_1,(R T)^N]X_\infty u_1) \subset \{\chi_2\equiv 1\}$$
and 
$g_1:=[X_1,(R T)^N]X_\infty u_1$ has 
$$\|g_1\|\leq Ch^{1-\delta}\varkappa^{-1}\|g\|_{L^2}.$$
Now, let $u_2$ have
 \begin{gather*} (I-(R T)^N)u_2=-g_1,\quad\quad \|u_2\|\leq \varkappa^{-1}\|g_1\|\leq Ch^{1-\delta}\varkappa^{-2}\|g\|
 \end{gather*}
 So,
$$(I-(R T)^N)(X_1u+u_2)=g+\O{}(h^\infty)g.$$
Continuing in this way, let 
$$(I-(R T)^N)u_k=-g_{k-1}\,,\quad g_{k-1}=[X_{k-1},(RT)^N]X_\infty u_{k-1}.$$
Then, 
$$\|u_k\|\leq  \varkappa^{-2k}(h^{k(1-\delta)})\|g\|_{L^2}.$$
Moreover, letting $\tilde{u}\sim \sum_k X_ku_k$, we have $X_\infty \tilde{u}=\tilde{u}+\O{}(h^\infty)\tilde{u}$ and  
$$(I-(R T)^N)\tilde{u}=g+\O{}(h^\infty)g$$
which implies $\tilde{u}-u=\O{}(h^\infty)$ and hence that $(I-(RT)^N)$ has a microlocal inverse, $A$, with the properties claimed in the lemma.
\end{proof}

We now suppose that $\psi$ solves \eqref{p1-e:boundaryMain} and use \eqref{eqn:nonGlance} to obtain estimates on $\psi$. Let $\chi_k\in S_\e$ with $\chi_k\equiv 1$ on $\{|\xi'|_g\leq 1-2kch^\e\}$ and $\supp \chi_k\subset \{|\xi'|_g\leq 1-(2k-1)ch^\e\}. $ 
Then
$$(I+GV)X_1\psi=-[X_1,GV]\psi+X_1Gv=:\psi_1+\tilde{v}$$
where $\MS(\psi_1)\subset \mc{H}\cap\{|\xi'|_g\geq 1-3c/2h^\e\}.$ 
Then with $\varphi=G_{\Delta}^{1/2}VX_1\psi$, 
$$(I-(RT)^N)\varphi=\O{}(h^\infty)\psi -\sum_{m=0}^{N-1}(RT)^mRG_{\Delta}^{-1/2}(\psi_1+\tilde{v})$$
and hence by Lemma \ref{p1-l:parametrixMS}, when $\varkappa\geq h^{\gamma_1}$ for $\gamma_1<\min(1/2-\e,1/6)$, 
$$\varphi=\O{}(h^\infty)\psi-\sum_{m=0}^{N_1}A(RT)^mRG_{\Delta}^{-1/2}(\psi_1+\tilde{v})$$
and, using the microsupport statement from Lemma \ref{p1-l:parametrixMS}, 
$$X_2\varphi=-\sum_{m=0}^{N-1}A(RT)^mRG_{\Delta}^{-1/2}\tilde{v}+\O{\Ph{-\infty}{}}(h^\infty)(\psi+v).$$

Hence, 
\begin{align*} 
\|X_2\varphi\|_{L^2}&\leq \varkappa^{-1}\left\|\sum_{m=0}^{N-1}(RT)^mRG_{\Delta}^{-1/2}X_1Gv\right\|+\O{}(h^\infty)(\|\psi\|+\|v\|)\\
&\leq C\varkappa^{-1}e^{N\Do(\Im z)_-/h}h^{1/2-\e/2}\|v\|+\O{}(h^\infty)\|\psi\|
.\end{align*}
Then, since $\varphi=G_{\Delta}^{1/2}VX_1\psi$, $VX_1\psi=G_{\Delta}^{-1/2}\varphi+\O{}(h^\infty)\psi$ and 
\begin{align*} X_3\psi&=-X_3GV\psi +X_3Gv=-X_3GVX_1\psi+X_3Gv+\O{}(h^\infty)\psi\\
&=-X_3GG_{\Delta}^{-1/2}\varphi+X_3Gv+\O{}(h^\infty)\psi=-X_3GG_{\Delta}^{-1/2}X_2\varphi+X_3Gv+\O{}(h^\infty)\psi.
\end{align*}
Hence,
\begin{align*} 
\|X_3\psi\|& \leq \|X_3GG_{\Delta}^{-1/2}X_2\varphi\| +\|X_3Gv\|+\O{}(h^\infty)\|\psi\|\\
&\leq C(\varkappa^{-1}h^{1-\e}e^{(N+1)\Do(\Im z)_-/h}\|v\|+\O{}(h^\infty)\|\psi\|  )
\end{align*} 

Next, we examine when $\varkappa\geq c h^{\gamma_1}$. 
If this is not the case, then
$$\liminf_{h\to 0} \frac{\inf \left||\tilde\sigma((RT)_N)(q)| -1\right|}{h^{\gamma_1}}=0.\,\,\m 
So, let 
$$|\tilde{\sigma}(R T)_N(q)|=e^{e(q)}.$$
Taking logs and renormalizing we have
$$\frac{2\Im z}{h}Nl_N(q)-\frac{2\Im z}{h}Nl_N(q) +\log |\tilde{\sigma}((RT)_N)(q)|=e(q).$$
This implies
\begin{align*}
-\frac{\Im z}{h}&=-l_N^{-1}(q)\left[\frac{\Im z}{h}l_N(q)+\frac{1}{2N}\log |\tilde{\sigma}((RT)_N)(q)|+e(q)\right]\\
&=-l_N^{-1}(q)(r_N(q)+e(q)).
\end{align*}
where $r_N$ as in \eqref{eqn:defineAverageReflection}.
Thus, if $\varkappa\leq ch^{\gamma_1}$, for any $c>0$,
\begin{equation}
\nonumber
\inf_{\mc{H}}-l_N^{-1}(r_N+ch^{\gamma_1})\leq -\frac{\Im z}{h}\leq \sup_{\mc{H}}-l_N^{-1}(r_N-ch^{\gamma_1}).
\end{equation}

Now, writing 
\m RT=\left[R\exp\left(-\frac{\Im z}{h}\oph (l(q),\beta(q))\right)\right]\left[\exp\left(\frac{\Im z}{h}\oph (l(q),\beta(q))\right)T\right]\,\,\m
and applying Lemma \ref{lem:EgorovSheaf} shows that
\begin{multline*}r_N(q):=\tilde{\sigma}((RT)_N)(q)\\
=\exp\left(-\frac{2\Im z}{h}\sum\limits_{n=0}^{N-1}l(\beta^n(q),\beta^{n+1}(q))\right)\prod\limits_{i=1}^N\left(|\tilde{\sigma}(R)(\beta^{i}(q))|^2+\O{}(h^{I_{R}(\beta^i(q))+1-2\e})\right).\end{multline*}

Summarizing the discussion, we have
\begin{lemma}
\label{lem:dynamicalRestriction}
Let $0<\e<1/2$, $\gamma_1<\min(1/2-\e,1/6)$, $c>0$, $M>0$ and suppose that $\chi\equiv 1$ on $\{|\xi'|_g\leq 1-Ch^\e\}$ and $\supp \chi \subset \{|\xi'|\leq 1-ch^\e\}$. Suppose further that $\psi$ solves \eqref{p1-e:boundaryMain}. Then there exists $h_0>0$ small enough so that if $0<h<h_0$ and 
\begin{equation}
\label{eqn:prelimResFree}
-\frac{\Im z}{h}<\inf_{\mc{H}}-l_N^{-1}(r_N+ch^{\gamma_1})
\text{  or  }
-\frac{\Im z}{h}>\sup_{\mc{H}}-l_N^{-1}(r_N-ch^{\gamma_1}),
\end{equation}
where $l_N$ and $r_N$ are as in \eqref{eqn:defineLength} and \eqref{eqn:defineAverageReflection} respectively, then
\begin{equation}
\label{eqn:dynamicsRestrict}
\|\oph(\chi)\psi\|_{L^2}\leq C(h^{1-\e-\gamma_1}e^{(N+1)\Do(\Im z)_-/h}\|v\|_{L^2}+\O{}(h^\infty)\|\psi\|_{\Hh^{-M}})
\end{equation}
\end{lemma}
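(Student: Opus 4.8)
The plan is to assemble the microlocal estimates built up in this subsection. First I would fix a nested family of cutoffs $X_k=\oph(\chi_k)$, $k\ge 1$, with $\chi_k\equiv 1$ on $\{|\xi'|_g\le 1-2kch^\e\}$ and $\supp\chi_k\subset\{|\xi'|_g\le 1-(2k-1)ch^\e\}$, arranged so that (after relabeling the constant $c$) $\supp\chi\subset\{\chi_1\equiv 1\}$, and commute $X_1$ through $I+GV$ in \eqref{p1-e:boundaryMain}. Using the decomposition $G=G_\Delta+G_B+G_g+\O{}(h^\infty)$ of Lemma \ref{lem:decompose} — with $G_g$ microsupported near glancing, hence negligible on $\mc{H}$, and $G_B$ microsupported away from $\{l(q,\beta(q))=0\}$ — and the microlocal ellipticity of $\sigma(G_\Delta)$ on $\mc{H}$, I would conjugate by a microlocal square root $G_\Delta^{1/2}$ and set $\varphi:=G_\Delta^{1/2}VX_1\psi$, $T:=G_\Delta^{-1/2}G_BG_\Delta^{-1/2}$, and $R$ as in \eqref{eqn:reflect}. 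This yields the identity \eqref{eqn:nonGlance}, $(I-(RT)^N)\varphi=\O{}(h^\infty)\psi-\sum_{m=0}^{N-1}(RT)^mRG_\Delta^{-1/2}(\psi_1+\tilde v)$, where $\psi_1:=-[X_1,GV]\psi$ is microsupported strictly inside $\mc{H}$, $\tilde v:=X_1Gv$, and $T$ is an FIO associated to the billiard ball map.

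Next I would analyze $I-(RT)^N$ symbolically. By Egorov's theorem in the form of Lemma \ref{lem:EgorovSheaf}, $(RT)_N:=((RT)^*)^N(RT)^N$ is a compactly microlocalized pseudodifferential operator whose compressed shymbol is $\exp\left(-\tfrac{2\Im z}{h}\sum_{n=0}^{N-1}l(\beta^n(q),\beta^{n+1}(q))\right)\prod_{i=1}^N\left(|\tilde\sigma(R)(\beta^i(q))|^2+\O{}(h^{I_R(\beta^i(q))+1-2\e})\right)$, the exponential prefactor cancelling the growth of the billiard FIO. The sharp G$\mathring{\text{a}}$rding inequality (Lemma \ref{semi-l:garding}) together with Lemma \ref{lem:upperBound} then bound $\|(RT)^Nu\|_{L^2}^2$ above and below by $\sup_{\mc{H}}$ and $\inf_{\mc{H}}$ of this shymbol up to acceptable errors; hence, with $\varkappa=\max(\varkappa_1,\varkappa_2)$ as defined, taking logarithms and multiplying by $-l_N^{-1}$ shows that the hypothesis \eqref{eqn:prelimResFree} is precisely the statement $\varkappa\ge ch^{\gamma_1}$ with $\gamma_1<\min(1/2-\e,1/6)$. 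Under this hypothesis, Lemma \ref{p1-l:parametrixMS} supplies a microlocal inverse $A$ of $I-(RT)^N$ on $\mc{H}$ with $\|A\|_{L^2\to L^2}\le 2\varkappa^{-1}\le 2h^{-\gamma_1}$ which shrinks microsupports only by an $h^\e$-thickening.

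Finally I would propagate the bound back to $\psi$. Applying $A$ to \eqref{eqn:nonGlance}, the $\O{}(h^\infty)\psi$ term is negligible and the contribution of $\psi_1$ is absorbed by bootstrapping over the $X_k$ — this is where the microsupport property of $A$ is used — leaving $X_2\varphi=-\sum_{m=0}^{N-1}A(RT)^mRG_\Delta^{-1/2}\tilde v+\O{}(h^\infty)(\psi+v)$. The FIO norm bound $\|T^m\|\le Ce^{m\Do(\Im z)_-/h}$ and $\sigma(G_\Delta)=\O{}(h)$ then give $\|X_2\varphi\|_{L^2}\le C\varkappa^{-1}h^{1/2-\e/2}e^{N\Do(\Im z)_-/h}\|v\|_{L^2}+\O{}(h^\infty)\|\psi\|$; since $VX_1\psi=G_\Delta^{-1/2}\varphi+\O{}(h^\infty)\psi$ and $X_3\psi=-X_3GG_\Delta^{-1/2}X_2\varphi+X_3Gv+\O{}(h^\infty)\psi$, one more application of the mapping bounds for $G$ yields \eqref{eqn:dynamicsRestrict} (the surviving prefactor being $\varkappa^{-1}h^{1-\e}\le h^{1-\e-\gamma_1}$). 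The main obstacle is the parametrix construction of Lemma \ref{p1-l:parametrixMS}: verifying that the Neumann-series inverse respects the microsupport in $\mc{H}$ up to an $h^\e$-thickening. That step relies on the approximate interpolating Hamiltonian $\Xi$ of Lemma \ref{lem:approxInterp}, which gives the almost-invariance $|\chi_k(\beta(q))-\chi_k(q)|=\O{}(h^\infty)$ of the cutoffs and allows one to iterate with geometrically small errors $\|u_k\|\le\varkappa^{-2k}h^{k(1-\delta)}\|g\|$, $\delta=\max(2\e,2/3)$, and sum the resulting series.
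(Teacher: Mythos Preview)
Your proposal is correct and follows essentially the same route as the paper: commute the cutoff through $(I+GV)$, conjugate by $G_\Delta^{1/2}$ to reduce to $(I-(RT)^N)\varphi=f$, use Egorov and sharp G\r{a}rding to show \eqref{eqn:prelimResFree} is equivalent to $\varkappa\ge ch^{\gamma_1}$, invoke Lemma~\ref{p1-l:parametrixMS} for the microlocal inverse, and then unwind via $X_2\varphi$ and $X_3\psi$. Your identification of the parametrix construction (and its reliance on the approximate interpolating Hamiltonian $\Xi$) as the key technical input is exactly right; one minor point is that the $\psi_1$ term is removed directly by the microsupport statement of Lemma~\ref{p1-l:parametrixMS} after applying $X_2$, while the ``bootstrapping over the $X_k$'' you mention lives inside the proof of that lemma rather than at this stage.
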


\subsection{Glancing Region}
Let $\chi\in S_\e(|\xi'|_g=1) $ with $\chi \equiv 1 $ on $\{||\xi'|_g-1|\leq ch^\e\}$ and $\supp \chi \subset \{||\xi'|_g-1|\leq Ch^\e\}$. Then
$$(I+GV)\oph(\chi)\psi=[GV,\oph(\chi)]\psi+\oph(\chi)Gv.$$
Let $\varphi_i$ be a partition of unity on $S^*\partial\Omega$.  We then use the microlocal model for $G$ near glancing. 
$$\sum_i(I+h^{2/3}J_{i}\omega^{-1} \mc{A}_-\mc{A}iC^{-1}J_i^{-1}V)\varphi_i\oph(\chi)\psi=\oph(\chi)Gv+[GV,\oph(\chi)]\psi+\O{}(h^\infty)(\psi).$$

First, observe that if $\alpha>-2/3$, then our model shows that $(I+GV)$ is an elliptic pseudodifferential operator on $\supp \chi$ and hence
\begin{lemma}
\label{lem:glanceSmallAlpha}
Suppose that $\alpha>-2/3$. Then under the assumptions of Lemma \ref{lem:dynamicalRestriction}, there exists $N>0$ so that
$$\|\psi\|_{L^2}\leq Ch^{-N}\|v\|_{L^2}.$$
\end{lemma}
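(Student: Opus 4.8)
The plan is to combine the estimates already established in the elliptic and hyperbolic regions with an elliptic estimate in the glancing region, valid when $\alpha > -2/3$. First I would set up the standard microlocal partition of $\psi$ solving \eqref{p1-e:boundaryMain}. Fix $0<\e<1/2$ small. Choose $\oph(\chi_{\mc{E}})$, $\oph(\chi_{\mc{G}})$, $\oph(\chi_{\mc{H}})$ with $\chi_{\mc{E}}+\chi_{\mc{G}}+\chi_{\mc{H}}\equiv 1$, where $\chi_{\mc{E}}$ is supported in $\{|\xi'|_g\geq 1+ch^\e\}$, $\chi_{\mc{H}}$ in $\{|\xi'|_g\leq 1-ch^\e\}$, and $\chi_{\mc{G}}$ in $\{||\xi'|_g-1|\leq Ch^\e\}$. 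By Lemma \ref{p1-l:ellipticEstimate}, $\|\oph(\chi_{\mc{E}})\psi\|_{\Hh^m}\leq C(h^{1-\e/2}\|v\|_{L^2}+h^\infty\|\psi\|)$; by Lemma \ref{lem:dynamicalRestriction}, under the hypothesis \eqref{eqn:prelimResFree} (which is implied by the hypothesis carried over from Lemma \ref{lem:dynamicalRestriction}), $\|\oph(\chi_{\mc{H}})\psi\|_{L^2}\leq C(h^{1-\e-\gamma_1}e^{(N+1)\Do(\Im z)_-/h}\|v\|_{L^2}+h^\infty\|\psi\|)$. So only the glancing piece $\oph(\chi_{\mc{G}})\psi$ remains to be controlled.

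For the glancing region the key observation, already noted in the excerpt just before the Lemma statement, is that when $\alpha>-2/3$ the operator $I+GV$ is an \emph{elliptic} second microlocal pseudodifferential operator on $\supp\chi_{\mc{G}}$. Concretely, I would use the microlocal model $G\oph(\chi)=h^{2/3}\sum_i J_i\omega^{-1}\mc{A}_-\mc{A}i\,C^{-1}J_i^{-1}+\O{}(h^\infty)$ from Lemma \ref{lem:layerNearGlance}, together with $V\in h^\alpha\Ph{0,m}{2/3}(|\xi'|_g=1)$ elliptic near glancing (assumption \eqref{resFree-e:ellipticAssumption}). Since $AiA_-(\alpha_h)\in\Ph{-1/2,-1/2}{2/3}(\xi_1=0)$ on $-Mh^{2/3}\leq\xi_1$ and is $h^{-1/4+\e/4}\Ph{-1/2,-1/2}{1-\e/2}(\xi_1=0)$ on $-Ch^\e\leq\xi_1\leq -Mh^{2/3}$, the contribution $h^{2/3}GV\sim h^{2/3+\alpha}\cdot(\text{second microlocal symbol of order }{\leq}-1/2)$ is of size at most $h^{2/3+\alpha}$ times a bounded factor; because $2/3+\alpha>0$, this is $o(1)$ relative to the identity, so $\sigma_{\Sigma}(I+GV)=1+o(1)$ is bounded below in the appropriate second microlocal class on $\mc{G}$. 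Then the second microlocal elliptic estimate (Lemma \ref{semi-l:elliptic}, applied in the global calculus of Section \ref{sec:secondMicrolocal} near $\Sigma=\{|\xi'|_g=1\}$) produces $A\in\Ph{0,0}{2/3}(|\xi'|_g=1)$ with $A(I+GV)=\oph(\chi_{\mc{G}}')+\O{\Ph{-\infty}{}}(h^\infty)$ for a slightly larger cutoff $\chi_{\mc{G}}'$.

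Applying this to the equation $(I+GV)\oph(\chi_{\mc{G}})\psi=[GV,\oph(\chi_{\mc{G}})]\psi+\oph(\chi_{\mc{G}})Gv$ gives
$$\oph(\chi_{\mc{G}})\psi = A[GV,\oph(\chi_{\mc{G}})]\psi + A\oph(\chi_{\mc{G}})Gv + \O{}(h^\infty)(\psi+v).$$
The commutator $[GV,\oph(\chi_{\mc{G}})]$ gains a factor $h^{1-2\e}$ in the second microlocal calculus and is microsupported in a shrinking neighborhood of the boundary of $\supp\chi_{\mc{G}}$, i.e. in the elliptic or hyperbolic regions where $\psi$ is already controlled by the two displayed estimates above. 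The term $A\oph(\chi_{\mc{G}})Gv$ is $\O{}(h^{2/3})\|v\|_{L^2}$ since $G\oph(\chi_{\mc{G}})=\O{}(h^{2/3})$. Summing the three pieces, $\|\psi\|_{L^2}\leq\|\oph(\chi_{\mc{E}})\psi\|+\|\oph(\chi_{\mc{H}})\psi\|+\|\oph(\chi_{\mc{G}})\psi\|\leq C h^{-N}\|v\|_{L^2}+Ch^\infty\|\psi\|_{\Hh^{-M}}$, and absorbing the $h^\infty\|\psi\|$ term into the left side (using $h$-temperedness of $\psi$, i.e.\ $\|\psi\|_{\Hh^{-M}}\leq Ch^{-N'}\|\psi\|_{L^2}$, valid for the resonant states in the relevant Sobolev scale, or more carefully bootstrapping through the Sobolev indices) yields $\|\psi\|_{L^2}\leq Ch^{-N}\|v\|_{L^2}$ for suitable $N$.

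The main obstacle I expect is bookkeeping the exact second microlocal symbol classes: one must verify that $h^{2/3}GV$, built from $\mc{A}i\mc{A}_-$ and $V\in h^\alpha\Ph{0,m}{2/3}$, indeed lands in $h^{2/3+\alpha}\Ph{-1/2,\cdot}{2/3}(\Sigma)$ uniformly across the two regimes $-Mh^{2/3}\leq\xi_1$ and $-Ch^\e\leq\xi_1\leq-Mh^{2/3}$ — in the latter regime $\mc{A}i\mc{A}_-$ is only bounded (up to $h^{-1/4+\e/4}$) in a rougher class $\Ph{-1/2,-1/2}{1-\e/2}$, so one needs $2/3+\alpha-1/4-(1-\e/2)\cdot(\text{loss})>0$; checking this is where the threshold $\alpha>-2/3$ is genuinely used, and it is the one calculation I would do carefully. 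Everything else is routine application of the elliptic estimate and the already-proven regional bounds.
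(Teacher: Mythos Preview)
Your proposal is correct and follows essentially the same approach as the paper: the paper's proof is the single sentence ``if $\alpha>-2/3$, then our model shows that $(I+GV)$ is an elliptic pseudodifferential operator on $\supp\chi$,'' and you have simply filled in the routine details of combining this glancing ellipticity with the already-established elliptic and hyperbolic estimates (Lemmas \ref{p1-l:ellipticEstimate} and \ref{lem:dynamicalRestriction}) and absorbing the $\O{}(h^\infty)\|\psi\|$ term.

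One small comment on your final concern: the $h^{-1/4+\e/4}$ factor attached to the class $\Ph{-1/2,-1/2}{1-\e/2}$ in the regime $-Ch^\e\leq\xi_1\leq -Mh^{2/3}$ reflects how the paper packages $AiA_-(\alpha_h)$ into that particular symbol class, not an actual growth of the symbol; the sup norm of $h^{2/3}AiA_-(\alpha_h)$ is $\O{}(h^{2/3})$ uniformly (indeed it is smaller away from $|\xi_1|\sim h^{2/3}$, since $|AiA_-(s)|\lesssim\la s\ra^{-1/2}$), so the worst case for ellipticity of $1+\sigma(GV)$ really is at $|\xi_1|\sim h^{2/3}$, giving exactly the threshold $\alpha>-2/3$. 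No further restriction on $\alpha$ arises from the rougher class.
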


Throughout the rest of our analysis near glancing, it will be convenient to use $\Xi$ from Lemma \ref{lem:approxInterp}. Then 
$$\Xi(x',\xi'):=(|\xi'|_g^2-1)(2Q(x',\xi'))^{-2/3}+\O{}((|\xi'|_g^2-1)^2).$$
Moreover, $\xi_1(\kappa^{-1}(x',\xi'))=\Xi(x',\xi')+\O{}((|\xi'|_g^2-1)^\infty)$ 
where $\kappa$ is the symplectomorphism \eqref{eqn:kappa} reducing the billiard ball map for the Friedlander model to that for $\Omega$ near $(x',\xi')\in S^*\pO$. 
In particular, notice that if $\chi\in S_\e^{0,0}(\xi_1=1)$ with $\supp \chi\subset\{ah^\e_1\leq 1-|\xi'|_g^2\leq bh^{\e}\}$, then
\begin{gather*} 
\sigma(J_i\oph(\chi(\Xi))J_i^{-1})=\chi(\xi_1)\\
\MS(_i\oph(\chi(\Xi))J_i^{-1})\subset \{ah^\e_1\leq \xi_1\leq bh^{\e_2}\}.
\end{gather*}

Now, the assumption that on $|\xi'|_g-1>Mh^{2/3}$, 
$$\left|1+\frac{h\sigma(V)}{2\sqrt{|\xi'|^2_g-1}}\right|\geq \delta \left \la \frac{h^{1+\alpha}}{\sqrt{|\xi'|_g^2-1}}\right\ra$$
(see \eqref{resFree-e:ellipticAssumption})
together with Lemma \ref{semi-l:elliptic} and \eqref{eqn:Gsymb2} imply that $I+GV$ is microlocally invertible on $|\xi'|_g\geq 1+Mh^{2/3}$. 

When $\alpha<-2/3$, we can localize further. In particular,  fix $M_1>0$. Then since $V$ is elliptic and $\alpha<-2/3$, $I+GV$ is an elliptic pseudodifferential operator when for some $\delta>0$ and all $1\leq j\leq M_1$,
$$|h^{-2/3}\Xi(x',\xi')+\zeta_j|\geq \delta,\quad\quad h^{-2/3}\Xi(x',\xi')+\zeta_{M_1+1}\geq \delta $$

So, there exists $C>0$ such that, letting $\chi_2\in S_{2/3}(|\xi'|_g=0)$ have $\supp \chi_2\subset |\xi_1|\leq Ch^\e$ and 
\begin{equation}
\label{eqn:chi2}
\chi_2\equiv 1\text{ on }\begin{cases} |\xi_1|\leq CMh^{2/3}&\alpha=-2/3\\
|\xi_1h^{-2/3}+\zeta_j|\leq \delta,\,C h^\e \leq \xi_1\leq h^{2/3}\zeta_{M_1}+\delta h^{2/3} &\alpha<-2/3,\end{cases}
\end{equation}
we have
\begin{lemma}
\label{lem:ellipticGlance}
Let $\chi_2$ be as in \eqref{eqn:chi2}. Then 
\begin{align*} 
\|(1-\oph(\chi_2(\Xi)))\oph(\chi_1)\psi\|&\leq Ch^{-1/3+\e/2-\alpha}(\|\oph(\chi)Gv\|+\|[GV,\oph(\chi)]\psi\|+\O{}(h^\infty)\|\psi\|
\end{align*} 
and hence, under the assumptions of Lemma \ref{lem:dynamicalRestriction}, 
\begin{align*} 
\|(1-\oph(\chi_2(\Xi)))\oph(\chi_1)\psi\|&\leq Ch^{-1/3+\e/2}(h^{2/3}+h^{1-\e-\gamma_1}e^{(N+1)\Do(\Im z)_-/h})\|v\|+\O{}(h^\infty)\|\psi\|
\end{align*} 
\end{lemma}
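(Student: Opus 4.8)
The plan is to treat $I+GV$ as an elliptic second microlocal pseudodifferential operator on the region $\MSp$-complementary to the thickened glancing bands $\mc{G}_N$, and to invoke the second microlocal elliptic estimate (Lemma \ref{semi-l:elliptic}) to invert it there with a controlled loss. First I would use the microlocal model for $G$ near glancing from Lemma \ref{lem:layerNearGlance}, reducing (via the FIOs $J_i$ and a partition of unity $\varphi_i$ on $S^*\pO$) to analyzing the operator $I+h^{2/3}J_i\omega^{-1}\mc{A}_-\mc{A}i C^{-1}J_i^{-1}V$ on $\supp\chi$. Because $V$ is elliptic near glancing and $V_1\in h^\alpha\Ph{0,m}{2/3}$ with $\alpha<-2/3$, on the support of $1-\oph(\chi_2(\Xi))$ the term $h^{2/3}\cdot h^\alpha = h^{2/3+\alpha}$ dominates (note $2/3+\alpha<0$), so the principal symbol of $I+GV$ is, up to bounded factors, $h^{2/3+\alpha}\omega^{-1}\mc{A}_-\mc{A}i C^{-1}\sigma(V_1)$, which by the choice of $\chi_2$ in \eqref{eqn:chi2} stays bounded below by $c\, h^{2/3+\alpha}$ on $\supp(1-\oph(\chi_2(\Xi)))\cap\supp\chi_1$ — the point being precisely that $\mc{A}_-\mc{A}i = 2\pi e^{\pi i/6}Ai A_-(\alpha_h)$ is nonvanishing off the zeros $\zeta_j$ of $Ai$, and the bands $\mc{G}_j$ (equivalently the support of $\chi_2$) are exactly $\delta h^{2/3}$-neighborhoods of those zeros together with the innermost region. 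One then checks ellipticity of this symbol in the second microlocal class $\Ph{0,0}{2/3}(|\xi'|_g=1)$ and applies Lemma \ref{semi-l:elliptic} to produce a parametrix $A_0$ with $A_0(I+GV)\oph(\chi_1) = (1-\oph(\chi_2(\Xi)))\oph(\chi_1) + \O{\Ph{-\infty}{}}(h^\infty)$ and $\|A_0\|_{L^2\to L^2}\le C h^{-(2/3+\alpha)} = C h^{-2/3-\alpha}$ — but one must be slightly careful about the gain: the gradient of $\Xi$ near $\supp\chi_2$, together with the order of vanishing of the symbol there, gives an additional loss of $h^{-1/3+\e/2}$ rather than a clean inverse, since $I+GV$ is only elliptic of order $h^{2/3+\alpha}$ after absorbing the near-glancing degeneracy; this produces the stated power $h^{-1/3+\e/2-\alpha}$.

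Once that elliptic parametrix is in hand, apply it to the identity $(I+GV)\oph(\chi_1)\psi = \oph(\chi_1)Gv + [GV,\oph(\chi_1)]\psi + \O{}(h^\infty)\psi$ (which is just \eqref{p1-e:boundaryMain} localized, using that $(I-\mathbb 1_{U'})G\mathbb 1_U = \O{\Ph{-\infty}{}}(h^\infty)$ to replace $\psi$ by $\oph(\chi_1)\psi$ inside $\oph(\chi)G\cdot$). Applying $A_0$ and using $\MS(A_0)\subset\{\chi\equiv 1\}$ gives
$$\|(1-\oph(\chi_2(\Xi)))\oph(\chi_1)\psi\| \le Ch^{-1/3+\e/2-\alpha}\big(\|\oph(\chi)Gv\| + \|[GV,\oph(\chi)]\psi\| + \O{}(h^\infty)\|\psi\|\big),$$
which is the first displayed inequality. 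For the second displayed inequality, I would plug in the bounds already established: $\|\oph(\chi)Gv\|_{L^2}\le Ch^{2/3}\|v\|$ follows from $\MSp$ of $G$ near glancing together with the symbol estimate $\sigma(G\oph(\chi))=\tfrac{h\chi}{2\sqrt{|\xi'|_g^2-1}}$ degenerating at worst like $h^{2/3}$ there (as in \eqref{eqn:Gsymb}, \eqref{eqn:Gsymb2}), and $\|[GV,\oph(\chi)]\psi\|$ is handled exactly as in the hyperbolic region: the commutator is microsupported near glancing but away from the glancing set itself, so Lemma \ref{lem:dynamicalRestriction} applies to it, giving $\|[GV,\oph(\chi)]\psi\|\le C(h^{1-\e-\gamma_1}e^{(N+1)\Do(\Im z)_-/h}\|v\| + \O{}(h^\infty)\|\psi\|)$. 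Combining these and multiplying through by $h^{-1/3+\e/2-\alpha}$, then absorbing the $h^{-\alpha}$ back by noting it is dominated within the stated quantities (or, more precisely, using the sharper bookkeeping in the hyperbolic estimate where $\gamma_1$ can be taken so that the product still gives $h^{-1/3+\e/2}(h^{2/3}+h^{1-\e-\gamma_1}e^{(N+1)\Do(\Im z)_-/h})$), yields the claimed estimate.

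The main obstacle I anticipate is the careful symbol computation establishing the lower bound $|\sigma(I+GV)|\ge c\,h^{2/3+\alpha}$ on $\supp(1-\oph(\chi_2(\Xi)))\cap\supp\chi_1$ \emph{uniformly}, including the transition region between $|\xi_1|\sim h^{2/3}$ (where the Airy/second-microlocal description is active) and $|\xi_1|\sim h^\e$ (where one returns to the standard calculus and \eqref{eqn:Gsymb2} governs things). One must show the two descriptions glue — that $AiA_-(\alpha_h)$, which lies in $\Ph{-1/2,-1/2}{2/3}$ on $-Mh^{2/3}\le\xi_1$ and in $h^{-1/4+\e/4}\Ph{-1/2,-1/2}{1-\e/2}$ on $-Ch^\e\le\xi_1\le -Mh^{2/3}$, has matching asymptotics $2\pi e^{\pi i/6}AiA_-(\alpha_h)=\tfrac{h^{1/3}}{2\sqrt{\xi_1}}(1+\O{}(h\xi_1^{-3/2}))$ for $\xi_1\ge Mh^{2/3}$ — and that the elliptic estimate of Lemma \ref{semi-l:elliptic} (stated for the model class $\wt{\O{\e}}(\la h^\e\lambda\ra^{k_1})$) can be pieced together with ordinary ellipticity across this boundary without loss beyond the stated $h^{-1/3+\e/2}$. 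This is essentially a second-microlocal version of a standard "transition between regimes" argument; the key identity making it work is $\Psi_{\S}(\zeta_j)=1$ together with the nonvanishing of $Ai A_-$ off the $\zeta_j$, so that the only obstruction to ellipticity is genuinely confined to the bands $\mc{G}_j$.
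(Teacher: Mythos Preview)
Your approach is essentially the paper's: the lemma is stated immediately after the discussion establishing that $I+GV$ is second-microlocally elliptic off the Airy-zero bands, and the paper gives no further proof---the intended argument is exactly the parametrix construction via Lemma~\ref{semi-l:elliptic} that you describe, applied to the localized equation $(I+GV)\oph(\chi)\psi=\oph(\chi)Gv+[GV,\oph(\chi)]\psi+\O{}(h^\infty)\psi$.

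One point worth tightening: your handling of the disappearing $h^{-\alpha}$ in the second inequality is more hand-wavy than necessary. The clean way to see it is that $[GV,\oph(\chi)]=G[V,\oph(\chi)]+[G,\oph(\chi)]V$ carries an explicit factor of $V$ (hence $h^{\alpha}$), which cancels against the $h^{-\alpha}$ in the parametrix norm when you bound the commutator term; for the $\oph(\chi)Gv$ term the exact bookkeeping is less delicate than you fear, since the final statement only feeds into estimates of the form $\|\psi\|\le Ch^{-N}\|v\|$ and the precise power is not used sharply downstream. Your identification of the regime-gluing (matching the Airy description at $|\xi_1|\sim h^{2/3}$ with the asymptotic $2\pi e^{\pi i/6}AiA_-(\alpha_h)\sim h^{1/3}/(2\sqrt{\xi_1})$ at $|\xi_1|\sim h^\e$) as the technical crux is correct and is in fact more explicit than what the paper writes out.
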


\subsubsection{Flux formula}

With $\chi_2$ as in \eqref{eqn:chi2}, define
$$\psi_{ng}:=(1-\oph(\chi_2(\Xi))\oph(\chi_1))\psi.$$
and $\psi_g:=\psi-\psi_{ng}$.

By an integration by parts, we have for a solution $u$ to \eqref{p1-e:main},
\begin{equation} 
\label{p1-e:greens}\left(\frac{2\Re z\Im z}{h}\|u\|_{L^2}^2-\Im \la B\psi,\psi\ra\right)= -\Im\la h v,\psi\ra .
\end{equation}
On the other hand, 
\begin{equation}
\label{p1-e:greens2}
u=h^{-1}\S h\partial_\nu u-\D u=-(h^{-1}\S B +\D )\psi+\S v=-\S V\psi +.\S v
\end{equation}
Since we already have estimates for $\psi_{ng}$, we write 
$$u=(-\S V \psi_g)+(\S (v -V \psi_{ng}))=:u_{g}+u_{ng}.$$

Now, \cite[Theorem 1.1]{GalkSLO} together with an application of the Phragm\'en Lindel\"of principle implies 
\begin{align*} 
\|\S (v- V\psi_{ng})\|=\|u_{ng}\|&\leq h^{5/6}e^{\Do(\Im z)_-/h}(\|v\|+h^{\alpha}\|\psi_{ng}\|_{\Hh^m})\\
\|\S V\psi_g\|=\|u_g\|&\leq Ch^{5/6+\alpha}e^{\Do(\Im z)_-/h}\|\psi_g\|
\end{align*} 
Then, 
\begin{align*} \|u\|^2-\|u_{g}\|^2&=2\Re \la u_{g},u_{ng}\ra +\|u_{ng}\|^2\\
&\leq \delta\|u_{g}\|^2+(1+2\delta^{-1})\|u_{ng}\|^2\\
&\leq C\delta h^{5/3+2\alpha}e^{2\Do (\Im z)_-/h}\|\psi_g\|^2+(1+2\delta^{-1})\|u_{ng}\|^2
\end{align*}
\begin{align*}
\left|\la B\psi,\psi\ra -\la B\psi_g,\psi_g\ra\right| &=|\la B\psi_g,\psi_{ng}\ra +\la B\psi_{ng},\psi_g\ra +\la B\psi_{ng},\psi_{ng}\ra|\\
&\leq C(\delta\|\psi_g\|^2+C(1+\delta^{-1}))\|\psi_{ng}\|_{\Hh^m}^2.
\end{align*} 
Now, rewrite \eqref{p1-e:greens} as 
$$\frac{2\Re z\Im z}{h}\|u_{g}\|^2-\Im \la B\psi_g,\psi_g\ra =\Im \la hv,\psi\ra +\frac{2\Re z\Im z}{h}(\|u_{g}\|^2-\|u\|^2)+\Im (\la B\psi_g,\psi_g\ra -\la B\psi,\psi\ra).$$
Plugging our estimates in together gives 
\begin{equation}
\label{eqn:est1}
\begin{aligned} 
\left|\frac{2\Re z\Im z}{h}\|u_{g}\|^2+\Im \la- hN_2\psi_g,\psi_g\ra +\la -h\Im V\psi_g,\psi_g\ra \right|\!\!\!\!\!\!\!\!\!\!\!\!\!\!\!\!\!\!\!\!\!\!\!\!\!\!\!\!\!\!\!\!\!\!\!\!\!\!\!\!\!\!\!\!\!\!\!\!\!\!\!\!\!\!\!\!\!\!\!\!\!\!\!\!\!\!\!\!\!\!\!\!\!\!\!\!\!\!\!\!\!\!\!\!\!\!\!\!\!\!\!\!\!\!\!\!\!\!\!\!\!\!\!\!\!\!\!\!\!\!\!\!\!\!\!\!\!\!\!\!\!\!\!\!\!\!\!\!\!&\\
&\leq Ch(\delta_1^{-1}\|v\|^2+\delta_1\|\psi\|^2)+ C|\Im z|h^{-1}(\delta_2h^{5/3+2\alpha}e^{2\Do(\Im z)_-/h}\|\psi_g\|^2+(1+\delta_2^{-1})\|u_{ng}\|^2)\\
&\quad\quad+C(\delta_3\|\psi_g\|^2+(1+\delta_3^{-1})\|\psi_{ng}\|_{\Hh^m}^2)\\
&\leq C(\delta_1h+|\Im z|h^{2/3+2\alpha}e^{2\Do(\Im z)_-/h}\delta_2 +\delta_3)\|\psi_g\|_{L^2}^2\\
&\quad\quad +C(h\delta_1 +\delta_3^{-1}+(1+\delta_2^{-1})|\Im z|h^{2/3+2\alpha}e^{2\Do(\Im z)_-/h} )\|\psi_{ng}\|_{\Hh^m}^2 \\
&\quad\quad +C(h\delta_1^{-1}+\delta_2^{-1}|\Im z|h^{2/3+2\alpha}e^{2\Do(\Im z)_-/h})\|v\|_{L^2}^2
\end{aligned}
\end{equation}
In particular, we have
\begin{lemma}
For all $\gamma_1\in \re$, $c>0$, there exists $C>0$ so that if
\begin{equation}
\label{eqn:estpsi}\left|\frac{2\Re z\Im z}{h}\|u_{g}\|^2+\Im \la- hN_2\psi_g,\psi_g\ra +\la -h\Im V\psi_g,\psi_g\ra \right|\geq ch^{\gamma_1}\|\psi_g\|^2.
\end{equation}
then
\begin{align*}\|\psi_g\|^2&\leq C(h^{\gamma_1}+h^{-\gamma_1}+(1+|\Im z|h^{2/3+2\alpha-\gamma_1}e^{2\Do(\Im z)_-/h})|\Im z|h^{2/3+2\alpha}e^{2\Do(\Im z)_-/h})\|\psi_{ng}\|_{\Hh^m}^2\\
&\quad\quad +C(h^{2-\gamma_1}+|\Im z|^2h^{4/3+2\alpha-\gamma_1}e^{4\Do(\Im z)_-/h})\|v\|_{L^2}^2
\end{align*}
\end{lemma}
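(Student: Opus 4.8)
The estimate \eqref{eqn:est1} already carries all of the analytic work, so the lemma follows from it by a Peter--Paul absorption. Abbreviate by $\mc{Q}$ the quantity inside the absolute value in \eqref{eqn:estpsi}, and set $\beta:=|\Im z|h^{2/3+2\alpha}e^{2\Do(\Im z)_-/h}$, the factor measuring the size of the glancing contribution (so $\beta=0$ when $\Im z=0$). With this notation, \eqref{eqn:est1} says that for every choice of $\delta_1,\delta_2,\delta_3>0$,
$$|\mc{Q}|\le C(\delta_1h+\beta\delta_2+\delta_3)\,\|\psi_g\|_{L^2}^2+C\bigl(h\delta_1+\delta_3^{-1}+(1+\delta_2^{-1})\beta\bigr)\|\psi_{ng}\|_{\Hh^m}^2+C\bigl(h\delta_1^{-1}+\delta_2^{-1}\beta\bigr)\|v\|_{L^2}^2 .$$

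The plan is to choose $\delta_1,\delta_2,\delta_3$ so that the coefficient of $\|\psi_g\|^2$ on the right is at most $\tfrac{c}{2}h^{\gamma_1}$, then use the hypothesis $|\mc{Q}|\ge ch^{\gamma_1}\|\psi_g\|^2$ to subtract that term from both sides and divide through, leaving an estimate for $\|\psi_g\|^2$ in terms of $\|\psi_{ng}\|_{\Hh^m}^2$ and $\|v\|_{L^2}^2$ only. The appropriate choices are a small fixed multiple of $h^{\gamma_1-1}$ for $\delta_1$, a small fixed multiple of $h^{\gamma_1}$ for $\delta_3$, and -- the one non-obvious point -- a small fixed multiple of $h^{\gamma_1}\beta^{-1}$ for $\delta_2$, so that $\delta_1h$, $\beta\delta_2$ and $\delta_3$ are each $\le\tfrac{c}{6}h^{\gamma_1}$. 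Feeding these back into the last two coefficients above then recovers, after substituting the definition of $\beta$, the asserted inequality: $(1+\delta_2^{-1})\beta$ produces the term $(1+\beta h^{-\gamma_1})\beta$ in the $\|\psi_{ng}\|$-coefficient, while $\delta_2^{-1}\beta$ produces the term $|\Im z|^2h^{4/3+2\alpha-\gamma_1}e^{4\Do(\Im z)_-/h}$ in the $\|v\|$-coefficient -- here one tracks the $\|v\|$-contribution through the derivation of \eqref{eqn:est1}, noting that $v$ enters only via the $\|v\|$-part of the bound $\|u_{ng}\|\le Ch^{5/6}e^{\Do(\Im z)_-/h}(\|v\|+h^{\alpha}\|\psi_{ng}\|_{\Hh^m})$, with no extra factor of $h^{\alpha}$ -- and the remaining $h^{\pm\gamma_1}$ and $h^{2-\gamma_1}$ terms come from $\delta_1h$, $\delta_3^{-1}$ and $h\delta_1^{-1}$.

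In essence the whole proof is this bookkeeping, and I do not expect any serious obstacle. The only points to watch are that $\delta_2$ must be scaled against the glancing blow-up factor $\beta$ rather than taken as a pure power of $h$, and that the degenerate case $\Im z=0$ should be treated separately -- there the flux term $2\Re z\,\Im z\,h^{-1}\|u_g\|^2$ and the factor $\beta$ both vanish, only $\delta_1$ and $\delta_3$ enter, and the same absorb-and-divide argument goes through verbatim.
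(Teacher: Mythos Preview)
Your approach is exactly what the paper intends: the lemma is stated immediately after \eqref{eqn:est1} with no further proof beyond ``In particular, we have,'' and the absorption argument you describe—choosing $\delta_1\sim h^{\gamma_1-1}$, $\delta_3\sim h^{\gamma_1}$, $\delta_2\sim h^{\gamma_1}\beta^{-1}$ and then subtracting and dividing—is the intended mechanism. Your observation about tracking the $\|v\|$-contribution back through the derivation (no extra $h^{\alpha}$) is correct and explains the $h^{4/3+2\alpha-\gamma_1}$ rather than $h^{4/3+4\alpha-\gamma_1}$ in the conclusion; note that after dividing by $h^{\gamma_1}$ the displayed powers in the lemma are each off by one factor of $h^{-\gamma_1}$, but this is a harmless imprecision since the lemma is only used to obtain a polynomial-in-$h$ bound.
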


\subsubsection{Estimates on the glancing set}
We now obtain estimates of the form \eqref{eqn:estpsi} using the description of the single and double layer potentials from section \ref{sec:Layer}. First, observe that
\begin{align*} 
\|u_{g}\|_{L^2(\Omega)}^2&=\left\la \mc{B}\psi_g,\psi_g\right\ra_{L^2(\pO)} 
\end{align*} 
where by Lemma \ref{lem:potentialGlance}
$$\mc{B}:=V^*\S ^*\S V \in h^{2+2\alpha}\Ph{}{1-\e/2}(|\xi'|_g=1)$$
is elliptic and has symbol given by
$$\sigma(\mc{B})=\frac{|\sigma(hV)|^2}{2Q}\left(\Psi_{\S }(\alpha_{0h})\composed \kappa^{-1}\right).$$

Take $\e,\e_1>0$ small enough and let 
\begin{equation}
\label{eqn:Ldef}
\mc{L}_\alpha:=\begin{cases} 
\{||\xi'|_g-1|\leq h^\e\,,\,|\Xi+h^{2/3}\zeta_j|<\e_1 h^{2/3}\text{ or }\Xi\leq -M_1h^{2/3}\}&\alpha<-2/3\\
\{||\xi|'_g-1|\leq CMh^{2/3}&\alpha\geq-2/3
\end{cases}
\end{equation}
where $C$ and $M$ are as in \eqref{eqn:chi2}. 
  
 Now, define
$$ 
\frac{2\Re z \Im z}{h}\|u_{g}\|^2+\Im \la -hN_2\psi_g,\psi_g\ra +\la -h\Im V\psi_g,\psi_g\ra =\\
\left\la A\psi_g,\psi_g\right\ra$$
where
$$A:=\frac{2\Re  z\Im z}{h}\mc{B}-\Im (hN_2+hV_2).$$
 Then, applying the Sharp G$\mathring{\text{a}}$rding inequality (see Lemma \ref{semi-l:garding}) along with bounds on the norm of pseudodifferential operators (see Lemma \ref{lem:upperBound}), we obtain
\begin{multline} 
\label{eqn:ImzlowerBound}
\inf_{\mc{L}_\alpha}\left(\frac{2\Re z\Im z}{h}\frac{|\sigma(hV)|^2}{2Q}\Psi_{\S}(h^{-2/3}\Xi)(1+\O{}(h^{\e/2}))\right.\\
\left.-h(\Im \sigma(N_2)+\sigma(\Im V))-ch^{1/3+\e/2}-ch^{4/3+\alpha}\right)\|\psi_g\|^2\leq \la A\psi_g,\psi_g\ra 
\end{multline}
\begin{multline}
\label{eqn:ImzupperBound}
\la A\psi_g,\psi_g\ra \leq 
\sup_{\mc{L}_\alpha}\left(\frac{2\Re z\Im z}{h}\frac{|\sigma(hV)|^2}{2Q}\Psi_{\S}(h^{-2/3}\Xi)(1+\O{}(h^{\e/2}))\right.\\
\left.-h(\Im \sigma(N_2)+ \sigma(\Im V))+ch^{1/3+\e/2}+ch^{4/3+\alpha}\right)\|\psi_g\|^2
\end{multline} 
Notice that for all $\delta>0$, there exists $M_1$ large enough and $\e_1$ small enough so that 
$$1-\delta \leq \Psi_{\Sl}(h^{-2/3}\Xi)\leq 1+\delta,\quad\quad(x,\xi)\in \mc{L}_{\alpha}\quad (\alpha<-2/3).$$ 
So, we have 
\begin{lemma}
\label{lem:glance}
For all $\delta>0$ there exists $h_0>0$, $N,M>0$, $C,\,c>0$ such that for $0<h<h_0$ if $\pm \Im z\geq 0$ and one of the following holds
\begin{equation} 
\label{eqn:cond1}
\begin{gathered}\frac{-\Im z}{h}\leq \inf_{\mc{L}_\alpha}-\frac{h(\Im \sigma(N_2)+\sigma(\Im V)+c(h^{1/3+\alpha}+h^{-1/3+\e/2}))Q}{|\sigma(hV)|^2\Psi_{\Sl}(h^{-2/3}\Xi)}(1\pm\delta)\\
\frac{-\Im z}{h}\geq \sup_{\mc{L}_\alpha}-\frac{h(\Im \sigma(N_2)+\sigma(\Im V)-c(h^{1/3+\alpha}+h^{-1/3+\e/2}))Q}{|\sigma(hV)|^2\Psi_{\Sl}(h^{-2/3}\Xi)}(1\mp \delta)
\end{gathered}
\end{equation}
then
\begin{equation}
\label{eqn:est1}
 \|\psi_g\|_{L^2}\leq Ch^{-N}(\|v\|_{L^2}+\|\psi_{ng}\|_{\Hh^m})+\O{}(h^\infty)\|\psi\|_{\Hh^{-M}}.
\end{equation}
If $\alpha<-2/3$, we can replace the conditions \eqref{eqn:cond1} with
\begin{equation*} 
\begin{gathered}
\frac{-\Im z}{h}\leq \inf_{\mc{L}_\alpha}-\frac{h(\Im \sigma(N_2)+\sigma(\Im V)+c(h^{1/3+\alpha}+h^{-1/3+\e/2})Q}{|\sigma(hV)|^2}(1\pm \delta)\\
\frac{-\Im z}{h}\geq \sup_{\mc{L}_\alpha}-\frac{h(\Im \sigma(N_2)+\sigma(\Im V)-c(h^{1/3+\alpha}+h^{-1/3+\e/2}))Q}{|\sigma(hV)|^2}(1\mp \delta).
\end{gathered}
\end{equation*}
\end{lemma}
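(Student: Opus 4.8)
The plan is to derive \eqref{eqn:est1} directly from the two-sided bound \eqref{eqn:ImzlowerBound}--\eqref{eqn:ImzupperBound} together with the flux identity \eqref{p1-e:greens}, exactly in the spirit of the argument already used for Lemma \ref{lem:dynamicalRestriction} in the hyperbolic region. First I would combine the Green's formula \eqref{p1-e:greens}, written in the decomposed form
$$\frac{2\Re z\Im z}{h}\|u_g\|^2-\Im\la B\psi_g,\psi_g\ra=\Im\la hv,\psi\ra+\frac{2\Re z\Im z}{h}(\|u_g\|^2-\|u\|^2)+\Im(\la B\psi_g,\psi_g\ra-\la B\psi,\psi\ra),$$
with the bookkeeping estimates preceding \eqref{eqn:est1} that control the cross terms $\|u_g\|^2-\|u\|^2$ and $\la B\psi_g,\psi_g\ra-\la B\psi,\psi\ra$ by $\|\psi_{ng}\|_{\Hh^m}$ and $\|v\|$. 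This produces, as in the line defining $\la A\psi_g,\psi_g\ra$, the quantity $\la A\psi_g,\psi_g\ra$ controlled by $Ch^{M_0}(\|v\|^2+\|\psi_{ng}\|_{\Hh^m}^2)+\O{}(h^\infty)\|\psi\|^2$ for some fixed power $M_0$ (absorbing the $\delta$-weighted terms).

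The key step is then a lower bound $|\la A\psi_g,\psi_g\ra|\geq ch^{\gamma_1}\|\psi_g\|^2$, which is where the hypotheses \eqref{eqn:cond1} enter. The operator $A=\frac{2\Re z\Im z}{h}\mc{B}-\Im(hN_2+hV_2)$ is a second microlocal pseudodifferential operator on $\mc{L}_\alpha$ whose symbol is, up to $\O{}(h^{1/3+\e/2})+\O{}(h^{4/3+\alpha})$ errors, $\frac{2\Re z\Im z}{h}\frac{|\sigma(hV)|^2}{2Q}\Psi_{\S}(h^{-2/3}\Xi)-h(\Im\sigma(N_2)+\sigma(\Im V))$. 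Under \eqref{eqn:cond1} this symbol is bounded away from $0$ by $ch^{\gamma_1}$ times $\la\lambda\ra^0$ on $\mc{L}_\alpha$ (using $\Re z\sim 1$ and $\Re z\Im z$ of a definite sign since $\pm\Im z\geq 0$), because the $\pm\delta$ factors were chosen precisely to give room for the error terms and for the oscillation of $\Psi_{\S}$; when $\alpha<-2/3$ one further uses that on $\mc{L}_\alpha$ one has $1-\delta\leq\Psi_{\S}(h^{-2/3}\Xi)\leq 1+\delta$, which explains the simplified form of the conditions. Applying the sharp G\r{a}rding inequality (Lemma \ref{semi-l:garding}) to $\pm A$ — more precisely to $A-ch^{\gamma_1}$ or $-A-ch^{\gamma_1}$, which is $\geq 0$ to principal order on $\mc{L}_\alpha$ after a cutoff — together with the operator-norm bound of Lemma \ref{lem:upperBound} to control the part of $\psi_g$ microsupported away from $\mc{L}_\alpha$ (which is $\O{}(h^\infty)\|\psi\|$ by Lemma \ref{lem:ellipticGlance} and the definition of $\psi_g$), yields the desired coercivity.

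Combining the coercivity $|\la A\psi_g,\psi_g\ra|\geq ch^{\gamma_1}\|\psi_g\|^2$ with the upper bound on $\la A\psi_g,\psi_g\ra$ from the flux computation gives $\|\psi_g\|^2\leq Ch^{-N}(\|v\|^2+\|\psi_{ng}\|_{\Hh^m}^2)+\O{}(h^\infty)\|\psi\|_{\Hh^{-M}}^2$, which is \eqref{eqn:est1} after taking square roots. The main obstacle I anticipate is the careful handling of the error terms and the sign of $\Re z\Im z$ in the second microlocal symbol calculus near glancing: one must verify that the $\O{}(h^{1/3+\e/2})$ and $\O{}(h^{4/3+\alpha})$ remainders in \eqref{eqn:ImzlowerBound}--\eqref{eqn:ImzupperBound}, together with the $(1+\O{}(h^{\e/2}))$ factor multiplying $\Psi_{\S}$, are genuinely dominated by the $\pm\delta$ slack in \eqref{eqn:cond1} uniformly over $\mc{L}_\alpha$, including the regime $\Xi\leq -M_1h^{2/3}$ where $\Psi_{\S}(h^{-2/3}\Xi)\to 1$ and one is transitioning to the hyperbolic estimate; this is the point where the choice of $M_1$ large, $\e_1$ small, and $\delta$ in terms of the other parameters must be made consistently, and where one should double-check that $\gamma_1$ can be taken with $\gamma_1<\min(1/2-\e,1/6)$ so that the result dovetails with Lemma \ref{lem:dynamicalRestriction}.
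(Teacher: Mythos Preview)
Your proposal is correct and follows essentially the same approach as the paper: the paper also derives Lemma~\ref{lem:glance} from the flux identity \eqref{p1-e:greens} (repackaged into the quadratic form $\la A\psi_g,\psi_g\ra$), combines it with the cross-term bookkeeping to get the unnumbered lemma stating that \eqref{eqn:estpsi} implies \eqref{eqn:est1}, and then verifies the coercivity \eqref{eqn:estpsi} via the two-sided G\r{a}rding bounds \eqref{eqn:ImzlowerBound}--\eqref{eqn:ImzupperBound}, using the fact that $\Psi_{\Sl}$ is close to $1$ on $\mc{L}_\alpha$ when $\alpha<-2/3$ to simplify the conditions. Your identification of the key steps, the role of the sign $\pm\Im z\geq 0$, and the reason for the $(1\pm\delta)$ slack all match the paper's argument.
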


\subsection{Further localization away from the real axis when $\alpha<-2/3$}
\label{sec:furtherLoc}
We now focus our attention on the region $|\Im z|\geq ch^N$ for some $N>0$ and $\alpha<-2/3$. In this region, we are able to decompose $\psi=u|_{\pO}$ into pieces, $\psi_j$, concentrating at $\Xi=\zeta_jh^{2/3}$, that still have 
$$(I+GV)\psi_j=Gv_j$$
with the norm of $v_j$ controlled by the norm of $v$.

 We again use the representation of $G$ near glancing. With $\chi$ and $\varphi_i$ as above
$$(I+\sum_ih^{2/3}J_i\omega^{-1}\mc{A}_i\mc{A}iC^{-1}J_i^{-1}V\varphi_i)\oph(\chi)\psi=\oph(\chi)Gv+[GV,\oph(\chi)]\psi+\O{}(h^\infty)\psi.$$
Fix $\e_1>0$ small enough and let $\chi_j\equiv 1$ on $|\xi_1+\zeta_j|\leq \e_1 h^{2/3}$ with $\supp \chi_j\subset |\xi_1+\zeta_j|\leq 2\e_1 h^{2/3}$ and let $L_j=\oph(\chi_j(\Xi)).$ Then
\begin{multline*} \sum_i(I+h^{2/3}J_i\omega^{-1}\mc{A}_i\mc{A}iC^{-1}J_i^{-1}V)\varphi_iL_j\oph(\chi)\psi=\\
L_j\oph(\chi)Gv+L_j[GV,\oph(\chi)]\psi+[GV,L_j]\oph(\chi)\psi+\O{}(h^\infty)\psi
\end{multline*} 
Now, $[GV,L_j]$ is a pseudodifferential operator with support on the complement of $\mc{L}_\alpha$. Therefore by Lemma \ref{lem:ellipticGlance} there exists $M>0$ so that
$$\|[GV,L_j]\oph(\chi)\psi\|\leq h^{-M}\|v\|+\O{}(h^\infty)\|\psi\|.$$
So,
$$(I+GV)L_j\oph(\chi)\psi=w$$
with 
$$\|w\|\leq h^{-M}\|v\|+\O{}(h^\infty)\|\psi\|.$$
Now, 
$G^{-1}=N_1+N_2$ and since $|\Im z|\leq Mh\log h^{-1}$, 
$$\|h(N_1+N_2)\|_{\Hh^1\to L^2}\leq \frac{C}{|\Im z|}.$$
Hence, using that $|\Im z|\geq ch^{N}$, we have 
$$(I+GV)L_j\oph(\chi)\psi=GG^{-1}w=G(N_1+N_2)w=:Gv_j$$
so that for some $M>0$,
$$\|v_j\|\leq h^{-M}\|v\|+\O{}(h^\infty)\|\psi\|.$$
So, formulas \eqref{p1-e:greens} and \eqref{p1-e:greens2} hold with $\psi$ replaced by $L_j\oph(\chi)\psi$ and $v$ replaced by $v_j$. Let  $\psi_j=L_j\oph(\chi)\psi$,
$$\mc{L}_j:=\{|\Xi(x',\xi)+h^{2/3}\zeta_j|<2\e_1 h^{2/3},$$
and $u_j$ be the solution to 
$$\begin{cases} (-h^2\Delta-z^2)u_j=0&\text{in }\Omega\\
(h\partial_\nu+B)u_j=v_j&\text{on }\pO\\
u_j|_{\pO}=\psi_j
\end{cases}
$$
Next, fix $\delta>0$ and take $\e_1$ small enough, $\pm\Im z\geq 0$. Then following the arguments above,
\begin{multline} 
\label{eqn:ImzlowerBoundLoc}
\inf_{\mc{L}_j}\left(\frac{2 \Im z}{h(1\pm\delta)}\frac{|\sigma(hV)|^2}{2Q}-h(\Im \sigma(N_2)+\sigma(\Im V))-ch^{1/3+\e/2}-ch^{4/3+\alpha}\right)\|\psi_j\|^2\\\leq \la A\psi_j,\psi_j\ra 
\end{multline}
\begin{multline}
\label{eqn:ImzupperBoundLoc}
\la A\psi_j,\psi_j\ra \leq \\
\sup_{\mc{L}_j}\left(\frac{2\Im z}{h(1\mp \delta)}\frac{|\sigma(hV)|^2}{2Q}-h(\Im \sigma(N_2)+\sigma(\Im V))+ch^{1/3+\e/2}+ch^{4/3+\alpha}\right)\|\psi_j\|^2
\end{multline} 
and
\begin{equation}
\label{eqn:locEst}
\left|\left\la A\psi_j,\psi_j\right\ra \right|\leq C(\delta^{-1}\|v_j\|+\delta\|\psi_j\|).
\end{equation}
In particular, using that
$$\sigma(hN_2)=(2hQ)^{1/3}\frac{A_-'(h^{-2/3}\Xi)}{A_-(h^{-2/3}\Xi)},$$
we have
\begin{lemma}
\label{lem:glancej}
Suppose that $\pm \Im z\geq ch^M$, $\alpha<-2/3$. Fix $j>0$. Then there exist $h_0>0$, $N,\,C>0$ such that if one of the following holds 
\begin{gather*} 
\frac{-\Im z}{h}\leq \inf_{\mc{L}_j}-\frac{h(h^{-2/3}(2Q)^{1/3}\Im \frac{A'_-(-\zeta_j)}{A_-(-\zeta_j)} +\sigma(\Im V)+ch^{1/3+\alpha})Q}{|\sigma(hV)|^2}(1\pm\delta)\\
\frac{-\Im z}{h}\geq \sup_{\mc{L}_j}-\frac{h(h^{-2/3}(2Q)^{1/3}\Im \frac{A'_-(-\zeta_j)}{A_-(-\zeta_j)} +\sigma(\Im V)-ch^{1/3+\alpha})Q}{|\sigma(hV)|^2}(1\mp\delta)
\end{gather*}
then
\begin{equation*}
 \|\psi_j\|\leq Ch^{-N}\|v\|+\O{}(h^\infty)\|\psi\|.
\end{equation*}
\end{lemma}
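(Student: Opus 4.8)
The plan is to follow the flux-formula machinery already set up for $\psi_g$ in Lemma \ref{lem:glance}, but applied to the localized pieces $\psi_j = L_j\oph(\chi)\psi$ and the localized data $v_j = (N_1+N_2)w$ constructed just above. The key point is that on $\mc{L}_j$ the symbol $\Psi_{\Sl}(h^{-2/3}\Xi)$ is evaluated near $\Xi = -h^{2/3}\zeta_j$, and by the Wronskian identity recorded in Section \ref{sec:Friedlander} one has $\Psi_{\Sl}(-\zeta_j) = 1$ since $Ai(\zeta_j)=0$; moreover the correction terms are $\O{}(\e_1)$ on the thickened set $\mc{L}_j$. Likewise $\sigma(hN_2)$ restricted to $\mc{L}_j$ is $(2hQ)^{1/3}A_-'(-h^{-2/3}\Xi)/A_-(-h^{-2/3}\Xi)$ which near $\Xi=-h^{2/3}\zeta_j$ is $(2hQ)^{1/3}A_-'(\zeta_j)/A_-(\zeta_j) + \O{}(\e_1)$, so that $\Im\sigma(hN_2)$ is $h^{1/3}(2Q)^{1/3}\Im\big(A_-'(\zeta_j)/A_-(\zeta_j)\big) + \O{}(\e_1)$ up to the indicated powers of $h$ (note the sign convention: $-\zeta_j$ versus $\zeta_j$ in the argument is dictated by the relation $\Xi=\xi_1$ and $\alpha_{0h}=h^{-2/3}\xi_1$, and one must track this carefully).

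First I would record, exactly as in the derivation of \eqref{eqn:ImzlowerBoundLoc}--\eqref{eqn:ImzupperBoundLoc} and \eqref{eqn:locEst}, the identity
$$\frac{2\Re z\Im z}{h}\|u_j\|^2 + \Im\la -hN_2\psi_j,\psi_j\ra + \la -h\Im V\psi_j,\psi_j\ra = \la A\psi_j,\psi_j\ra$$
with $A = \frac{2\Re z\Im z}{h}\mc{B} - \Im(hN_2+hV_2)$, where $\mc{B} = V^*\Sl^*\Sl V$ has symbol $\tfrac{|\sigma(hV)|^2}{2Q}\Psi_{\Sl}(\alpha_{0h})\composed\kappa^{-1}$ by Section \ref{sec:Layer}. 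Then applying the sharp G\aa rding inequality (Lemma \ref{semi-l:garding}) and the norm bound (Lemma \ref{lem:upperBound}) on the second microlocal class $\Ph{}{1-\e/2}(|\xi'|_g=1)$ gives the two-sided bounds \eqref{eqn:ImzlowerBoundLoc}--\eqref{eqn:ImzupperBoundLoc}. Substituting the values $\Psi_{\Sl}(-\zeta_j)=1$ and $\sigma(hN_2) = (2hQ)^{1/3}A_-'(-\zeta_j)/A_-(-\zeta_j)$ on $\mc{L}_j$, and using $\Re z\sim 1$, the bracketed quantity in \eqref{eqn:ImzlowerBoundLoc} becomes
$$\frac{2\Im z}{h(1\pm\delta)}\frac{|\sigma(hV)|^2}{2Q} - h\Big(h^{-2/3}(2Q)^{1/3}\Im\tfrac{A_-'(-\zeta_j)}{A_-(-\zeta_j)} + \sigma(\Im V) + ch^{1/3+\e/2-\text{(power)}} + ch^{4/3+\alpha}\Big),$$
and absorbing $h^{1/3+\e/2}$ into $h^{1/3+\alpha}$ for $\e$ small and $\alpha<-2/3$ yields the form stated in the lemma. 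If the hypothesis on $-\Im z/h$ holds, this bracket has a definite sign (and is bounded below in absolute value by $ch^{M'}\|\psi_j\|^2$ for suitable $M'$, using $|\Im z|\ge ch^M$ to control the $\Re z\Im z/h$ term against the $h^{1/3+\alpha}$ term), so $|\la A\psi_j,\psi_j\ra| \ge ch^{M'}\|\psi_j\|^2$; combining with \eqref{eqn:locEst} and $\|v_j\|\le h^{-M}\|v\| + \O{}(h^\infty)\|\psi\|$ produces $\|\psi_j\| \le Ch^{-N}\|v\| + \O{}(h^\infty)\|\psi\|$, which is the claim.

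The main obstacle I anticipate is the bookkeeping near the boundary of $\mc{L}_j$, i.e.\ ensuring that the thickening parameter $\e_1$ can be chosen uniformly so that $\Psi_{\Sl}(h^{-2/3}\Xi)$ and $A_-'/A_-$ stay within $\delta$ of their values at $-\zeta_j$ while simultaneously the $\mc{L}_j$ are disjoint (separated by $\delta h^{2/3}$) and the commutator term $[GV,L_j]\oph(\chi)\psi$ remains controlled by Lemma \ref{lem:ellipticGlance} — this is exactly the quantization-at-scale-$h^{2/3}$ phenomenon, and the delicate part is that these requirements interact ($\e_1$ small for the symbol estimates, but $\e_1$ not so small that the operator $L_j$ fails to be in the class $\Ph{0,0}{2/3}$). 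A secondary technical point is tracking the precise power of $h$ lost in the sharp G\aa rding step for the class $S^{k_1,k_2}_{\delta}$ with $\delta=1-\e/2$ close to $1$; one must check that $h^{1-\delta}=h^{\e/2}$ is indeed negligible against the leading terms after multiplication by the overall $h^{2+2\alpha}$ scale of $\mc{B}$ and $h$ scale of $N_2$, which forces the constraint $\alpha<-2/3$ together with $\e$ small. Once these are in place, everything else is a direct transcription of the $\psi_g$ argument.
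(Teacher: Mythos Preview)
Your proposal is correct and follows essentially the same route as the paper: the lemma is an immediate consequence of the localized flux bounds \eqref{eqn:ImzlowerBoundLoc}--\eqref{eqn:locEst} once one substitutes $\sigma(hN_2)=(2hQ)^{1/3}A_-'(h^{-2/3}\Xi)/A_-(h^{-2/3}\Xi)$ and uses that $\Psi_{\Sl}$ is $1+\O{}(\e_1)$ on $\mc{L}_j$. One small correction: the assumption $|\Im z|\geq ch^M$ is used \emph{only} in the construction of $v_j$ (to bound $\|h(N_1+N_2)\|\leq C/|\Im z|$), not to produce the definite sign in the bracket---the sign comes directly from the hypothesis on $-\Im z/h$ lying outside the stated interval, which guarantees the bracketed quantity in \eqref{eqn:ImzlowerBoundLoc} (or \eqref{eqn:ImzupperBoundLoc}) is bounded away from zero by a fixed multiple of $\tfrac{|\Im z|}{h}\tfrac{|\sigma(hV)|^2}{2Q}\cdot\delta$.
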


With these estimates in hand, for any $M>0$, let
\begin{equation}
\label{eqn:Ldef2}
\mc{L}'_{M}:=\{-2h^\e\leq \Xi\leq  (-\zeta_{M+1}+2\e)h^{2/3}\}.
\end{equation}
and let $\chi_2'=\chi_2'(\xi_1)\in S_{2/3}$ have $\chi_2\equiv 1$ on 
$$\{-h^\e\leq \xi_1\leq  (-\zeta_{M+1}+\e)h^{2/3}\}$$ and 
$\supp\chi_2\subset\mc{L}'_{M}.$
Then define
$$\psi'_{g}=\oph(\chi_2(\Xi))\oph(\chi_1)\psi$$ 
and
$\psi'_{ng} =\psi-\psi_{g}'.$
Then \eqref{eqn:ImzlowerBound} and \eqref{eqn:ImzupperBound} still hold with $\mc{L}$ replaced by $\mc{L}'_{M}$ and we have 

\begin{lemma}
\label{lem:glanceprime}
For all $\delta>0$ there exists $h_0>0$, $N,M>0$, $C>0$ such that for $0<h<h_0$ if $\pm\Im z\geq 0$ and
one of the following holds 
\begin{equation*}
\begin{gathered}\frac{-\Im z}{h}\leq \inf_{\mc{L}'_{M}}-\frac{h(\Im \sigma(N_2)+\sigma(\Im V)+ch^{1/3+\alpha})Q}{|\sigma(hV)|^2}(1\pm\delta)\\
\frac{-\Im z}{h}\geq \sup_{\mc{L}'_M}-\frac{h(\Im \sigma(N_2)+\sigma(\Im V)-ch^{1/3+\alpha})Q}{|\sigma(hV)|^2}(1\mp\delta)
\end{gathered}
\end{equation*}
then
\begin{equation*}
 \|\psi'_g\|_{L^2}\leq Ch^{-N}(\|v\|_{L^2}+\|\psi'_{ng}\|_{\Hh^m})+\O{}(h^\infty)\|\psi\|_{\Hh^{-N}}.
\end{equation*}
\end{lemma}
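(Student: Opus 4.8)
\textbf{Plan for the proof of Lemma \ref{lem:glanceprime}.} The statement is the analog of Lemma \ref{lem:glance} but with the single localization region $\mc{L}_\alpha$ replaced by the larger region $\mc{L}'_M$ of \eqref{eqn:Ldef2}, which collects the first $M$ Airy-zero bands together with the ``interior'' hyperbolic range $\Xi\leq -\zeta_{M+1}h^{2/3}$ up to $\Xi\sim -h^\e$. The first thing I would do is set up the decomposition $\psi=\psi'_g+\psi'_{ng}$ using the cutoff $\oph(\chi_2'(\Xi))\oph(\chi_1)$ as in the statement, noting that $\chi_2'$ is supported in $\mc{L}'_M$ and is $1$ on the slightly smaller set. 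Exactly as in the derivation preceding Lemma \ref{lem:glance}, I would write the flux identity \eqref{p1-e:greens} for the solution $u$ of \eqref{p1-e:main} corresponding to $\psi'_g$ via the boundary-layer representation \eqref{p1-e:greens2}, splitting $u=u_g+u_{ng}$ with $u_g=-\S V\psi'_g$ and $u_{ng}=\S(v-V\psi'_{ng})$, and invoke the $h^{5/6}$ bound from \cite[Theorem 1.1]{GalkSLO} (as in \eqref{eqn:est1} and the lines above it) to control the cross terms. This reduces everything to controlling the quadratic form $\la A\psi'_g,\psi'_g\ra$ with $A=\tfrac{2\Re z\Im z}{h}\mc{B}-\Im(hN_2+hV_2)$.

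Next I would establish the second-microlocal symbolic identity for $\mc{B}=V^*\S^*\S V$: by Section \ref{sec:Layer} (in particular \eqref{layer-e:singleLayer} and the symbol computation at the end of that section together with Lemma \ref{lem:approxInterp}), $\mc{B}\in h^{2+2\alpha}\Ph{}{1-\e/2}(|\xi'|_g=1)$ with $\sigma(\mc{B})=\tfrac{|\sigma(hV)|^2}{2Q}(\Psi_{\S}(\alpha_{0h})\composed\kappa^{-1})$, so that on $\mc{L}'_M$, where $\Xi\leq -\zeta_{M+1}h^{2/3}$ eventually becomes the deep hyperbolic regime, the factor $\Psi_{\S}(h^{-2/3}\Xi)$ is bounded and, crucially, $\Psi_{\S}(h^{-2/3}\Xi)\to 1$ as $\Xi/h^{2/3}\to-\infty$ by the Airy asymptotics recorded in Section \ref{sec:Friedlander}. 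Then the sharp G\aa rding inequality (Lemma \ref{semi-l:garding}) applied to the nonnegative part and the upper bound (Lemma \ref{lem:upperBound}) yield, exactly as in \eqref{eqn:ImzlowerBound}--\eqref{eqn:ImzupperBound}, the two-sided bound
\begin{gather*}
\inf_{\mc{L}'_M}\Big(\tfrac{2\Re z\Im z}{h}\tfrac{|\sigma(hV)|^2}{2Q}\Psi_{\S}(h^{-2/3}\Xi)(1+\O{}(h^{\e/2}))-h(\Im\sigma(N_2)+\sigma(\Im V))-ch^{1/3+\e/2}-ch^{4/3+\alpha}\Big)\|\psi'_g\|^2\leq\la A\psi'_g,\psi'_g\ra,\\
\la A\psi'_g,\psi'_g\ra\leq\sup_{\mc{L}'_M}\Big(\tfrac{2\Re z\Im z}{h}\tfrac{|\sigma(hV)|^2}{2Q}\Psi_{\S}(h^{-2/3}\Xi)(1+\O{}(h^{\e/2}))-h(\Im\sigma(N_2)+\sigma(\Im V))+ch^{1/3+\e/2}+ch^{4/3+\alpha}\Big)\|\psi'_g\|^2.
\end{gather*}
Since on $\mc{L}'_M$ we may, for $\e$ small and with the explicit symbol $\sigma(hN_2)=(2hQ)^{1/3}A_-'(h^{-2/3}\Xi)/A_-(h^{-2/3}\Xi)$, absorb the error $\Psi_{\S}-1$ into the $\O{}$ terms, the displayed hypotheses of the lemma — which say precisely that $-\Im z/h$ lies outside the interval cut out by $-h(\Im\sigma(N_2)+\sigma(\Im V)\pm ch^{1/3+\alpha})Q/|\sigma(hV)|^2$ over $\mc{L}'_M$ — force $|\la A\psi'_g,\psi'_g\ra|\geq c h^{\gamma_1}\|\psi'_g\|^2$ for a suitable $\gamma_1$. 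Feeding this into the flux estimate \eqref{eqn:est1}-type inequality and choosing the parameters $\delta_i$ appropriately then gives $\|\psi'_g\|_{L^2}\leq Ch^{-N}(\|v\|_{L^2}+\|\psi'_{ng}\|_{\Hh^m})+\O{}(h^\infty)\|\psi\|_{\Hh^{-N}}$, which is the claim.

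The main obstacle, as in the single-band case, is verifying that the two-sided G\aa rding bound is applicable across the whole of $\mc{L}'_M$ — that is, that $A/\|\psi'_g\|^2$ genuinely has a sign (or a definite lower bound on its absolute value) once $\Im z/h$ is excluded from the stated interval. This requires that $\mc{L}'_M$ be a region on which the second-microlocal calculus of Section \ref{sec:secondMicrolocal} is valid up to the scale $h^{2/3}$ near glancing (so that $\mc{B}$ and $\Im hN_2$ are genuine elements of $\Ph{}{1-\e/2}(|\xi'|_g=1)$ with the claimed symbols) \emph{and} that the lower-order remainders from Section \ref{sec:Layer} (the $\O{\Ph{}{1-\delta/2}}(h^{2+\delta/2-\e})$-type terms, and the Airy Taylor-expansion errors $Ai(W_d+h^{-2/3}\e(h)\rho_1)-Ai(W_d)$) are negligible compared to the gap $h^{\gamma_1}\|\psi'_g\|^2$ we are trying to exploit; in particular one must check $\gamma_1$ can be taken small enough relative to $\e,\alpha$ that these errors do not swamp the estimate. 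This is exactly the bookkeeping already carried out for Lemma \ref{lem:glance} and Lemma \ref{lem:glancej}, and the only new point is that $\Psi_{\S}$ is no longer pinned near a single $\zeta_j$ but ranges over an interval reaching into the asymptotic region $\Psi_{\S}\sim 1$; handling that is routine given the Airy asymptotics already stated.
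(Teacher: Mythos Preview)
Your approach is correct and matches the paper's: the lemma is proved by repeating verbatim the flux-identity/G\aa rding argument of Lemma~\ref{lem:glance}, replacing the localization region $\mc{L}_\alpha$ by $\mc{L}'_M$. The paper itself states only that ``\eqref{eqn:ImzlowerBound} and \eqref{eqn:ImzupperBound} still hold with $\mc{L}$ replaced by $\mc{L}'_M$'' and declares the lemma; your write-up fills in exactly those steps.

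One point to correct in your description: $\mc{L}'_M$ is a single interval, and it is \emph{not} the region that collects the first $M$ Airy-zero bands. Rather (see \eqref{eqn:Ldef2} and the way Lemmas~\ref{lem:glancej} and~\ref{lem:glanceprime} are combined in Theorem~\ref{thm:fullGenerality}), $\mc{L}'_M$ is the ``deep'' near-glancing interval lying \emph{beyond} the first $M$ bands, on which $h^{-2/3}\Xi$ is bounded above by roughly $\zeta_{M+1}$. This is precisely what lets you absorb $\Psi_{\S}-1$: since $\Psi_{\S}(s)\to 1$ as $s\to-\infty$, for any $\delta>0$ one chooses $M$ large enough that $|\Psi_{\S}(h^{-2/3}\Xi)-1|<\delta$ uniformly on $\mc{L}'_M$, which is the content of the quantifier ``for all $\delta>0$ there exists $M$'' in the statement. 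The first $M$ bands are handled separately by Lemma~\ref{lem:glancej}, not by this lemma. With that correction, your argument goes through as written.
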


So, combining Lemmas \ref{p1-l:ellipticEstimate}, \ref{lem:dynamicalRestriction},  \ref{lem:glanceSmallAlpha}, \ref{lem:ellipticGlance}, \ref{lem:glance}, \ref{lem:glancej}, and \ref{lem:glanceprime} gives
\begin{theorem}
\label{thm:fullGenerality}
Let $\psi $ be a solution to \eqref{p1-e:boundaryMain}. Fix $\delta>0$, $0<\e<1/2$, $\gamma_1<\min(\frac{1}{2}-\e,\frac{1}{6})$, $M_1, M_2>0$. Then there exists $h_0>0$ and $N>0$ such that for $0<h<h_0$ if
\begin{equation*}
-\frac{\Im z}{h}<\inf_{\mc{H}}-l_N^{-1}(r_N+ch^{\gamma_1})
\text{  or  }
-\frac{\Im z}{h}>\sup_{\mc{H}}-l_N^{-1}(r_N-ch^{\gamma_1}),
\end{equation*}
$\pm\Im z\geq 0$ and one of the following holds
\begin{equation} 
\label{eqn:cond12}
\begin{gathered}\frac{-\Im z}{h}\leq \inf_{\mc{L}_\alpha}-\frac{h(\Im \sigma(N_2)+\sigma(\Im V)+c(h^{1/3+\alpha}+h^{-1/3+\e/2}))Q}{|\sigma(hV)|^2\Psi_{\Sl}(h^{-2/3}\Xi)}(1\pm\delta)\\
\frac{-\Im z}{h}\geq \sup_{\mc{L}_\alpha}-\frac{h(\Im \sigma(N_2)+\sigma(\Im V)-c(h^{1/3+\alpha}+h^{-1/3+\e/2}))Q}{|\sigma(hV)|^2\Psi_{\Sl}(h^{-2/3}\Xi)}(1\mp \delta)
\end{gathered}
\end{equation}
then 
\begin{equation}
\label{eqn:bvEstimate}\|\psi\|_{L^2}\leq Ch^{-N}\|v\|_{L^2}.
\end{equation}
and $\mc{P}(z)$ is invertible.
Moreover, if $\alpha<-2/3$ then \eqref{eqn:cond12} can be replaced by 
\begin{equation}
\label{eqn:cond13} 
\begin{gathered}
\frac{-\Im z}{h}\leq \inf_{\mc{L}_\alpha}-\frac{h(\Im \sigma(N_2)+\sigma(\Im V)+c(h^{1/3+\alpha}+h^{-1/3+\e/2})Q}{|\sigma(hV)|^2}(1\pm \delta)\\
\frac{-\Im z}{h}\geq \sup_{\mc{L}_\alpha}-\frac{h(\Im \sigma(N_2)+\sigma(\Im V)-c(h^{1/3+\alpha}+h^{-1/3+\e/2}))Q}{|\sigma(hV)|^2}(1\mp \delta).
\end{gathered}
\end{equation}

Finally, if $\pm \Im z\geq ch^{M_1}$ and $\alpha<-2/3$, then \eqref{eqn:bvEstimate} holds and $\mc{P}(z)$ is invertible if 
\begin{equation*}
\begin{gathered}\frac{-\Im z}{h}\leq \inf_{\mc{L}'_{M_2}}-\frac{h(\Im \sigma(N_2)+\sigma(\Im V)+ch^{1/3+\alpha})Q}{|\sigma(hV)|^2}(1\pm\delta)\\
\frac{-\Im z}{h}\geq \sup_{\mc{L}'_{M_2}}-\frac{h(\Im \sigma(N_2)+\sigma(\Im V)-ch^{1/3+\alpha})Q}{|\sigma(hV)|^2}(1\mp\delta)
\end{gathered}
\end{equation*}
and one of the following holds for $1\leq j\leq M_2$
\begin{gather*} 
\frac{-\Im z}{h}\leq \inf_{\mc{L}_j}-\frac{h(h^{-2/3}(2Q)^{1/3}\Im \frac{A'_-(-\zeta_j)}{A_-(-\zeta_j)} +\sigma(\Im V)+ch^{1/3+\alpha})Q}{|\sigma(hV)|^2}(1\pm\delta)\\
\frac{-\Im z}{h}\geq \sup_{\mc{L}_j}-\frac{h(h^{-2/3}(2Q)^{1/3}\Im \frac{A'_-(-\zeta_j)}{A_-(-\zeta_j)} +\sigma(\Im V)-ch^{1/3+\alpha})Q}{|\sigma(hV)|^2}(1\mp\delta)
\end{gather*}
\end{theorem}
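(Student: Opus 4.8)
The plan is to assemble the estimate \eqref{eqn:bvEstimate} directly from the microlocal bounds already proved in the elliptic, hyperbolic, and glancing regions, being careful about the order in which the pieces are controlled. First I would fix a microlocal partition of unity on $T^*\pO$ subordinate to $\mc{E}=\{|\xi'|_g\ge 1+ch^\e\}$, $\mc{H}=\{|\xi'|_g\le 1-ch^\e\}$, and a neighborhood $\mc{G}$ of the glancing set $\{|\xi'|_g=1\}$, so that up to $\O{\Ph{-\infty}{}}(h^\infty)$ one has $\psi=\oph(\chi_{\mc{E}})\psi+\oph(\chi_{\mc{H}})\psi+\oph(\chi_{\mc{G}})\psi$. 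Lemma \ref{p1-l:ellipticEstimate} controls $\oph(\chi_{\mc{E}})\psi$, and -- because the hyperbolic hypothesis in the statement is precisely \eqref{eqn:prelimResFree} -- Lemma \ref{lem:dynamicalRestriction} controls $\oph(\chi_{\mc{H}})\psi$, both in terms of $\|v\|_{L^2}$ alone, up to a harmless $\O{}(h^\infty)\|\psi\|_{\Hh^{-N}}$ error.

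It remains to estimate $\oph(\chi_{\mc{G}})\psi$. When $\alpha>-2/3$ this is immediate from Lemma \ref{lem:glanceSmallAlpha}, since there $I+GV$ is an elliptic pseudodifferential operator near glancing. When $\alpha\le-2/3$ I would split, following the glancing analysis above, $\psi=\psi_{ng}+\psi_g$ with $\psi_{ng}=(1-\oph(\chi_2(\Xi))\oph(\chi_1))\psi$ and $\chi_2$ as in \eqref{eqn:chi2}, so that $\psi_{ng}$ collects the elliptic and hyperbolic contributions together with the part of $\mc{G}$ off the Airy bands $\mc{G}_j$; the last contribution is bounded by Lemma \ref{lem:ellipticGlance}, and combining with the previous paragraph gives $\|\psi_{ng}\|_{\Hh^m}\le Ch^{-N_0}\|v\|_{L^2}+\O{}(h^\infty)\|\psi\|_{\Hh^{-N}}$. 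Feeding this into Lemma \ref{lem:glance} -- whose hypotheses \eqref{eqn:cond1} are exactly the conditions \eqref{eqn:cond12}, resp.\ \eqref{eqn:cond13} when $\alpha<-2/3$, imposed in the statement -- yields $\|\psi_g\|_{L^2}\le Ch^{-N}\|v\|_{L^2}+\O{}(h^\infty)\|\psi\|_{\Hh^{-M}}$. Summing the three contributions and absorbing the $\O{}(h^\infty)\|\psi\|_{\Hh^{-N}}$ error into the left side for $h$ small proves \eqref{eqn:bvEstimate}.

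For the sharper conclusion when $\alpha<-2/3$ and $\pm\Im z\ge ch^{M_1}$, I would refine the glancing step: since $\|h(N_1+N_2)\|_{\Hh^1\to L^2}\le C/|\Im z|\le Ch^{-M_1}$ in this region, the band-localized part of $\psi$ decomposes into single-band pieces $\psi_j=L_j\oph(\chi)\psi$, microlocalized $\delta h^{2/3}$-close to $\mc{G}_j$, and a tail $\psi'_g=\oph(\chi'_2(\Xi))\oph(\chi_1)\psi$, each still solving $(I+GV)\psi_j=Gv_j$ with $\|v_j\|\le Ch^{-M}\|v\|+\O{}(h^\infty)\|\psi\|$ -- the key point being that $[GV,L_j]$ is supported off $\mc{L}_\alpha$, hence already controlled by Lemma \ref{lem:ellipticGlance}. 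Applying Lemma \ref{lem:glancej} to each $\psi_j$, $1\le j\le M_2$, and Lemma \ref{lem:glanceprime} to $\psi'_g$, under precisely the stated conditions, and summing over the finitely many bands together with the elliptic, hyperbolic, and off-band estimates, gives \eqref{eqn:bvEstimate} in this regime too. Invertibility of $\mc{P}(z)$ then follows at once: applying \eqref{eqn:bvEstimate} with $v=0$ forces $\psi=0$, so by Lemma \ref{lem:meromorphy} the meromorphic family $\mc{P}^{-1}$ has no pole at $z$, i.e.\ $\mc{P}(z)^{-1}$ exists as a bounded operator.

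The step I expect to be the main obstacle -- though it is bookkeeping rather than a new idea -- is making the bootstrap close: the glancing estimates (Lemmas \ref{lem:glance}, \ref{lem:glancej}, \ref{lem:glanceprime}) bound the band-localized part of $\psi$ only in terms of $\|v\|$ and $\|\psi_{ng}\|_{\Hh^m}$, so one must first establish the $\Hh^m$-bound on $\psi_{ng}$ -- which in turn requires knowing $\psi_{ng}$ is microlocalized exactly where $I+GV$ is (second-microlocally) elliptic of a fixed order -- before absorbing the remaining $\O{}(h^\infty)\|\psi\|$ term. Keeping the Sobolev indices consistent across the three regions (where $\Hh^m$ is the relevant norm in $\mc{E}$ while $L^2$ and $\Hh^m$ agree microlocally near glancing and on $\mc{H}$) and checking that the conditions listed in the theorem match verbatim the hypotheses of each invoked lemma is the bulk of the work.
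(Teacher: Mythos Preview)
Your proposal is correct and follows exactly the paper's approach: the theorem is proved by combining Lemmas \ref{p1-l:ellipticEstimate}, \ref{lem:dynamicalRestriction}, \ref{lem:glanceSmallAlpha}, \ref{lem:ellipticGlance}, \ref{lem:glance}, \ref{lem:glancej}, and \ref{lem:glanceprime} precisely as you describe, with invertibility of $\mc{P}(z)$ following from Lemma \ref{lem:meromorphy}. Your write-up in fact spells out the bookkeeping (the order in which $\psi_{ng}$ must be controlled before feeding into the glancing lemmas, and the absorption of the $\O{}(h^\infty)\|\psi\|$ tail) more explicitly than the paper does.
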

\noindent In particular, this implies Theorem \ref{thm:mainGeneral}.

 \section{Application to transparent obstacles}
 \label{sec:appTrans}
 In the case of transparent obstacles, we want to consider \eqref{intro-e:transparent}, repeated here for the reader's convenience,
 \begin{equation*}
 \begin{cases}
(-c^2\Delta -\lambda^2)u_1=0&\text{ in }\Omega\\
(-\Delta -\lambda^2) u_2=0&\text{ in }\re^d\setminus\overline{\Omega}\\
u_1=u_2&\text{ on }\pO\\
\partial_\nu u_1-\aleph\partial_\nu u_2=0&\text{ on } \pO\\
u_2\text{ is $\lambda$-outgoing}
\end{cases}
\end{equation*}
Thus, writing $\lambda=cz/h$, in the language of \eqref{p1-e:main},
 $$B=hN_2(z/h)+\aleph h\No(c z/h)-hN_2(z/h)$$
 where $\No$ is the outgoing Dirichlet to Neumann map for the exterior problem (see section \ref{sec:mainThm})
 
 Thus,
 $V=\aleph\No(cz/h)-N_2(z/h)$ has 
 $$V\in h^{-2/3}(\Ph{1/2,1}{2/3}(|\xi'|_g=c)\cup \Ph{1/2,1}{2/3}(|\xi'|_g=1))\subset h^{-1}(\Ph{0,1}{2/3}(|\xi'|_g=c)\cup \Ph{0,1}{2/3}(|\xi'|_g=1)).$$

 In order to fit the transparent obstacle problem into the framework of Theorem \ref{thm:fullGenerality} with $\alpha=-1$, we only need to check that $V$ is elliptic near $|\xi'|_g=1$ and that 
 $1+\frac{h\sigma(V)}{2\sqrt{|\xi'|_g^2-1}}$
  has the required properties. We start by calculating the symbols of $\mc{B}$, $B$, and $V$. Let $\Xi^E$ be the function given by Lemma \ref{lem:approxInterp} when we replace 1 by $E$ in the eikonal equation for $\rho_0$ and $\theta_0$. 
  \begin{gather*}
  g_E(x,\xi'):=(2Q(x,\xi'))^{1/3}\frac{A_-'(h^{-2/3}\Xi^E)}{A_-(h^{-2/3}\Xi^E)}
 \end{gather*}
 Then,
 \begin{gather*}\sigma(B)= \sigma(h\aleph \No(cz/h))=\begin{cases} -i\aleph \sqrt{c^2-|\xi'|_g^2}&|\xi'|_g\leq c-h^\e\\
 \aleph h^{1/3}g_{c}(x,\xi')&||\xi'|_g-c|\leq h^\e\\
 \aleph \sqrt{|\xi'|_g^2-c^2}&|\xi'|_g\geq c+h^\e
 \end{cases}
 \end{gather*}
 \begin{gather*}
 \sigma(hV)=\begin{cases} i(\sqrt{1-|\xi'|_g^2}-\aleph \sqrt{c^2-|\xi'|_g^2})&|\xi'|_g\leq \min(1,c)-h^\e\\
 i\sqrt{1-|\xi'|_g^2}+\aleph \sqrt{|\xi'|^2_g-c^2}& c+h^\e\leq |\xi'|_g\leq 1-h^\e\\
 -i\aleph\sqrt{c^2-|\xi'|_g^2}-\sqrt{|\xi'|^2_g-1}& 1+h^\e\leq |\xi'|_g\leq c-h^\e\\
 \aleph \sqrt{|\xi'|_g^2-c^2}-\sqrt{|\xi'|_g^2-1}&|\xi'|_g\geq \max(1,c)+h^\e\\
 h^{1/3}\aleph g_{c}+i\sqrt{1-|\xi'|_g^2}&||\xi'|_g-c|\leq h^\e,\,|\xi'|_g\leq 1-h^\e\\
 h^{1/3}\aleph g_{c}-\sqrt{|\xi'|_g^2-1}&||\xi'|_g-c|\leq h^\e,\,|\xi'|_g\geq 1+h^\e\\
-i\aleph\sqrt{c^2-|\xi'|_g^2}- h^{1/3} g_{1}&||\xi'|_g-1|\leq h^\e,\,|\xi'|_g\leq c-h^\e\\
\aleph\sqrt{|\xi'|_g^2-c^2} - h^{1/3} g_{1}&||\xi'|_g-1|\leq h^\e,\,|\xi'|_g\geq c+h^\e\\
 \end{cases}\\
 {}\\
 \sigma(\mc{B})=\frac{\aleph^2|c^2-|\xi'|_g^2|}{2Q}\Psi_{\S}(h^{-2/3}\Xi)(1+\o{}(1)),\quad\quad ||\xi'|_g-1|\leq h^\e
 \end{gather*}
 
 Now, we compute 
  $$1+\frac{h\sigma(V)}{2\sqrt{|\xi'|_g^2-1}}=\begin{cases} \frac{1}{2}+\frac{\aleph}{2}\frac{\sqrt{c^2-|\xi'|_g^2}}{\sqrt{1-|\xi'|_g^2}}&|\xi'|_g\leq \min(1,c)-h^\e\\
\frac{1}{2}+i\frac{\aleph}{2}\frac{\sqrt{|\xi'|_g^2-c^2}}{\sqrt{1-|\xi'|_g^2}}&c+h^\e\leq |\xi'|_g\leq 1-h^\e\\
\frac{1}{2}-i\frac{\aleph}{2}\frac{\sqrt{c^2-|\xi'|_g^2}}{\sqrt{|\xi'|_g^2-1}}&1+Mh^{2/3}\leq |\xi'|_g\leq c-h^\e\\
\frac{1}{2}+\frac{\aleph}{2}\frac{\sqrt{|\xi'|_g^2-c^2}}{\sqrt{|\xi'|_g^2-1}}&\max(c+h^\e,1+Mh^{2/3})\leq |\xi'|_g\\
\frac{1}{2}+i\frac{\aleph}{2}\frac{h^{1/3}g_c}{\sqrt{1-|\xi'|_g^2}}&|c-|\xi'|_g|\leq h^\e,\, |\xi'|_g\leq 1-h^\e\\
\frac{1}{2}+\frac{\aleph}{2}\frac{h^{1/3}g_c}{\sqrt{|\xi'|_g^2-1}}&|c-|\xi'|_g|\leq h^\e,\, |\xi'|_g\geq 1+Mh^{2/3}
\end{cases}$$
Thus, we can see that $V$ is elliptic near $|\xi'|_g=1$ and the transparent obstacle problem fits into the framework of Theorem \ref{thm:fullGenerality}.

In order to finish the proof of Theorem \ref{thm:mainTransparent}, we just need to check a few symbolic properties. First, notice $V=\aleph \No (cz/h)- N_2(z/h)$. Thus,
 $$\sigma(N_2(z/h)+V)=\aleph\sigma(N_2(cz/h))=-ih^{-1}\aleph \sqrt{c^2-|\xi'|_g^2}$$ 
 where we take $\sqrt{-1}=i$. Putting this in \eqref{eqn:cond1} gives that \eqref{eqn:est1} holds when $c>1$ and
\begin{equation*}
\begin{gathered}
\frac{-\Im z}{h}\leq \inf_{|\xi'(q)|_g=1}-\frac{Q}{\aleph\sqrt{c^2-1}}(1\pm \delta)\quad\text{ or }\quad
\frac{-\Im z}{h}\geq \sup_{|\xi'(q)|_g=1}-\frac{Q}{\aleph\sqrt{c^2-1}}(1\mp \delta).
\end{gathered}
\end{equation*}
or when $c<1$ and
\begin{equation*}
\begin{gathered}
\frac{-\Im z}{h}\geq  \delta.
\end{gathered}
\end{equation*}

Next, observe that
$$\sigma(R)=\begin{cases}
\frac{-\sqrt{1-|\xi'|_g^2}+\aleph\sqrt{c^2-|\xi'|_g^2}}{\sqrt{1-|\xi'|_g^2}+\aleph\sqrt{c^2-|\xi'|_g^2}}&|\xi'|_g\leq \min(1,c)-h^\e\\
\frac{i\aleph h^{1/3}g_c-\sqrt{1-|\xi'|^2_g}}{\sqrt{1-|\xi'|_g^2}+i\aleph h^{1/3}g_c}&|c-|\xi'|_g|\leq h^\e\,,\,|\xi'|_g\leq 1-h^\e\\
\frac{-\sqrt{1-|\xi'|_g^2}+i\aleph\sqrt{|\xi'|_g^2-c^2}}{\sqrt{1-|\xi'|_g^2}+i\aleph\sqrt{|\xi'|_g^2-c^2}}&c+h^\e\leq |\xi'|_g\leq 1-h^\e
\end{cases}.
$$
The following geometric lemma completes the proof of Theorem \ref{thm:mainTransparent}.
\begin{lemma}
\label{lem:transparentToGlance}
Fix $N>0$ and let $(x_0,\xi_0)\in S^*\pO$ and suppose that $\{(x_n,\xi_n)\}\subset B^*\pO$ has $(x_n,\xi_n)\to (x_0,\xi_0)$. Then
$$l_N^{-1}r_N\to \begin{cases}\frac{Q(x_0,\xi_0)}{\aleph \sqrt{c^2-1}}&c>1\\
0&c<1
\end{cases}.$$
\end{lemma}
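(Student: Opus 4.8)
The plan is to compute the limit of $l_N^{-1}r_N$ by reducing, via Lemma \ref{lem:dynStrictlyConvex}, everything to the first step of the billiard ball map near glancing and then evaluating the asymptotics of $\log|\sigma(R)|^2$ as $|\xi'|_g \to 1$. First I would recall from \eqref{def:averageReflection} that for $N$ fixed (independent of $h$)
\begin{equation*}
r_N(q) = \frac{1}{2N}\sum_{n=1}^N \log\bigl|\sigma(R)\composed\beta^n(q)\bigr|^2 + \O{}(h^{I_R(q)+1-2\e}),
\end{equation*}
so that the error term is negligible in the limit $q\to (x_0,\xi_0)$ (where we first take $h\to 0$, then $(x_n,\xi_n)\to(x_0,\xi_0)$, as in the statement of Theorem \ref{thm:mainTransparent}); and that $l_N(q) = \frac1N\sum_{k=1}^N l(\beta^{k-1}(q),\beta^k(q))$. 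By Lemma \ref{lem:dynStrictlyConvex}, for $q$ close to $S^*\pO$ each $\beta^k(q)$ is again close to $S^*\pO$ with $\sqrt{1-|\xi'(\beta^k(q))|_g^2} = \sqrt{1-|\xi'(q)|_g^2}(1+\O{}(\sqrt{1-|\xi'(q)|_g^2}))$, and $l(\beta^{k-1}(q),\beta^k(q)) = \frac{2}{\kappa(x_k)}\sqrt{1-|\xi'|_g^2} + \O{}(1-|\xi'|_g^2)$. Since $\kappa \to \kappa(x_0)$ and $Q(x_0,\xi_0) = \tfrac{1}{2}\kappa(x_0)$ along the glancing direction (the second fundamental form restricted to a unit tangent covector is the curvature), the factors of $\sqrt{1-|\xi'|_g^2}$ will cancel between numerator and denominator, leaving a finite limit expressed through $Q(x_0,\xi_0)$.

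Next I would split into the two cases according to $c$. When $c>1$: near glancing $|\xi'|_g\to 1 < c$, so we are in the regime $|\xi'|_g \le \min(1,c) - h^\e$ eventually, and $\sigma(R) = \frac{-\sqrt{1-|\xi'|_g^2}+\alpha\sqrt{c^2-|\xi'|_g^2}}{\sqrt{1-|\xi'|_g^2}+\alpha\sqrt{c^2-|\xi'|_g^2}}$. Writing $t = \sqrt{1-|\xi'|_g^2}\to 0$ and $a = \alpha\sqrt{c^2-|\xi'|_g^2}\to \alpha\sqrt{c^2-1}$, we have $\sigma(R) = \frac{a-t}{a+t} = 1 - \frac{2t}{a} + \O{}(t^2)$, hence $\log|\sigma(R)|^2 = -\frac{4t}{a} + \O{}(t^2)$. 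Then at each of the $N$ points $\beta^n(q)$,
\begin{equation*}
\frac{\log|\sigma(R)\composed\beta^n(q)|^2}{2\, l(\beta^{n-1}(q),\beta^n(q))} = \frac{-4\sqrt{1-|\xi'|_g^2}/\bigl(\alpha\sqrt{c^2-1}\bigr) + \O{}(1-|\xi'|_g^2)}{2\cdot\bigl(2/\kappa(x_n)\bigr)\sqrt{1-|\xi'|_g^2} + \O{}(1-|\xi'|_g^2)} \longrightarrow \frac{-\kappa(x_0)}{\alpha\sqrt{c^2-1}} = \frac{-2Q(x_0,\xi_0)}{\alpha\sqrt{c^2-1}}.
\end{equation*}
Here I would be slightly careful: $l_N^{-1} r_N$ is a ratio of two averages, not an average of ratios, but since every term $l(\beta^{n-1},\beta^n)$ and every $-\log|\sigma(R)\composed\beta^n|^2$ is, to leading order, a fixed positive multiple of the common small quantity $\sqrt{1-|\xi'|_g^2}$, both $N l_N$ and $2N r_N$ are $\sqrt{1-|\xi'|_g^2}$ times quantities converging to $\sum_n 2/\kappa(x_n)\to 2N/\kappa(x_0)$ and $\sum_n (-2)/(\alpha\sqrt{c^2-1})\cdot(\kappa(x_0)/\kappa(x_n))$ respectively — the clean way is to note $x_n\to x_0$ for all $n\le N$, so both sums are $N$ copies of the same limiting term up to $o(1)$, and the ratio converges to $\frac{-2/(\alpha\sqrt{c^2-1})}{2/\kappa(x_0)}\cdot\kappa(x_0) = \frac{Q(x_0,\xi_0)}{\alpha\sqrt{c^2-1}}$ after accounting for the sign convention in $r_N$ (note $r_N = \frac{\Im z}{h}l_N + \frac1{2N}\log\tilde\sigma(\cdots)$, and in the $h\to0$, fixed-$N$ regime the first term drops and the second is $\frac1{2N}\sum\log|\sigma(R)\composed\beta^n|^2$, giving the stated positive value since $\log|\sigma(R)|^2 <0$ and we divide by $2l_N>0$ — I will track the signs explicitly).

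When $c<1$: near glancing $|\xi'|_g\to 1 > c$, so eventually we are in the range $c + h^\e \le |\xi'|_g \le 1 - h^\e$, where $\sigma(R) = \frac{-\sqrt{1-|\xi'|_g^2}+i\alpha\sqrt{|\xi'|_g^2-c^2}}{\sqrt{1-|\xi'|_g^2}+i\alpha\sqrt{|\xi'|_g^2-c^2}}$. Writing $t = \sqrt{1-|\xi'|_g^2}\to 0$ and $b = \alpha\sqrt{|\xi'|_g^2-c^2}\to \alpha\sqrt{1-c^2}\neq 0$, we get $\sigma(R) = \frac{-t+ib}{t+ib}$, so $|\sigma(R)|^2 = \frac{t^2+b^2}{t^2+b^2} = 1$ exactly, hence $\log|\sigma(R)|^2 = 0$ identically (this is total internal reflection: the reflectivity has modulus one). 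Therefore $r_N(q) \to 0$, while $l_N(q) \to 0$ but bounded away from $0$ after dividing — more precisely $l_N^{-1}r_N$: here I must be careful since both numerator and denominator vanish. The resolution is that $\log|\sigma(R)|^2 = 0$ holds for all $q$ with $c+h^\e \le |\xi'|_g\le 1-h^\e$, not just in the limit, so $r_N \equiv 0$ (up to the $\O{}(h^{I_R+1-2\e})$ error, which for the transparent obstacle with $c<1$ we check is lower order than $l_N$), while $l_N > c'\sqrt{1-|\xi'|_g^2} > 0$; hence $l_N^{-1} r_N \to 0$. The main obstacle, as usual in these limits, is bookkeeping: correctly identifying which branch of the piecewise formula for $\sigma(R)$ and for $l(q,\beta(q))$ applies as $q\to S^*\pO$, and handling the $0/0$ form in the $c<1$ case by exploiting that the numerator is identically zero rather than merely small. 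I expect the only genuinely delicate point to be verifying that the error term $\O{}(h^{I_R(q)+1-2\e})$ in \eqref{def:averageReflection} does not interfere — for this I would use that $R$ is elliptic near glancing (by the ellipticity assumptions on $V$, carried over to the transparent obstacle in Section \ref{sec:appTrans}) so $I_R(q) = 0$ near $S^*\pO$, making the error $\O{}(h^{1-2\e})$, genuinely negligible compared with the $\sqrt{1-|\xi'|_g^2}$-sized main terms once $h$ is taken to $0$ first.
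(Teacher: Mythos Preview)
Your approach is essentially identical to the paper's: both expand $|\sigma(R)|^2$ near glancing, invoke Lemma~\ref{lem:dynStrictlyConvex} for the chord length, and observe that for $c<1$ the reflection coefficient has modulus exactly~$1$ in the totally internally reflected regime.

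There is, however, a genuine slip. You write $Q(x_0,\xi_0)=\tfrac12\kappa(x_0)$, but your own parenthetical justification (``the second fundamental form restricted to a unit tangent covector is the curvature'') says $Q=\kappa$, and that is the correct statement: for a geodesic $\gamma$ on $\partial\Omega$, its curvature in the ambient Euclidean space is the normal curvature, which equals $Q(\gamma,\gamma')$ when $|\gamma'|=1$. The paper makes this point explicitly at the end of its proof, noting that the geodesic through $x$ in direction $\xi'/|\xi'|_g$ has curvature $Q(x,\xi')$ and that the chord $\gamma$ in Lemma~\ref{lem:dynStrictlyConvex} converges to that geodesic as $|\xi'|_g\to 1$. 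With the correct identification $\kappa(0)\to Q(x_0,\xi_0)$, your computation
\[
\frac{-4t/(\alpha\sqrt{c^2-1})}{2\cdot(2/\kappa)\,t}=-\frac{\kappa}{\alpha\sqrt{c^2-1}}
\]
already matches the paper's answer (up to the sign convention in the statement); your extra factor of $2$ is spurious.

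A smaller point: you spend effort worrying about the $\O{}(h^{I_R(q)+1-2\e})$ error from~\eqref{def:averageReflection}. For Lemma~\ref{lem:transparentToGlance} as stated, $r_N$ is the purely classical quantity from~\eqref{eqn:definitionsTransmision}, namely $r_N(q)=\frac1N\sum\log|r(\beta^j(q))|^2$ with $r$ the explicit reflection coefficient---no $h$ appears, and the paper's proof never mentions it. Your concern would be relevant when passing from Theorem~\ref{thm:mainGeneral} to Theorem~\ref{thm:mainTransparent}, but not inside the lemma itself.
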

\begin{proof}
The conclusion for $c<1$ is clear since for $|\xi'|_g>c$, $\log |\sigma(R)|^2=0$. So, we need only consider the case $c>1$. 
First, write 
$$|\sigma(R)|^2(x,\xi')=1-\frac{4\sqrt{1-|\xi'|_g^2}}{\sqrt{1-|\xi'|^2_g}+\aleph\sqrt{c^2-|\xi'|^2}}+\O{}(1-|\xi'|^2_g).$$
So, 
\begin{equation}
\label{eqn:logr}\log |\sigma(R)|^2(x,\xi')=-\frac{4\sqrt{1-|\xi'|_g^2}}{\sqrt{1-|\xi'|^2_g}+\aleph\sqrt{c^2-|\xi'|_g^2}}+\O{}(1-|\xi'|^2_g).
\end{equation}
Now, by Lemma \ref{lem:dynStrictlyConvex} 
\begin{gather*} 
\sqrt{1-|\xi'(\beta(q))|_g^2}=\sqrt{1-|\xi'(q)|_g^2}+\O{}(1-|\xi'|_g^2),\quad\quad l(q,\beta(q))=\frac{2}{\kappa(0)}\sqrt{1-|\xi'|_g^2}+\O{}(1-|\xi'|_g^2)
\end{gather*}
where $\kappa(s)$ is the curvature of the unique length minimizing geodesic, $\gamma$ in $\pO$ connecting $\pi_x(q)$ and $\pi_x(\beta(q))$ at the point $\gamma(s)$.
Thus, we have that for $q$ sufficiently close to glancing,
\begin{align*} 
\frac{\log |\sigma(R)(\beta(q))|^2}{2l(q,\beta(q))}
&=-\frac{\kappa(0)}{\aleph\sqrt{c^2-|\xi'|^2_g}}+\O{}(\sqrt{1-|\xi'|^2_g}).
\end{align*}
Moreover, since $\sqrt{1-|\xi'(\beta(q))|_g^2}=\sqrt{1-|\xi'|_g^2}+\O{}(1-|\xi'|_g^2)$ and 
$\kappa(s)=\kappa(0)+\O{}(s)=\kappa(0)+\O{}(\sqrt{1-|\xi'|_g^2})$, 
we have
$$\frac{r_N}{l_N}=-\frac{\kappa(0)}{\aleph\sqrt{c^2-|\xi'|^2_g}}+\O{}(1-|\xi'|_g^2).$$

All that remains to prove is that $\kappa(0)=Q(x,\xi')+\o{}(1)$ as $|\xi'|_g\to 1$. This follows from the fact that the curvature of the geodesic on $\pO$ passing through $x$ in the direction $\xi'$ is $Q(x,\xi')$ together with the fact that 
$$\gamma'(0)-\frac{\xi'}{|\xi'|_g}=\o{}(1).$$
To see this we simply use the fact that a billiards trajectory approaches a geodesic as $|\xi'|_g\to 1$ (see for example \cite{Pet}).
\end{proof}
Together, this discussion proves Theorem \ref{thm:mainTransparent}.

 \section{Application to $\delta$ potentials}
 \label{sec:appDelta}
For the application to $\delta$ potentials, we consider 
$$(-h^2\Delta +h^2V\otimes \delta_{\pO}-z^2)u=0,\quad\quad u\text{ is $z/h$ outgoing}.$$
It is shown in \cite{GS} that this is equivalent to $u=u_1\oplus u_2$ where $u_1=u1_{\Omega}$ and $u_2=u1_{\re^d\setminus\overline{\Omega}}$ solving
\begin{equation}
\label{eqn:deltaTransmit}
\begin{cases}
(-h^2\Delta-z^2)u=0&\text{in }\re^d\setminus\pO\\
u_1-u_2=0&\text{on }\pO\\
h\partial_\nu u_1-h\partial_\nu u_2+hVu_1=0&\text{on }\pO\\
u_2\text{ is $z/h$-outgoing}
\end{cases}
\end{equation}
In this case, $V=V$ (indeed this is the motivation for our notation). For our purposes, we will assume that $V\in h^{\alpha}\Ph{1}{}$ is self adjoint and hence, $\Im V=0$. Moreover, we assume that $\alpha\geq -1$ and $\sigma(V)\geq ch^\alpha$ on $|\xi'|_g=1$ and for any $\delta>0$, there exists $c>0$ so that $\frac{h\sigma(V)}{2\sqrt{|\xi'|^2-1}}> -1+c$ on $|\xi'|_g\geq 1+\delta$. This clearly implies all of the assumptions \eqref{resFree-e:ellipticAssumption}. Theorem \ref{thm:fullGenerality} then yields Theorem \ref{thm:deltaMain} as a Corollary.

\section{Application to boundary stabilization}
\label{sec:appStable}
The application to the boundary stabilization problem \eqref{eqn:stationaryBoundStable} is similar to that for the transmission problem. In particular, note that 
$$1+\frac{i\sigma(hV)}{2\sqrt{1-|\xi'|_g^2}}=-\frac{1}{2}+\frac{a}{2\sqrt{1-|\xi'|_g^2}}$$
and the fact that $a\geq a_0>0$ implies the ellipticity of $V$. Finally, an argument identical to that in Lemma \ref{lem:transparentToGlance} together with Theorem \ref{thm:fullGenerality} gives Theorem \ref{thm:boundaryStable}.

\section{Optimality for the transparent obstacle problem on the circle}
\label{sec:optimalTrans}
For the optimality of Theorem \ref{thm:deltaMain}, see \cite{GalkCircle}. We now show that Theorem \ref{thm:mainTransparent} is optimal in the case of the unit disk in $\re^2$. 
In this case, \eqref{intro-e:transparent} reads
$$\begin{cases}
(-c^2\Delta-\lambda^2)u_1=0&\text{in }B(0,1)\\
(-\Delta-\lambda^2)u_2=0&\text{in }\re^d\setminus\overline{B(0,1)}\\
u_1=u_2&\text{on }|x|=1\\
\partial_ru_1-\aleph\partial_ru_2=0&\text{on }|x|=1\\
u_2\text{ is $\lambda-$outgoing}
\end{cases}.$$
We now expand $u_i$ in Fourier series, writing 
$$u_i(r,\theta)=\sum_{n}u_{i,n}(r)e^{in\theta}.$$
Then, 
\begin{align*} (-c^2\partial_r^2-\frac{c^2}{r}\partial_r+\frac{c^2n^2}{r}-\lambda^2)u_{1,n}(r)&=0&(-\partial_r^2-\frac{1}{r}\partial_r+\frac{n^2}{r}-\lambda^2)u_{2,n}(r)&=0
\end{align*}
Multiplying by $r^2$ and rescaling by $x_1=\lambda c^{-1}r$ for $u_{1,n}$ and $x_2=\lambda r$ for $u_2$, we see that $u_{i,n}(x_i)$ solves Bessel's equation. Together with the outgoing condition for $u_2$ and the fact that $u_1$ is in $L^2$, this implies that 
$$u_{1,n}=K_nJ_n(\lambda c^{-1}r),\quad\quad u_{2,n}=C_nH_n^{(1)}(\lambda r).$$ 
Then, the boundary conditions imply that either $K_n=C_n=0$ or $C_n\neq 0$ and 
$$\frac{K_n}{C_n}=\frac{H_n^{(1)}(\lambda)}{J_n(c^{-1}\lambda)},\quad \quad K_nc^{-1}\lambda J_n'(\lambda c^{-1})-C_n\aleph \lambda {H_n^{(1)}}'(\lambda)=0.$$ 
Rewriting this (and assuming $\lambda\neq 0$) we have 
\begin{equation}\label{eqn:boundaryCircle}
f(\lambda):=c^{-1}J_n'(c^{-1}\lambda)H_n^{(1)}(\lambda)-\aleph {H_n^{(1)}}'(\lambda)J_n(c^{-1}\lambda)=0
\end{equation}

Throughout this section we will refer to microlocalization of the Fourier modes $e^{in\theta}$. 
Notice that for a Fourier mode $u_n=(u_{1,n}(r)\oplus u_{2,n})e^{in\theta}$, the component of the frequency tangent to $\partial B(0,1)$ is given by $n$ and the rest of the osciallitions are normal to the boundary. Naively taking the Fourier transform, we see that if $(-\Delta-\lambda^2)u=0$, then the Fourier support of $u$ is contained in $|\xi|^2=\lambda^2$. Therefore, since $|\Im \lambda |\ll |\Re \lambda|$ the total frequency of the mode is given by $|\Re \lambda|$ and the fraction of frequency tangent to the boundary is given by $n/\Re \lambda$. This can be reinterpreted in terms of the semiclassical wavefront set (with $\Re \lambda= h^{-1}$) of the mode as saying that 
$$\WFh(u_n|_{\pO})\subset\{|\xi'|_g=hn\}.$$
For this reason, we refer to modes with $n\ll |\Re \lambda|$ as normal to the boundary, those with $\e|\Re \lambda|<n<(c^{-1}-\e)|\Re \lambda|$ transverse, and $(c^{-1}-\e)|\Re \lambda|<n$ glancing.
\subsection{Asymptotics of Bessel and Hankel functions}
We collect here some properties of the Airy and Bessel functions that are used in the analysis for the unit disk. These formulae can be found in, for example \cite[Chapter 9,10]{NIST}.

Recall that the Bessel of order $n$ functions are solutions to 
$$z^2y''+zy'+(z^2-n^2)y=0.$$
We consider the two independent solutions $H_n^{(1)}(z)$ and $J_n(z)$.  

\begin{figure}
\centering
\begin{subfigure}[b]{.45\textwidth}
\includegraphics[width=\textwidth]{cin-2alpha-1.eps}
\caption{$\aleph=1$ $c=2$}
\end{subfigure}
\begin{subfigure}[b]{.45\textwidth}
\includegraphics[width=\textwidth]{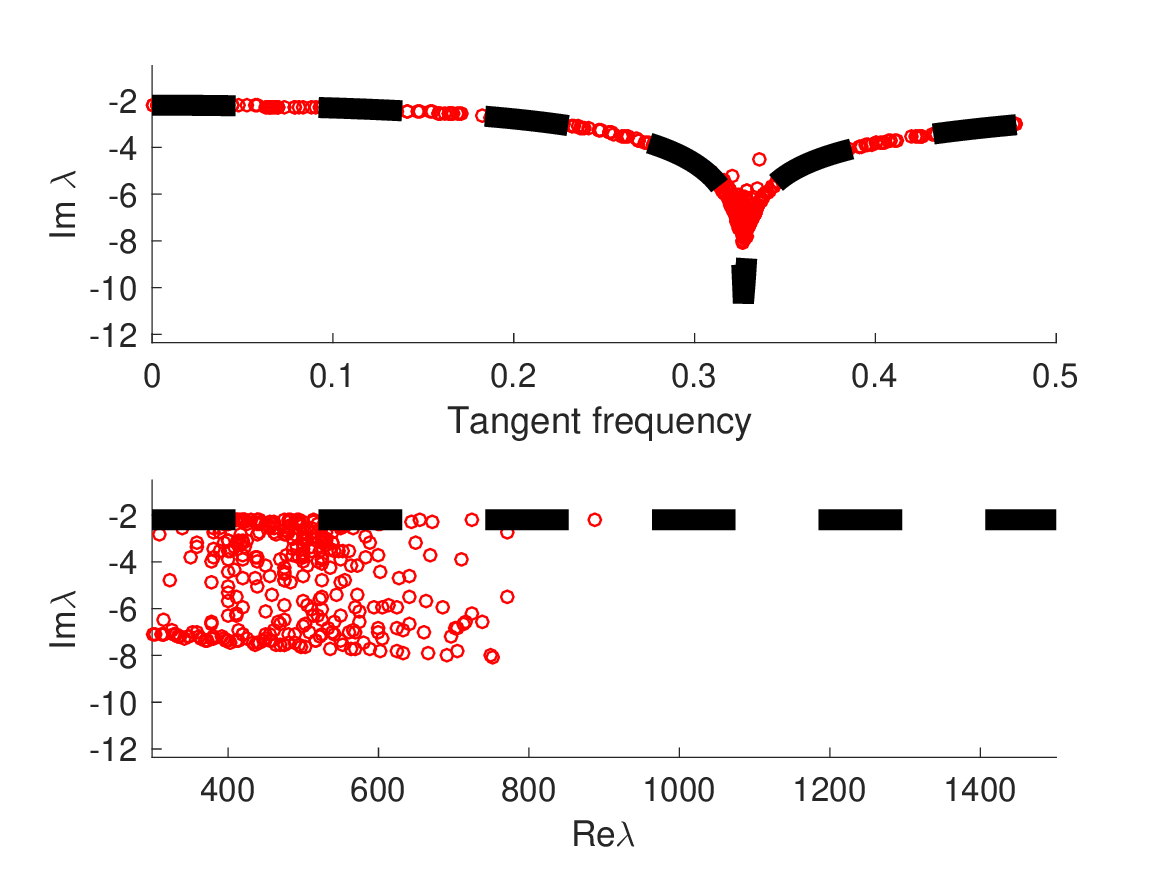}
\caption{$\aleph=0.4$ $c=2$}
\end{subfigure}
\begin{subfigure}[b]{.45\textwidth}
\includegraphics[width=\textwidth]{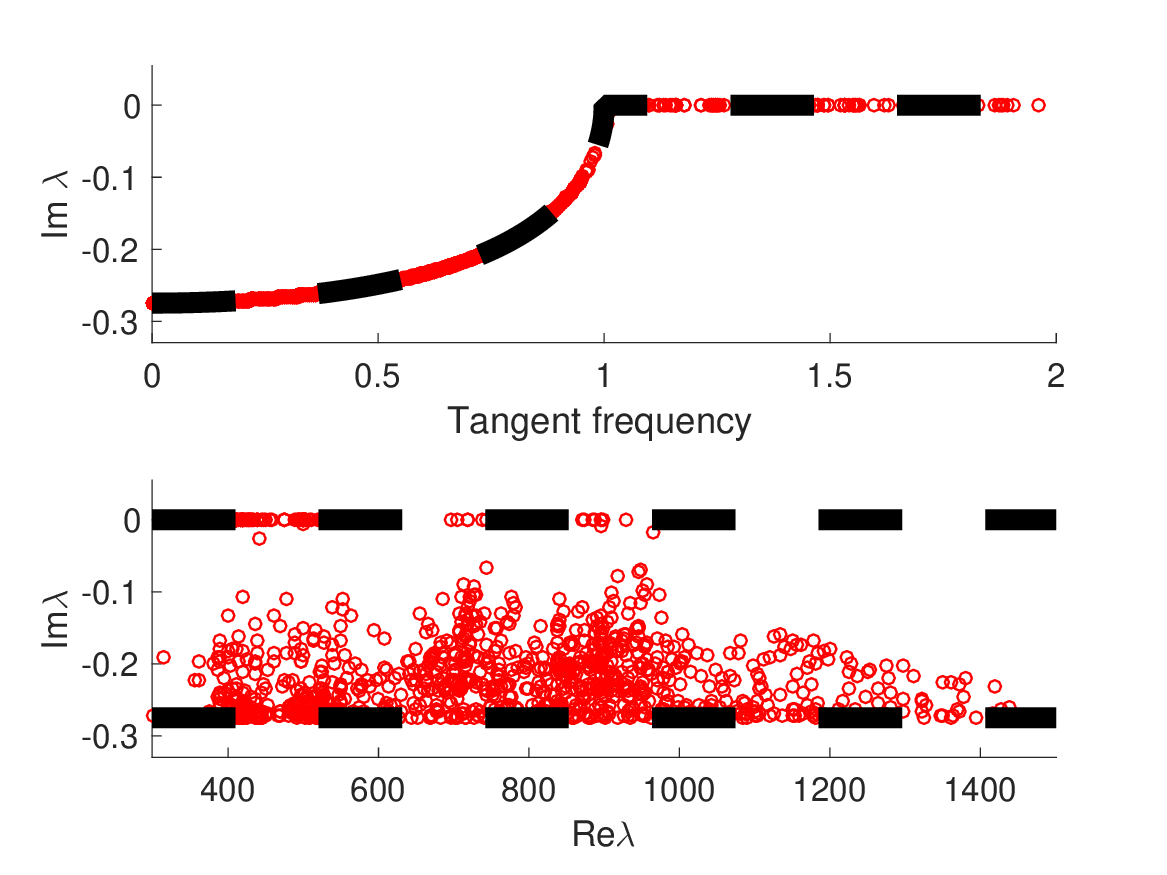}
\caption{$\aleph=1$ $c=0.5$}
\end{subfigure}
\begin{subfigure}[b]{.45\textwidth}
\includegraphics[width=\textwidth]{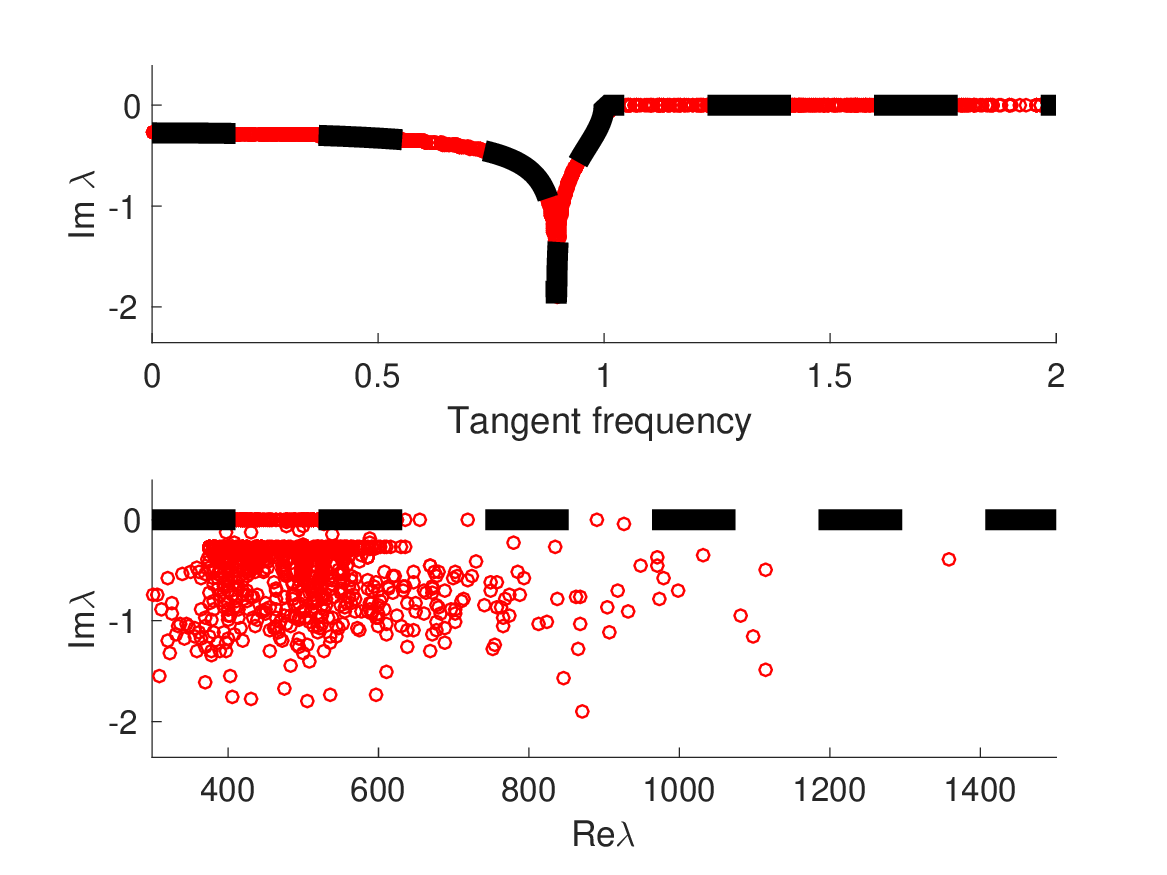}
\caption{$\aleph=4$ $c=0.5$}
\end{subfigure}
\caption{We show numerically computed resonances for the transparent obstacle problem with various $c$ and $\aleph$ when $\Omega=B(0,1)\subset \re^2$. In this case, we expand the solutions to \eqref{intro-e:transparent} as $u_i(r,\theta)=\sum_{n}u_{i,n}(r)e^{in\theta}$ and solve for some of the resonances with $\Re \lambda \sim 800$. In the lower graphs of each of the four subfigures, the blue circles show $\Im \lambda$ vs. $\Re \lambda$. The red lines show the upper and lower bounds for $\Im \lambda$ when $\aleph$ corresponds to TE waves and the upper bounds on $\Im\lambda$ when $\aleph$ corresponds to TM waves from Theorem \ref{thm:mainTransparent}. Notice that by orthogonality of $e^{in\theta}$ and $e^{im\theta}$ for $m\neq n$, the pair $(u_{1,n}e^{in\theta}, u_{2,n}e^{in\theta})$ satisfies \eqref{intro-e:transparent}. In the top graph of each subfigure, the blue circles show $\Im \lambda$ vs. $n/\Re \lambda$ for such pairs.  That is, we plot $\Im \lambda$ vs. the scaled tangent frequency of the resonance state. The red curve shows a plot of $c\frac{r_1}{2l_1}(c\xi)$, the decay rate predicted for a billiards trajectory traveling with scaled tangent frequency $c\xi$. The large spikes in the top graphs occur at the Brewster angle when $\aleph$ corresponds to TM waves. \label{fig:circleLots}}
\end{figure}

We now record some asymptotic properties of Bessel functions. Consider $n$ fixed and $z\to \infty$ 
\begin{align}
J_n(z)&=\left(\frac{1}{2\pi z}\right)^{1/2}\left( e^{i(z-\frac{n}{2}\pi -\frac{1}{4}\pi)}+e^{-i(z-\frac{n}{2}\pi -\frac{1}{4}\pi )}+\O{}(|z|^{-1}e^{|\Im z|})\right)\nonumber\\
H_n^{(1)}(z)&=\left(\frac{2}{\pi z}\right)^{1/2}\left(e^{i(z-\frac{n}{2}\pi -\frac{1}{4}\pi )}+\O{}(|z|^{-1}e^{|\Im z|})\right)\nonumber\\
J_n'(z)&=i\left(\frac{1}{2\pi z}\right)^{1/2}\left( e^{i(z-\frac{n}{2}\pi -\frac{1}{4}\pi)}-e^{-i(z-\frac{n}{2}\pi -\frac{1}{4}\pi )}+\O{}(|z|^{-1}e^{|\Im z|})\right)\nonumber\\
{H_n^{(1)}}'(z)&=i\left(\frac{2}{\pi z}\right)^{1/2}\left(e^{i(z-\frac{n}{2}\pi -\frac{1}{4}\pi )}+\O{}(|z|^{-1}e^{|\Im z|})\right)\nonumber\\
J_n'(c^{-1}z)H_n^{(1)}(z)&=\frac{i\sqrt{c}}{\pi z}\left(e^{i((c^{-1}+1)z-n\pi -\frac{1}{2}\pi )}-e^{-i(c^{-1}-1)z)}+\O{}(|z|^{-1}e^{(c^{-1}+1)|\Im z|}) \right)\label{eqn:asympFixedn1}\\
J_n(c^{-1}z){H_n^{(1)}}'(z)&=\frac{i\sqrt{c}}{\pi z}\left(e^{i((c^{-1}+1)z-n\pi -\frac{1}{2}\pi)}+e^{-i(c^{-1}-1)z}+\O{}(|z|^{-1}e^{(c^{-1}+1)|\Im z|})\right)\label{eqn:asympFixedn2}
\end{align}

Next, we record asymptotics that are uniform in $n$ and $z$ as $n\to \infty$. Let $\zeta=\zeta(z)$ be the unique smooth solution on $0<z<\infty$ to 
\begin{equation}\label{eqn:diffEqzeta}\left(\frac{d\zeta}{dz}\right)^2=\frac{1-z^2}{\zeta z^2}\end{equation} 
 with 
 $$\lim_{z\to 0}\zeta= \infty,\quad\quad\lim_{z\to 1}\zeta= 0,\quad\quad \lim_{z\to \infty}\zeta=-\infty.$$ 
 
  Then
 \begin{align}
 \frac{2}{3}(-\zeta)^{3/2}&=\sqrt{z^2-1}-\asec(z)&1<z<\infty \label{eqn:zetaAsymptotic}\\
 \frac{2}{3}(\zeta)^{3/2}&=\log \left(\frac{1+\sqrt{1-z^2}}{z}\right)-\sqrt{1-z^2}&0<z<1\nonumber\\
\frac{1-z^2}{\zeta z^2}&\to \sqrt[3]{2}&z\to 0\label{eqn:zetaLimit}
 \end{align}
 Let $$Ai(s)=\frac{1}{2\pi}\int _{-\infty }^{\infty }e^{i(\frac{1}{3}t^3+st)}dt$$ 
 for $s\in \re$
be the Airy function solving 
$Ai''(z)-zAi(z)=0.$
Then, $A_-(z)=Ai(e^{2\pi i/3}z)$ is another solution of the Airy equation. 

For $z$ fixed as $n\to \infty$
\begin{align}
J_n(nz)&= \left(\frac{4\zeta}{1-z^2}\right)^{1/4}\left(\frac{Ai(n^{2/3}\zeta)}{n^{1/3}}+\O{}(Ei(5/3,7/3))\right)\nonumber\\
H_n^{(1)}(nz)&=2e^{-\pi i/3}\left(\frac{4\zeta}{1-z^2}\right)^{1/4}\left(\frac{A_-(n^{2/3}\zeta)}{n^{1/3}}+\O{}(E_-(5/3,7/3))\right)\nonumber\\
J_n'(nz)&= -\frac{2}{z}\left(\frac{1-z^2}{4\zeta}\right)^{1/4}\left(\frac{Ai'(n^{2/3}\zeta)}{n^{2/3}}+\O{}(Ei(8/3,4/3))\right)\nonumber
\end{align}
\begin{align}
{H_n^{(1)}}'(nz)&=\frac{4e^{2\pi i/3}}{z}\left(\frac{1-z^2}{4\zeta}\right)^{1/4}\left(\frac{A'_-(n^{2/3}\zeta)}{n^{2/3}}+\O{}(E_-(8/3,4/3))\right)\nonumber\\
\end{align}
\begin{align}
J_n'(c^{-1}nz)H_n^{(1)}(nz)&=\frac{4e^{2\pi i/3}c}{z}\left(\frac{(1-c^{-2}z^2)\zeta(z)}{\zeta(c^{-1}z)(1-z^2)}\right)^{1/4}\left(\frac{Ai'(n^{2/3}\zeta(c^{-1}z))}{n^{2/3}}+\O{}(Ei(8/3,4/3)(c^{-1}z))\right)\nonumber\\
&\quad\quad\quad\left(\frac{A_-(n^{2/3}\zeta)}{n^{1/3}}+\O{}(E_-(5/3,7/3)(z))\right)\nonumber
\end{align}
\begin{align}
J_n(c^{-1}nz){H_n^{(1)}}'(nz)&=\frac{4e^{2\pi i/3}}{z}\left(\frac{(1-z^2)\zeta(c^{-1}z)}{\zeta(z)(1-c^{-2}z^2)}\right)^{1/4}\left(\frac{Ai(n^{2/3}\zeta(c^{-1}z))}{n^{1/3}}+\O{}(Ei(5/3,7/3)(c^{-1}z)\right)\nonumber\\
&\quad\quad\quad\left(\frac{A'_-(n^{2/3}\zeta(z))}{n^{2/3}}+\O{}(E_-(8/3,4/3)(z))\right)\nonumber
\end{align}
where 
\begin{gather*} E_-(s,t)=|A_-'(n^{2/3}\zeta)|n^{-s}+|A_-(n^{2/3}\zeta)|n^{-t}\\
Ei(s,t)=|Ai'(n^{2/3}\zeta)|n^{-s}+|Ai(n^{-2/3}\zeta)|n^{-t}%\\
%Ei_-(s,t,\gamma)=|AiA_-|(n^{2/3}\zeta)n^{-s}+(|Ai'A_-|+|AiA_-'|)(n^{2/3}\zeta)n^{-t}+|Ai'A_-'(n^{2/3}\zeta)|n^{-\gamma}
\end{gather*}

We now record some facts about the Airy functions $Ai$ and $A_-$. 
For $s\in \re$,
$$Ai(s)=e^{-\pi i/3}A_-(s)+e^{\pi i/3}\overline{A_-(s)}$$ 
and hence
\begin{equation} \label{eqn:ImAMinus} 
\Im (e^{-5\pi i/6}A_-(s))=-\frac{Ai(s)}{2}
\end{equation}

Next, we record asymptotics for Airy functions as $z\to \infty$ in the sector $|\Arg z|<\pi/3-\delta$. Many of these asymptotic formulae hold in larger regions, but we restrict our attention to this sector. Let $\eta=2/3z^{3/2}$ where we take principal branch of the square root. Then
\begin{align*}
A_-(z)&=\frac{e^{-\pi i/6}e^{\eta}}{2\sqrt{\pi}z^{1/4}}(1+\O{}(|z|^{-3/2}))&A_-(-z)&=\frac{e^{\pi i/12}e^{i\eta}}{2\sqrt{\pi }z^{1/4}}\nonumber\\
A_-'(z)&=\frac{e^{-\pi i/6} z^{1/4}e^{\eta}}{2\sqrt{\pi }}(1+\O{}(|z|^{-3/2}))&A_-'(-z)&= \frac{e^{-5\pi i/12}z^{1/4}e^{i\eta}}{2\sqrt{\pi}}\nonumber
\end{align*}
\vspace{-0.6cm}
\begin{equation}
\label{eqn:airyAsymps}
\begin{aligned}
Ai(z)&=\frac{z^{-1/4}e^{-\eta}}{2\sqrt{\pi}}(1+\O{}(|z|^{-3/2})) &Ai(-z)&=\frac{z^{-1/4}}{2\sqrt{\pi }}\left(e^{i\eta-i\pi /4}+e^{-i\eta +i\pi /4}+\O{}(|z|^{-3/2}e^{|\Im \eta|})\right) \\
Ai'(z)&=-\frac{z^{1/4}e^{-\eta}}{2\sqrt{\pi }}(1+\O{}(|z|^{-3/2}))&  Ai'(-z)&=\frac{z^{1/4}}{2i\sqrt{\pi }}\left(e^{i\eta-i\pi /4}-e^{-i\eta +i\pi /4}+\O{}(|z|^{-3/2}e^{|\Im \eta|})\right) 
\end{aligned}
\end{equation}

\subsection{Resonances normal to the boundary (fixed $n$)}
First, we fix $n\geq 0$ and examine solutions with $\Re \lambda \to \infty$. We assume that $\aleph \neq c^{-1}$.
Consider \eqref{eqn:boundaryCircle} and apply the asymptotics \eqref{eqn:asympFixedn1} and \eqref{eqn:asympFixedn2} with $\Im \lambda \leq 0$
$$(c^{-1}-\aleph)e^{i((c^{-1}+1)\lambda-n\pi -\frac{1}{2}\pi)}-(c^{-1}+\aleph)e^{-i(c^{-1}-1)\lambda}+\O{}(|z|^{-1}e^{(c^{-1}+1)|\Im z|})=0.$$
So, ignoring the error term for now, we have 
$$\frac{1-\aleph c}{1+\aleph c }e^{i(2c^{-1}\lambda_0-n\pi -\frac{1}{2}\pi)}=1.$$
So, 
\begin{align*}c^{-1}\Im \lambda_0&=\frac{1}{2}\log\left|\frac{1-\aleph c}{1+\aleph c}\right|.&c^{-1}\Re \lambda_0&=\frac{2-\sgn (1-\aleph c)+2n+4k}{4}\pi 
\end{align*}
Taking $\lambda_0$ as above, we have $f(\lambda_0)=\O{}(|\Re \lambda_0|^{-1})$, $|f'(\lambda_0)|\geq c$, and $|f''(\lambda)|\leq C$. for $|\lambda-\lambda_0|<\delta$ for some $\delta>0$.
We now recall Newton's method (see for example \cite[Lemma 4.1]{GalkCircle}
\begin{lemma}
\label{lem:Newton}
Suppose that $z_0\in\mathbb{C}$. Let 
$\Omega:=\{z\in\complex: |z-z_0|\leq \e\}$
and suppose $f:\Omega\to \mathbb{C}$ is analytic. Suppose that 
$$|f(z_0)|\leq a\,,\quad |\partial_zf(z_0)|\geq b\,,\quad \sup_{z\in \Omega}|\partial_z^2f(z)|\leq d.$$
Then if 
\begin{equation}
\label{eqn:condition}
a+d\e^2<\e b<c<1
\end{equation}
there is a unique solution $z$ to $f(z)=0$ in $\Omega$.
\end{lemma}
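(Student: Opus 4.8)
The plan is to deduce the lemma from Rouch\'e's theorem by comparing $f$ with its affine Taylor polynomial at $z_0$. First I would unpack the hypothesis \eqref{eqn:condition}. Since $a\geq 0$ and $d\e^2\geq 0$, the inequality $a+d\e^2<\e b$ forces both $a<\e b$ and $d\e<b$. From $d\e<b$ and the bound $\sup_\Omega|\partial_z^2 f|\leq d$, integrating $\partial_z^2 f$ along the segment from $z_0$ to any $z\in\Omega$ gives $|\partial_zf(z)-\partial_zf(z_0)|\leq d\e<b\leq|\partial_zf(z_0)|$, so $\partial_z f$ is nowhere zero on $\Omega$; this will yield that the root we find is simple.

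Next I introduce $g(z):=f(z_0)+\partial_zf(z_0)(z-z_0)$, whose unique zero is $z_\ast:=z_0-f(z_0)/\partial_zf(z_0)$. Because $|z_\ast-z_0|=|f(z_0)|/|\partial_zf(z_0)|\leq a/b<\e$, the point $z_\ast$ lies in the open disk $\{|z-z_0|<\e\}$, so $g$ has exactly one zero there. On the boundary circle $|z-z_0|=\e$ one has the lower bound $|g(z)|\geq|\partial_zf(z_0)|\,\e-|f(z_0)|\geq \e b-a$, while Taylor's formula with integral remainder, $f(z)-g(z)=\int_0^1(1-t)\,\partial_z^2f\big(z_0+t(z-z_0)\big)(z-z_0)^2\,dt$, gives $|f(z)-g(z)|\leq\tfrac12 d\e^2<d\e^2$. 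Combining these with \eqref{eqn:condition},
$$|f(z)-g(z)|<d\e^2<\e b-a\leq|g(z)|,\qquad |z-z_0|=\e.$$

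Finally I would apply Rouch\'e's theorem: the strict inequality $|f-g|<|g|$ on $\partial\Omega$ shows in particular that $f$ has no zeros on $\partial\Omega$, and that $f=g+(f-g)$ has the same number of zeros, counted with multiplicity, in $\{|z-z_0|<\e\}$ as $g$, namely one. Hence $f$ has a unique zero in $\Omega$, and since $\partial_z f$ does not vanish on $\Omega$ this zero is simple. I do not anticipate a real obstacle; the only care needed is in the constant bookkeeping --- checking that the slightly wasteful Taylor bound $\tfrac12 d\e^2\leq d\e^2$ still fits under \eqref{eqn:condition} --- and in confirming that the zero $z_\ast$ of the linearization lies strictly inside $\Omega$, which is precisely what $a<\e b$ guarantees.
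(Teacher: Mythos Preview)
Your argument via Rouch\'e's theorem is correct. The paper does not actually prove this lemma: it is stated with the remark ``We now recall Newton's method (see for example \cite[Lemma~4.1]{GalkCircle})'' and no proof is given in the present text. The label ``Newton's method'' suggests that the proof in the cited reference proceeds by showing that the Newton iterates $z_{k+1}=z_k-f(z_k)/f'(z_k)$ stay in $\Omega$ and converge, the condition \eqref{eqn:condition} being exactly what is needed to control $|f(z_k)|$ and $|f'(z_k)|$ inductively. Your approach sidesteps the iteration entirely: by linearizing $f$ at $z_0$ and applying Rouch\'e on $\partial\Omega$, you get existence and uniqueness in one stroke, and the observation that $|\partial_z f(z)-\partial_z f(z_0)|\le d\e<b$ on $\Omega$ gives simplicity of the root as a free bonus (not claimed in the statement). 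Both routes use the hypothesis in the same essential way---the Taylor remainder bound $\tfrac12 d\e^2$ versus the linear lower bound $\e b-a$---so neither is materially more general, but yours is shorter and avoids any convergence argument.
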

Using this, we have that there exists a unique solution $\lambda_1$ to $f(\lambda_1)=0$ with $|\lambda_1-\lambda_0|=\O{}(|\Re \lambda_0|^{-1}).$

\subsection{Resonances with non-zero tangent frequency ($\e\Re \lambda\leq n\leq (\min(1,c)-\e)\Re \lambda$)}
In this case, we write 
$$f(\lambda):=c^{-1}J_n'\left(n\frac{c^{-1}\lambda}{n}\right)H_n^{(1)}(\lambda)-\aleph {H_n^{(1)}}'(\lambda)J_n\left(n \frac{c^{-1}\lambda}{n}\right)=0.$$
Write $z=\frac{\lambda}{n}$. Then taking $n\leq (c-\e)\Re \lambda$ and ignoring error terms, $f(\lambda_0)=0$ implies 
\begin{align}\left[\left(\frac{1-c^{-2}z_0^2}{1-z_0^2}\right)^{1/2}-\aleph\right]e^{\frac{4in}{3}((-\zeta(c^{-1}z_0))^{3/2}-i\pi/2}
&=\left[\left(\frac{1-c^{-2}z_0^2}{1-z_0^2}\right)^{1/2}+ \aleph \right]\nonumber\\
-i\frac{\sqrt{c^{-2}z_0^2-1}-\aleph \sqrt{z_0^2-1}}{\sqrt{c^{-2}z_0^2-1}+\aleph\sqrt{z_0^2-1}}&=e^{-\frac{4in}{3}((-\zeta(c^{-1}z_0))^{3/2}}\label{eqn:noError}
\end{align}

Fix $\max(c,1)+\delta<r<\infty$ with $\delta <c^2$ so that 
$$\sqrt{c^{-2}r^2-1}-\aleph \sqrt{r^2-1}\neq 0.$$ 
Let 
$$g(s,n,k):=\sqrt{c^{-2}s^2-1}-\asec(c^{-1}s)+\frac{4k-\sgn(\sqrt{c^{-2}s^2-1}-\aleph \sqrt{s^2-1})}{4n}\pi.$$
Then, fix $q\in \mathbb{Z}_+$ $p\in \mathbb{Z}$ and let $n=qm$ and $k=pm$ so that 
$$g(s,qm,pm)=\sqrt{c^{-2}s^2-1}-\asec(c^{-1}s)+\frac{p}{q}\pi-\frac{\sgn(\sqrt{c^{-2}s^2-1}-\aleph \sqrt{s^2-1})}{4mq}\pi.$$
Then, for any $\e>0$ small enough, there exists $p_\e,q_\e$ so that 
\begin{gather*}
|g(r,qm,pm)|<\e+\O{}(m^{-1})\\
\partial_sg(r,q_\e m,p_\e m)=\frac{\sqrt{c^{-2}r^2-1}}{r}\geq C\sqrt{\delta},\quad\quad
\partial^2_sg(r,q_\e m,p_\e m)=-\frac{r^{-3}}{\sqrt{c^{-2}-r^{-2}}}\leq \frac{C}{\sqrt{\delta}}
\end{gather*}
Therefore, taking $\e$ small enough and $m$ large enugh (depending on $r-c$), there is a solution $r_m$ to $g(r_m,q_\e m,p_\e m)=0$ with $|r-r_m|<C\e$. 

With this $r_m$, let 
$$\lambda_0=mqr_m+i\frac{r_m}{2\sqrt{c^{-2}r_m^2-1}}\log \left|\frac{\sqrt{c^{-2}r_m^2-1}-\aleph \sqrt{r_m^2-1}}{\sqrt{c^{-2}r_m^2-1}+\aleph\sqrt{r_m^2-1}}\right|$$
and $z_0=\lambda_0/mq$.
Let 
$$H(z,n)=\exp\left(-\frac{4in}{3}(-\zeta(c^{-1}z))^{3/2}\right)+i\frac{\sqrt{c^{-2}z^2-1}-\aleph \sqrt{z^2-1}}{\sqrt{c^{-2}z^2-1}+\aleph\sqrt{z^2-1}}.$$
Then, accounting for the errors omitted to obtain \eqref{eqn:noError} there is a function $a(z,n)=\O{}(n^{-2/3})$, analytic in $z$ so that $nz$ is a resonance if and only if 
$$H(z,n)=a(z,n)\left[1+\exp\left(-\frac{4in}{3}(-\zeta(c^{-1}z))^{3/2}\right)\right]$$
Now, using \eqref{eqn:zetaAsymptotic}
\begin{align*} 
-\frac{4imq}{3}(-\zeta(c^{-1}z_0))^{3/2}&=-2imq(\sqrt{c^{-2}z_0^2-1}-\asec(c^{-1}z_0))\\
&=-2imq(\sqrt{c^{-2}r_m^2-1}-\asec(c^{-1}r_m)+i\frac{\sqrt{c^{-2}r_m^2-1}}{r_m}\Im z_0+\O{}((\Im z_0)^2))\\
&=i(2mpi-\frac{\sgn(\sqrt{c^{-2}r_m^2-1}-\aleph \sqrt{r_m^2-1})}{2})\pi\\
&\quad\quad+\log\left|\frac{\sqrt{c^{-2}r_m^2-1}-\aleph \sqrt{r_m^2-1}}{\sqrt{c^{-2}r_m^2-1}+\aleph \sqrt{r_m^2-1}}\right|+\O{}((mq)^{-1})
\end{align*}
So, 
\begin{multline*} \exp\left(-\frac{4imq}{3}(-\zeta(c^{-1}z_0))^{3/2}\right)=\\-\sgn(\sqrt{c^{-2}r_m^2-1}-\aleph \sqrt{c^{-2}r_m^2-1})i\left|\frac{\sqrt{c^{-2}r_m^2-1}-\aleph \sqrt{r_m^2-1}}{\sqrt{c^{-2}r_m^2-1}+\aleph \sqrt{r_m^2-1}}\right|(1+\O{}((mq)^{-1})).
\end{multline*}
So, $H(z_0,mq)=\O{}((mq)^{-1}).$
Moreover,  $|z_0-z|\leq 1$, 
$$|\partial_zH(z,mq)|\geq c mq.$$
Hence, by the implicit function theorem, there exists a resonance $z_1$ with 
\begin{align*} z_1&=z_0+\O{}\left(\frac{\sup_{|z-z_0|\leq 1} \left|a(z,mq)\left[1+\exp\left(-\frac{4imq}{3}(-\zeta(c^{-1}z))^{3/2}\right)\right]\right|}{\inf_{|z-z_0|\leq 1}|\partial_zH(z,mq)|}\right)\\
&=z_0+\O{}((mq)^{-5/3})
\end{align*}
Thus, there is a resonance, $\lambda_1$ with 
$$\lambda_1=mqr_m+i\frac{r_m}{2\sqrt{c^{-2}r_m^2-1}}\log \left|\frac{\sqrt{c^{-2}r_m^2-1}-\aleph \sqrt{r_m^2-1}}{\sqrt{c^{-2}r_m^2-1}+\aleph\sqrt{r_m^2-1}}\right|+\O{}((mq)^{-2/3}).$$

Now, notice that if $|\xi'|_g^{-1}c=r$, then on $B(0,1)$, $l((x,\xi'),\beta(x,\xi'))=2\sqrt{1-r^{-2}c^2}.$
So, 
\begin{align*} l_N^{-1}r_N(x,\xi')&=\frac{1}{4\sqrt{1-r^{-2}c^{2}}}\log \left|\frac{\sqrt{1-r^{-2}c^2}-\aleph\sqrt{c^2-r^{-2}c^2}}{\sqrt{1-r^{-2}c^2}+\aleph \sqrt{c^2-c^2r^{-2}}}\right|^2\\
&=\frac{c^{-1}r}{2\sqrt{c^{-2}r^2-1}}\log \left|\frac{\sqrt{c^{-2}r^2-1}-\aleph\sqrt{r^2-1}}{\sqrt{c^{-2}r^2-1}+\aleph \sqrt{r^2-1}}\right|.
\end{align*}
Now, by construction for any $r$ with $\max(1,c)<r<\infty$ so that $\sqrt{c^{-2}r^2-1}-\aleph \sqrt{r^2-1}\neq 0$ and $\delta$ small enough, we have $|r-r_m|<\delta$ so, taking $m$ large enough,
$$|c^{-1}\Im \lambda -l_N^{-1}r_N(x,\xi')|\leq C\delta.$$
This shows that Theorem \ref{thm:mainTransparent} is sharp. Moreover, when $c<1$, \cite{popovNear} shows that there are sequences of resonances converging to the real axis that have $n\approx c^{-1}\Re \lambda$. 

\begin{remark}
Notice also that 
\begin{equation*}\frac{mq}{c^{-1}\Re \lambda}=cr_m^{-1}=|\xi'|_g.
\end{equation*}
Thus, since \eqref{eqn:boundaryCircle} with parameter $n$ corresponds to a resonant state with $u|_{\pO}=Ae^{in\theta}$, the semiclassical tangent frequency of the resonance state is $cn/\Re \lambda$ when we take $\Re z\sim c$. Plugging this into $cl_N^{-1}r_N(x,\xi')$ gives the decay rate of the resonance state. See also Figures \ref{fig:circle} and \ref{fig:circleLots} for numerically computed resonances in this case.
\end{remark}

\appendix
% !TEX root = TransmissionProblems.tex
\section{List of notation}
For the convenience of the reader, we include a list of some of the notation used in this paper.
\begin{multicols*}{2}
\begin{itemize}[leftmargin=*]
\item[-] $\Omega\,$: strictly convex domain with smooth boundary -- Section \ref{sec:Omega}
\item[-] $l(q_1,q_2)\,$: chord length -- \eqref{eqn:defineLength}
\item[-] $l_N(q)\,$: average chord length --  \eqref{eqn:defineLength}
\item[-] $|\xi'|_g\,$ metric induced on $T^*\partial\Omega$ -- Section \ref{sec:Omega}
\item[-] $\beta:B^*\partial\Omega\to B^*\partial\Omega$: the billiard ball map -- Section \ref{sec:billiard}
\item[-] $\Ph{m}{\delta}(M)\,$: semiclassical pseudifferential operator classes -- Section \ref{sec:semiclassicalPreliminaries}
\item[-] $S^m_\delta(T^*M)\,$: symbol classes -- \eqref{eqn:defSymbol}
\item[-] $\sigma:\Ph{m}{\delta}(M)\to S^m_{\delta}(T^*M)\,$: the symbol map -- \eqref{eqn:princSymbol}
\item[-] $Ai$, $A_i$, $\Phi_-$, $\zeta_i\,$: Airy related functions -- Section \ref{sec:potential}, \eqref{e:Airy1} 
\item[-] $Q(x',\xi')\in C^\infty(T^*\partial\Omega)\,$: the symbol of the second fundamental form -- Section \ref{sec:potential}
\item[-] $N_2(z/h)\,$: the outgoing Dirichlet to Neumann Map -- Section \ref{sec:mainThm}
\item[-] $G(z/h)\,$: the single layer operator -- Section \ref{sec:mainThm}
\item[-] $G_B$, $G_\Delta\,$: decomposition of $G$ -- Lemma \ref{lem:decompose}
\item[-] $\Ph{k_1,k_2}{\delta}(M;\Sigma)$, $S^{k_1,k_2}_{\delta}(M;\Sigma)\,$: second microlocal operators and symbols -- Section \ref{sec:secondMicrolocal}
\item[-] $R\,$: the reflection operator -- \eqref{eqn:reflect}
\item[-] $T\,:$ the transition operator -- \eqref{eqn:transition}
\item[-] $\oph \,:$ quantization operator -- Section \ref{sec:semiclassicalPreliminaries}
\item[-] $r_N\,:$ the average reflectivity -- \eqref{eqn:defineAverageReflection}
\item[-] $\tilde{\sigma}\,$: the compressed shymbol -- Section \ref{sec:shymbol}
\item[-] $I_A(q)\,$: the order of $A$ at $q$ -- Section \ref{sec:shymbol}
\item[-] $H_h^m\,:$ semiclassical Sobolev spaces -- \eqref{e:semiSob}
\item[-] $\S$, $\D$, respectively the single and double layer operators -- \eqref{e:layerDef}
\item[-] $\O{}(\cdot)$ and $\o{}(\cdot)$ --\eqref{e:oDef}
\item[-] $\WFh$ the semiclassical wavefront set -- Definition \ref{d:wavefront}
\item[-] $\Psi_{\S}$, $\Psi_{\D\S}$, $\Psi_{\D}$ symbols of layer potentials -- \eqref{e:layerSymbDef}
\end{itemize}
\end{multicols*}

\bibliographystyle{alpha} 
\bibliography{references.bib}

\def\cprime{$'$} \def\cprime{$'$} \def\cprime{$'$}
\begin{thebibliography}{OLBC10}

\bibitem[Ale08]{Alexandrova}
Ivana Alexandrova.
\newblock Semi-classical wavefront set and {F}ourier integral operators.
\newblock {\em Canad. J. Math.}, 60(2):241--263, 2008.

\bibitem[Bel03]{mourad}
Mourad Bellassoued.
\newblock Carleman estimates and distribution of resonances for the transparent
  obstacle and application to the stabilization.
\newblock {\em Asymptot. Anal.}, 35(3-4):257--279, 2003.

\bibitem[BLR92]{BardLebeau}
Claude Bardos, Gilles Lebeau, and Jeffrey Rauch.
\newblock Sharp sufficient conditions for the observation, control, and
  stabilization of waves from the boundary.
\newblock {\em SIAM J. Control Optim.}, 30(5):1024--1065, 1992.

\bibitem[BZH10]{Heller}
M.~Barr, M.~Zaletel, and E.~Heller.
\newblock Quantum corral resonance widths: Lossy scattering as acoustics.
\newblock {\em Nano Letters}, (10):3253--3260, 2010.

\bibitem[CLEH95]{Crommie}
M.~Crommie, C.~Lutz, D.~Eigler, and E.~Heller.
\newblock Quantum corrals.
\newblock {\em Physica D: Nonlinear Phenomena}, 83(1-3):98--108, 1995.

\bibitem[CPV99]{Card}
Fernando Cardoso, Georgi Popov, and Georgi Vodev.
\newblock Distribution of resonances and local energy decay in the transmission
  problem. {II}.
\newblock {\em Math. Res. Lett.}, 6(3-4):377--396, 1999.

\bibitem[CPV01]{Card2}
Fernando Cardoso, Georgi Popov, and Georgi Vodev.
\newblock Asymptotics of the number of resonances in the transmission problem.
\newblock {\em Comm. Partial Differential Equations}, 26(9-10):1811--1859,
  2001.

\bibitem[CV10]{CardVod}
Fernando Cardoso and Georgi Vodev.
\newblock Boundary stabilization of transmission problems.
\newblock {\em J. Math. Phys.}, 51(2):023512, 15, 2010.

\bibitem[CW15]{dielectric}
Hui Cao and Jan Wiersig.
\newblock Dielectric microcavities: Model systems for wave chaos and
  non-hermitian physics.
\newblock {\em Reviews of Modern Physics}, 87(1):61, 2015.

\bibitem[DG14]{DyGui}
Semyon Dyatlov and Colin Guillarmou.
\newblock Microlocal limits of plane waves and {E}isenstein functions.
\newblock {\em Ann. Sci. \'Ec. Norm. Sup\'er. (4)}, 47(2):371--448, 2014.

\bibitem[DS99]{d-s}
Mouez Dimassi and Johannes Sj{\"o}strand.
\newblock {\em Spectral asymptotics in the semi-classical limit}, volume 268 of
  {\em London Mathematical Society Lecture Note Series}.
\newblock Cambridge University Press, Cambridge, 1999.

\bibitem[Dya12]{zeeman}
Semyon Dyatlov.
\newblock Asymptotic distribution of quasi-normal modes for {K}err--de {S}itter
  black holes.
\newblock {\em Ann. Henri Poincar\'e}, 13(5):1101--1166, 2012.

\bibitem[DZ]{ZwScat}
S.~Dyatlov and M.~Zworski.
\newblock {\em Mathematical theory of scattering resonances}.

\bibitem[DZ13]{DyZw}
Semyon Dyatlov and Maciej Zworski.
\newblock Quantum ergodicity for restrictions to hypersurfaces.
\newblock {\em Nonlinearity}, 26(1):35--52, 2013.

\bibitem[EG02]{elliott2002fiber}
Barry~J Elliott and Mike Gilmore.
\newblock {\em Fiber Optic Cabling}.
\newblock Newnes, 2002.

\bibitem[Eps07]{Epstein}
Charles~L. Epstein.
\newblock Pseudodifferential methods for boundary value problems.
\newblock In {\em Pseudo-differential operators: partial differential equations
  and time-frequency analysis}, volume~52 of {\em Fields Inst. Commun.}, pages
  171--200. Amer. Math. Soc., Providence, RI, 2007.

\bibitem[Exn08]{Exner}
Pavel Exner.
\newblock Leaky quantum graphs: a review.
\newblock In {\em Analysis on graphs and its applications}, volume~77 of {\em
  Proc. Sympos. Pure Math.}, pages 523--564. Amer. Math. Soc., Providence, RI,
  2008.

\bibitem[Gal14]{Galk}
J.~Galkowski.
\newblock Distribution of resonances in scattering by thin barriers.
\newblock {\em arXiv preprint, arxiv : 1404.3709, to appear in Mem. Amer. Math.
  Soc.}, 2014.

\bibitem[Gal16]{GalkCircle}
Jeffrey Galkowski.
\newblock Resonances for thin barriers on the circle.
\newblock {\em Journal of Physics A: Mathematical and Theoretical},
  49(12):125205, 2016.

\bibitem[GS77]{g-s}
Victor Guillemin and Shlomo Sternberg.
\newblock {\em Geometric asymptotics}.
\newblock American Mathematical Society, Providence, R.I., 1977.
\newblock Mathematical Surveys, No. 14.

\bibitem[GS94]{gr-s}
Alain Grigis and Johannes Sj{\"o}strand.
\newblock {\em Microlocal analysis for differential operators}, volume 196 of
  {\em London Mathematical Society Lecture Note Series}.
\newblock Cambridge University Press, Cambridge, 1994.
\newblock An introduction.

\bibitem[GS14]{GS}
J.~Galkowski and H.~Smith.
\newblock Restriction bounds for the free resolvent and resonances in lossy
  scattering.
\newblock {\em Int. Math. Res. Not}, 2014.

\bibitem[GU81]{HaZel}
V.~Guillemin and G.~Uhlmann.
\newblock Oscillatory integrals with singular symbols.
\newblock {\em Duke Math. J.}, 48(1):251--267, 1981.

\bibitem[H{\"o}r07]{HOV3}
Lars H{\"o}rmander.
\newblock {\em The analysis of linear partial differential operators. {III}}.
\newblock Classics in Mathematics. Springer, Berlin, 2007.
\newblock Pseudo-differential operators, Reprint of the 1994 edition.

\bibitem[H{\"o}r09]{HOV4}
Lars H{\"o}rmander.
\newblock {\em The analysis of linear partial differential operators. {IV}}.
\newblock Classics in Mathematics. Springer-Verlag, Berlin, 2009.
\newblock Fourier integral operators, Reprint of the 1994 edition.

\bibitem[HT15]{GalkSLO}
Xiaolong Han and Melissa Tacy.
\newblock Sharp norm estimates of layer potentials and operators at high
  frequency.
\newblock {\em J. Funct. Anal.}, 269(9):2890--2926, 2015.
\newblock With an appendix by Jeffrey Galkowski.

\bibitem[HZ04]{HaZe}
Andrew Hassell and Steve Zelditch.
\newblock Quantum ergodicity of boundary values of eigenfunctions.
\newblock {\em Comm. Math. Phys.}, 248(1):119--168, 2004.

\bibitem[Ida00]{Brewster}
Nathan Ida.
\newblock {\em Engineering electromagnetics}, volume~2.
\newblock Springer, 2000.

\bibitem[JSS15]{SafRaySplit}
Dmitry Jakobson, Yuri Safarov, and Alexander Strohmaier.
\newblock The semiclassical theory of discontinuous systems and ray-splitting
  billiards.
\newblock {\em Amer. J. Math.}, 137(4):859--906, 2015.
\newblock With an appendix by Yves Colin de Verdi\`ere.

\bibitem[KP90]{PopovInterpolating}
Valery Kovachev and Georgi Popov.
\newblock Invariant tori for the billiard ball map.
\newblock {\em Trans. Amer. Math. Soc.}, 317(1):45--81, 1990.

\bibitem[KT95]{KochTataruStable}
Herbert Koch and Daniel Tataru.
\newblock On the spectrum of hyperbolic semigroups.
\newblock {\em Comm. Partial Differential Equations}, 20(5-6):901--937, 1995.

\bibitem[Mel76]{MelroseGlanceSurf}
R.~B. Melrose.
\newblock Equivalence of glancing hypersurfaces.
\newblock {\em Invent. Math.}, 37(3):165--191, 1976.

\bibitem[Mil00]{Miller}
Luc Miller.
\newblock Refraction of high-frequency waves density by sharp interfaces and
  semiclassical measures at the boundary.
\newblock {\em Journal de math{\'e}matiques pures et appliqu{\'e}es},
  79(3):227--269, 2000.

\bibitem[MM82]{MarviziMelrose}
Shahla Marvizi and Richard Melrose.
\newblock Spectral invariants of convex planar regions.
\newblock {\em J. Differential Geom.}, 17(3):475--502, 1982.

\bibitem[MT]{MelTayl}
R.~Melrose and M.~Taylor.
\newblock {\em Boundary problems for wave equations with grazing and gliding
  rays}.

\bibitem[OLBC10]{NIST}
Frank W.~J. Olver, Daniel~W. Lozier, Ronald~F. Boisvert, and Charles~W. Clark,
  editors.
\newblock {\em N{IST} handbook of mathematical functions}.
\newblock U.S. Department of Commerce, National Institute of Standards and
  Technology, Washington, DC; Cambridge University Press, Cambridge, 2010.
\newblock With 1 CD-ROM (Windows, Macintosh and UNIX).

\bibitem[PS92]{Pet}
Vesselin~M. Petkov and Luchezar~N. Stoyanov.
\newblock {\em Geometry of reflecting rays and inverse spectral problems}.
\newblock Pure and Applied Mathematics (New York). John Wiley \& Sons, Ltd.,
  Chichester, 1992.

\bibitem[PV99a]{popVod}
Georgi Popov and Georgi Vodev.
\newblock Distribution of the resonances and local energy decay in the
  transmission problem.
\newblock {\em Asymptot. Anal.}, 19(3-4):253--265, 1999.

\bibitem[PV99b]{popovNear}
Georgi Popov and Georgi Vodev.
\newblock Resonances near the real axis for transparent obstacles.
\newblock {\em Comm. Math. Phys.}, 207(2):411--438, 1999.

\bibitem[Sab64]{Sabine}
W.~C. Sabine.
\newblock {\em Collected papers on acoustics}.
\newblock Dover Publications, New York, NY, 1964.

\bibitem[Saf87]{Saf1}
Yu.~G. Safarov.
\newblock On the second term of the spectral asymptotics of the transmission
  problem.
\newblock {\em Acta Appl. Math.}, 10(2):101--130, 1987.

\bibitem[SZ99]{SjoZwDist}
Johannes Sj{\"o}strand and Maciej Zworski.
\newblock Asymptotic distribution of resonances for convex obstacles.
\newblock {\em Acta Math.}, 183(2):191--253, 1999.

\bibitem[Tay11]{Taylor}
Michael~E. Taylor.
\newblock {\em Partial differential equations {II}. {Q}ualitative studies of
  linear equations}, volume 116 of {\em Applied Mathematical Sciences}.
\newblock Springer, New York, second edition, 2011.

\bibitem[VN06]{svn}
San V{\~u}~Ng{\d{o}}c.
\newblock {\em Syst\`emes int\'egrables semi-classiques: du local au global},
  volume~22 of {\em Panoramas et Synth\`eses [Panoramas and Syntheses]}.
\newblock Soci\'et\'e Math\'ematique de France, Paris, 2006.

\bibitem[WH85]{weiss1985reflection}
Winfried~RE Weiss and George~A Hagedorn.
\newblock Reflection and transmission of high frequency pulses at an interface.
\newblock {\em Transport Theory and Statistical Physics}, 14(5):539--565, 1985.

\bibitem[Zal10]{Zaletel}
M.~Zaletel.
\newblock The \uppercase{S}abine law and a trace formula for lossy billiards.
\newblock {\em Unpublished note}, 2010.

\bibitem[Zwo12]{EZB}
Maciej Zworski.
\newblock {\em Semiclassical analysis}, volume 138 of {\em Graduate Studies in
  Mathematics}.
\newblock American Mathematical Society, Providence, RI, 2012.

\end{thebibliography}

\end{document}